\newtheorem{thm}{Theorem}[section]
\newtheorem{cor}[thm]{Corollary}
\newtheorem{lem}[thm]{Lemma}
\newtheorem{prop}[thm]{Proposition}
\newtheorem*{prob*}{Problem}
\newtheorem*{thm*}{Theorem}
\newtheorem*{quest*}{Question}
\theoremstyle{definition}
\newtheorem{defn}[thm]{Definition}
\newtheorem{example}[thm]{Example}
\newtheorem*{defn*}{Definition}
\newtheorem{rem}[thm]{Remark}
\newtheorem{rem*}[thm]{Remark}
\numberwithin{equation}{section}
\newcommand{\mP}{\mathbb {P}}
	\title{$\bigoplus_{p\in P}\mathbb{F}_p$- Systems as Abramov systems}
\date{\today}
\author{Or Shalom}
\begin{document}

	\begin{abstract}
		Let $\mathcal{P}$ be an (unbounded) countable multiset of primes, let $G=\bigoplus_{p\in P}\mathbb{F}_p$. We study the $k$'th universal characteristic factors of an ergodic probability system $(X,\mathcal{B},\mu)$ with respect to some measure preserving action of $G$. We find conditions under which every extension of these factors is generated by phase polynomials  (Theorem \ref{Mainr:thm}) and we give an example of an ergodic $G$-system that is not Abramov. In particular we generalize the main results of Bergelson Tao and Ziegler \cite{Berg& tao & ziegler} who proved a similar theorem in the special case $P=\{p,p,p,...\}$ for some fixed prime $p$. In a subsequent paper \cite{Sh} we use this result to prove a general structure theorem for ergodic $\bigoplus_{p\in P}\mathbb{F}_p$-systems.
	\end{abstract}
	\maketitle
\section{Introduction}
This paper is concerned with the study of the structure of the universal characteristic factors of some ergodic measure preserving systems associated with the action of some abelian groups (formal definitions below). Host and Kra \cite{HK} and independently Ziegler \cite{Z} studied these factors for ergodic $\mathbb{Z}$-actions.
Later, Bergelson Tao and Ziegler \cite{Berg& tao & ziegler} proved a counterpart for $\mathbb{F}_p^\omega$-actions.
The goal of this work is to generalize the results in \cite{Berg& tao & ziegler}. More concretely, we study the structure of the universal characteristic factors associated with the group $\bigoplus_{p\in P} \mathbb{F}_p$ for some countable multiset of primes $P$ (i.e. every prime may appear multiple times).\\
We prove three main results\footnote{All definitions for the terms used here are given later in the paper where these results are formalized.}. The first
is an if and only if condition under which every abelian extension of finite type of the universal characteristic factors of an ergodic $\bigoplus_{p\in P} \mathbb{F}_p$-system $X$ is Abramov (Theorem \ref{Mainr:thm}). More formally, we say that a system is "strongly Abramov" if every abelian extension of finite type is an Abramov system (See Definition \ref{stronglyAbr}). We show that the universal characteristic factors of $X$, $Z_{<1}(X),Z_{<2}(X),...$ are strongly Abramov if and only if every $(\bigoplus_{p\in P} \mathbb{F}_p,Z_{<l}(X),S^1)$-cocycle of finite type in $H^1(\bigoplus_{p\in P}\mathbb{F}_p,Z_{<l}(X),S^1)$ has a representative which is measurable with respect to a "totally disconnected" factor (see Definition \ref{TD:def}).  
This generalizes the main results of Bergelson Tao and Ziegler (\cite[Theorem 3.3, Theorem 8.1]{Berg& tao & ziegler}) who proved a similar theorem in the special case $P=\{p,p,p,...\}$ for some fixed prime $p$.\\ A priori, it is not clear whether all systems are strongly Abramov. Our second result is an example which justifies this definition. In the case where the multiset $P$ is unbounded, we construct an ergodic system $(X,\bigoplus_{p\in P} \mathbb{F}_p)$ which is Abramov but is not strongly Abramov (in particular there exists an extension that is not Abramov). Note that in the case where $P$ is bounded, Bergelson Tao and Ziegler \cite{Berg& tao & ziegler} proved that every ergodic system is strongly Abramov. An interesting fact is that the underline space $X$ in the example above is a solenoid (a finite dimensional compact abelian group that is not a Lie group), these groups are known for their pathological properties. Roughly speaking, this example indicates that these special solenoids form the "building blocks" of every ergodic $\bigoplus_{p\in P} \mathbb{F}_p$ system. In a subsequent paper \cite{Sh} we formalize this statement and prove this to be true. From this we then deduce a complete structure theorem for ergodic $\bigoplus_{p\in P} \mathbb{F}_p$ systems.\\ Our last result states, roughly speaking, that all "non-pathological" ergodic $\bigoplus_{p\in P} \mathbb{F}_p$-systems satisfy the conditions in the first result, and therefore are strongly Abramov. In particular, the last result explains why the use of solenoids as an example for a system that is not Abramov is necessary. \\

\subsection{Universal Characteristic Factors}
We begin with some standard definitions, essentially taken from \cite{Berg& tao & ziegler}.
\begin{defn} A $G$-system is a quadruple $(X,\mathcal{B},\mu,G)$ where $(X,\mathcal{B},\mu)$ is a probability measure space which is separable modulo null sets\footnote{For technical reasons we assume that the space $(X,\mathcal{B},\mu)$ is regular, meaning that $X$ is compact, $\mathcal{B}$ is the Borel $\sigma$-algebra and $\mu$ is a Borel measure.}, together with an action of $G$ on $X$ by measure preserving transformations $T_g:X\rightarrow X$. For every topological group\footnote{All topological groups in this paper are implicitly assumed to be metrizable} $(U,\cdot)$ measurable map $\phi:X\rightarrow U$ and element $g\in G$, we define the shift $T_g\phi = \phi \circ T_g$ and the multiplicative derivative $\Delta_g \phi = \frac{T_g \phi}{\phi}$. We say that a $G$-system $X$ is ergodic if the only functions in $L^2(X)$ which are invariant under the $G$-action are the constant functions.
\end{defn} 
The Gowers-Host-Kra seminorms play an important role throughout this paper

\begin{defn}
	[Gowers Host Kra seminorms for an arbitrary countable abelian group $G$]
	Let $G$ be a countable abelian group, let $X=(X,\mathcal{B},\mu)$ be a $G$-system, let $\phi\in L^\infty (X)$, and let $k\geq 1$ be an integer. The Gowers-Host-Kra seminorm $\|\phi\|_{U^k}$ of order $k$ of $\phi$ is defined recursively by the formula
	\[
	\|\phi\|_{U^1}:=\lim_{N\rightarrow\infty}\frac{1}{|\Phi_N^1|}\|\sum_{g\in\Phi_N^1}\phi\circ T_g\|_{L^2}
	\]
	for $k=1$, and
	\[
	\|\phi\|_{U^k}:=\lim_{N\rightarrow\infty}\left(\frac{1}{|\Phi_N^k|}\sum_{g\in\Phi_N^k}\|\Delta_g\phi\|_{U^{k-1}}\right)^{1/2^k}
	\]
	for $k\geq 1$, where $\phi_N^1,...,\phi_N^k$ are arbitrary $F{\o}lner$ sequences.
\end{defn}
These norms in the special case where $G=\mathbb{Z}/N\mathbb{Z}$ were first introduced by Gowers in \cite{G}, where he derived quantitative bounds for Szemer{\'e}di's Theorem about the existence of arbitrary large arithmetic progressions in sets of positive upper Banach density, \cite{Sz}. Later, in \cite{HK} Host and Kra introduced the above ergodic theoretical version of the Gowers norms (for $G=\mathbb{Z}$).\\ Gowers' work raised a natural question about the structure of functions with large norm (also known as the inverse problem for the Gowers norms). This question was answered partially by Gowers in his work above. Inspired by the work of Host and Kra \cite{HK}, Green and Tao solved this problem for the case $k=3$ in \cite{GT1} and together with Ziegler for general $k$ in \cite{GTZ}. Their work hints about a link between the ergodic theoretical structure of the universal characteristic factors and the inverse problem for the Gowers norms. Surprisingly, if one considers these problems in the context of vector spaces over finite fields this link becomes more concrete. Namely, in \cite{TZ} Tao and Ziegler deduced an inverse theorem for the Gowers norm over finite fields from an ergodic theoretical structure theorem for  $\mathbb{F}_p^\omega$-systems which they established together with Bergelson in \cite{Berg& tao & ziegler}. It is not yet known how to deduce a solution to the inverse problem in the context of $\mathbb{Z}/N\mathbb{Z}$ from the work of Host and Kra.\\
 
 Another approach for this inverse problem is the study of nilspaces. In \cite{SzCam} Szegedy and Camerana introduced a purely combinatorial counter-part of the universal characteristic factors called a nilspace. The idea was to give a more abstract and general notion which describes the "cubic structure" of an ergodic system (See Host and Kra \cite[Section 2]{HK}). In \cite{Sz3} Szegedy and Candela used this notion in order to give an abstract and purely combinatorial answer to the inverse problem for the Gowers norms. Their work answers this question not only for all abelian groups but also for all nilpotent groups. The notion "nilspace" is more general and abstract than the measure-theoretical counterpart. Thus, describing these nilspaces in a concrete way is often a difficult problem on it's own. In a series of papers, \cite{Gut1},\cite{Gut2},\cite{Gut3} Gutman, Manners, and Varj{\'u} studied further the structure of nilspaces (More results in this direction can be found in \cite{Sz1},\cite{Sz2} by Szegedy and Candela). Their work gives a more concrete, less abstract characterization of nilspaces (with some additional properties) in the case where the underline group is finitely generated. By imposing another measure-theoretical aspect to these nilspaces, Gutman and Lian \cite{Gut4} gave an alternative proof of Host and Kra's theorem.

In our work we do not pursue this approach, instead our goal is to generalize the ergodic theoretical structure for other groups. In this paper we're mostly concerned with proving a more general version of the main results of Bergelson Tao and Ziegler \cite[Theorem 4.5]{Berg& tao & ziegler}, in a subsequent paper \cite{Sh} we provide a full structure theorem associated with the group $\bigoplus_{p\in P}\mathbb{F}_p$ which also provides a counter-part to the theorem of Host and Kra \cite{HK} about $\mathbb{Z}$-actions.\\

The Gowers-Host-Kra seminorms correspond to a "factor" of $X$ (See Proposition \ref{UCF} below). We define
\begin{defn} [Factors] \label{Factor:def}
	Let $(X,\mathcal{B}_X, \mu_X, (T_g)_{g\in G})$ be a $G$-system. We say that a $G$-system $(Y,\mathcal{B}_Y,\mu_Y, (S_g)_{g\in G})$ is a factor of $(X,\mathcal{B}_X, \mu_X, (T_g)_{g\in G})$ if there's a measure preserving factor map $\pi_Y^X : X\rightarrow Y$ such that the push-forward of $\mu_X$ by $\pi^X_Y$ is $\mu_Y$ and $\mu^X_Y\circ T_g =S_g \circ \pi^X_Y$ $\mu_X$-a.e. for all $g\in G$.\\
	For a measure-space $U$ and a measurable map $f:Y\rightarrow U$ we define the pull back $(\pi_Y^X )^\star f: X\rightarrow U$ by $(\pi_Y^X )^\star f = f\circ\pi^X_Y$. Similarly, for $f\in L^2(X)$ we write $(\pi_Y^X)_\star f\in L^2(Y)$ for the push forward of $f$. With this notations one has $E(f|Y):=(\pi^X_Y)^\star (\pi^X_Y)_\star f $ where $E(f|Y)$ is the conditional expectation of $f$ to $Y$.\\
	We say that a function $f$ is $\mathcal{B}_Y$-measurable or measurable with respect to $Y$ if $f=E(f|Y)$, or equivalently if $f=(\pi^X_Y)^\star F$ for some $F\in L^2(Y)$.\\
	In this case we refer to $X$ as an extension of the $G$-system $Y$. Finally, we say that a factor $Y$ is generated by a collection $\mathcal{F}$ of measurable functions $f:X\rightarrow \mathbb{C}$ if $Y$ is the minimal factor of $X$ such that all $f\in\mathcal{F}$ are measurable with respect to $Y$.
\end{defn}
We have the following  Proposition\slash Definition (See Host and Kra, \cite[Lemma 4.3]{HK}):
\begin{prop}[Existence and uniqueness of the universal characteristic factors] \label{UCF} Let $G$ be a countable abelian group, let $X$ be a $G$-system, and let $k\geq 1$. Then there exists a factor $Z_{<k}(X)=(Z_{<k}(X),\mathcal{B}_{Z_{<k}(X)},\mu_{Z_{<k}(X)},\pi^X_{Z_{<k}(X)})$ of $X$ with the property that for every $f\in L^\infty (X)$, $\|f\|_{U^{k}(X)}=0$ if and only if $(\pi^{X}_{Z_{<k}(X)})_\star f = 0$ (equivalently $E(f|Z_{<k}(X))=0$). This factor is unique up to isomorphism and is called the $k$'th universal characteristic factor of $X$.
\end{prop}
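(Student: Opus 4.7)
The plan is to follow the standard Host--Kra construction (as carried out for $\mathbb{Z}$-actions in \cite{HK} and adapted for $\mathbb{F}_p^\omega$-actions in \cite{Berg& tao & ziegler}), taking care that the arguments pass through for an arbitrary countable abelian group $G$. The first task, which is prerequisite to the statement even making sense, is to verify that the recursive limits in the definition of $\|\cdot\|_{U^k}$ exist and are independent of the choice of F\o lner sequences $\Phi_N^1,\dots,\Phi_N^k$. I would do this by induction on $k$, combining the mean ergodic theorem (for $k=1$) with a van der Corput-type lemma to pass from derivatives $\Delta_g\phi$ at level $k-1$ up to level $k$. At the same stroke one obtains the Gowers--Cauchy--Schwarz inequality, which gives the seminorm property of $\|\cdot\|_{U^k}$ on $L^\infty(X)$.

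Next I would construct $Z_{<k}(X)$ explicitly via dual functions. For $\phi_1,\dots,\phi_{2^k-1}\in L^\infty(X)$, the dual function is the iterated F\o lner average
\[
\Dc_k(\phi_1,\dots,\phi_{2^k-1}) \;=\; \lim_{N\to\infty}\frac{1}{|\Phi_N^k|}\sum_{g\in\Phi_N^k}T_g\phi_1\cdot\overline{T_g\phi_2}\cdots,
\]
with the appropriate conjugation pattern indexed by the vertices of the discrete cube. The key point, provable by one more application of Gowers--Cauchy--Schwarz, is that if $\psi\in L^\infty(X)$ then $\langle\psi,\Dc_k(\phi_1,\dots,\phi_{2^k-1})\rangle_{L^2(X)}$ is bounded by $\|\psi\|_{U^k}$ times a product of $L^\infty$ norms. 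Consequently the closed linear span (in $L^2$) of all bounded dual functions coincides with the orthogonal complement of $\mathcal{N}_k=\{f\in L^\infty(X):\|f\|_{U^k}=0\}$. I then define $Z_{<k}(X)$ as the factor generated by all bounded dual functions of order $k$; one checks that this collection is closed under conjugation, $G$-shifts and pointwise limits, and that by a further Cauchy--Schwarz argument it is closed under multiplication, so that it indeed generates a $G$-invariant sub-$\sigma$-algebra.

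The characterization $\|f\|_{U^k(X)}=0 \Leftrightarrow E(f|Z_{<k}(X))=0$ then follows from the duality above: $E(f|Z_{<k}(X))=0$ means exactly that $f$ is orthogonal to every bounded dual function, which by the inequality above is equivalent to $\|f\|_{U^k}=0$. Uniqueness up to isomorphism is then a formal consequence: if $Y$ and $Y'$ are two factors with this property and $f$ is $Y$-measurable, then $f-E(f|Y')$ is orthogonal to $Y'$, hence has vanishing $U^k$-seminorm, hence is orthogonal to $Y$, hence equals $0$; so $Y\subseteq Y'$, and by symmetry they agree.

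The main technical obstacle is the first step: establishing the existence and F\o lner-independence of the limits defining $\|\cdot\|_{U^k}$ for a general (not necessarily finitely generated) countable abelian group $G$. For $\mathbb{Z}$ or $\mathbb{F}_p^\omega$ one has canonical F\o lner sequences that simplify the bookkeeping, but in the generality of $\bigoplus_{p\in P}\mathbb{F}_p$ one must run the van der Corput argument relative to arbitrary F\o lner sequences and invoke the mean ergodic theorem for amenable groups to identify the limit; this is where I would expect to spend the most care. Once this is in place, the construction of $Z_{<k}(X)$ and the proofs of its characterizing property and uniqueness proceed essentially verbatim from the $\mathbb{F}_p^\omega$ case.
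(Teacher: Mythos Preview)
The paper does not give its own proof of this proposition; it simply records it as a Proposition/Definition with a citation to Host and Kra \cite[Lemma 4.3]{HK}. Your outline is the standard Host--Kra construction via dual functions and the Gowers--Cauchy--Schwarz inequality, which is exactly what that citation points to, so your approach is correct and aligned with what the paper relies on.

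One small correction to your sketch: the dual function of order $k$ is not a single F\o lner average over one variable $g$ but a $k$-fold iterated average over $g_1,\dots,g_k$, with the factors $T_{\epsilon_1 g_1+\cdots+\epsilon_k g_k}\phi_\epsilon$ indexed by the nonzero vertices $\epsilon\in\{0,1\}^k\setminus\{0\}$; the formula you wrote collapses this to one sum. This does not affect the strategy, but the correct multilinear form is what makes the Gowers--Cauchy--Schwarz bound and the closure-under-products argument go through.
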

The structure of the universal characteristic factors for multiple averages for $\mathbb{Z}$-systems was studied independently by Host and Kra in \cite{HK} as a tool in the study of some non-conventional ergodic averages. Those averages were originally introduced by Furstenberg \cite{F1} in his proof of Szemer{\'e}di's Theorem. Ziegler defined these factors differently \cite{Z}. In \cite{Leib} Leibman proved the equivalence.
\begin{thm} [Structure theorem for $Z_{<k}(X)$ for ergodic $\mathbb{Z}$-systems] \cite[Theorem 10.1]{HK} \cite[Theorem 1.7]{Z} \label{HK}
	For an ergodic system $X$, $Z_{<k}(X)$ is an inverse limit of $k$-step nilsystems\footnote{A $k$-step nilsystem is quadruple $(\mathcal{G}/\Gamma,\mathcal{B},\mu,\mathbb{Z})$ where $\mathcal{G}$ is a $k$-step nilpotent Lie group, $\Gamma$ a co-compact subgroup. $\mathcal{B}$ is the Borel $\sigma$-algebra, $\mu$ the induced Haar measure and the action of $\mathbb{Z}$ is given by a left translation by an element in $\mathcal{G}$.}.
\end{thm}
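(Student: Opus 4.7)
The plan is to build the factor $Z_{<k}(X)$ from the bottom up as a tower of isometric (compact abelian group) extensions and then identify each stage of the tower with a piece of a $k$-step nilpotent homogeneous space. The starting point is the Furstenberg--Zimmer structure theorem, which, together with an ergodicity argument, shows that any ergodic $\mathbb{Z}$-system $X$ admits a transfinite tower of factors $Z_{<1}(X) \subset Z_{<2}(X) \subset \cdots$ in which each step is a compact abelian group extension. First I would verify, inductively in $k$, that the factor defined by the vanishing of $\|\cdot\|_{U^k}$ coincides with the maximal distal factor whose $k$-th step of this tower is reached, using the inductive formula for the seminorms and the fact that $\|\Delta_g \phi\|_{U^{k-1}}$ controls whether $\phi$ lives in a degree-$k$ extension.

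Next I would introduce the Host--Kra cube measures $\mu^{[k]}$ on $X^{2^k}$, defined inductively as relatively independent self-joinings over the invariant factor of the $2^{k-1}$-fold product. The key calculation is Cauchy--Schwarz--Gowers, which reexpresses $\|\phi\|_{U^k}^{2^k}$ as the integral of $\prod_{\varepsilon \in \{0,1\}^k} \mathcal{C}^{|\varepsilon|} \phi$ with respect to $\mu^{[k]}$; this identifies $Z_{<k}(X)$ as the $\sigma$-algebra generated by functions whose $k$-th cube-integral detects them. Using this, each extension $Z_{<k+1}(X) \to Z_{<k}(X)$ can be represented as $Z_{<k}(X) \times U_k$ for a compact abelian group $U_k$, with the action given by a measurable cocycle $\rho: \mathbb{Z} \times Z_{<k}(X) \to U_k$.

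The heart of the proof is to show that $\rho$ satisfies a higher order Conze--Lesigne type equation, i.e.\ that $\Delta_h \rho$ is a quasi-coboundary with transfer function measurable with respect to $Z_{<k}(X)$, and more generally that iterated derivatives of $\rho$ become coboundaries up to functions of phase polynomial type. From this functional equation one extracts, via Mackey theory applied to the extension of the rotation group by $U_k$, a nilpotent Lie group $\mathcal{G}$ acting transitively and a cocompact $\Gamma$ such that the finite-dimensional quotients of $Z_{<k}(X)$ are isomorphic to $\mathcal{G}/\Gamma$ equipped with Haar measure and a single translation.

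The hardest step, by far, is this cocycle analysis and the passage to nilpotent structure: one must show that every cocycle arising as an extension inside $Z_{<k}$ is cohomologous to one taking values in a nilpotent group, which requires a delicate Mackey-type group-cohomological argument combined with approximation of $Z_{<k}(X)$ by finite-dimensional factors (so that at each stage one deals with a Lie group extension and can invoke smooth structure). Finally, since $Z_{<k}(X)$ itself need not be finite-dimensional, one writes it as an inverse limit of its finite-dimensional subfactors; each such subfactor is shown to be a $k$-step nilsystem by the analysis above, and the compatibility of the inverse system follows from naturality of the Host--Kra cube construction with respect to factor maps.
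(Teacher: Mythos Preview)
The paper does not prove this theorem; it is quoted with citations to \cite[Theorem 10.1]{HK} and \cite[Theorem 1.7]{Z} as background, and no argument for it appears anywhere in the text. There is therefore nothing in the paper to compare your proposal against.

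That said, your sketch is broadly in the spirit of the Host--Kra proof: one does build $Z_{<k}(X)$ as a tower of compact abelian extensions via the cube measures $\mu^{[k]}$, one does extract a Conze--Lesigne type functional equation for the extension cocycles, and one does pass to finite-dimensional (toral) approximations before identifying each with a nilmanifold quotient. A few points are imprecise: the tower $Z_{<1}(X)\subset Z_{<2}(X)\subset\cdots$ is not the Furstenberg--Zimmer distal tower (the latter is indexed by ordinals and need not terminate after $k$ steps), and the identification of $Z_{<k}$ with a specific stage of that tower is itself one of the nontrivial outputs of the Host--Kra machinery rather than an input. Also, the extraction of the nilpotent Lie group $\mathcal{G}$ does not proceed via ``Mackey theory applied to the extension of the rotation group by $U_k$'' in the way you describe; rather, Host and Kra build $\mathcal{G}$ as a group of transformations of the tower satisfying certain polynomial commutation relations, and the Lie structure comes from showing this group is locally compact and finite-dimensional after the toral reduction. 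But as a roadmap your outline is reasonable; it is simply not something the present paper undertakes.
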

This theorem leads to various multiple recurrence and convergence results in ergodic theory, see for instance \cite{BTZ},\cite{CL84},\cite{CL87},\cite{CL88},\cite{F&W}.
\subsection{Abelian cohomology and some notations}
We use the same notations as in \cite{Berg& tao & ziegler}.
\begin{defn} [Abelian cohomology] Let $G$ be a countable discrete abelian group. Let $X=(X,\mathcal{B},\mu,G)$ be a $G$-system, and let $U=(U,\cdot)$ be a compact abelian group. 
	
	\begin{itemize}
		\item{We let $\mathcal{M}(X,U)$ denote the set of all measurable functions $\phi:X\rightarrow U$, with two functions $\phi,\phi'$ identified if they agree $\mu$-almost everywhere. $\mathcal{M}(X,U)$ is an abelian group under point-wise multiplication, and is a topological group with respect to the topology of convergence in measure.}
		\item {Similarly, let $\mathcal{M}(G,X,U)$ denote the set of all measurable functions $\rho:G\times X\rightarrow U$ with $\rho,\rho'$ being identified if $\rho(g,x)=\rho'(g,x)$ for $\mu$-almost every $x\in X$ and every $g\in G$.}
		\item {We let $Z^{1}(G,X,U)$ denote the subgroup of $\mathcal{M}(G,X,U)$ consisting of those $\rho:G\times X\rightarrow U$ which satisfies that $\rho(g+g',x)=\rho(g,x)\rho(g',T_gx)$ for all $g,g'\in G$ and $\mu$-almost every $x\in X$. We refer to the elements of $Z^{1}(G,X,U)$ as cocycles.}
		\item {Given a cocycle $\rho:G\times X\rightarrow U$ we define the abelian extension $X\times_{\rho} U$ of $X$ by $\rho$ to be the product space $(X\times U,\mathcal{B}_X\times\mathcal{B}_U,\mu_X\times\mu_U)$ where $\mathcal{B}_U$ is the Borel $\sigma$-algebra on $U$ and $\mu_U$ the Haar measure. We define the action of $G$ on this product space by $(x,u)\mapsto (T_gx,\rho(g,x)u)$ for every $g\in G$. In this setting we define the vertical translations  $V_u(x,t)=(x,ut)$ on $X\times_{\rho} U$ for every $u\in U$, we note that this action of $U$ commutes with the $G$-action on this system.}
		\item {If $F\in \mathcal{M}(X,U)$, we define the derivative $\Delta F\in \mathcal{M}(G,X,U)$ of $F$ to be the function $\Delta F(g,x):=\Delta_g F(x)$. We write $B^1(G,X,U)$ for the image of $\mathcal{M}(X,U)$ under the derivative operation. We refer the elements of $B^1(G,X,U)$ as $(G,X,U)$-coboundaries.}
		\item {We say that $\rho,\rho'\in \mathcal{M}(G,X,U)$ are $(G,X,U)$-cohomologous if $\rho/\rho'\in B^1(G,X,U)$.}
	\end{itemize}
\end{defn}
\begin{rem} \label{coh:rem} Observe that if $\rho$ and $\tilde{\rho}$ are $(G,X,U)$-cohomologous, then $X\times_{\rho} U$ and $X\times_{\tilde{\rho}}U$ are measure-equivalent systems. The isomorphism is given by $\pi(x,u)=(x,F(x)u)$ where $F:X\rightarrow U$ is a function such that $\rho = \tilde{\rho}\cdot \Delta F$.
\end{rem}
\subsection{Type of functions}
We introduce the notion of Cubic systems from \cite{HK} (Generalized for arbitrary countable abelian group).
\begin{defn} [Cubic measure spaces] \cite[Section 3]{HK}. Let $X=(X,\mathcal{B},\mu,G)$ be a $G$-system for some countable abelian group $G$. For each $k\geq 0$ we define $X^{[k]} =(X^{[k]},\mathcal{B}^{[k]},\mu^{[k]},G^{[k]})$ where $X^{[k]}$ is the Cartesian product of $2^k$ copies of $X$, endowed with the product $\sigma$-algebra $\mathcal{B}^{[k]}=\mathcal{B}^{2^k}$, $G^{[k]}=G^{2^k}$ acting on $X^{[k]}$ in the obvious manner. We define the cubic measures $\mu^{[k]}$ and $\sigma$-algebras $\mathcal{I}_k\subseteq \mathcal{B}^{[k]}$ inductively. $\mathcal{I}_0$ is defined to be the $\sigma$-algebra of invariant sets in $X$, and $\mu^{[0]}:=\mu$. Once $\mu^{[k]}$ and $\mathcal{I}_k$ are defined, we identify $X^{[k+1]}$ with $X^{[k]}\times X^{[k]}$ and define $\mu^{[k+1]}$ by the formula 
	$$\int f_1(x)f_2(y) d\mu^{[k+1]}(x,y) = \int E(f_1|\mathcal{I}_k)(x)E(f_2|\mathcal{I}_k)(x) d\mu^{[k]}(x)$$
	For $f_1,f_2$ functions on $X^{[k]}$ and $E(\cdot|\mathcal{I}_k)$ the conditional expectation, and $\mathcal{I}_{k+1}$ being the $\sigma$-algebra of invariant sets in $X^{[k+1]}$.

\end{defn}
\begin{defn} [Functions of type $<k$] \cite[Definition 4.1]{Berg& tao & ziegler}  \label{type:def} Let $G$ be a countable abelian group, let $X=(X,\mathcal{B},\mu,G)$ be a $G$-system. Let $k\geq 0$ and let $X^{[k]}$ be the cubic system associated with $X$ and $G$ acting on $X^{[k]}$ diagonally.
	\begin{itemize}
		\item{For each measurable $f:X\rightarrow U$, we define a measurable map $d^{[k]}f:X^{[k]}\rightarrow U$ to be the function $$d^{[k]}f((x_w)_{w\in \{-1,1\}^k}):=\prod_{w\in \{-1,1\}^k}f(x_w)^{\text{sgn}(w)}$$ where $\text{sgn}(w)=w_1\cdot w_2\cdot...\cdot w_k$}
		\item {Similarly for each measurable $\rho:G\times X\rightarrow U$ we define a measurable map $d^{[k]}\rho:G\times X^{[k]}\rightarrow U$ to be the function 
			$$d^{[k]}\rho(g,(x_w)_{w\in \{-1,1\}^k}):=\prod_{w\in \{-1,1\}^k} \rho(g,x_w)^{\text{sgn}(w)}$$}
		\item {A function $\rho:G\times X\rightarrow U$ is said to be a function of type $<k$ if $d^{[k]}\rho$ is a $(G,X^{[k]},U)$-coboundary. We let $\mathcal{M}_{<k}(G,X,U)$ denote the subspace of functions $\rho:G\times X\rightarrow U$ of type $<k$. We let $\mathcal{C}_{<k}(G,X,U)$ denote the subspace of $\mathcal{M}_{<k}(G,X,U)$ which are also cocycles.}
	\end{itemize}
\end{defn}

\begin{defn} [Phase polynomials]
	Let $G$ be a countable abelian discrete group, $X$ be a $G$-system, let $\phi\in L^\infty(X)$, and let $k\geq 0$ be an integer. A function $\phi:X\rightarrow \mathbb{C}$ is said to be a phase polynomial of degree $<k$ if we have $\Delta_{h_1}...\Delta_{h_k}\phi = 1$ $\mu_X$-almost everywhere for all $h_1,...,h_k\in G$. (In particular by setting $h_1=...=h_k=0$ we see that $\phi$ must take values in $S^1$, $\mu_X$-a.e.). We write $P_{<k}(X)=P_{<k}(X,S^1)$ for the set of all phase polynomials of degree $<k$. Similarly, a function $\rho:G\times X\rightarrow \mathbb{C}$ is said to be a phase polynomial of degree $<k$ if $\rho(g,\cdot)\in P_{<k}(X,S^1)$ for every $g\in G$. We let $P_{<k}(G,X,S^1)$ denote the set of all phase polynomials $\rho:G\times X\rightarrow \mathbb{C}$ of degree $<k$.
\end{defn}
We write $\text{Abr}_{<k}(X)$\footnote{It was Abramov who studied systems of this type for $\mathbb{Z}$-actions, see \cite{A}.} for the factor of $X$ generated by $P_{<k}(X)$, and say that $X$ is an Abramov system of order $<k$ if it is generated by $P_{<k}(X)$, or equivalently if $P_{<k}(X)$ spans $L^2(X)$. 
\begin{rem}
	The notion of phase polynomials can be generalized for an arbitrary abelian group $(U,\cdot)$. A function $\phi:X\rightarrow U$ is said to be a phase polynomial of degree $<k$ if $\Delta_{h_1}...\Delta_{h_k}\phi = 1$ $\mu_X$-a.e. for all $h_1,...,h_k\in G$. We let $P_{<k}(X,U)$ and similarly $P_{<k}(G,X,U)$ denote the phase polynomials of degree $<k$ takes values in $U$.
\end{rem}

We recall some properties about functions of type $<k$ and about phase polynomials from \cite{Berg& tao & ziegler}.
We recall some basic facts about functions of type $<k$ from \cite{Berg& tao & ziegler}
\begin{lem} \label{PP}
	Let $G$ be a countable abelian group, let $X=(X,\mathcal{B},\mu,G)$ be an ergodic $G$-system, let $U=(U,\cdot)$ be a compact abelian group, and let $k\geq 0$.
	
	\begin{enumerate} [(i)]
		\item {Every function $f:G\times X\rightarrow U$ of type $<k$ is also of type $<k+1$.}
		\item {The set $M_{<k}(G,X,U)$ is a subgroup of $M(G,X,U)$ and it contains the group $B^1(G,X,U)$ of coboundaries. In particular every function that is $(G,X,U)$-cohomologous to a function of type $<k$, is a function of type $<k$.}
		\item {A function $f:G\times X\rightarrow U$ is a phase polynomial of degree $<k$ if and only if $d^{[k]}f=1$, $\mu^{[k]}$-almost everywhere. In particular every phase polynomial of degree $<k$ is of type $<k$}
		\item{If $f:G\times X\rightarrow U$ is a $(G,X,U)$-coboundary, then $d^{[k]}f:G\times X^{[k]}\rightarrow U$ is a $(G,X^{[k]},U)$-coboundary.}
	\end{enumerate}
	
\end{lem}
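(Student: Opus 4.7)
The approach is to prove the four parts in the order (iv), (ii), (i), (iii), each following from a direct algebraic manipulation of the multiplicative operator $d^{[k]}$ together with the recursive structure of the cubic measure $\mu^{[k]}$. The central algebraic fact is that $d^{[k]}$ is a multiplicative homomorphism on $\mathcal{M}(X,U)$ and on $\mathcal{M}(G,X,U)$, and (crucially) that it commutes with the $G$-translation $T_g$ when the latter is taken diagonally on $X^{[k]}$. This single observation drives almost every step.

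For (iv), if $f=\Delta F$ with $F\in\mathcal{M}(X,U)$, the identity
$$d^{[k]}f(g,(x_w)) = \prod_w\frac{F(T_gx_w)^{\sgn(w)}}{F(x_w)^{\sgn(w)}} = \frac{d^{[k]}F(T_g\cdot(x_w))}{d^{[k]}F((x_w))}$$
exhibits $d^{[k]}f$ as $\Delta(d^{[k]}F)$, hence a coboundary on $X^{[k]}$ for the diagonal $G$-action. Part (ii) is then immediate: multiplicativity of $d^{[k]}$ in $f$ shows that $\mathcal{M}_{<k}(G,X,U)$ is closed under products and inverses, and (iv) supplies the inclusion $B^1(G,X,U)\subseteq\mathcal{M}_{<k}(G,X,U)$. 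The final clause of (ii) then follows: if $f$ is cohomologous to some $f'$ of type $<k$, then $f = f'\cdot\Delta F$ is a product of two type-$<k$ functions.

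For (i), identify $X^{[k+1]}=X^{[k]}\times X^{[k]}$ and decompose $w\in\{-1,1\}^{k+1}$ as $(w',w_{k+1})$, so that $\sgn(w)=w_{k+1}\sgn(w')$. A short computation gives
$$d^{[k+1]}f(g,(x,y)) = \frac{d^{[k]}f(g,y)}{d^{[k]}f(g,x)}.$$
If $d^{[k]}f=\Delta F$ on $X^{[k]}$, the function $H(x,y):=F(y)/F(x)$ on $X^{[k+1]}$ satisfies $\Delta H = d^{[k+1]}f$, since the diagonal $G$-action on $X^{[k+1]}$ decomposes as the product of the diagonal actions on the two copies of $X^{[k]}$.

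Part (iii) is the only one that genuinely requires the measure theory of $\mu^{[k]}$ and is where I expect the main effort. Since the condition $d^{[k]}f=1$ is separable in $g\in G$, it suffices to prove the equivalence for a single measurable $\phi:X\to U$. The forward direction is direct: if $\Delta_{h_1}\cdots\Delta_{h_k}\phi=1$ $\mu$-a.e.\ for every $h_1,\ldots,h_k\in G$, then pointwise on the parallelepipeds underlying the limiting construction of $\mu^{[k]}$ one has $d^{[k]}\phi=1$, and passing to the limit yields $d^{[k]}\phi=1$ $\mu^{[k]}$-a.e. The reverse direction is more delicate: starting from $d^{[k]}\phi=1$ $\mu^{[k]}$-a.e., one unwinds the recursive construction using the conditional expectations onto the invariant $\sigma$-algebras $\mathcal{I}_j$ to recover $\Delta_{h_1}\cdots\Delta_{h_k}\phi=1$ pointwise in $x$ for every choice of shifts. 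The main obstacle is verifying that the Bergelson--Tao--Ziegler argument for $G=\mathbb{F}_p^\omega$ in \cite{Berg& tao & ziegler} extends to an arbitrary countable abelian $G$; since the relevant manipulations depend only on the inductive formula for $\mu^{[k]}$ and on ergodicity, this extension should go through with essentially no changes.
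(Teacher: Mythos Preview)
The paper does not prove this lemma; it merely states it as a recall of basic facts from \cite{Berg& tao & ziegler}. Your sketch is correct and is essentially the standard argument from that reference: the multiplicativity of $d^{[k]}$ and its intertwining with the diagonal shift handle (ii) and (iv), the recursive identity $d^{[k+1]}f(g,(x,y))=d^{[k]}f(g,y)/d^{[k]}f(g,x)$ gives (i), and (iii) is the only part needing the inductive measure-theoretic structure of $\mu^{[k]}$. One small point worth making explicit in (i): to pass from ``$d^{[k]}f=\Delta F$ $\mu^{[k]}$-a.e.'' to ``$d^{[k+1]}f=\Delta H$ $\mu^{[k+1]}$-a.e.'' you are implicitly using that both marginals of $\mu^{[k+1]}$ on $X^{[k]}\times X^{[k]}$ equal $\mu^{[k]}$, which follows directly from the defining formula for $\mu^{[k+1]}$.
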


\begin{lem} 
	Let $G$ be a countable abelian group, $X$ be a $G$-system and $k\geq 0$.
	\begin{enumerate}[(i)]
		\item{(monotonicity) We have $P_{<k}(X,U)\subseteq P_{<k+1}(X,U)$. In particular $Abr_{<k}(X)\leq Abr_{<k+1}(X)$.}
		\item {(Homomorphism) $P_{<k}(X,U)$ is a group under point-wise multiplication, and for each $h\in H$ $\Delta_h$ is a homomorphism from $P_{<k+1}(X,U)$ to $P_{<k}(X,U)$.}
		\item {(Polynomiality criterion) Conversely, if $\phi:X\rightarrow U$ is measurable and for every $g\in G$, $\Delta_g\phi\in P_{<k}(X,U)$ then $\phi\in P_{<k+1}(X,U)$.}
		\item {(Functoriality) If $Y$ is a factor of $X$ then the pullback $(\pi^X_Y)^\star$ is a homomorphism from $P_{<k}(Y,U)$ to $P_{<k}(X,U)$. Conversely if $f:Y\rightarrow U$ is such that $(\pi^X_Y)^\star f\in P_{<k}(X,U)$ then $f\in P_{<k}(Y,U)$.}
		
	\end{enumerate}
\end{lem}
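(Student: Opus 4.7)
The plan is to reduce all four parts to two elementary facts about the multiplicative derivative: (a) for each fixed $h \in G$, $\Delta_h : \mathcal{M}(X,U) \to \mathcal{M}(X,U)$ is a group homomorphism under pointwise multiplication, and (b) the operators $\{\Delta_h : h \in G\}$ pairwise commute, which uses crucially that $G$ is abelian. Both follow from direct computations: $\Delta_h(\phi\psi) = T_h(\phi\psi)/(\phi\psi) = (\Delta_h\phi)(\Delta_h\psi)$, and $\Delta_g\Delta_h\phi = T_{g+h}\phi\cdot\phi / (T_g\phi\cdot T_h\phi) = \Delta_h\Delta_g\phi$.

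I would handle the parts in the order (ii), (i), (iii), (iv). For (ii), iterating (a) gives
\[
\Delta_{h_1}\cdots\Delta_{h_k}(\phi\psi) = \bigl(\Delta_{h_1}\cdots\Delta_{h_k}\phi\bigr)\bigl(\Delta_{h_1}\cdots\Delta_{h_k}\psi\bigr),
\]
so $P_{<k}(X,U)$ is closed under multiplication; the same reasoning with $\Delta_h(\phi^{-1}) = (\Delta_h\phi)^{-1}$ gives closure under inversion. That $\Delta_h : P_{<k+1}(X,U) \to P_{<k}(X,U)$ is a homomorphism is then tautological from the definition, since $\Delta_{h_1}\cdots\Delta_{h_k}(\Delta_h\phi)$ is simply a $(k+1)$-fold derivative of $\phi$. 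Part (i) now follows because $\Delta_h(1) = 1$: if $\phi \in P_{<k}(X,U)$, then $\Delta_{h_1}\cdots\Delta_{h_{k+1}}\phi = \Delta_{h_{k+1}}(1) = 1$; the corresponding inclusion of Abramov factors is immediate from the definition of the factor generated by a family of functions.

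For (iii), fact (b) is essential. If $\Delta_g\phi \in P_{<k}(X,U)$ for every $g \in G$, then for any $h_1,\ldots,h_{k+1}$ commutativity lets me write $\Delta_{h_1}\cdots\Delta_{h_{k+1}}\phi = \Delta_{h_1}\cdots\Delta_{h_k}(\Delta_{h_{k+1}}\phi) = 1$, giving $\phi \in P_{<k+1}(X,U)$. For (iv), since $\pi^X_Y \circ T_g = S_g \circ \pi^X_Y$ almost everywhere, pullback commutes with each derivative, $\Delta_h (\pi^X_Y)^\star f = (\pi^X_Y)^\star \Delta_h f$, and iterating yields $\Delta_{h_1}\cdots\Delta_{h_k}(\pi^X_Y)^\star f = (\pi^X_Y)^\star\bigl(\Delta_{h_1}\cdots\Delta_{h_k} f\bigr)$. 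The forward implication is immediate. The converse uses that $(\pi^X_Y)^\star$ is injective on a.e.-equivalence classes, since $\mu_X$ pushes forward to $\mu_Y$: if the pulled-back derivative equals $1$ $\mu_X$-a.e., then the derivative on $Y$ equals $1$ $\mu_Y$-a.e.

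There is no substantive obstacle; the lemma is bookkeeping once (a) and (b) are in place. The only mild subtlety is keeping careful track of pointwise versus a.e.-equality throughout, and in the converse half of (iv) invoking the measure-preserving property of $\pi^X_Y$ to move null sets between $X$ and $Y$.
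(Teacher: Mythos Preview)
Your proof is correct and is exactly the standard elementary argument one would expect here; the paper itself does not supply a proof for this lemma, treating it as routine bookkeeping imported from \cite{Berg& tao & ziegler}. Your identification of the two key facts (that each $\Delta_h$ is a group homomorphism and that the $\Delta_h$ commute because $G$ is abelian) and your handling of the a.e.\ issues in part (iv) via the measure-preserving property of the factor map are all on target.
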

Bergelson Tao and Ziegler, \cite{Berg& tao & ziegler} proved
\begin{thm}  [Structure theorem for $Z_{<k}(X)$ for ergodic $\mathbb{F}_p^\omega$-systems] \label{BTZ} There exists a constant $C(k)$ such that for any ergodic $\mathbb{F}_p^\omega$-system $X$, $L^2(Z_{<k}(X))$ is generated by phase polynomials of degree $<C(k)$.  Moreover if $p$ is sufficiently large with respect to $k$ then $C(k)=k$.
\end{thm}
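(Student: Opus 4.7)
The plan is to proceed by induction on $k$. For $k=1$ the factor $Z_{<1}(X)$ is trivial by ergodicity, and for $k=2$ the Kronecker factor is spanned by eigenfunctions, which are phase polynomials of degree $<2$ (since an eigenfunction $\phi$ satisfies $\Delta_g\phi=\mathrm{const}$, hence $\Delta_h\Delta_g\phi=1$). For the inductive step, assume $L^2(Z_{<k-1}(X))$ is generated by phase polynomials of degree $<C(k-1)$, i.e. $Z_{<k-1}(X)$ is Abramov of that order. By the general Host--Kra theory, $Z_{<k}(X)$ is an abelian extension $Z_{<k-1}(X)\times_\sigma U$ of $Z_{<k-1}(X)$ by a compact abelian group $U$ via a cocycle $\sigma:G\times Z_{<k-1}(X)\to U$ which is of type $<k$. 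Using Remark \ref{coh:rem}, replacing $\sigma$ by a cohomologous cocycle changes the extension only up to isomorphism, so it suffices to show that every such $\sigma$ is $(G,Z_{<k-1}(X),U)$-cohomologous to a phase polynomial cocycle of degree bounded in terms of $k$ and $p$; then $L^2(Z_{<k}(X))$ is spanned by functions $F(x)\chi(u)$ with $F\in L^2(Z_{<k-1}(X))$ and $\chi\in\hat U$, and each becomes a phase polynomial on the extension.

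The core step is the reduction of a type-$<k$ cocycle $\sigma$ to a phase polynomial cocycle. By Pontryagin duality on $U$ one reduces to the case $U=S^1$. By definition of type $<k$, $d^{[k]}\sigma$ is a coboundary $\Delta F$ on the cubic system $X^{[k]}$, and the inductive hypothesis (together with Lemma \ref{PP} applied to $X^{[k]}$) lets one analyze $F$ by Fourier decomposition against the phase polynomial basis of $Z_{<k-1}(X)$ pulled back to the cube. The symmetries among the $2^k$ coordinates of $X^{[k]}$, combined with the Host--Kra cubic structure, force $F$ to come from a phase polynomial on $X$ itself, producing a phase polynomial cocycle $\tau$ cohomologous to $\sigma$. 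The degree of $\tau$ is controlled by the degree of $F$ plus the cost of the integration step: given a phase polynomial $Q$ of degree $<d$ on $X$, one needs to find a phase polynomial $P$ with $\Delta P=Q$ of degree $<d+\Theta(p)$, which is possible because the torsion relation $pg=0$ in $G$ provides the necessary consistency for such $P$ to exist.

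The main obstacle is precisely this integration step and the resulting dependence on $p$. In a $\mathbb{Z}$-system the antiderivative of a degree $<d$ phase polynomial is a degree $<d+1$ phase polynomial, but in $\mathbb{F}_p^\omega$ the relation $pg=0$ forces one to absorb lower-order Hall--Petresco-type corrections when building the antiderivative, and the degree of $P$ may exceed $d+1$ by a quantity that grows with how small $p$ is compared to $d$. This is what makes $C(k)$ depend on $p$ in general. When $p$ is sufficiently large relative to $k$, all such corrections vanish because the $p$-fold derivative condition becomes vacuous below degree $k$, and the integration is lossless; in that regime $C(k)=k$ is recovered. Putting this together, once $\sigma$ has been replaced by a phase polynomial cocycle of degree $<C(k)$, the generators of $L^2(Z_{<k-1}(X))$ pull back to phase polynomials of degree $<C(k-1)\leq C(k)$ on $Z_{<k}(X)$, and the characters of $U$ become phase polynomials of degree $<C(k)$ by the cocycle equation, closing the induction.
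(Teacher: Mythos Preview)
This theorem is not proved in the paper; it is cited from Bergelson--Tao--Ziegler. The paper's own contribution is the generalization in Theorem~\ref{Mainr:thm} and Corollary~\ref{cor}, which recover Theorem~\ref{BTZ} in the special case $P=\{p,p,\ldots\}$ (since an $\mathbb{F}_p^\omega$-system is automatically totally disconnected). So the relevant comparison is between your sketch and the machinery of Sections~\ref{sr:sec}--\ref{low:sec}.

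Your inductive setup and the reduction to showing that a type-$<k$ cocycle $\sigma$ is cohomologous to a phase polynomial are correct and match the paper. The gap is in your ``core step.'' You write that from $d^{[k]}\sigma=\Delta F$ on $X^{[k]}$, the inductive Abramov structure and the cubic symmetries ``force $F$ to come from a phase polynomial on $X$ itself.'' This is precisely the content of the theorem, and you have not explained any mechanism that would make it true. The cubic system $X^{[k]}$ is typically not ergodic, the function $F$ lives on it rather than on $Z_{<k-1}(X)$, and there is no direct Fourier argument that pushes $F$ down to $X$; the cubic symmetries alone do not do this work.

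The paper (following BTZ) takes a completely different route: instead of working on $X^{[k]}$, one differentiates $\sigma$ by automorphisms of $X$ coming from the structure group $U$ (Lemma~\ref{dif:lem}), obtaining that $\Delta_t\sigma$ has strictly lower type. Downward induction then yields a Conze--Lesigne type equation $\Delta_t\sigma=p_t\cdot\Delta F_t$ with $p_t$ a phase polynomial. One then linearizes $t\mapsto p_t$ on an open subgroup (Lemma~\ref{lin:lem}), integrates using Lemma~\ref{PIL:lem}, reduces to the case where $U$ is finite (Theorem~\ref{MainF:thm}), and handles the finite case by explicit telescoping computations (Theorem~\ref{FC:thm}). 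Your remarks about the $p$-dependence of $C(k)$ arising from integration losses are correct in spirit, but the argument that gets you to the integration step is missing.
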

\subsection{Main results}
We say that a system $X$ is of order $<k$ if $X=Z_{<k}(X)$; We begin with the following result of Host and Kra \cite{HK} generalized for arbitrary discrete countable abelian group action.
\begin{prop}  [Order $<k+1$ systems are abelian extensions of order $<k$ systems]\label{abelext:prop} \cite[Proposition 6.3]{HK} Let $G$ be a discrete countable abelian group, let $k\geq 1$ and $X$ be an ergodic $G$-system of order $<k+1$. Then $X$ is an abelian extension $X=Z_{<k}(X)\times_{\rho} U$ for some compact abelian metrizable group $U$ and a cocycle $\rho:G\times Z_{<k}(X)\rightarrow U$ of type $<k$.
\end{prop}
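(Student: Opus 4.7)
The plan is to adapt the Host--Kra argument \cite[Proposition 6.3]{HK}, originally stated for $\mathbb{Z}$-actions, to the countable discrete abelian group $G$. Write $Y = Z_{<k}(X)$, so $X$ is an extension of $Y$ and the goal is to identify this extension as an abelian group extension by a cocycle of type $<k$. The argument proceeds in three stages: (i) exhibit the extension $\pi^X_Y : X \to Y$ as isometric, (ii) promote it to a compact abelian group extension via a Mackey-type theorem, and (iii) verify that the resulting cocycle is of type $<k$.

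For stage (i), the cubic measure $\mu^{[k+1]}$ on $X^{[k+1]}$ detects precisely the factor $Z_{<k+1}(X) = X$, while $\mu^{[k]}$ on $X^{[k]}$ detects $Y$. Using the inductive relation defining $\mu^{[k+1]}$ from $\mu^{[k]}$ and $\mathcal{I}_k$, together with the hypothesis $X = Z_{<k+1}(X)$, I would show that the fibers of $\pi^X_Y$ carry a compact homogeneous structure. Concretely, let $\mathcal{G}$ be the Polish group of measure-preserving $V : X \to X$ that fix $Y$ pointwise and commute with every $T_g$. Following Host--Kra's analysis of the side $\sigma$-algebras of the cubic system, $\mathcal{G}$ is abelian and acts transitively on almost every fiber of $\pi^X_Y$.

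For stage (ii), ergodicity and the commutativity of $\mathcal{G}$ with the $G$-action allow a Mackey-type selection argument to measurably trivialize the bundle $X \to Y$, producing a compact abelian metrizable group $U$ (a quotient of $\mathcal{G}$ by the stabilizer of a generic fiber), an identification $X \cong Y \times U$ of measure spaces, and a cocycle $\rho : G \times Y \to U$ under which $T_g(y, u) = (T_g y, \rho(g, y) u)$. For stage (iii), it remains to show that $d^{[k]}\rho$ is a $(G, Y^{[k]}, U)$-coboundary. Decomposing $L^2(X)$ into $U$-isotypic components and using that $X$ has order $<k+1$, one reduces the problem to showing, for each character $\chi \in \hat U$, that $\chi\circ\rho$ is of type $<k$; the latter is extracted from the cubic structure on $X^{[k+1]}$ relative to $Y^{[k]}$, which under the order hypothesis forces $d^{[k]}(\chi\circ\rho)=\Delta F_\chi$ for some $F_\chi: Y^{[k]}\to S^1$, and reassembling the $F_\chi$ yields the required $F : Y^{[k]} \to U$.

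The main obstacle is stage (i): establishing transitivity of $\mathcal{G}$ on fibers is the technical core of Host--Kra and rests on the cubic-measure formalism rather than on any specific feature of $\mathbb{Z}$. What has to be checked is that the cubic construction, the mean ergodic theorem along F\o lner sequences, and the Mackey selection theorem all extend uniformly in $G$. Bergelson, Tao and Ziegler carried out exactly this adaptation for $G = \mathbb{F}_p^\omega$ in \cite{Berg& tao & ziegler}, and their treatment serves as a direct template for the group $G = \bigoplus_{p\in P}\mathbb{F}_p$ considered here.
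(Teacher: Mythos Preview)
Your proposal is correct and aligns with what the paper does: the paper does not supply its own proof of this proposition but simply cites \cite[Proposition 6.3]{HK}, treating the Host--Kra argument as carrying over verbatim to countable discrete abelian $G$. Your three-stage sketch (isometric extension, Mackey-type identification with a compact abelian group extension, verification that the cocycle has type $<k$) is precisely the Host--Kra route, and your remark that the Bergelson--Tao--Ziegler adaptation for $\mathbb{F}_p^\omega$ serves as a template is apt.
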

In other words, for a countable discrete abelian group $G$, an ergodic $G$-system of order $<k+1$ is isomorphic to a tower of abelian extensions \begin{equation} \label{structure}U_0\times_{\rho_1}U_1\times...\times_{\rho_k}U_k
\end{equation} such that for each $1\leq i\leq k$, $\rho_i:G\times Z_{<i-1}(X)\rightarrow U_i$ is a cocycle of type $<i$. We call $U_1,...,U_k$ the structure groups of $X$. We are particularly interested in the structure of these groups. We define,
	\begin{defn} [Totally disconnected systems and Weyl systems] \label{TD:def} Let $G$ be a discrete countable abelian group. Let $X$ be an ergodic system of order $<k$ as above
		\begin{itemize}
			\item {We say that $X$ is a totally disconnected system if $U_0,U_1,...,U_{k-1}$ are totally disconnected groups.}
			\item {We say that $X$ is a Weyl-system if for every $1\leq i \leq k-1$ the cocycle $\rho_i$ is a phase polynomial of degree $<O_k(1)$.}
		\end{itemize} Note that we will show that totally disconnected systems are isomorphic to Weyl-systems (See Theorem \ref{TDisweyl}).
	\end{defn}
	
	We are particularly interested in systems whose abelian extensions of finite type are Abramov. Formally,
	
\begin{defn} \label{stronglyAbr}
    Let $X$ be an ergodic $G$-system. We say that $X$ is \emph{strongly Abramov} if for every $m\in\mathbb{N}$ there exists $l_m\in\mathbb{N}$ such that for any compact abelian group $U$ and a cocycle $\rho:G\times X\rightarrow U$ of type $<m$ the extension $X\times_\rho U$ is Abramov of order $<l_m$. (In particular $X$ is Abramov)
\end{defn}

Our first results is an if and only if criterion for this property. Namely,
\begin{thm} [A criterion for strongly Abramov] \label{Mainr:thm}
	Let $P$ be a countable (unbounded) multiset of primes and let $G=\bigoplus_{p\in P}\mathbb{F}_p$. Let $X=Z_{<k}(X)$ be an ergodic $G$-system of order $<k$. Then the factors $Z_{<1}(X),Z_{<2}(X),...,Z_{<k}(X)$ are strongly Abramov if and only if there exists a totally disconnected factor $Y$ of $X$ such that  the homomorphism induced by the factor maps $\pi_l:Z_{<l}(X)\rightarrow Z_{<l}(Y)$
	$$\pi_l^\star:H^1_{<m}(G,Z_{<l}(Y),S^1)\rightarrow H^1_{<m}(G,Z_{<l}(X),S^1)$$ is onto for every $m\in\mathbb{N}$. Moreover, we can take $l_m=O_{k,m}(1)$ and if $k,m\leq \min P$ we can take $l_m=m+1$.
\end{thm}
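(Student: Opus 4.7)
The plan is to argue both directions by induction on $l$, using Proposition \ref{abelext:prop} to peel off one level at a time and the forthcoming Theorem \ref{TDisweyl} that totally disconnected systems are Weyl, so that every cocycle of type $<m$ on such a system is cohomologous to a phase polynomial of degree $O_m(1)$.

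For the \emph{if} direction, fix $l\le k$ and a cocycle $\rho:G\times Z_{<l}(X)\to U$ of type $<m$. Via Pontryagin duality it suffices to handle each $\chi\circ\rho$ for $\chi\in\widehat{U}$ separately, reducing to $U=S^1$; the extension $Z_{<l}(X)\times_\rho U$ is then reassembled character-by-character using Lemma \ref{PP}. The hypothesis provides $\chi\circ\rho = \pi_l^\star\tilde\rho_\chi\cdot\Delta F_\chi$ for a type-$<m$ cocycle $\tilde\rho_\chi$ on $Z_{<l}(Y)$; since $Z_{<l}(Y)$ inherits total disconnectedness from $Y$, it is Weyl by Theorem \ref{TDisweyl}, so $\tilde\rho_\chi$ is cohomologous to a phase polynomial of bounded degree. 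Pulling back through $\pi_l$ and absorbing $\Delta F_\chi$ shows $\chi\circ\rho$ is itself cohomologous to a phase polynomial of bounded degree, which yields that $Z_{<l}(X)\times_\rho U$ is Abramov of order $<l_m=O_{k,m}(1)$.

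For the \emph{only if} direction I will build $Y$ inductively. Starting from $Y_0=\mathrm{pt}$ (valid since $Z_{<1}(X)$ is trivial for ergodic systems), suppose $Y_l$ is a totally disconnected factor of $Z_{<l}(X)$ with surjective $\pi_l^\star$ at every $m$. By Proposition \ref{abelext:prop}, write $Z_{<l+1}(X)=Z_{<l}(X)\times_\sigma V$ with $\sigma$ a cocycle of type $<l$. For each $\chi\in\widehat V$, the strongly Abramov hypothesis applied to $\chi\circ\sigma$ yields a phase polynomial representative $p_\chi:G\times Z_{<l}(X)\to S^1$ of bounded degree. The inductive surjectivity lets me replace each $p_\chi$, up to coboundary, by a pullback $\pi_l^\star q_\chi$ of a phase polynomial on $Y_l$; since $Y_l$ is totally disconnected, $q_\chi$ can be further arranged to take values in a finite subgroup of $S^1$. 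Bundling the $q_\chi$'s into a single cocycle $\tau:G\times Y_l\to W$ valued in a totally disconnected compact abelian $W$, I set $Y_{l+1}:=Y_l\times_\tau W$; total disconnectedness is preserved, and the surjectivity of $\pi_{l+1}^\star$ follows by a direct cocycle chase combining the inductive hypothesis with the construction of $\tau$.

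The main obstacle is the only-if construction: the structure groups of $X$ may have nontrivial connected component (as in the paper's solenoid example), and one must extract a ``totally disconnected part'' of each cocycle coherently across all characters $\chi\in\widehat V$ and assemble these into a cocycle valued in a totally disconnected group. This is precisely where the strongly Abramov assumption earns its keep: every character cocycle is cohomologous to a phase polynomial, and forcing its image into a finite subgroup is possible precisely because $Y_l$ is totally disconnected. The refinement $l_m=m+1$ when $k,m\le\min P$ follows by tracking degree bounds through Lemma \ref{PP}: differentiating a phase polynomial of degree $<m$ in $G$ costs one degree without any characteristic-$p$ inflation so long as $m\le\min P$, so no Frobenius-style obstruction forces $l_m$ above $m+1$.
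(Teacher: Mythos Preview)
Your ``if'' direction is essentially the paper's argument, though you should cite Theorem \ref{MainT:thm} (or Corollary \ref{cor}) rather than Theorem \ref{TDisweyl}: being Weyl only says the \emph{structure} cocycles are phase polynomials, not that an arbitrary type-$<m$ cocycle is cohomologous to one.

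Your ``only if'' direction has a real gap, and the paper takes a different route. The paper does \emph{not} build $Y$ level by level as $Y_l\times_\tau W$. Instead it constructs inductively a compact \emph{connected} abelian group $\mathcal{H}_l$ acting on $Z_{<l}(X)$ by automorphisms, containing the connected component $U_{l-1,0}$ of each structure group; the inductive step uses that $\Delta_h\sigma_l$ is a coboundary for $h\in\mathcal{H}_l$ (because $\chi\circ\sigma_l$ is cohomologous to a phase polynomial and phase polynomials are invariant under connected groups, Proposition \ref{pinv:prop}), and then takes the connected component of the resulting $2$-step nilpotent extension group. One then sets $Y=X/\mathcal{H}_k$. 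Surjectivity of $\pi_l^\star$ is proved by a two-case analysis on whether $Z_{<l}(X)\times_\rho S^1$ is ergodic: in the ergodic case phase polynomials on the extension are forced (again by Proposition \ref{pinv:prop}) to have the form $s\mapsto \chi(s)F(x)$, and the Abramov property produces one with $\chi=\mathrm{id}$; in the non-ergodic case Mackey theory gives $\rho$ cohomologous to some $\tau$ with $\tau^n=1$, and then divisibility of the connected group $\mathcal{H}_l$ forces $\Delta_h\tau$ to be a coboundary for all $h$.

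The problem with your construction is the ``bundling'' step. You choose $q_\chi$ independently for each $\chi\in\widehat V$, with no multiplicative compatibility among them, so there is no obvious group $W$ and cocycle $\tau:G\times Y_l\to W$ into which they assemble. Even if you took $W$ to be some closure in a product $\prod_\chi S^1$, you would still need to exhibit $Y_l\times_\tau W$ as a \emph{factor} of $Z_{<l+1}(X)=Z_{<l}(X)\times_\sigma V$, i.e.\ produce an equivariant measurable map covering $\pi_l$; nothing in your sketch does this, and without it the ``direct cocycle chase'' for surjectivity of $\pi_{l+1}^\star$ cannot even be formulated. The paper's quotient-by-connected-group approach sidesteps this entirely: the factor map to $Y$ is the quotient map, and descent of cocycles comes from invariance of phase polynomials under $\mathcal{H}_l$.
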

In particular, with a little work it follows that
\begin{cor} \label{cor} Let $G$ be as in Theorem \ref{Mainr:thm} and let $X$ be an ergodic totally disconnected $G$-system of order $<k$. Then for an integer $m\in\mathbb{N}$ any cocycle $\rho:G\times X\rightarrow S^1$ of type $<m$ is $(G,X,S^1)$-cohomologous to a phase polynomial of degree $<O_{k,m}(1)$. If $\min P >k,m$ then it is cohomologous to a phase polynomial of degree $<m$.
\end{cor}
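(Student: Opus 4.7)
The plan is to invoke Theorem~\ref{Mainr:thm} with $Y=X$. Since $X$ is totally disconnected by hypothesis, the factor map $\pi_l:Z_{<l}(X)\to Z_{<l}(Y)$ is the identity, so the induced pullback $\pi_l^\star$ is trivially surjective on $H^1_{<m}$ for every $l$ and every $m$. The hypothesis of Theorem~\ref{Mainr:thm} is therefore satisfied, and we conclude that $X=Z_{<k}(X)$ is strongly Abramov with parameter $l_m=O_{k,m}(1)$ in general, and with $l_m=m+1$ when $\min P>k,m$. Applying Definition~\ref{stronglyAbr} to $U=S^1$ and the given cocycle $\rho$ of type $<m$, the extension $X\times_\rho S^1$ is Abramov of order $<l_m$.

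The next step is a vertical Fourier extraction. The vertical translations $V_v(x,u)=(x,vu)$ commute with the $G$-action, so they preserve $P_{<l_m}(X\times_\rho S^1,S^1)$. Decomposing $L^2(X\times_\rho S^1)$ into its vertical isotypic components $L^2_n=\{F(x)\,u^n:F\in L^2(X)\}$, we observe that since phase polynomials of degree $<l_m$ span $L^2$ they must have nontrivial projection onto $L^2_1$. The goal of this step is to upgrade this to the existence of an actual phase polynomial of the form
\[
P(x,u)=F(x)\,u,\qquad F:X\to S^1.
\]
This is the step I expect to be the main obstacle, because the Fourier projection of a phase polynomial need not itself be a phase polynomial; one has to exploit the multiplicative group structure of $P_{<l_m}$ together with the $V_v$-action (in the spirit of the analogous argument in \cite{Berg& tao & ziegler}), to pin down a phase polynomial that lives exactly in the $n=1$ vertical mode.

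Granted the existence of such a $P$, a direct computation gives
\[
\Delta_g P(x,u)=\frac{F(T_gx)\,\rho(g,x)\,u}{F(x)\,u}=\rho(g,x)\,\Delta_gF(x),
\]
which depends only on $x$. Since $P$ has degree $<l_m$, each $\Delta_g P$ has degree $<l_m-1$; being independent of $u$, it is the pullback of a function on $X$, and by functoriality of phase polynomials the cocycle $\rho\cdot\Delta F$ belongs to $P_{<l_m-1}(G,X,S^1)$. Finally the identity $\rho=(\rho\cdot\Delta F)\cdot\Delta F^{-1}$ exhibits $\rho$ as $(G,X,S^1)$-cohomologous to a phase polynomial of degree $<l_m-1=O_{k,m}(1)$, and of degree $<m$ when $\min P>k,m$ (using $l_m=m+1$), as claimed.
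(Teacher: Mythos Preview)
Your route via Theorem \ref{Mainr:thm} and the strongly Abramov conclusion is workable, and the extraction argument you sketch is exactly what the paper carries out in Section \ref{proof} (Case I) when proving the converse direction of Theorem \ref{Mainr:thm}. The step you flag as the obstacle has a clean resolution when $X\times_\rho S^1$ is ergodic: since $S^1$ is connected, Corollary \ref{ker:cor} (or the argument behind Proposition \ref{pinv:prop}) forces every phase polynomial on the extension to satisfy $\Delta_s P=\chi(s)$ for some character $\chi$ of $S^1$, hence to already have the exact form $u^n F(x)$ for some integer $n$ and some $F:X\to S^1$. No genuine Fourier projection is needed; one simply observes that spanning $L^2$ forces some polynomial with $n=1$. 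What you do not address is the non-ergodic case, where Mackey theory replaces $\rho$ by a cocycle into a finite cyclic group and the connectedness trick is unavailable; the paper treats this separately (Case II of Section \ref{proof}) via an induction on $m$.

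That said, the paper's own path to Corollary \ref{cor} is more direct and bypasses all of this. A totally disconnected system of order $<k$ trivially satisfies the splitting condition of Definition \ref{split:def}: each torus part $T_i$ is the trivial group and the invariance requirement on the cocycles is vacuous. Hence Theorem \ref{Main:thm} and Theorem \ref{MainH:thm} apply immediately and give both the $O_{k,m}(1)$ bound and the sharp bound $<m$ in high characteristic. Equivalently, Corollary \ref{cor} is just Theorem \ref{MainT:thm} (specialized to cocycles) together with its high-characteristic refinement Theorem \ref{TDH:thm}, both of which the paper proves directly in Sections \ref{low:sec} and \ref{high:sec}. Your detour through strongly Abramov plus vertical extraction recovers the same conclusion but reintroduces an ergodicity case split that the direct application of Theorem \ref{Main:thm} avoids.
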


A priori, it is not clear that systems which are not strongly Abramov exists. In fact, Bergelson Tao and Ziegler \cite{Berg& tao & ziegler} proved that in the case where $P$ is bounded every system is strongly Abramov. On the other hand, in the case of $\mathbb{Z}$-actions, Furstenberg and Weiss showed that not all systems are strongly Abramov. They constructed an extension of the $2$-dimensional torus by a $1$-dimensional torus that is not Abramov of any order (In \cite{HK02} Host and Kra worked out their example in details). Our second result is a counter-part of their example in the case of $\bigoplus_{p\in P}\mathbb{F}_p$ -systems. We show that when $P$ is unbounded there exists an ergodic Abramov system which is not strongly Abramov. Namely, 
\begin{thm}  [An Abramov system that is not strongly Abramov]\label{example:thm}
	Let $P$ be the set of prime numbers. There exists a solenoid\footnote{A solenoid is a compact abelian finite dimensional group that is not a Lie group. These are known for their pathological properties.} $U$ with an ergodic $\bigoplus_{p\in P}\mathbb{F}_p$ action such that $(U,\bigoplus_{p\in P}\mathbb{F}_p)$ is of order $<2$ and a cocycle $\rho:\bigoplus_{p\in P}\mathbb{F}_p\times U\rightarrow S^1$ of type $<2$ that is not $(\bigoplus_{p\in P}\mathbb{F}_p,U,S^1)$-cohomologous to a phase polynomial of any degree. The extension $U\times_\rho S^1$ is an ergodic $\bigoplus_{p\in P}\mathbb{F}_p$-system of order $<3$ that is not Abramov of any order.
\end{thm}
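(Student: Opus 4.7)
The plan is to adapt the Furstenberg--Weiss torus example -- worked out in detail by Host--Kra in \cite{HK02} -- to the solenoidal setting. I would construct a specific solenoid $U$ with an ergodic $G$-action by translation, then exhibit a type $<2$ cocycle $\rho$ whose non-triviality is detected by a solenoidal wrap-around invariant that no phase polynomial carries.

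Take $U=\Sigma_p=\widehat{\mathbb{Z}[1/p]}$ for a fixed prime $p\in P$, realized as $(\mathbb{R}\times\mathbb{Z}_p)/\mathbb{Z}$ with $\mathbb{Z}$ diagonally embedded: a compact connected one-dimensional abelian group, not a Lie group, whose dense torsion subgroup $(\mathbb{Q}/\mathbb{Z})_{p'}$ contains the $q$-torsion elements $(1/q,1/q)$ for primes $q\neq p$. Define $\phi:G\to U$ sending the generator of $\mathbb{F}_q$ to $(1/q,1/q)$ for $q\neq p$ and to $0$ for $q=p$. Density of $\phi(G)$ follows by pairing with a non-zero character $m/p^k\in\widehat{U}$: pick any prime $q\neq p$ coprime to $m$ and evaluate on $1/q$. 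Hence $G$ acts ergodically and $U$ is of order $<2$.

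Pick a measurable section $s:U\to\mathbb{R}\times\mathbb{Z}_p$ of the projection, lifts $\tilde\phi(e_q)=(1/q,1/q)$ extended measurably to $G$, and an irrational $\alpha\in\mathbb{R}$. The drift $c(g,x)=s(T_g x)-s(x)-\tilde\phi(g)\in\mathbb{Z}$ is a quasi-cocycle, failing the cocycle identity by the integer-valued group 2-cocycle $\delta\tilde\phi$. Choose $\psi:G\to\mathbb{R}/\mathbb{Z}$ satisfying $\delta\psi\equiv-\alpha\,\delta\tilde\phi\pmod{\mathbb{Z}}$; such $\psi$ exists since $H^2(G,\mathbb{R}/\mathbb{Z})=0$ ($\mathbb{R}/\mathbb{Z}$ is divisible and $G\wedge G=0$). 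Set
\[ \rho(g,x)=\exp\bigl(2\pi i(\alpha\,c(g,x)+\psi(g))\bigr). \]
This is a genuine $S^1$-cocycle. To see $\rho$ is of type $<2$: lifting $d^{[2]}\rho$ to $(\mathbb{R}\times\mathbb{Z}_p)^{[2]}$ turns $\alpha c$ into a coboundary, and Lemma \ref{PP}(iv) lets one descend to $U^{[2]}$. By Proposition \ref{abelext:prop} the extension $U\times_\rho S^1$ has order $<3$, and it is ergodic for generic $\alpha$.

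The crux is showing $\rho$ is not $(G,U,S^1)$-cohomologous to any phase polynomial. Because $G$ is torsion and $\widehat{U}=\mathbb{Z}[1/p]$ is torsion-free, an inductive argument on the degree -- starting from $P_{<2}(U,S^1)=S^1\cdot\widehat{U}$ -- forces every phase polynomial cocycle $Q:G\times U\to S^1$ to be a character of $G$, constant in $x$ (the character-part $\tilde\chi_g\in\widehat{U}$ must be a homomorphism $G\to\widehat{U}$, which is zero by torsion considerations). So it suffices to show $\rho$ is not cohomologous to any character $\chi$. Using the cocycle identity on the relation $qe_q=0$ for a prime $q\neq p$, telescoping the section $s$ yields $\sum_{k=0}^{q-1}c(e_q,x+k\phi(e_q))=-1$. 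So $\rho=\Delta F\cdot\chi$ would force $1=\chi(e_q)^q=\prod_k\rho(e_q,x+k\phi(e_q))=\exp(2\pi i(-\alpha+q\psi(e_q)))$, i.e.\ $q\psi(e_q)\equiv\alpha\pmod{\mathbb{Z}}$. The constraint on $\psi$ gives $q\psi(e_q)\equiv-\alpha\pmod{\mathbb{Z}}$, forcing $2\alpha\in\mathbb{Z}$ and contradicting irrationality. The hardest step is making this obstruction rigorous despite the gauge freedom in choosing $\psi$, $\tilde\phi$, and $s$; it parallels Host and Kra's extraction of the first Chern class in the torus case \cite{HK02}. Once non-cohomology is established, $U\times_\rho S^1$ is not Abramov of any order, since any Abramov structure would realize $\rho$ as $\Delta F\cdot Q$ for a phase polynomial $Q$, a contradiction.
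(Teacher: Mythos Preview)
Your construction has a genuine gap: the cocycle $\rho$ you build is actually $(G,U,S^1)$-cohomologous to a character of $G$. The sign in your constraint on $\psi$ is wrong---the cocycle identity for $\rho$ forces $\delta\psi\equiv +\alpha\,\delta\tilde\phi$, not $-\alpha\,\delta\tilde\phi$. Telescoping this along $e_q,2e_q,\dots,qe_q$ gives $q\psi(e_q)\equiv\alpha\pmod{\mathbb{Z}}$, which is exactly the relation your ``obstruction'' demands, so no contradiction arises. More conceptually: for any $u\in U$ one computes directly that $c(g,x+u)-c(g,x)=\sigma(u,T_gx)-\sigma(u,x)$ where $\sigma(u,x)=s(x+u)-s(x)-\tilde u\in\mathbb{Z}$, hence $\Delta_u\rho=\Delta F_u$ with $F_u(x)=e^{2\pi i\alpha\sigma(u,x)}$. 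The Conze--Lesigne character $\lambda_u$ is \emph{identically trivial}. Since $U$ is connected, Lemma~\ref{cob:lem} then forces $\rho$ to be cohomologous to a $U$-invariant cocycle, i.e.\ to a character of $G$, which is a phase polynomial of degree $<1$.

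The paper avoids this by working on a \emph{two}-dimensional solenoid $U=U_1\times U_2$ and building $\rho$ from the antisymmetric form $\phi(x,y)=e(x_1y_2-x_2y_1)$ (together with a carefully chosen correction $\chi\cdot\varphi$ living on the totally disconnected part). Here $\Delta_t\rho=\lambda_t\cdot\Delta F_t$ with $\lambda_t$ a genuinely non-trivial character of $G$ depending on $t$; it is this non-vanishing $\lambda$, coming from the skew pairing between the two solenoid factors, that obstructs cohomology to any phase polynomial. A one-dimensional base cannot carry such a pairing, which is why your winding-number cocycle collapses. If you want to repair your approach you need at least a second direction for $\alpha$ to pair against---this is precisely the role of the second solenoid factor in Section~\ref{example}.
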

In Furstenberg and Weiss' example, the group $U$ is the $2$ dimensional torus. While a torus can be given a structure of an ergodic $\bigoplus_{p\in P}\mathbb{F}_p$-system it is impossible to generalize their example with $U$ being a torus. In fact in Theorem \ref{Main:thm} below we show that the torus and a much larger family of "non-pathological" ergodic systems are strongly Abramov. Before we formulate the theorem we recall the following results of Host and Kra \cite{HK} for $\mathbb{Z}$-actions and the result of Bergelson Tao and Ziegler \cite{Berg& tao & ziegler} for $\mathbb{F}_p^\omega$-actions.\\

A Toral system $X$ of order $<k$ is an ergodic system whose structure groups $U_1,...,U_{k-1}$ such that $U_1$ is a Lie group and for all $2\leq i\leq k-1$, $U_i$ is isomorphic to a finite dimensional torus.
\begin{thm}[\cite{HK}] Let $X$ be an ergodic $\mathbb{Z}$-system of order $<k$. Then $X$ is an inverse limit of Toral systems of order $<k$.
\end{thm}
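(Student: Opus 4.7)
The plan is to invoke Theorem~\ref{HK} and then unpack the nilsystem structure concretely. Since $X = Z_{<k}(X)$, Theorem~\ref{HK} expresses $X$ as an inverse limit $\varprojlim_\alpha X_\alpha$ where each $X_\alpha = G_\alpha/\Gamma_\alpha$ is a nilsystem of order $<k$. The class of Toral systems of order $<k$ is closed under the operation of taking inverse limits (one re-indexes the finite-dimensional truncations of the Lie group/torus tower), so it suffices to prove that every individual nilsystem $X = G/\Gamma$ of order $<k$ is itself a Toral system of order $<k$.

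Fix such a nilsystem and let $G = G_1 \supseteq G_2 \supseteq \cdots \supseteq G_k = \{e\}$ be the lower central series. For $0 \leq i \leq k-1$, put $Y_i := G/(G_{i+1}\Gamma)$, so that $Y_0$ is a point and $Y_{k-1} = X$. A standard identification (essentially Ziegler~\cite{Z}) gives $Y_i = Z_{<i+1}(X)$. Each arrow $Y_i \to Y_{i-1}$ is an abelian extension whose structure group is
$$U_i \;=\; G_i \big/ \bigl(G_{i+1}\,(\Gamma \cap G_i)\bigr),$$
and the cocycle $\rho_i$ is obtained by fixing a measurable Borel section $s_i\colon Y_{i-1}\to Y_i$ and reading off the discrepancy modulo $G_{i+1}$. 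This realizes the abstract decomposition \eqref{structure} of $X$ as a concrete tower built from the lower central series.

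It remains to identify these $U_i$ with the data in the definition of a Toral system. The Kronecker factor $U_1 = G/(G_2\Gamma)$ is a quotient of the Lie group $G$ by a closed subgroup, hence a compact abelian Lie group, as required. For $i \geq 2$, the quotient $G_i/G_{i+1}$ is a connected, simply connected abelian Lie group (it is central in $G_{i-1}/G_{i+1}$ and torsion-free), isomorphic to some $\R^{d_i}$; the image of $\Gamma \cap G_i$ inside it is a cocompact discrete subgroup; hence $U_i$ is a finite dimensional torus. This is exactly the definition of a Toral system of order $<k$.

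The main technical obstacle is the case when $G$ is disconnected: then $G_i/G_{i+1}$ may acquire a finite cyclic factor and $U_i$ for $i \geq 2$ may fail to be a pure torus. The resolution is to pull $G/\Gamma$ back along a suitable finite cover associated with the identity component $G^0$, which is a normal finite-index closed subgroup; this reduces the analysis of the higher structure groups to the connected case at the cost of passing to a factor in the inverse system, which is harmless because the conclusion is stated in terms of inverse limits and not a single Toral system. A secondary point requiring care is the measurability and near-uniqueness of the sections $s_i$; standard Borel selection yields existence, and any two choices differ by a cocycle in $B^1(G,Y_{i-1},U_i)$, so by Lemma~\ref{PP}(ii) the resulting system is independent of the choice up to isomorphism.
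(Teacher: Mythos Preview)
The paper does not supply a proof of this statement; it is quoted from Host--Kra \cite{HK} as background, so there is no in-paper argument to compare your attempt against.

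On the substance of your proposal: the strategy (reduce to a single nilsystem, identify the tower \eqref{structure} with the lower central series filtration, and read off the structure groups) is the standard one and is essentially what Host and Kra do. Two points deserve correction. First, the claim that Toral systems are ``closed under inverse limits'' is both false (the structure groups can become infinite-dimensional in the limit) and unnecessary for your reduction: once each $X_\alpha$ is shown to be Toral, $X=\varprojlim X_\alpha$ is already, tautologically, an inverse limit of Toral systems. Drop that sentence. Second, your treatment of disconnected $G$ is too vague, and ``pulling back along a finite cover associated with $G^0$'' is not the right fix. The actual mechanism (as in \cite{HK} or Leibman \cite{Leib}) is to replace $G$ by the subgroup generated by $G^0$, the acting element, and $\Gamma$; for such $G$ one checks that the commutator subgroups $G_i$ for $i\geq 2$ are already connected, so $G_i/G_{i+1}\cong\mathbb{R}^{d_i}$ and the quotient by the lattice is a genuine torus. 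Without this replacement your claim that $G_i/G_{i+1}$ is simply connected fails.
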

We say that a group $(U,\cdot)$ is $n$-torsion if $u^n=1_U$ for all $u\in U$.
\begin{thm}[\cite{Berg& tao & ziegler}]
	Let $X$ be an ergodic $\mathbb{F}_p^\omega$-system of order $<k$. Then the structure groups $U_0,U_1,...,U_{k-1}$ are $p^m$-torsion for some $m=O_k(1)$. In particular $X$ is a totally disconnected system.
\end{thm}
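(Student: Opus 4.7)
The proof proceeds by induction on $k$, using Proposition \ref{abelext:prop} to view $X=Z_{<k}(X)$ as an iterated abelian extension with structure groups $U_0,\dots,U_{k-1}$ and cocycles $\rho_i$ of type $<i$ into $U_i$. The base case is $U_0$, the Kronecker factor: its $G$-action is by translation through a homomorphism $\alpha:G\to U_0$ whose image is dense by ergodicity, and since $pg=0$ for every $g\in G=\mathbb{F}_p^\omega$ we get $\alpha(g)^p=1$, so $U_0=\overline{\alpha(G)}$ is $p$-torsion.

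The key auxiliary claim is that for any ergodic $\mathbb{F}_p^\omega$-system $Y$, every $P\in P_{<j}(Y,S^1)$ is, after multiplication by an appropriate constant in $S^1$, valued in $p^{O(j)}$-th roots of unity. The proof is by induction on $j$: by the homomorphism property of derivatives, $\Delta_hP\in P_{<j-1}(Y,S^1)$, which by the inductive hypothesis is $p^{O(j-1)}$-torsion-valued; hence $P^{p^{O(j-1)}}$ has trivial derivative for every $h\in G$, is therefore $G$-invariant, and by ergodicity is a constant. A $p^{O(j-1)}$-th root of this constant rescales $P$ (leaving all derivatives intact) to a $p^{O(j)}$-torsion-valued phase polynomial. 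Via Pontryagin duality on the target this extends to phase polynomial cocycles valued in any compact abelian group.

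The inductive step for structure groups concerns the extension $Z_{<i+1}(X)=Z_{<i}(X)\times_{\rho}U_i$ with $\rho$ of type $<i$. The main step is to invoke the cocycle-level version of Theorem \ref{BTZ}: every type $<i$ cocycle over $Z_{<i}(X)$ is $(G,Z_{<i}(X),U_i)$-cohomologous to a phase polynomial cocycle of degree $<C(i)$. Combined with the auxiliary claim, $\rho$ is cohomologous to a cocycle $\tilde\rho$ valued in the $p^{O_k(1)}$-torsion subgroup $U_i[p^m]\le U_i$. By Remark \ref{coh:rem} the extensions by $\rho$ and $\tilde\rho$ are isomorphic, and any character of $U_i$ trivial on $U_i[p^m]$ but nontrivial on $U_i$ would descend to a nonconstant $G$-invariant function on $Z_{<i+1}(X)$, contradicting ergodicity. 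Hence $U_i$ is $p^m$-torsion, and since the identity component of a torsion compact abelian group is trivial, $U_i$ is totally disconnected.

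The principal obstacle is the cocycle-to-phase-polynomial upgrade used in the inductive step. Theorem \ref{BTZ} as stated concerns the generation of $L^2(Z_{<k}(X))$ by phase polynomials, which is a statement at the level of top-of-tower functions rather than the cocycles defining the tower. Proving the cocycle-level upgrade requires the full cubic-space machinery of \cite{Berg& tao & ziegler}: starting from $d^{[i]}\rho=\Delta F$ on $Z_{<i}(X)^{[i]}$ one symmetrizes $F$ under the hypercube symmetry group and iteratively extracts coboundary corrections that reduce the type of $\rho$ one level at a time. The $p$-torsion of $G$ enters each reduction step via the telescoping identity $\prod_{i=0}^{p-1}T_{ig}\rho(g,\cdot)=1$ coming from $\rho(pg,\cdot)=1$, and this is ultimately responsible for the $p^{O_k(1)}$ bound in the conclusion.
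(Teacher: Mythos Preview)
This statement is cited from \cite{Berg& tao & ziegler} without proof in the present paper, so there is no proof here to compare against directly. However, the paper does prove a direct generalization in Theorem~\ref{TST:thm}, and your outline matches that argument closely: for each character $\chi$ of the structure group one shows $\chi\circ\sigma$ is cohomologous to a phase polynomial (the inductive use of the main theorem), then uses the $p$-torsion of $G$ to force the phase polynomial to be $p^{O_k(1)}$-torsion-valued (your auxiliary claim, which is essentially Proposition~\ref{PPC}), and finally uses ergodicity to bound the exponent of the structure group. Your ergodicity argument via characters trivial on $U_i[p^m]$ is correct and is equivalent to the paper's formulation via non-ergodicity of the factor $Z_{<i}(X)\times_{(\chi\circ\sigma)^{p^d}}C_{p^{n-d}}$.

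One point deserves a sharper treatment. Your sentence ``via Pontryagin duality on the target this extends to phase polynomial cocycles valued in any compact abelian group'' hides a real issue: knowing that each $\chi\circ\rho$ is individually $(G,Z_{<i}(X),S^1)$-cohomologous to a phase polynomial does not immediately yield a single $U_i$-valued coboundary making $\rho$ cohomologous to a $U_i$-valued phase polynomial --- one would need compatibility of the correcting functions $F_\chi$ across all $\chi\in\hat U_i$. The paper's proof of Theorem~\ref{TST:thm} (and the original argument in \cite{Berg& tao & ziegler}) avoids this entirely by never assembling a $U_i$-valued object: it works one character at a time, using Lemma~\ref{valp:lem} to upgrade the $S^1$-cohomology to $C_{p^n}$-cohomology for that particular $\chi$, and then the ergodicity contradiction is run separately for each $\chi$. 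Your argument can be repaired in exactly the same way. Your final paragraph correctly identifies the cocycle-to-phase-polynomial step as the substantive content, and that is indeed the heart of \cite{Berg& tao & ziegler}.
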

We can see now that Theorem \ref{Mainr:thm} and in particular Corollary \ref{cor} generalize Bergelson Tao and Ziegler's structure theorem (Theorem \ref{BTZ}).\\
We show that systems of the form "Toral by totally disconnected" are satisfying the condition in Theorem \ref{Mainr:thm}. Formally, 
\begin{defn} [Splitting condition] \label{split:def}
	Let $G$ be a countable discrete abelian group and let $X$ be an ergodic $G$-system of order $<k$. Write $X=U_0\times_{\rho_1} U_1\times...\times_{\rho_{k-1}}U_{k-1}$ as in (\ref{structure}). Suppose that for every $0\leq i\leq k-1$ the group $U_i$ is isomorphic to $T_i\times D_i$ where $T_i$ is a torus (possibly zero dimensional, or infinite dimensional) and $D_i$ is totally disconnected. If the projection of each $\rho_i$ to $D_i$ is invariant under $T_1\times...\times T_{i-1}$ we say that $X$ satisfies the splitting condition (or that $X$ splits in short). In this case we denote by $\mathcal{T}(X):=\prod_{i=0}^{k-1}T_i$ the torus part of $X$ and by $D(X):=\prod_{i=0}^{k-1} D_i$ the totally disconnected part of $X$. 
\end{defn} 
\begin{rem}
There exists an ergodic system $(\mathbb{T},\bigoplus_{p\in P}\mathbb{F}_p)$ where $\mathbb{T}$ is a torus and a cocycle $\rho:\bigoplus_{p\in P}\mathbb{F}_p\times\mathbb{T}\rightarrow \Delta$ (which is non-constant) into a disconnected group $\Delta$ such that the extension $X:=\mathbb{T}\times_\rho \Delta$ is isomorphic to a solenoid. In other words, the splitting condition identifies all the "non-pathological" ergodic $\bigoplus_{p\in P}\mathbb{F}_p$ systems.
\end{rem}
Our main result implies that these systems are strongly Abramov,
\begin{thm} [Main Theorem]\label{Main:thm} 
	Let $P$ be a countable (unbounded) multiset of primes and let $G=\bigoplus_{p\in P}\mathbb{F}_p$. Let $X$ be a system of order $<k$ which splits and let $m\in\mathbb{N}$. Then any cocycle $\rho:G\times X\rightarrow S^1$ of type $<m$ is $(G,X,S^1)$-cohomologous to a phase polynomial of degree $<O_{k,m}(1)$.
\end{thm}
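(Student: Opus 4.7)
The plan is to verify the criterion in Theorem \ref{Mainr:thm} with $Y:=D(X)$, the totally disconnected part produced by the splitting condition. Once $X$ is known to be strongly Abramov with parameter $l_m=O_{k,m}(1)$, the extension $X\times_\rho S^1$ is Abramov of order $<l_m$, and decomposing its vertical coordinate function into eigenfunctions of the vertical $S^1$-action (each of which is a phase polynomial of bounded degree, by Abramovness) forces $\rho$ itself to be cohomologous to a phase polynomial of degree $<O_{k,m}(1)$, which is the conclusion.

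\textbf{Setting up the factor.} Write $X=U_0\times_{\rho_1}U_1\times\ldots\times_{\rho_{k-1}}U_{k-1}$ with each $U_i=T_i\times D_i$ as in Definition \ref{split:def}. The hypothesis that the $D_i$-component $\rho_i^D$ is $T_1\times\ldots\times T_{i-1}$-invariant makes the projection onto the $D$-coordinates a well-defined $G$-equivariant factor map $X\to D(X)$, and $D(X)=D_0\times_{\rho_1^D}D_1\times\ldots\times_{\rho_{k-1}^D}D_{k-1}$ is totally disconnected in the sense of Definition \ref{TD:def}. Applied level by level the same construction identifies $Z_{<l}(Y)$ with $D(Z_{<l}(X))$ and gives the factor maps $\pi_l:Z_{<l}(X)\to Z_{<l}(Y)$ demanded by the criterion.

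\textbf{Core step.} The main task is to show that $\pi_l^\star:H^1_{<m}(G,Z_{<l}(Y),S^1)\to H^1_{<m}(G,Z_{<l}(X),S^1)$ is onto for every $l,m$. Given $\rho\in\mathcal{C}_{<m}(G,Z_{<l}(X),S^1)$, the splitting condition makes the torus $\mathcal{T}:=\mathcal{T}(Z_{<l}(X))$ act on $Z_{<l}(X)$ by vertical rotations commuting with the $G$-action. For each $t\in\mathcal{T}$ the vertical derivative $\sigma_t:=V_t^\star\rho/\rho$ is a cocycle whose type drops by one, since $V_t$ acts trivially on a full half of the cubic coordinates. I propose to show by induction on $m$ that, after modifying $\rho$ by a coboundary, each $\sigma_t$ is a phase polynomial of degree $<O_{k,m}(1)$: the inductive hypothesis identifies $\sigma_t$ up to coboundary with a cocycle on the totally disconnected quotient $Z_{<l}(Y)$, and Corollary \ref{cor} applied there upgrades it to a phase polynomial of controlled degree. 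Once $t\mapsto \sigma_t$ is a homomorphism from $\mathcal{T}$ into a bounded-degree phase polynomial group, divisibility and connectedness of $\mathcal{T}$ permit solving the cohomological equation $\sigma_t=V_t^\star F/F$ for some measurable $F:Z_{<l}(X)\to S^1$ (a Moore-type vanishing statement for the continuous cohomology of a compact connected abelian group with values in a bounded-polynomial module). Replacing $\rho$ by $\rho/\Delta F$ makes it $\mathcal{T}$-invariant, hence measurable with respect to $Z_{<l}(Y)$, which is the required surjectivity.

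\textbf{Main obstacle.} The delicate part is the inductive identification of $\sigma_t$ with a phase polynomial of uniformly bounded degree, together with the passage from the $t$-indexed pointwise solutions to a single measurable coboundary $F$. One must track how the type-$<m$ hypothesis descends under vertical differentiation and propagates through the iterated tower, keep the polynomial degree bounded uniformly in $t$, and execute a Fubini/measurable-selection argument on $\mathcal{T}\times Z_{<l}(X)$ to glue the family into a global coboundary. This is precisely where the connectedness and divisibility of the torus part $\mathcal{T}$ are used in an essential way, and it is what distinguishes the splitting systems of Theorem \ref{Main:thm} from the solenoid counterexample of Theorem \ref{example:thm}, in which the torus and totally disconnected components are genuinely entangled and the glueing step fails.
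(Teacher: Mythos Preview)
Your overall strategy---eliminate the torus part of $X$ so that the cocycle descends to the totally disconnected factor, then invoke the totally disconnected case---is exactly the paper's strategy. But the way you have packaged it introduces two genuine gaps.

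\textbf{Circularity.} You invoke both the forward direction of Theorem \ref{Mainr:thm} and Corollary \ref{cor}. In the paper's logical order, both of these are \emph{consequences} of Theorem \ref{Main:thm}: the argument in Section \ref{sr:sec} for the forward direction of Theorem \ref{Mainr:thm} explicitly uses that cocycles of finite type on a totally disconnected system are cohomologous to phase polynomials, which is the totally disconnected instance of the theorem you are trying to prove; and Corollary \ref{cor} is stated as a corollary of Theorem \ref{Main:thm}. What you actually need as a black box is Theorem \ref{MainT:thm} (the totally disconnected case, proved independently in Sections \ref{TSTweyl:sec}--\ref{low:sec}), not Corollary \ref{cor} or Theorem \ref{Mainr:thm}. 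Once you use \ref{MainT:thm} directly, your ``core step'' is essentially Theorem \ref{TDred:thm}, and the argument collapses to the paper's two-step reduction (Theorem \ref{TDred:thm} $+$ Theorem \ref{MainT:thm} $\Rightarrow$ Theorem \ref{Main:thm}) rather than a detour through \ref{Mainr:thm}.

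\textbf{The torus $\mathcal{T}$ does not act on $X$.} You assert that the splitting condition makes $\mathcal{T}=\prod_i T_i$ act on $Z_{<l}(X)$ by vertical rotations commuting with $G$. This is false as stated: the splitting condition only guarantees that the $D_i$-components of the structure cocycles $\rho_i$ are invariant under $T_1\times\cdots\times T_{i-1}$, not the $T_i$-components. So $T_j$ need not lift past level $j{+}1$. The paper handles this in Theorem \ref{con:thm} by an induction on $k$: one uses Theorem \ref{Main:thm} for order $<k{-}1$ to replace the higher structure cocycles with $U_{j,0}$-invariant ones, and only then does $U_{j,0}$ act on the whole tower. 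Your proposal skips this step entirely.

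Finally, the heart of the matter---your ``Moore-type vanishing''---is precisely where the paper does the real work, and your description does not capture it. One does not get $p_t$ to be a coboundary from divisibility of $\mathcal{T}$ alone; the mechanism is Lemma \ref{lin:lem} (linearize $u\mapsto p_u$ on a neighbourhood), then Lemma \ref{ker:lem} (a cocycle $H_0\to P_{<m}(G,X,S^1)$ from a connected group lands in $P_{<1}\cap Z^1\cong\hat G$, which is totally disconnected, hence the map is trivial), and only then Lemma \ref{cob:lem}. The essential input is that $\hat G$ is totally disconnected, not a generic cohomology vanishing for compact connected abelian groups.
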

We also prove an "high characteristic" version,
\begin{thm}\label{MainH:thm}
	In the settings above. If $k,m<\min P $ then $\rho$ is $(G,X,S^1)$-cohomologous to a phase polynomial of degree $<m$.
\end{thm}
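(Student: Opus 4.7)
The plan is to follow the argument of Theorem \ref{Main:thm} with careful bookkeeping of degrees, observing that each place where the general proof loses a degree corresponds to an operation (extraction of a $p$-th root, division by $p$, or lifting through a $p$-torsion obstruction) requiring a prime $p\leq k$ or $p\leq m$ to appear in $P$. When $\min P > k,m$, none of these operations are required and all degree bounds stay sharp. Concretely, I would induct on the order $k$: assuming the statement for ergodic splitting systems of order $<k-1$, decompose $X = Z_{<k-1}(X)\times_{\rho_{k-1}} U_{k-1}$ with $U_{k-1}=T_{k-1}\times D_{k-1}$, and perform a vertical Fourier decomposition of $\rho$ with respect to the translations by $U_{k-1}$.

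Fourier components that descend to $Z_{<k-1}(X)$ are handled by the induction hypothesis and yield phase polynomials of degree $<m$. For components transforming under a nontrivial character $\chi$ of $U_{k-1}$, one derives the usual Conze--Lesigne type functional equation, as in \cite{Berg& tao & ziegler} and the proof of Theorem \ref{Main:thm}. In the totally disconnected direction $D_{k-1}$, the invariance of the projection of $\rho_{k-1}$ under the torus part of $Z_{<k-1}(X)$, supplied by the splitting condition, lets one apply the sharp clause of Corollary \ref{cor} directly on the totally disconnected factor $D(X)$, producing a phase polynomial of degree exactly $<m$. In the torus direction $T_{k-1}$, characters are $\mathbb{Z}$-valued and the functional equation reduces to a finite system of polynomial identities over $\mathbb{Z}$ whose degree $<m$ solvability is unobstructed so long as one never has to divide by primes $\leq m$.

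The main obstacle will be managing the interaction between the torus tower $\mathcal{T}(X)$ and the totally disconnected tower $D(X)$ across inductive steps without introducing extra degrees. Every lift of a phase polynomial from a fiber to the total space of an abelian extension amounts to solving $\Delta_g \phi = \psi$ for $\phi$ with $\psi$ a phase polynomial of some degree $d$; in low characteristic one typically pays one extra degree to extract $p$-th roots during such a lift, but when $\min P > d$ the extraction is unique and degree preserving. Verifying that under the hypothesis $\min P > k,m$ every intermediate degree stays bounded by $m$ is the combinatorial bookkeeping which upgrades Theorem \ref{Main:thm} to Theorem \ref{MainH:thm}; the same induction scheme then closes, and cohomology of $\rho$ to a phase polynomial of degree $<m$ is obtained.
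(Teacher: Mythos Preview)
Your plan has a genuine gap: the proof of Theorem \ref{Main:thm} cannot be upgraded to sharp degrees merely by ``careful bookkeeping,'' because several of its lemmas lose degrees for structural reasons that persist even when $\min P > k,m$. Specifically, the general polynomial integration lemma (Lemma \ref{PIL:lem}) and the descent-of-type lemma (Lemma \ref{dec:lem}) produce $O_{k,m}(1)$ bounds not because of $p$-th root extraction but because of how they are proved; one needs their exact replacements, Lemma \ref{Eint:lem} and Lemma \ref{Edec:lem}. The exact descent lemma, in turn, only applies to \emph{quasi-cocycles} of the correct order, and once you divide $f$ by an integrated phase polynomial $Q$ the quotient $f/Q$ is no longer a cocycle. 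The paper therefore develops and tracks two auxiliary notions --- quasi-cocycle of order $<k-1$ and line-cocycle --- and verifies at each inductive step that they are preserved (this is the content of Lemma \ref{ED:lem} and the computation showing $Q$ is a quasi-cocycle of the right order). Your sketch makes no mention of this machinery, and without it the induction does not close with sharp degrees.

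Two further points. First, your invocation of the sharp clause of Corollary \ref{cor} is circular: that clause is precisely the totally disconnected case of what you are trying to prove, and the paper establishes it via Theorem \ref{TDH:thm}, not the other way around. Second, the ``vertical Fourier decomposition of $\rho$'' you describe is not how the argument proceeds; a cocycle into $S^1$ does not decompose as a sum of cocycles transforming by characters of $U_{k-1}$. The actual mechanism is to study $\Delta_t f$ for $t\in U_{k-1}$, apply the induction hypothesis to it, integrate via Lemma \ref{Eint:lem}, and then descend --- all while maintaining the quasi-cocycle and line-cocycle properties and the condition that $P(g,\cdot)$ takes values in $C_n$ when $g$ has order $n$.
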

Theorem \ref{Main:thm} is the main difficulty in this paper and it implies Theorem \ref{Mainr:thm} (see Section \ref{proof}). In order to demonstrate some of the difficulties we give an example of an ergodic $\bigoplus_{p\in\mathbb{F}_p}$ action on the torus.
\begin{example}  [The torus as an ergodic $\bigoplus_{p\in P} \mathbb{F}_p$-system]\label{Example}
	Consider the circle $S^1$ with the Borel $\sigma$-algebra and the Haar measure. We define an action of $\bigoplus_{p\in P} \mathbb{F}_p$ by a homomorphism $\varphi:\bigoplus_{p\in P} \mathbb{F}_p\rightarrow S^1$ which is given by the formula $$\varphi((g_p)_{p\in P}) = \prod_{p\in P} w_p^{g_p}$$
	where $w_p$ is the first root of unity of degree $p$. This formula is well-defined because $g_p=0$ for all but finitely many $p\in P$ (including multiplicities).\\
	Let $f\in L^\infty(X)$ be an invariant function, by comparing the Fourier coefficients of $f$ and $f\circ T_g$ one can show that if $P$ is unbounded, then $f$ must be a constant. In other words $X:=(S^1,\bigoplus_{p\in P}\mathbb{F}_p)$ is ergodic. Moreover, it is easy to see that the characters $\chi_n(z)=z^n$ form an orthonormal basis of eigenfunctions in $L^2(X)$. It follows that $X$ is a system of order $<2$. Moreover, every phase polynomial of degree $<2$ is a constant multiple of a character.
	\end{example}
	We notice the following fact,\\
	\textbf{Claim:} Let $X$ be as in the example. Then, every phase polynomial $F:X\rightarrow S^1$ is of degree $<2$. (i.e. there are no phase polynomials of higher degree)
	\begin{proof} Let $F:X\rightarrow S^1$ be a phase polynomial of degree $<m$ and consider the map $s\mapsto \Delta_s F$ from $S^1$ to $P_{<m}(X)$. By Corollary \ref{ker:cor} this map takes values in $P_{<1}(X)$, hence by ergodicity $\Delta_s F = \chi(s)$ for some constant $\chi(s)$ depending only on $s$. Using the cocycle identity (i.e. $\Delta_{st} F(x) = \Delta_s F(tx) \Delta_t F(x)$) we conclude that $\chi(st)=\chi(s)\chi(t)$. It follows that $\Delta_s (F(x)/\chi(x))=1$ for every $s\in S^1$. Hence $F/\chi$ is a constant and so $F$ is a polynomial of degree $<2$.
	\end{proof}
	From this example we see that the many results of Bergelson Tao and Ziegler \cite{Berg& tao & ziegler} for $\mathbb{F}_p^\omega$-systems can not be generalized for $\bigoplus_{p\in P}\mathbb{F}_p$ where $P$ is unbounded. More concretely we see that
	
	\begin{enumerate}
		\item {Ergodic $\bigoplus_{p\in P} \mathbb{F}_p$-systems of finite order need not be totally disconnected.}
		\item {Phase polynomials need not take finitely many values.}
		\item {From the last claim we see that some phase polynomials, like the identity map need not have phase polynomial roots.}
	\end{enumerate}
	
	Fortunately, these properties hold in the case where the system is totally disconnected (See Definition \ref{TD:def} above) and we manage to reduce the matters to this case.\\
Below is an overview of the structure of the paper
\subsection{Overview of the structure of the paper}
Most of the paper will be devoted to the proof of Theorem \ref{Main:thm} (Main theorem) which is also the main component in the proof of Theorem \ref{Mainr:thm} (Criterion for strongly Abramov).\\ 
The paper begins by following the arguments of Host and Kra \cite{HK} and Bergelson Tao and Ziegler \cite{Berg& tao & ziegler} in order to reduce the original problem into solving a certain Conze-Lesgine type equation. More concretely, by the theory of Host and Kra (see Proposition \ref{abelext:prop}) every system of order $<k$ takes the form  $$X=U_0\times_{\rho_1}\times U_1\times_{\rho_2}U_2\times...\times_{\rho_{k-1}}U_{k-1}$$ for some compact abelian groups $U_0,...,U_{k-1}$, and cocycles $\rho_i:Z_{<i-1}(X)\rightarrow U_i$. Using a proof by induction we may assume that $\rho$ satisfies a Conze-Lesigne type equation with respect to the automorphisms on $X$. Namely, there exists $m$ (bounded by $k$) such that \begin{equation} \label{eq:overview}\Delta_t \rho = p_t\cdot \Delta F_t
\end{equation} for all automorphism $t:X\rightarrow X$ where $p_t\in P_{<m}(\bigoplus_{p\in P}\mathbb{F}_p,X,S^1)$ and $F_t\in \mathcal{M}(X,S^1)$. \\

The next fundamental step is to reduce matters to the case where the underline system $X$ is totally disconnected (i.e. the groups $U_0,U_1,...,U_{k-1}$ are totally disconnected).\\ 
Inductively we construct factors $$X=X_k\rightarrow X_{k-1}\rightarrow...\rightarrow X_0$$ with the following properties
\begin{enumerate}
	\item {For $0\leq l<k$, $X_l$ is obtained from $X_{l+1}$ by quotienting out the torus of $U_l$.}
	\item {For $0\leq l<k$, the torus of $U_{l-1}$ acts on $X_l$ by automorphisms.}
	\item {Every cocycle $\rho:G\times X_l\rightarrow S^1$ of type $<k$ is $(G,X_l,S^1)$-cohomologous to a cocycle that is measurable with respect to $X_{l-1}$.}
\end{enumerate}
The last property is the most difficult to prove and the main work is taken care of in Sections \ref{fdred:sec} and \ref{tdred:sec}. In section \ref{fdred:sec} we work in full generality (we don't assume that $X$ splits). We use the totally disconnected nature of $\hat G$, the dual of $G:=\bigoplus_{p\in P} \mathbb{F}_p$, together with a "linearization trick" of Furstenberg and Weiss \cite{F&W}. We show that $\rho:G\times X_l\rightarrow S^1$ is cohomologous to a cocycle which is invariant under some subgroup $J_l$ of $U_l$ such that $U_l/J_l$ is a finite dimensional group (See Definition \ref{FD:def}). In the case where $X$ splits, this means that $U_l/J_l$ is a product of a finite torus and a totally disconnected group. We note that working in this generality leads to even stronger version of Theorem \ref{Main:thm} (See the discussion in section \ref{fdred:sec}). However, for the sake of simplicity we choose to stick with the version above. In section \ref{tdred:sec} we finish the argument by another application of the linearization lemma, however this time combined with an idea of Ziegler from \cite{Z}.\\
The proof of the totally disconnected case follows the general idea of Bergelson Tao and Ziegler \cite{Berg& tao & ziegler}, however the multiplicity of generators of different prime orders in $G$ leads to some new difficulties. In section \ref{TSTweyl:sec} we show that if $X$ splits then the totally disconnected part of $X$ is in fact a direct product of finite groups, moreover we show that these finite groups are of the form $C_{p^d}$ where $p\in P$ and $d=O_k(1)$ (and $d=1$ if $\min P$ is sufficiently large, see Theorem \ref{TST:thm}). This is a counterpart of \cite[Theorem 4.8]{Berg& tao & ziegler}, in which Bergelson Tao and Ziegler proved that every ergodic $\mathbb{F}_p^\omega$-system of order $<k$ is totally disconnected and of $p^m$ torsion for some $m=O_k(1)$ (and $m=1$ if $p$ sufficiently large with respect to $k$). Finally, in section \ref{low:sec} (and in section \ref{high:sec} for high characteristic) we finish the proof following the ideas of Bergelson Tao and Ziegler, by working on each of the primes separately. The high characteristic case of Theorem \ref{Mainr:thm} follows almost the same arguments as in \cite{Berg& tao & ziegler} and therefore we omit most of it.

\textbf{Acknowledgment} I would like to thank my adviser Prof. Tamar Ziegler for many valuable discussions and suggestions.

\section{Standard reductions} \label{sr:sec}
Throughout the rest of the paper it will be convenient to use the letter $G$ to denote the group $\bigoplus_{p\in P}\mathbb{F}_p$ for some fixed and possibly unbounded countable multiset of primes $P$.\\
Our first goal in this section is to prove the following direction of Theorem \ref{Mainr:thm} assuming Theorem \ref{Main:thm} (Compare with \cite[Theorem 3.8]{Berg& tao & ziegler}). We show,
\begin{thm*} Let $X$ be an ergodic system of order $<k$. Let $1\leq l \leq k$ and suppose that $Z_{<l}(X)$ is Abramov of order $<O_k(1)$ and that there exists a totally disconnected factor $Y$ and a factor map $\pi_l:Z_{<l}(X)\rightarrow Z_{<l}(Y)$ such that $$\pi_l^\star: H^1_{<m} (G,Z_{<l}(Y),S^1)\rightarrow H^1_{<m}(G,Z_{<l}(X),S^1)$$ is onto for every $m\in\mathbb{N}$. Then, for every $1\leq l\leq k$, $Z_{<l}(X)$ is strongly Abramov.
\end{thm*}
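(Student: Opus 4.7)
The plan is to reduce to $S^1$-valued cocycles via Pontryagin duality, transport the resulting cocycle down to the totally disconnected factor $Y$ using the surjectivity hypothesis, apply Theorem \ref{Main:thm} there, and reassemble. Fix a compact abelian group $U$ and a cocycle $\rho:G\times Z_{<l}(X)\to U$ of type $<m$; the goal is to show that $Z_{<l}(X)\times_\rho U$ is Abramov of order $<l_m=O_{k,m}(1)$.

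I would first decompose $L^2(Z_{<l}(X)\times_\rho U)=\bigoplus_{\chi\in\hat U}L^2(Z_{<l}(X))\otimes\chi$. For each $\chi\in\hat U$ the cocycle $\chi\circ\rho:G\times Z_{<l}(X)\to S^1$ is of type $<m$, since $\chi$ is a homomorphism and therefore sends $(G,X^{[m]},U)$-coboundaries to $(G,X^{[m]},S^1)$-coboundaries. The surjectivity hypothesis then produces a cocycle $\tilde\rho_\chi:G\times Z_{<l}(Y)\to S^1$ of type $<m$ with $\chi\circ\rho$ cohomologous to $\pi_l^\star\tilde\rho_\chi$. Because $Y$ is totally disconnected, so is the factor $Z_{<l}(Y)$, which trivially splits with empty torus part; Theorem \ref{Main:thm} therefore yields a phase polynomial $\tilde p_\chi\in P_{<d}(G,Z_{<l}(Y),S^1)$, with $d=O_{k,m}(1)$, cohomologous to $\tilde\rho_\chi$. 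By functoriality $p_\chi:=\pi_l^\star\tilde p_\chi$ is a phase polynomial of the same degree on $Z_{<l}(X)$, and composing the two cohomologies gives $\chi\circ\rho=p_\chi\cdot\Delta F_\chi$ for some $F_\chi:Z_{<l}(X)\to S^1$.

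Next, define $H_\chi(x,u):=F_\chi(x)^{-1}\chi(u)$ on $Z_{<l}(X)\times_\rho U$. A direct computation gives $\Delta_g H_\chi(x,u)=(\Delta_g F_\chi)^{-1}(x)\cdot(\chi\circ\rho)(g,x)=p_\chi(g,x)$, so every derivative is a phase polynomial of degree $<d$; by the polynomiality criterion $H_\chi$ itself is a phase polynomial of degree $<d+1$ on the extension, and $\chi(u)=F_\chi(x)\cdot H_\chi(x,u)$. Since by hypothesis $Z_{<l}(X)$ is Abramov of order $<l_0=O_k(1)$, for any $f\in L^2(Z_{<l}(X))$ the product $f\cdot F_\chi$ is an $L^2$-limit of phase polynomials on $Z_{<l}(X)$ of degree $<l_0$. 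Pulling back to the extension preserves degrees, and multiplying by the phase polynomial $H_\chi$ shows that each $f\chi$ lies in the $L^2$-closure of phase polynomials of degree $<\max(l_0,d+1)=O_{k,m}(1)$ on $Z_{<l}(X)\times_\rho U$. Since such functions span $L^2(Z_{<l}(X)\times_\rho U)$, the extension is Abramov of the required order, which is exactly the strongly Abramov property for $Z_{<l}(X)$.

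The only serious input here is Theorem \ref{Main:thm}, and it is only invoked on the totally disconnected system $Z_{<l}(Y)$. The remaining work is bookkeeping: checking that type $<m$ passes through characters of $U$, verifying that each cohomological adjustment raises the degree by a controlled amount, and combining the Abramov order of $Z_{<l}(X)$ with the output of Theorem \ref{Main:thm} into a single bound $l_m$ depending only on $k$ and $m$. No genuine obstacle remains once Theorem \ref{Main:thm} is granted.
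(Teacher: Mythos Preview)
Your proposal is correct and follows essentially the same route as the paper's proof: reduce to $S^1$-valued cocycles via characters of $U$, use the surjectivity of $\pi_l^\star$ on $H^1_{<m}$ to descend to the totally disconnected factor $Z_{<l}(Y)$, apply Theorem \ref{Main:thm} there to obtain a phase polynomial, and then combine the resulting identity $\chi\circ\rho=p_\chi\cdot\Delta F_\chi$ with the Abramov hypothesis on $Z_{<l}(X)$ to exhibit $f\cdot\chi$ as a limit of phase polynomials of bounded degree. The paper phrases the descent step slightly differently (writing $\chi\circ\rho$ as cohomologous to a $Z_{<l}(Y)$-measurable cocycle and then invoking descent of type, Lemma \ref{Cdec:lem}), but since the surjectivity is stated at the level of $H^1_{<m}$ your formulation is equivalent and in fact slightly cleaner.
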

Note that $Z_{<l}(X)$ is an extension of $Z_{<l-1}(X)$ and therefore by proving this direction of Theorem \ref{Mainr:thm} by induction on $l$ we may assume that $Z_{<l}(X)$ is Abramov.
\begin{proof}
We prove the claim by induction on $l$; If $l=1$ then $Z_{<1}(X)$ is a point and the claim is trivial. Let $1< l \leq k$, and suppose that the claim has already been proven for smaller values of $l$. Let $Y$ be as in the theorem; For $m\in\mathbb{N}$ let $\rho:G\times Z_{<l}(X)\rightarrow U$ be a cocycle of type $<m$ for some compact abelian group $U$. From the assumption, we see that for all $\chi\in\hat U$, $\chi\circ\rho$ is $(G,Z_{<l}(X),S^1)$-cohomologous to a cocycle $\rho_\chi$ which is measurable with respect to $Z_{<l}(Y)$. Let $\pi:Z_{<l}(X)\rightarrow Z_{<l}(Y)$ be the projection map.  By \cite[Corollary 7.8]{HK} (See Lemma \ref{Cdec:lem} below) $\pi_\star\rho_\chi$ is a cocycle of type $<m$ on $Z_{<l}(Y)$. Since $Z_{<l}(Y)$ is totally disconnected, Theorem \ref{Mainr:thm} implies that $\pi_\star\rho_\chi$ is $(G,Z_{<l}(Y),S^1)$-cohomologous to a phase polynomial of degree $<l_m$ for some $l_m=O_{k,m}(1)$. Lifting everything back  to $Z_{<l}(X)$ using $\pi^\star$ we conclude that $\chi\circ\rho$ is $(G,Z_{<l}(X),S^1)$-cohomologous to a phase polynomial of degree $<l_m=O_{k,m}(1)$. Write $\chi\circ \rho = q_\chi \cdot \Delta F_\chi$ for some $F_\chi:Z_{<l}(X)\rightarrow S^1$ and a phase polynomial $q_\chi: G\times Z_{<l}(X)\rightarrow S^1$. We conclude that the map $\Phi(x,u) = \chi(u)\cdot \overline{F}(x)$ is a phase polynomial of degree $<l_m+1$ on $Z_{<l}(X)\times_\rho U$. Since $Z_{<l}(X)$ is Abramov of degree $<r_k$ for some $r_k=O_k(1)$ we conclude that $\overline{F}$ can be approximated by phase polynomials of some bounded degree. We conclude that $\chi$ can be approximated by phase polynomials of degree $<\max\{r_k,l_m+1\} = O_{k,m}(1)$. Since $L^2(Z_{<l}(X)\times_\rho U)$ is generated by $Z_{<l}(X)$ and these characters this completes the proof.
\end{proof}
This proves one of the directions of Theorem \ref{Mainr:thm}. The other direction is proved in the end of this paper after we prove Theorem \ref{Main:thm}. We now turn to the proof of Theorem \ref{Main:thm}\\

We need a definition,
\begin{defn} [Automorphism] \label{Aut:def} Let $X$ be a $G$-system. A measure-preserving transformation $u:X\rightarrow X$ is called an Automorphism if the induced action on $L^2(X)$ by $V_u(f)=f\circ u$ commutes with the action of $G$. In this case we define the multiplicative derivative with respect to $u$ by $\Delta_u f = V_uf\cdot \overline{f}$
\end{defn}
Automorphisms arise naturally from Host-Kra's theory. For instance the group $U$ acts by automorphisms on the abelian extension $Y\times_\rho U$ by $V_u (y,t)=(y,tu)$.\\
Our goal in this section is to show that Theorem \ref{Main:thm} follows from the following two theorems; The first asserts that in order to prove Theorem \ref{Main:thm} it suffices to do so on a totally disconnected system. Namely,
\begin{thm} [Reduction to a totally disconnected factor]\label{TDred:thm}
	Let $k\geq 1$, let $X$ be an ergodic $G$-system of order $<k$ which splits. Then there exists a totally disconnected ergodic $G$-system $Y$, with a factor map $\pi:X\rightarrow Y$ such that for every $m\in\mathbb{N}$ and any cocycle $\rho:G\times X\rightarrow S^1$ of type $<m$ for which
	\begin{equation} \label{Conze-Lesigne}\Delta_t \rho = p_t\cdot \Delta F_t\end{equation} for all automorphisms $t:X\rightarrow X$, where $p_t\in P_{<m}(G,X,S^1)$ and  $F_t:X\rightarrow S^1$ is a measurable map, we have that $\rho$ is $(G,X,S^1)$-cohomologous to $\pi^\star \rho'$ for a cocycle $\rho':G\times  Y\rightarrow S^1$.
\end{thm}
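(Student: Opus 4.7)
The plan is to prove the theorem by inductively descending through a tower $X = X_k \to X_{k-1} \to \cdots \to X_0 =: Y$, where $X_l$ is obtained from $X_{l+1}$ by quotienting out the torus part $T_l$ of the structure group $U_l = T_l \times D_l$. The splitting hypothesis is precisely what ensures that the $D_l$-part of $\rho_{l+1}$ is invariant under $T_0 \times \cdots \times T_{l-1}$, so each such quotient is a well-defined $G$-factor, and the bottom system $Y := X_0$ is totally disconnected with structure groups the $D_i$'s. At each stage the torus $T_l$ acts on $X_{l+1}$ by vertical translations $V_s$, which are $G$-automorphisms. A cocycle on $X$ is cohomologous to a pullback from $Y$ if and only if, after correcting by a coboundary at each stage of the tower, it is invariant under $V_s$ for every $s \in T_l$.

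The core step is therefore the following descent claim: given a cocycle $\rho : G \times X_{l+1} \to S^1$ of type $<m$ satisfying the Conze--Lesigne equation $\Delta_t \rho = p_t \cdot \Delta F_t$ for every automorphism $t$ (with $p_t \in P_{<m}(G, X_{l+1}, S^1)$), there exists a measurable $F : X_{l+1} \to S^1$ such that $\rho \cdot \overline{\Delta F}$ is invariant under $V_s$ for all $s \in T_l$ and hence descends to a cocycle on $X_l$. Specializing the hypothesis to $t = V_s$ yields $\Delta_s \rho = p_s \cdot \Delta F_s$, and the cocycle identity for $\rho$ translates into a multiplicative cocycle identity for $s \mapsto p_s$ on the compact abelian group $T_l$ modulo coboundaries; the objective is to trivialize this obstruction class.

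I would carry out the trivialization in two stages, following the skeleton of sections \ref{fdred:sec} and \ref{tdred:sec}. First, exploiting that $\hat G$ is totally disconnected together with a Furstenberg--Weiss linearization, one reduces to the case where $\rho$ is invariant under a closed subgroup $J_l \leq U_l$ with $U_l/J_l$ finite-dimensional; under the splitting hypothesis, $U_l/J_l$ is then a finite-dimensional torus times a finite totally disconnected group, and only the torus part remains to kill. Second, on this finite-dimensional torus one uses measurable selection to make $s \mapsto F_s$ and $s \mapsto p_s$ jointly measurable, decomposes the phase-polynomial cocycle $s \mapsto p_s$ into a character-in-$s$ part and a coboundary-in-$s$ part, absorbs the nontrivial characters into a modified $F$, and solves the remaining coboundary equation. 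The main obstacle is this second stage --- turning a measurable phase-polynomial-valued cocycle on a compact torus into a genuine coboundary while keeping the degrees of the intermediate phase polynomials bounded by $O_{k,m}(1)$ via the polynomiality criterion; once it is in hand, iterating from $l = k-1$ down to $l = 0$ produces a cocycle cohomologous to $\rho$ that factors through $Y$ as $\pi^\star \rho'$, completing the reduction.
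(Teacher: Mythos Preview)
Your overall strategy matches the paper's: build the tower $X=X_k\to\cdots\to X_0=Y$ by successively quotienting out the torus parts $T_l$, and at each stage use the Conze--Lesigne data together with a two-phase reduction (first to a finite-dimensional quotient via Proposition~\ref{FD:prop}, then eliminating the residual torus via Proposition~\ref{con:prop} and Lemma~\ref{real:lem}) to make the cocycle $T_l$-invariant. The splitting condition does make the tower of factors well-defined, exactly as you say.

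The genuine gap is in how the Conze--Lesigne hypothesis propagates along the tower. You assume that the cocycle on $X_{l+1}$ satisfies $\Delta_t\rho=p_t\cdot\Delta F_t$ for every automorphism $t$ of $X_{l+1}$, but the theorem only gives this for automorphisms of $X$. The translations $V_s$, $s\in T_l$, are automorphisms of $X_{l+1}$ but need not lift to automorphisms of $X$: the $T_{l+1},\dots,T_{k-1}$-valued parts of $\rho_{l+1},\dots,\rho_{k-1}$ are not assumed $T_l$-invariant (only the $D$-parts are, by splitting). The paper handles this in Theorem~\ref{con:thm} by invoking the \emph{induction hypothesis of Theorem~\ref{Main:thm}} for systems of order $<k'<k$: the torus components of the structure cocycles $\rho_{j+1},\dots,\rho_{k-1}$ are cohomologous to phase polynomials, which by Proposition~\ref{pinv:prop} are invariant under the connected group $U_{j,0}$, so after this modification $U_{j,0}$ acts on $X$ itself and the original CL hypothesis applies directly. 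The same induction hypothesis is used a second time to re-establish the CL equation on the intermediate quotient after dividing out $J$ (the step producing the $F'_u$ in the proof of Theorem~\ref{con:thm}). Without this input your descent does not close; you should state explicitly that Theorem~\ref{TDred:thm} is proved under the standing assumption that Theorem~\ref{Main:thm} holds for all smaller $k$, as Theorem~\ref{con:thm} does.

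Two smaller points. After the first stage, $U_l/J_l$ is a finite-dimensional torus times a totally disconnected group, not a \emph{finite} one ($D_l$ may be infinite). And your description of the second stage (``character-in-$s$ part plus coboundary-in-$s$ part'') understates what is needed: the paper lifts $s\mapsto p_s$ from the torus to $\mathbb{R}^n$ via Lemma~\ref{lift:lem}, uses that any measurable homomorphism from $\mathbb{R}^n$ into the countable quotient $P_{<m}/P_{<1}$ is trivial (Lemma~\ref{sep:lem} plus Corollary~\ref{openker}), so $p_s$ lands in $P_{<1}\cong\hat G$, and then that there is no nontrivial continuous homomorphism $\mathbb{R}^n\to\hat G$ since $\hat G$ is totally disconnected. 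This forces $p_{\pi(w)}=1$ on an open ball and hence $\Delta_s\rho\in B^1$ for all $s$ in the torus, after which Lemma~\ref{cob:lem} finishes.
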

The second theorem is a totally disconnected version of Theorem \ref{Main:thm}. As in \cite{Berg& tao & ziegler} the theorem holds for general functions of type $<m$ that may not be cocycles.

\begin{thm} [The totally disconnected case]\label{MainT:thm} Let $k,m\geq 0$ and let $X$ be an ergodic totally disconnected $G$-system of order $<k$. Let $f:G\times X\rightarrow S^1$ be a function of type $<m$. Then $f$ is $(G,X,S^1)$-cohomologous to a phase polynomial of degree $<O_{k,m}(1)$.
\end{thm}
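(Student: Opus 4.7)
The plan is to induct on the order $k$, following the template of Bergelson, Tao and Ziegler while carefully handling the multiple primes. For $k\leq 1$ the system is essentially trivial and any function of type $<m$ is already a phase polynomial up to a coboundary. For the inductive step we invoke Theorem \ref{TDisweyl} to identify $X$ with a Weyl system, so we may write $X = Z_{<k-1}(X)\times_\sigma U_{k-1}$, where $U_{k-1}$ is totally disconnected and $\sigma$ is cohomologous to a phase polynomial cocycle of degree $O_k(1)$. By the torsion result of section \ref{TSTweyl:sec}, $U_{k-1}$ decomposes as a direct product $\prod_{p\in P} U_{k-1,p}$ of $p^d$-torsion abelian groups with $d=O_k(1)$, reflecting the direct-sum structure of $G$.

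The heart of the proof is a Conze-Lesigne type analysis along vertical translations. Each $u\in U_{k-1}$ induces an automorphism $V_u$ of $X$ commuting with the $G$-action, and so $\Delta_u f$ is again a function of type $<m$. A cubic-measure argument in the spirit of \cite[Section 9]{Berg& tao & ziegler} produces a factorization $\Delta_u f = p_u\cdot\Delta F_u$, where $p_u\in P_{<l}(G,X,S^1)$ is a phase polynomial of degree $<l=O_{k,m}(1)$ and $F_u:X\to S^1$ is measurable. This step is the main analytic input; it combines ergodicity with the type hypothesis, and the fact that $p_u$ emerges as a phase polynomial (rather than a more general type-$<m$ cocycle) leans on the Weyl structure of $X$ together with the polynomiality criterion from the properties of $P_{<k}$.

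We then solve the Conze-Lesigne equation prime by prime. Fix $p\in P$, let $G_p\leq G$ denote the sum of all $\mathbb{F}_p$-copies in $G$, and consider the restricted action of $U_{k-1,p}$ together with $G_p$; because both are $p$-primary, this sub-structure behaves like an $\mathbb{F}_p^\omega$-system, and the Bergelson-Tao-Ziegler argument underlying Theorem \ref{BTZ} produces a cohomology modification of $f$ making it invariant under $U_{k-1,p}$ modulo a phase polynomial of degree $<O_{k,m}(1)$. Assembling these modifications over all primes---which is possible precisely because $U_{k-1}$ is a direct sum, so the per-prime corrections are independent---yields a function cohomologous to $f$ which is invariant under all of $U_{k-1}$; such a function descends to a cocycle on $Z_{<k-1}(X)$, which remains of type $<m$, and the induction closes.

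The main obstacle is uniformity: each per-prime BTZ application a priori produces a phase polynomial whose degree depends on $p$, yet we need a single bound $O_{k,m}(1)$ that does not grow with $\min P$. This will follow from the uniform torsion bound $d=O_k(1)$ from section \ref{TSTweyl:sec}, together with a careful inspection of the exponents appearing in the Conze-Lesigne equation, all of which are controlled by $k$ and $m$ alone. A secondary subtlety is showing that the per-prime corrections actually assemble into a single coboundary on $X$ rather than diverging in some infinite product; this is again where the direct-sum structure of $U_{k-1}$, as opposed to a general totally-disconnected structure, plays an essential role. In the high-characteristic regime $\min P > \max(k,m)$, the sharper BTZ bounds apply directly, and the same scheme yields the cleaner degree $<m$ conclusion used in Theorem \ref{MainH:thm}.
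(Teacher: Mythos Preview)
Your overall architecture---induct on $k$, write $X=Z_{<k-1}(X)\times_\sigma U_{k-1}$ via Theorem \ref{TDisweyl}, establish a Conze--Lesigne equation for vertical derivatives, and descend to $Z_{<k-1}$---matches the paper. But two steps are not justified as written.

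\textbf{Obtaining the Conze--Lesigne equation.} You write that ``$\Delta_u f$ is again a function of type $<m$'' and then that a ``cubic-measure argument'' produces $\Delta_u f = p_u\cdot\Delta F_u$ with $p_u$ a phase polynomial. Neither claim gives you what you need. The differentiation lemma (Lemma \ref{dif:lem}) yields the sharper statement that $\Delta_u f$ is of type $<m-\min(m,k)$, i.e.\ of strictly lower type; but lower type alone does not produce a phase polynomial factor. The paper (Section \ref{low:sec}, first paragraph) gets the CL equation by a \emph{downward induction on the number of derivatives}: iterating Lemma \ref{dif:lem} $m$ times kills the type completely, and then one climbs back up one derivative at a time using Theorem \ref{TDCL:thm} at each step. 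There is no direct cubic-measure argument that outputs $p_u\in P_{<O_{k,m}(1)}$; you need to set up this second induction (or equivalently induct jointly on $(k,m)$).

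\textbf{The prime-by-prime reduction.} Restricting the acting group to $G_p$ and the structure group to $U_{k-1,p}$ does not put you inside the scope of Theorem \ref{BTZ}, because the $G_p$-action on $X$ is not ergodic (and the BTZ machinery requires ergodicity throughout). Consequently the ``per-prime corrections'' you describe would only be $(G_p,X,S^1)$-coboundaries, not $(G,X,S^1)$-coboundaries, and there is no reason the product over $p$ of such corrections should be a single $G$-coboundary. The paper avoids this entirely: it first uses the linearization lemma (Lemma \ref{lin:lem}) together with the cylinder structure of $U_{k-1}$ to reduce to the case where $U_{k-1}$ is \emph{finite} (Theorem \ref{MainF:thm}), and then treats the finite case (Theorem \ref{FC:thm}) by an explicit construction of $F$ via iterated products along the generators $e_1,\dots,e_N$ of $U_{k-1}$, always working with the full $G$-action. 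The prime-dependent bookkeeping happens in the \emph{values} of the polynomials (via Proposition \ref{PPC} and Theorem \ref{HTDPV:thm}), not by decomposing the dynamics.
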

In retrospect, the reduction to a totally disconnected case is a reasonable approach assuming that Theorem \ref{Mainr:thm} holds.
\subsection{Proof of Theorem \ref{Main:thm} assuming Theorem \ref{TDred:thm} and Theorem \ref{MainT:thm}}
We recall some results from Bergelson Tao and Ziegler \cite{Berg& tao & ziegler}
\begin{lem}  [Decent of type for cocycles] \cite[Proposition 8.11]{Berg& tao & ziegler}  \label{Cdec:lem}
	Let $k\geq 0$ and let $X$ be an ergodic $G$-system of order $<k$. Let $Y$ be a factor of $X$, with factor map $\pi:X\rightarrow Y$. Suppose that $\rho:G\times Y\rightarrow S^1$ is a cocycle. If $\pi^\star \rho$ is of type $<k$, then $\rho$ is of type $<k$.
\end{lem}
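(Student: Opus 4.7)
The first step is to unpack what needs to be shown. ``$\pi^\star \rho$ is of type $<k$'' means that there exists $\tilde F \in \mathcal{M}(X^{[k]}, S^1)$ with $d^{[k]}(\pi^\star\rho) = \Delta \tilde F$ as a coboundary under the diagonal $G$-action on $X^{[k]}$. Using the functorial identity $d^{[k]}(\pi^\star \rho) = (\pi^{[k]})^\star d^{[k]}\rho$, where $\pi^{[k]}: X^{[k]} \to Y^{[k]}$ is the product factor map, this rewrites as
$$\Delta_g \tilde F(\mathbf{x}) = d^{[k]}\rho(g, \pi^{[k]}(\mathbf{x})) \qquad \text{for all } g \in G \text{ and } \mu^{[k]}\text{-a.e. } \mathbf{x}.$$
The goal is to produce $F \in \mathcal{M}(Y^{[k]}, S^1)$ with $\Delta F = d^{[k]}\rho$.

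My plan is a fiber product argument. On the relatively independent self-join $X^{[k]} \times_{Y^{[k]}} X^{[k]}$, define $H(\mathbf{x}, \mathbf{x}') := \tilde F(\mathbf{x}) \cdot \overline{\tilde F(\mathbf{x}')}$. Both coordinates have the same image in $Y^{[k]}$, so the displayed identity gives $\Delta_g \tilde F(\mathbf{x}) = \Delta_g \tilde F(\mathbf{x}')$ on the fiber product, forcing $H$ to be invariant under the diagonal $G$-action. I would then invoke a relative ergodicity statement: the $G$-invariant functions on this fiber product are precisely those measurable with respect to the base $Y^{[k]}$. Granted this, $H$ depends only on $\mathbf{y} := \pi^{[k]}(\mathbf{x}) = \pi^{[k]}(\mathbf{x}')$; since $H \equiv 1$ on the diagonal $\mathbf{x} = \mathbf{x}'$, continuity of the fiber-wise inner product forces $H \equiv 1$ almost everywhere. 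Consequently $\tilde F$ is constant along fibers of $\pi^{[k]}$, and so descends to a measurable $F : Y^{[k]} \to S^1$ with $(\pi^{[k]})^\star F = \tilde F$. Then $(\pi^{[k]})^\star \Delta F = \Delta \tilde F = (\pi^{[k]})^\star d^{[k]}\rho$, and since $(\pi^{[k]})_* \mu_X^{[k]} = \mu_Y^{[k]}$, this upgrades to $\Delta F = d^{[k]}\rho$ on $Y^{[k]}$, as required.

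The main obstacle is the relative ergodicity of the diagonal $G$-action on $X^{[k]} \times_{Y^{[k]}} X^{[k]}$ over $Y^{[k]}$. This is where the hypothesis ``$X$ is of order $<k$'' genuinely enters: for such systems the cubic measure $\mu_X^{[k]}$ has a rigid Host--Kra disintegration over $\mu_Y^{[k]}$, and an ergodic-decomposition argument along the diagonal action on the $2^k$ coordinates (together with the recursive definition of $\mu^{[k]}$ via the invariant $\sigma$-algebras $\mathcal{I}_k$) yields the required invariance property. A backup route I would keep in mind, should the fiber-product ergodicity become cumbersome, is to work directly with $F := E(\tilde F \mid Y^{[k]})$: a short conditional-expectation calculation gives $\Delta_g F(\mathbf{y}) = d^{[k]}\rho(g, \mathbf{y}) \cdot F(\mathbf{y})$, so $|F|$ is $G$-invariant on $Y^{[k]}$; the same cubic-rigidity forces $|F|$ to be constant a.e., and after normalising we land in $S^1$.
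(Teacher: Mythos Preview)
The paper does not supply its own proof of this lemma; it simply cites \cite[Proposition 8.11]{Berg& tao & ziegler} (and, earlier in Section \ref{sr:sec}, also \cite[Corollary 7.8]{HK}). So there is no in-paper argument to compare against, only the referenced literature.

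Your proposed argument has a genuine gap. The central claim --- that the $G$-invariant functions on $X^{[k]}\times_{Y^{[k]}}X^{[k]}$ under the diagonal action are exactly the $Y^{[k]}$-measurable ones --- is false even under the order-$<k$ hypothesis. Take $k=2$, let $X$ be an ergodic Kronecker system (so $X$ has order $<2$), and let $Y$ be the one-point factor. Then $Y^{[2]}$ is a point and the fiber product is $X^{[2]}\times X^{[2]}$ with $\mu_X^{[2]}\times\mu_X^{[2]}$; the diagonal $G$-action here is far from ergodic (e.g.\ $(x_1,\dots,x_4,x_1',\dots,x_4')\mapsto \chi(x_1)\overline{\chi(x_2)}$ is invariant and nonconstant for any nontrivial character $\chi$). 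More fundamentally, you are trying to show that the \emph{particular} function $\tilde F$ descends to $Y^{[k]}$, and this is strictly stronger than what is needed: in the example just given, $d^{[2]}\rho\equiv 1$, so \emph{any} $G$-invariant $\tilde F:X^{[2]}\to S^1$ solves $\Delta\tilde F=d^{[2]}(\pi^\star\rho)$, with no reason to be constant. The lemma only asks for \emph{some} $F$ on $Y^{[k]}$ with $\Delta F=d^{[k]}\rho$, not for descent of $\tilde F$.

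Your backup route inherits the same defect: the diagonal $G$-action on $(Y^{[k]},\mu_Y^{[k]})$ is not ergodic, so $|E(\tilde F\mid Y^{[k]})|$ being $G$-invariant does not force it to be constant, and it can vanish on entire ergodic components. (Even on a single ergodic component the conditional-expectation trick can fail: with $Y$ a point and $\rho=\chi$ a nontrivial character, $\pi^\star\rho$ is a coboundary on $X$ via an eigenfunction $\tilde F$, but $E(\tilde F\mid Y)=\int\tilde F=0$.) The arguments in \cite{HK} and \cite{Berg& tao & ziegler} do not attempt to push $\tilde F$ down directly; they go through the Host--Kra characterisation of type via the Gowers--Host--Kra seminorms and the universal factors $Z_{<k}$, where the hypothesis $X=Z_{<k}(X)$ enters in an essential way that is not captured by a relative-ergodicity statement of the kind you invoke.
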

\begin{lem}  [Differentiation Lemma] \cite[Lemma 5.3]{Berg& tao & ziegler}\label{dif:lem} Let $k\geq 1$, and let $X$ be a $G$-system of order $<k$. Let $f:G\times X\rightarrow S^1$ be a function of type $<m$ for some $m\geq 1$. Then for every automorphism $t:X\rightarrow X$ which preserves $Z_{<k}(X)$ the function $\Delta_t f(x) := f(tx)\cdot\overline{f}(x)$ is of type $<m-\min(m,k)$.
\end{lem}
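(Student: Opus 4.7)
The plan is to reduce the statement to the diagonal case $m=k$ and then exploit the defining cubic structure of order-$<k$ systems. Both steps rely on the identity $d^{[m]} = d^{[k]}\circ d^{[m-k]}$ on $X^{[m-k]}$, obtained by splitting $w\in\{-1,1\}^{m}$ into its first $m-k$ and last $k$ coordinates and using multiplicativity of $\mathrm{sgn}$.

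For the reduction, first observe that if $m\le k$, the conclusion is that $\Delta_t f$ is a $(G,X,S^1)$-coboundary; by the subgroup property in Lemma \ref{PP} every type-$<m$ function is also type-$<k$, so this is a special case of $m=k$. When $m>k$, the identity above shows that $d^{[m-k]}f\colon G\times X^{[m-k]}\to S^1$ is of type $<k$. Standard Host--Kra theory ensures that each ergodic component of $X^{[m-k]}$ (with its cubic measure $\mu^{[m-k]}$) is a $G$-system of order $<k$. Since $t^{[m-k]}$ is an automorphism of $X^{[m-k]}$ that commutes with the diagonal $G$-action and $d^{[m-k]}(\Delta_t f) = \Delta_{t^{[m-k]}}(d^{[m-k]}f)$ by a direct computation, applying the $m=k$ case componentwise yields that $d^{[m-k]}(\Delta_t f)$ is a $(G,X^{[m-k]},S^1)$-coboundary. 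Unpacking the definition, this is exactly the assertion that $\Delta_t f$ has type $<m-k$ on $X$.

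The core assertion is then: on $X$ of order $<k$ and $f$ of type $<k$, $\Delta_t f$ is a coboundary. Writing $d^{[k]}f = \Delta F$ for some $F\in\mathcal{M}(X^{[k]},S^1)$ and using that $t^{[k]}$ commutes with the $G$-action yields
\begin{equation*}
d^{[k]}(\Delta_t f) = \Delta\bigl(\Delta_{t^{[k]}}F\bigr),
\end{equation*}
so $\Delta_t f$ is automatically of type $<k$. To improve this to "coboundary", I would use that in an order-$<k$ system the top vertex of a $k$-cube is controlled---modulo lower-order structure---by the remaining $2^k-1$ vertices. This should allow one to replace the representative $\Delta_{t^{[k]}}F$ by a cohomologous function on $X^{[k]}$ that factors through a single-vertex projection $\pi_v\colon X^{[k]}\to X$, producing an anti-derivative $H\colon X\to S^1$ with $\Delta_t f = \Delta H$.

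The main obstacle is this descent step in the core case. For $\mathbb{Z}$- or $\mathbb{F}_p^\omega$-actions one may integrate or average along a suitable subgroup, but $G=\bigoplus_{p\in P}\mathbb{F}_p$ consists entirely of torsion elements, so this route is unavailable and one must work purely inside the cubic framework, following the line of Bergelson--Tao--Ziegler. Concretely, I would disentangle $\Delta_{t^{[k]}}F$ using the face structure of $X^{[k]}$ and the explicit tower $X = U_0\times_{\rho_1}U_1\times\cdots\times_{\rho_{k-1}}U_{k-1}$, and verify that any $G$-invariant function on $X^{[k]}$ arising from this construction descends to a single vertex in a way that can be inverted to produce the required $H$.
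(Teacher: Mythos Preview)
Your reduction to the diagonal case $m=k$ via the factorisation $d^{[m]}=d^{[k]}\circ d^{[m-k]}$ and passage to ergodic components of $X^{[m-k]}$ is sound and is the standard opening move. Note, however, that the paper does not give its own proof of this lemma; it simply quotes \cite[Lemma~5.3]{Berg& tao & ziegler}, so the comparison is with the Bergelson--Tao--Ziegler argument rather than anything in the present paper.

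The core case is where you have a genuine gap, and the strategy you sketch does not close it. Suppose you succeed in showing that $\Delta_{t^{[k]}}F$ agrees, up to a $G$-invariant function on $X^{[k]}$, with $H\circ\pi_v$ for some $H:X\to S^1$ and a single vertex $v$. Then $d^{[k]}(\Delta_t f)=\Delta(H\circ\pi_v)$, which at the point $(x_w)_w$ equals $(\Delta_g H)(x_v)$. But $d^{[k]}(\Delta H)$ equals $\prod_w(\Delta_g H)(x_w)^{\mathrm{sgn}(w)}$, which is a genuinely different function of $(x_w)_w$. Dividing, you find only that $d^{[k]}(\Delta_t f/\Delta H)$ is again a $(G,X^{[k]},S^1)$-coboundary, i.e.\ that $\Delta_t f/\Delta H$ is of type $<k$---which was already obvious from the outset. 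So the single-vertex descent, even if executed, lands you back where you started; it does not force $\Delta_t f=\Delta H$.

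Your belief that the obstruction is special to $G=\bigoplus_{p\in P}\mathbb{F}_p$ is also mistaken: $\mathbb{F}_p^\omega$ is just as much a torsion group, and the Bergelson--Tao--Ziegler proof of this lemma is entirely group-agnostic. What it actually uses is the Host--Kra structural fact that when $X$ is of order $<k$, each vertex in $X^{[k]}$ is $\mu^{[k]}$-almost surely measurable with respect to the $\sigma$-algebra generated by the other $2^k-1$ vertices, together with the behaviour of the face (side) transformations on $X^{[k]}$. This rigidity of $\mu^{[k]}$---not any averaging along $G$---is the missing ingredient that upgrades ``$\Delta_t f$ is of type $<k$'' to ``$\Delta_t f$ is a coboundary''. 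You should consult the cited proof directly rather than try to reinvent this step.
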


We now prove Theorem \ref{Main:thm} assuming Theorem \ref{TDred:thm} and Theorem \ref{MainT:thm}. The proof is similar to the proof of \cite[Theorem 5.4]{Berg& tao & ziegler}.
\begin{proof} [Proof of Theorem \ref{Main:thm} assuming Theorem \ref{TDred:thm} and Theorem \ref{MainT:thm}]
	Let $k,m,X,\rho$ be as in Theorem \ref{Main:thm}. We claim that for every $0\leq j \leq m$ and automorphisms $t_1,...,t_j:X\rightarrow X$ we have,
	
	$$\Delta_{t_1}...\Delta_{t_j}\rho \in P_{<O_{k,m,j}(1)}(G,X,S^1)\cdot B^1(G,X,S^1)$$
	
	We establish this by downward induction on $m$.
	For $j=m$ the claim follows by iterating Lemma \ref{dif:lem} (with no polynomial term). Assuming inductively that the claim is true for $j+1$. Let $Y$ be as in Theorem \ref{TDred:thm} and let $\pi:X\rightarrow Y$ be the projection map. We have that $\Delta_{t_1}...\Delta_{t_j} \rho $ is $(G,X,S^1)$-cohomologous to $\pi^\star \rho_{t_1,...,t_j}$ where $\rho_{t_1,...,t_j}:G\times Y\rightarrow S^1$ is a cocycle. By assumption $\rho$ is of type $<m$, and $t_{j+1}$ commutes with the $G$-action we therefore have that $\Delta_{t_1}...\Delta_{t_j}\rho$ is of type $<m$ and so by Lemma \ref{PP} (ii), $\pi^\star \rho_{t_1,...,t_j}$ is also of type $<m$. Thus, Lemma \ref{Cdec:lem} implies that $\rho_{t_1,...,t_j}:G\times Y\rightarrow S^1$ is a cocycle of type $<m$. Since $\rho_{t_1,...,t_j}$ is defined on a totally disconnected system $Y$ we see from Theorem \ref{MainT:thm} that $\rho_{t_1,...,t_j}$ is $(G,Y,S^1)$-cohomologous to a phase polynomial $P_{t_1,...,t_j}:G\times Y\rightarrow S^1$ of degree $<O_{k,m,j}(1)$. Lifting everything up by $\pi$, we conclude that $\rho_{t_1,...,t_j}$ is $(G,X,S^1)$-cohomologous to $\pi^\star P_{t_1,...,t_j}$ which by functoriality is a phase polynomial of degree $<O_{k,m,j}(1)$. This completes the proof of the claim, Theorem \ref{Main:thm} now follows from the case $j=0$.
\end{proof}
The rest of the paper is devoted to the proof of Theorem \ref{TDred:thm} and Theorem \ref{MainT:thm} assuming the induction hypothesis of Theorem \ref{Main:thm}.
\section{Reduction to a finite dimensional $U$}\label{fdred:sec}
In the next couple of sections we develop a theory which eventually allow us to reduce the proof of Theorem \ref{Main:thm} to the case where $X$ is totally disconnected. In this section we develop tools that later will be used to replace each of the structure groups with a finite dimensional quotient (See Definition \ref{FD:def}, below). We work in general settings (we don't assume that $X$ splits) and so we potentially deal with some "pathological" groups. We believe that working in this generality gives a better understanding of the example in Theorem \ref{example:thm}. Moreover, one can adapt our proof to prove a more general version of Theorem \ref{Main:thm}. More concretely, it is possible to replace the torus in the splitting condition with any compact connected abelian group $U$ that has no non-trivial local isomorphism to $\hat G$ (i.e. there is no open neighborhood $U'$ of the identity in $U$ and a non-trivial map $\varphi:U'\rightarrow \hat G$ such that $\varphi(u\cdot u')=\varphi(u)\cdot \varphi(u')$ whenever $u,u',u\cdot u'\in U'$). For the sake of simplicity we do not prove Theorem \ref{Main:thm} in this generality. \begin{defn} [Definition and properties of finite dimensional compact abelian groups] \label{FD:def} \cite[Theorem 8.22]{HM} The following conditions are equivalent for a compact abelian group $H$ and a natural number $n$:
	
	\begin{enumerate}
		\item {There is a compact zero dimensional subgroup $\Delta$ of $H$ and an exact sequence $$1\rightarrow \Delta \rightarrow H\rightarrow \mathbb{T}^n\rightarrow 1$$ where $\mathbb{T}^n$ is the $n$-dimensional torus.}
		\item {There is a compact zero dimensional subgroup $\Delta$ of $H$ and a continuous quotient homomorphism $\varphi:\Delta\times\mathbb{R}^n\rightarrow H$ which has a discrete kernel.}
	\end{enumerate}
	We say that a compact abelian group $H$ is finite dimensional if it satisfies at least one of these conditions.
	
\end{defn}
\begin{rem}
	Note that condition $(1)$ in the previous definition is equivalent to the existence of some zero dimensional subgroup $D$ of $H$ and an exact sequence $$1\rightarrow D\rightarrow H\rightarrow \mathbb{T}^n\times C_k\rightarrow 1$$ for some finite group $C_k$. For if $H/D\cong \mathbb{T}^n\times C_k$ then $C_k$ is a subgroup of $H/D$ hence the isomorphism theorem implies that there exists $D\subseteq D'\subseteq H$ such that $D'/D\cong C_k$. It follows that $D'$ is also zero dimensional group and $H/D'\cong \mathbb{T}^n$ as required.
\end{rem}

\begin{defn} [free action] \cite[Definition B.2]{Berg& tao & ziegler}
	Let $U$ be a locally compact group acting on a probability space $X$ in a measure-preserving way. The action of $U$ is said to be free if $X$ is measure-equivalent to a system of the form $Y\times U$ and the action of $u\in U$ is given by $V_u(y,v)=(y,uv)$. 
\end{defn}
In this section we study cocycles $\rho:G\times X\rightarrow S^1$ which satisfy a Conze-Lesgine type equation (See \cite{CL84}, \cite{CL87}, \cite{CL88} for relevant work of Conze and Lesigne about $\mathbb{Z}$-systems of order $<3$) with respect to some group $U$ of measure-preserving transformations on $X$. That is, we assume that there exists some $m\in\mathbb{N}$ such that for every $u\in U$, $\Delta_u \rho = p_u\cdot \Delta F_u$ for some phase polynomial $p_u:G\times X\rightarrow S^1$ and a measurable map $F_u:X\rightarrow S^1$ (In retrospective, if Theorem \ref{Main:thm} holds then any cocycle of type $<m$ satisfies this equation with respect to any group of automorphisms). The main result in this section is the following,
\begin{prop}\label{FD:prop}
	Let $X$ be an ergodic $G$-system. Let $U$ be a compact abelian group acting freely on $X$ and commuting with the action of $G$. Let $m\geq 0$ and $\rho:G\times X\rightarrow S^1$ be a cocycle so that for every $u\in U$ we have that $\Delta_u\rho = p_u\cdot \Delta F_u$ for some phase polynomial $p_u\in P_{<m}(G,X,S^1)$ and a measurable map $F_u:X\rightarrow S^1$. Then $\rho$ is $(G,X,S^1)$-cohomologous to a cocycle which is invariant under some closed connected subgroup $J$ of $U$ for which $U/J$ is a finite dimensional compact abelian group.
\end{prop}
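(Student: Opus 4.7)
The plan is to study the map $\Psi:u\mapsto [p_u]$ from $U$ into the Polish group $P_{<m}(G,X,S^1)/B^1(G,X,S^1)$, show that $\Psi$ factors through a finite dimensional quotient of $U$, and then upgrade the resulting near-invariance of $\rho$ to genuine invariance under a closed connected subgroup $J$. First I would use a standard measurable selection argument (available because the targets are Polish groups) to choose $u\mapsto p_u$ and $u\mapsto F_u$ measurably in $u$. Computing $\Delta_{u+v}\rho$ two ways via $\Delta_{u+v}\rho=\Delta_u\rho\cdot V_u\Delta_v\rho$ gives the crossed homomorphism identity $p_{u+v}\equiv p_u\cdot V_u p_v\pmod{B^1(G,X,S^1)}$, and comparing $\Delta_u\Delta_v\rho$ with $\Delta_v\Delta_u\rho$ using commutativity of the $U$-action yields $\Delta_u p_v\equiv \Delta_v p_u\pmod{B^1}$.

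The main step is to establish that $\Psi$ factors through a finite dimensional quotient of $U$, which is where I would adapt the linearization trick of Furstenberg and Weiss. Iterating the commutative identity above and using that $p_u$ has polynomial degree $<m$ in $x$, I expect to reduce $p_u$ --- after applying $m-1$ multiplicative derivatives $\Delta_{h_1}\cdots\Delta_{h_{m-1}}$ in $x$ --- to a product of terms of the form $\chi_\alpha(u)\cdot\xi_\alpha(g,h_1,\dots,h_{m-1},x)$, where the $\chi_\alpha$ are continuous characters of $U$ and the $\xi_\alpha$ are fixed phase polynomials. The crucial point is that $\hat G=\prod_{p\in P}\mathbb F_p$ is totally disconnected, so the continuous $g$-dependence enforces discreteness on those $\chi_\alpha$ that are non-trivial on the connected component $U_0$: a continuous one-parameter subgroup of $U_0$ cannot map non-trivially into a totally disconnected target. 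Combined with the polynomial degree bound $m$, this should force the set of characters of $U_0$ appearing non-trivially modulo coboundaries to generate a finitely generated subgroup $A_0\subseteq\widehat{U_0}$.

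Setting $A\subseteq\hat U$ to be the subgroup generated by $A_0$ together with the annihilator of $U_0$, and taking $J=(A^\perp)_0$ to be the connected component of the annihilator of $A$ in $U$, I obtain that $U/A^\perp$ fits into a short exact sequence $1\to\Delta\to U/A^\perp\to\mathbb T^n\to 1$ with $\Delta$ totally disconnected; passing from $A^\perp$ to its connected component only introduces a further totally disconnected factor, so $U/J$ remains finite dimensional in the sense of Definition \ref{FD:def}. By construction $[p_u]=0$ for every $u\in J$, and hence $\Delta_u\rho$ is a $(G,X,S^1)$-coboundary for every $u\in J$.

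Finally, since $U$ acts freely on $X$, so does the closed subgroup $J$, giving a decomposition $X\cong W\times J$. A Mackey-type cohomological argument --- choosing coboundary witnesses $G_u:X\to S^1$ with $\Delta_u\rho=\Delta G_u$ depending measurably on $u\in J$, then using the crossed-homomorphism identity in $u$ to integrate along the free $J$-action --- produces a single measurable $F:X\to S^1$ such that $\rho\cdot\Delta F^{-1}$ is genuinely $J$-invariant, completing the proof. The main obstacle will be the second paragraph: establishing that the linearization of $p_u$ picks up only finitely many continuous characters of $U_0$ modulo coboundaries. This is the point where both the totally disconnected structure of $\hat G$ and the bounded polynomial degree $m$ must be exploited simultaneously, and where the care taken in setting up the crossed homomorphism identities in Step 1 really pays off.
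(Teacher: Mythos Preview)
Your overall skeleton is right (measurable selection, a linearization step, exploiting that $\hat G$ is totally disconnected, then a straightening lemma), but the main step in your second paragraph has a genuine gap and is not the mechanism the paper uses. Working only modulo $B^1(G,X,S^1)$ is not enough: to extract a map from a connected piece of $U$ into $\hat G$ you need $u\mapsto p_u$ to be an \emph{honest} cocycle in $u$, not just a crossed homomorphism modulo coboundaries, since the $B^1$ error term does not vanish after taking $G$-derivatives $\Delta_{h_1}\cdots\Delta_{h_{m-1}}$ in $x$. Your proposed reduction to ``a product of terms $\chi_\alpha(u)\cdot\xi_\alpha(\cdot)$'' with finitely many characters $\chi_\alpha$ is also not what one actually obtains: once the degree is driven down you get a \emph{single} map into $P_{<1}(G,X,S^1)\cap Z^1(G,X,S^1)\cong\hat G$, and the conclusion is that this map is \emph{trivial} on any connected domain (not that it involves finitely many characters). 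In particular, the ``finitely generated $A_0\subseteq\widehat{U_0}$'' framing is a red herring; finite dimensionality of $U/J$ does not come from a character count.

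The paper's route is cleaner and avoids these issues. First, the linearization lemma (Lemma~\ref{lin:lem}) upgrades the cocycle-modulo-$B^1$ identity to an honest identity $p_{uv}=p_u\,V_u p_v$ on an open neighbourhood $U'$ of the identity. Second --- and this is the step you are missing --- finite dimensionality is obtained from the general fact that compact abelian groups are approximated by Lie groups: one chooses a closed $J'\subseteq U'$ with $U/J'$ a Lie group. Third, one reduces the polynomial degree by iterating in the $u$-variable (via the separation lemma, cf.\ Corollary~\ref{ker:cor}), not in the $x$-variable; on the connected component $J:=J'_0$ this drives $p$ into $P_{<1}\cap Z^1\cong\hat G$, and since $\hat G$ is totally disconnected the restriction $p|_J$ is trivial. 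Then Lemma~\ref{cob:lem} gives the $J$-invariance, and the short exact sequence $1\to J'/J\to U/J\to U/J'\to 1$ exhibits $U/J$ as a totally disconnected extension of a Lie group, hence finite dimensional. Your final straightening step is fine and is exactly Lemma~\ref{cob:lem}; the gap is entirely in how you propose to find $J$.
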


We will take advantage of the following results of Bergelson Tao and Ziegler \cite{Berg& tao & ziegler}.

\begin{lem}  [Separation Lemma] \cite[Lemma C.1]{Berg& tao & ziegler}\label{sep:lem} Let $X$ be an ergodic $G$-system, let $k\geq 1$, and let $\phi,\psi\in P_{<k}(X,S^1)$ be such that $\phi/\psi$ is non-constant. Then $\|\phi-\psi\|_{L^2(X)} \geq \sqrt{2}/2^{k-2}$.
\end{lem}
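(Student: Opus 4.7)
The plan is induction on $k$. Set $\eta := \phi/\psi$; since $P_{<k}(X,S^1)$ is a group under pointwise multiplication, $\eta\in P_{<k}(X,S^1)$, and by hypothesis it is non-constant. Because $|\psi|\equiv 1$ almost everywhere, one has
\[
\|\phi-\psi\|_{L^2(X)}^{2} \;=\; 2 - 2\,\mathrm{Re}\!\int_X \eta\, d\mu,
\]
so the task reduces to bounding $\mathrm{Re}\int\eta\, d\mu$ away from $1$. The case $k=1$ is vacuous, since phase polynomials of degree $<1$ are $G$-invariant and hence constant by ergodicity, so no eligible pair $\phi,\psi$ exists.

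The base case $k=2$ is immediate: $\eta$ is then a non-trivial eigenfunction, whose integral vanishes by ergodicity, so $\|\phi-\psi\|_{L^2}=\sqrt{2}=\sqrt{2}/2^{k-2}$. For the inductive step, assume the bound through $k-1\geq 2$ and take $\phi,\psi\in P_{<k}(X,S^1)$ with $\eta$ non-constant. If every derivative $\Delta_h\eta$ is constant, then the polynomiality criterion places $\eta$ in $P_{<2}(X,S^1)$, and the base-case computation applied to $\eta$ already gives $\|\phi-\psi\|_{L^2}\geq\sqrt{2}\geq\sqrt{2}/2^{k-2}$. Otherwise there exists $h\in G$ with $\Delta_h\eta=\Delta_h\phi/\Delta_h\psi$ non-constant; since $\Delta_h$ is a homomorphism from $P_{<k}$ to $P_{<k-1}$, both $\Delta_h\phi,\Delta_h\psi$ lie in $P_{<k-1}(X,S^1)$, and the inductive hypothesis yields
\[
\|\Delta_h\phi - \Delta_h\psi\|_{L^2(X)} \;\geq\; \frac{\sqrt{2}}{2^{k-3}}.
\]

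To convert this into a bound on $\phi-\psi$, I would use the pointwise identity
\[
\Delta_h\phi - \Delta_h\psi \;=\; T_h\phi\,(\bar\phi-\bar\psi) \;+\; (T_h\phi - T_h\psi)\,\bar\psi,
\]
together with $|\phi|=|\psi|\equiv 1$ and the $G$-invariance of $\mu$, to deduce
\[
\|\Delta_h\phi-\Delta_h\psi\|_{L^2} \;\leq\; \|\phi-\psi\|_{L^2} + \|T_h(\phi-\psi)\|_{L^2} \;=\; 2\|\phi-\psi\|_{L^2}.
\]
Combining with the inductive estimate above gives $\|\phi-\psi\|_{L^2}\geq \sqrt{2}/2^{k-2}$, closing the induction. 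There is no genuinely hard step; the only subtlety worth flagging is that the inductive descent from $k$ to $k-1$ fails to produce a non-constant derivative precisely when $\eta\in P_{<2}$, which is why the base case at $k=2$ must be available as a separate fallback inside the inductive step.
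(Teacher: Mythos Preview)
Your proof is correct. The induction via the derivative $\Delta_h$ together with the triangle-inequality estimate $\|\Delta_h\phi-\Delta_h\psi\|_{L^2}\le 2\|\phi-\psi\|_{L^2}$ is exactly the right mechanism, and the separate treatment of the case $\eta\in P_{<2}$ is necessary and handled cleanly.

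Note, however, that the paper does not supply its own proof of this lemma: it is quoted verbatim from \cite[Lemma C.1]{Berg& tao & ziegler} and used as a black box. Your argument is essentially the standard one (and matches the proof in the cited reference), so there is nothing further to compare.
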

Since $X$ is compact, $L^2(X)$ is separable. From this it follows that up to a constant multiple, there are only countably many phase polynomials of a given degree.
\begin{lem}  [Measure selection Lemma] \cite[Lemma C.4]{Berg& tao & ziegler}\label{sel:lem} Let $X$ be an ergodic $G$-system, and let $k\geq 1$. Let $U$ be a compact abelian group. If $u\mapsto h_u$ is Borel measurable map from $U$ to $\mathcal{P}_{<k}(G,X,S^1)\cdot \mathcal{B}^1(G,X,S^1) \subseteq \mathcal{M}(G,X,S^1)$ where $\mathcal{M}(G,X,S^1)$ is the group of measurable maps of the form $G\times X\rightarrow S^1$ endowed with the topology of convergence in measure, then there is a Borel measurable choice of $f_u,\psi_u$ (as functions from $U$ to $M(X,S^1)$ and $U$ to $P_{<k}(G,X,S^1)$ respectively) obeying that $h_u = \psi_u \cdot \Delta f_u$.
\end{lem}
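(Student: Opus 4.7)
The plan is to decompose the Borel selection into two Polish-group-quotient steps. The ambiguity in writing $h_u = \psi_u \cdot \Delta f_u$ is exactly multiplication by an element of $H := \mathcal{P}_{<k}(G,X,S^1) \cap \mathcal{B}^1(G,X,S^1)$: any other valid pair $(\psi_u', f_u')$ satisfies $\psi_u/\psi_u' = \Delta(f_u'/f_u) \in H$. So $\psi_u$ is well-defined in $\mathcal{P}_{<k}(G,X,S^1)/H$, and once $\psi_u$ is chosen, $f_u$ is well-defined in $\mathcal{M}(X, S^1)/S^1$ (by ergodicity, the $G$-invariants in $\mathcal{M}(X,S^1)$ are exactly the constants).

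For the Borel selection of $\psi_u$: the topology of convergence in measure makes $\mathcal{M}(G,X,S^1)$ a Polish group, and $\mathcal{P}_{<k}(G,X,S^1)$ is a closed subgroup (cut out by the countably many continuous identities $\Delta_{h_1}\cdots\Delta_{h_k}\psi(g,\cdot) = 1$). The Separation Lemma (Lemma \ref{sep:lem}) plays a crucial role: it implies that $\mathcal{P}_{<k}(X,S^1)/S^1$ is a countable discrete group (the cosets are $L^2$-separated by $\sqrt{2}/2^{k-2}$ and $L^2(X)$ is separable), which in turn ensures that $H$ is closed in $\mathcal{P}_{<k}(G,X,S^1)$ and that the quotient $\mathcal{P}_{<k}(G,X,S^1)/H$ is a Polish space. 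The natural inclusion induces a Borel isomorphism $\phi \colon \mathcal{P}_{<k}\cdot \mathcal{B}^1 / \mathcal{B}^1 \to \mathcal{P}_{<k}/H$. Applying the classical Borel cross-section theorem for Polish group quotients yields a Borel section $\sigma \colon \mathcal{P}_{<k}(G,X,S^1)/H \to \mathcal{P}_{<k}(G,X,S^1)$; we then set $\psi_u := \sigma(\phi(h_u \bmod \mathcal{B}^1))$, which is Borel in $u$ and satisfies $h_u \cdot \psi_u^{-1} \in \mathcal{B}^1(G,X,S^1)$.

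With $\psi_u$ fixed, the coboundary $\Delta f_u := h_u \cdot \psi_u^{-1}$ depends Borel-measurably on $u$. The derivative $\Delta \colon \mathcal{M}(X,S^1) \to \mathcal{B}^1(G,X,S^1)$ is a continuous surjective homomorphism between Polish groups whose kernel, by ergodicity of the $G$-action, is exactly $S^1$; hence $\Delta$ is an open map and admits a Borel cross-section by the same classical result, producing the desired Borel $u \mapsto f_u$. The main obstacle is the first selection: verifying that $H$ is closed and that the quotient $\mathcal{P}_{<k}(G,X,S^1)/H$ is a Polish space good enough to apply Borel cross-section selection. This is exactly where the Separation Lemma is essential, since it prevents a sequence of non-cohomologous phase polynomials from converging in measure to a coboundary, and thus upgrades the (a priori only analytic) coset structure of $\mathcal{P}_{<k}\cdot \mathcal{B}^1$ modulo $\mathcal{B}^1$ to a Borel one.
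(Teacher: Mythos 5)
The paper does not actually prove this lemma; it is imported verbatim from \cite[Lemma C.4]{Berg& tao & ziegler}. Judged on its own terms, your proposal correctly isolates the ambiguity group $H=\mathcal{P}_{<k}(G,X,S^1)\cap\mathcal{B}^1(G,X,S^1)$ and correctly reaches for descriptive-set-theoretic selection, but the load-bearing claim --- that the Separation Lemma forces $H$ to be closed, so that $\mathcal{P}_{<k}(G,X,S^1)/H$ is Polish and the cross-section theorem for Polish group quotients applies --- is false. The Separation Lemma only separates phase polynomials whose \emph{ratio is non-constant}; it says nothing about convergence inside a single coset of the constants, and that is exactly where closedness fails. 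Concretely, if $q_n\in P_{<2}(X,S^1)$ are eigenfunctions, then $\Delta q_n$ is the constant (in $x$) character $c_n\in\hat G$ given by the eigenvalue, so $\Delta q_n\in H$; but the eigenvalue group of an ergodic system is typically a countable \emph{dense, non-closed} subgroup of $\hat G$ (in Example \ref{Example} of this paper it is $\mathbb{Z}$ sitting densely in $\prod_p C_p$), and any limit point $c\in\hat G$ that is not an eigenvalue lies in $\overline{H}\setminus H$, since $c=\Delta F$ would make $F$ an eigenfunction with eigenvalue $c$. For the same reason $\mathcal{B}^1$ is not closed in $\mathcal{M}(G,X,S^1)$, so neither $\mathcal{M}/\mathcal{B}^1$ nor $\mathcal{P}_{<k}/H$ is a Polish (or even standard Borel) quotient, and your map $\phi$ and section $\sigma$ are not available. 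This is not merely cosmetic: the coset equivalence relation of a countable dense subgroup is generically non-smooth (Vitali-type), so a Borel transversal of the sort you invoke can genuinely fail to exist.

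The repair is to quotient the \emph{source} by the kernel rather than the target by the image. Consider the continuous homomorphism $\Phi:\mathcal{P}_{<k}(G,X,S^1)\times\mathcal{M}(X,S^1)\to\mathcal{M}(G,X,S^1)$, $\Phi(\psi,f)=\psi\cdot\Delta f$, whose image is $\mathcal{P}_{<k}\cdot\mathcal{B}^1$ and whose kernel is $K=\{(\Delta\bar f,f): f\in P_{<k+1}(X,S^1)\}$, by the polynomiality criterion. Kernels of continuous homomorphisms into Hausdorff groups are automatically closed, so $(\mathcal{P}_{<k}\times\mathcal{M}(X,S^1))/K$ is a Polish group, the induced map to $\mathcal{M}(G,X,S^1)$ is a continuous \emph{injection}, and by Lusin--Souslin it is a Borel isomorphism onto its (Borel) image; composing its inverse with a Borel cross-section of the closed-subgroup quotient by $K$ (this is where the cross-section theorem legitimately applies, and where the Separation Lemma enters, via the fact that $P_{<k+1}(X,S^1)$ is a closed, countable-mod-constants subgroup) yields Borel maps $u\mapsto(\psi_u,f_u)$ with $h_u=\psi_u\cdot\Delta f_u$. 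Your second step (selecting $f_u$ from the coboundary $h_u\psi_u^{-1}$) has the same Polishness issue for $\mathcal{B}^1$ but is fixable by the identical Lusin--Souslin device; the first step as written is not.
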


\begin{lem} [Straightening nearly translation-invariant cocycles] \cite[Lemma C.9]{HK} \label{cob:lem}
	Let $X$ be an ergodic $G$-system, let $K$ be a compact abelian group acting freely on $X$ and commuting with the $G$-action, and let $\rho:G\times X\rightarrow S^1$ be such that $\Delta_k\rho$ is a $(G,X,S^1)$-coboundary for every $k\in K$, then $\rho$ is $(G,X,S^1)$-cohomologous to a function which is invariant under the action of some open subgroup of $K$. 
\end{lem}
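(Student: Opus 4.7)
The idea is to select a Borel family of primitives $F_k$ with $\Delta_k\rho=\Delta F_k$, measure the failure of $k\mapsto F_k$ to be a cocycle on $K$ by a constant $2$-cocycle $c:K\times K\to S^1$, trivialize $c$ on an open subgroup $K_0$, and then use freeness of the $K$-action to upgrade the resulting genuine $K_0$-cocycle into a coboundary $\Delta H$; the function $\rho\cdot(\Delta H)^{-1}$ will be $K_0$-invariant.

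\emph{Step 1 (Selection and construction of $c$).} By hypothesis $k\mapsto\Delta_k\rho$ is a Borel map from $K$ into $B^1(G,X,S^1)$, which is contained in $\mathcal{P}_{<1}(G,X,S^1)\cdot B^1(G,X,S^1)$. So Lemma \ref{sel:lem} (taking the polynomial component to be trivial) produces a Borel selection $k\mapsto F_k\in\mathcal{M}(X,S^1)$ with $\Delta_k\rho=\Delta F_k$; normalize $F_e\equiv 1$. Since $K$ commutes with $G$, iterating the cocycle identity for $\rho$ yields $\Delta_{k_1k_2}\rho=V_{k_2}(\Delta_{k_1}\rho)\cdot\Delta_{k_2}\rho$, hence $F_{k_1k_2}/(V_{k_2}F_{k_1}\cdot F_{k_2})$ satisfies $\Delta$ equal to $1$, i.e.\ is $G$-invariant, and by ergodicity equals a constant $c(k_1,k_2)\in S^1$. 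A direct bookkeeping check on $F_{k_1k_2k_3}$ evaluated two ways confirms that $c$ is a Borel $2$-cocycle on $K$.

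\emph{Step 2 (Trivializing $c$ on an open subgroup).} This is the main obstacle. Invoking Moore's theorem on measurable cohomology of compact abelian groups (using that $K$ is an inverse limit of abelian Lie groups, and that Borel $2$-cocycles can be corrected by a coboundary to become continuous), one locates an open subgroup $K_0\leq K$ and a Borel map $\beta:K_0\to S^1$ with $c(k_1,k_2)=\beta(k_1k_2)/(\beta(k_1)\beta(k_2))$ for all $k_1,k_2\in K_0$. Replacing $F_k$ by $\beta(k)^{-1}F_k$ for $k\in K_0$ (which does not change $\Delta F_k$) makes the modified map $k\mapsto F_k$ into a genuine cocycle over the $K_0$-action on $X$, satisfying $F_{k_1k_2}=V_{k_2}F_{k_1}\cdot F_{k_2}$.

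\emph{Step 3 (Freeness and conclusion).} Since $K$ acts freely, so does $K_0$. For any free measure-preserving action of a locally compact group, every Borel $S^1$-valued cocycle is a coboundary: choose a Borel section $s:X/K_0\to X$ of the quotient, write each $x\in X$ uniquely as $V_{k_0(x)}s(\pi(x))$, and set $H(x):=F_{k_0(x)}(s(\pi(x)))$; the cocycle identity for $F$ yields $\Delta_{k_0}H=F_{k_0}$ for every $k_0\in K_0$. Therefore, for all $k_0\in K_0$,
\[
\Delta_{k_0}\rho=\Delta F_{k_0}=\Delta(\Delta_{k_0}H)=\Delta_{k_0}(\Delta H),
\]
so $\rho\cdot(\Delta H)^{-1}$ is $K_0$-invariant, providing the required $K_0$-invariant representative of the cohomology class of $\rho$. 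The hardest ingredient throughout is Step 2, where the compact abelian structure of $K$ is used in an essential way to produce the open subgroup on which the obstruction class vanishes.
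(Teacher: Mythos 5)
This lemma is imported by the paper from \cite[Lemma C.9]{HK} without proof, and your argument is essentially the standard one behind that citation: measurably select primitives $F_k$, use ergodicity to extract a constant $S^1$-valued Borel $2$-cocycle $c$ on $K$, trivialize $c$ on an open subgroup, and then integrate the resulting genuine $K_0$-cocycle using freeness (this last step is exactly Lemma \ref{B.4} of the paper, which you could cite instead of re-deriving). The one place you are too brisk is Step 2: rather than a blanket appeal to ``Moore's theorem,'' the clean route is to observe that the antisymmetrization $b(k_1,k_2)=c(k_1,k_2)/c(k_2,k_1)$ is a measurable bicharacter, so $k\mapsto b(k,\cdot)$ is a measurable homomorphism into the countable discrete group $\hat K$ and hence has open kernel $K_1$ by Corollary \ref{openker}, while the remaining symmetric cocycle on $K_1$ is a coboundary because $\mathrm{Ext}_{\mathrm{LCA}}(K_1,S^1)=0$ ($S^1$ is injective among locally compact abelian groups); with that unpacking, and the minor remark that the selection via Lemma \ref{sel:lem} leaves a constant-in-$x$ factor in $P_{<1}\cap B^1$ that must be absorbed into $F_k$ by a measurable choice of eigenfunctions (possible since the eigenvalue group of an ergodic system is countable), your proof is correct.
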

\begin{rem}
	Note that if $K$ is connected then it has no non-trivial open subgroups (see Lemma \ref{connectedcomponent}). In this case we have that $\rho$ is $(G,X,S^1)$-cohomologous to a function which is invariant under $K$. Moreover it is important to note that such result does not work for cocycles which takes values in an arbitrary compact abelian group.
\end{rem}
\begin{lem} [Linearization of the $p_u$-term] \label{lin:lem}
	Let $X$ be an ergodic $G$-system, let $U$ be a compact abelian group acting freely on $X$ and commuting with the action of $G$. Let $\rho:G\times X\rightarrow S^1$ be a cocycle and suppose that there exists $m\in\mathbb{N}$ such that for every $u\in U$ there exist phase polynomials $p_u\in P_{<m}(G,X,S^1)$ and a measurable map $F_u:X\rightarrow S^1$ such that $\Delta_u\rho = p_u\cdot \Delta F_u$. Then there exists a measurable choice $u\mapsto p'_u$ and $u\mapsto F'_u$ such that $\Delta_u\rho = p'_u\cdot \Delta F'_u$ for phase polynomials $p'_u\in P_{<m}(G,X,S^1)$ which satisfies that $p'_{uv} = p'_u\cdot V_u p'_v$ whenever $u,v,uv\in U'$ where $U'$ is some neighborhood of $U$.
\end{lem}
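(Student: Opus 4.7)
The plan is to apply Lemma~\ref{sel:lem} to the Borel-measurable map $u\mapsto\Delta_u\rho$ in order to pick Borel selections $u\mapsto p_u\in P_{<m}(G,X,S^1)$ and $u\mapsto F_u\in\mathcal{M}(X,S^1)$ with $\Delta_u\rho = p_u\cdot\Delta F_u$, normalized so that $p_e=1$ and $F_e=1$.

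Next, I would exploit the cocycle identity $\Delta_{uv}\rho = \Delta_u\rho\cdot V_u(\Delta_v\rho)$. Direct substitution of the selection (using the fact that the $G$- and $U$-actions commute, so $V_u\Delta F_v = \Delta(V_u F_v)$) yields
$$p_{uv}\cdot\Delta F_{uv} \;=\; (p_u\cdot V_u p_v)\cdot\Delta(F_u\cdot V_u F_v),$$
so that the \emph{defect} $\tau(u,v):=p_{uv}\cdot\overline{p_u\cdot V_u p_v}$ is simultaneously a phase polynomial of degree $<m$ (a quotient of three such) and the coboundary $\Delta(F_u\cdot V_u F_v\cdot\overline{F_{uv}})$; in particular $\tau(u,v)\in P_{<m}(G,X,S^1)\cap B^1(G,X,S^1)$. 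I would then invoke Lemma~\ref{sep:lem} together with separability of $L^2(X)$ to conclude that $P_{<m}(G,X,S^1)$ is countable modulo constants in $S^1$, and combine this with the strong continuity of the $U$-action on $L^2(X)$ (standard for compact actions by measure-preserving transformations, giving $u\mapsto\Delta_u\rho(g,\cdot)$ continuous in $L^2$ for each $g$) to obtain $\tau(u,v)\to 1$ in measure as $(u,v)\to(e,e)$. Discreteness modulo constants together with continuity forces $\tau(u,v)$ to reduce to a constant $c(u,v)\in S^1$ on some neighborhood $U'\times U'$ of $(e,e)$.

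Finally, the remaining obstruction $c:U'\times U'\to S^1$ is a measurable local $2$-cocycle on the compact abelian group $U$, and by a standard averaging argument, after possibly shrinking $U'$ it can be written as $c(u,v)=s(u)\cdot s(v)/s(uv)$ for some measurable $s:U'\to S^1$ (note $V_u$ acts trivially on constants, so the twisted and untwisted cocycle equations coincide). Setting $p'_u:=p_u\cdot s(u)$ and correcting $F_u$ accordingly then delivers the desired local homomorphism identity $p'_{uv}=p'_u\cdot V_u p'_v$ on $U'$, without spoiling $\Delta_u\rho=p'_u\cdot\Delta F'_u$. The main obstacle I anticipate is the discreteness argument: the selection $u\mapsto p_u$ is only defined modulo $P_{<m}(G,X,S^1)\cap B^1(G,X,S^1)$, so a careful normalization is required before continuity can be invoked; Lemma~\ref{sep:lem} supplies exactly the rigidity — phase polynomials of bounded degree are discrete modulo constants — that upgrades measurable dependence in $u$ into locally constant dependence near the identity.
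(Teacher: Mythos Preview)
Your outline follows the Bergelson--Tao--Ziegler argument to which the paper defers (the paper gives no proof of its own here), and the architecture --- measurable selection, compute the defect, invoke the separation lemma --- is correct. But the passage from ``$\tau(u,v)$ is a phase-polynomial coboundary'' to ``$\tau(u,v)$ is constant on a neighborhood'' has a genuine gap, one you half-acknowledge in your last paragraph but do not close.

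Two concrete problems. First, your discreteness claim is false as stated: Lemma~\ref{sep:lem} shows that $P_{<m}(X,S^1)/S^1$ is countable, but $P_{<m}(G,X,S^1)$ modulo a single scalar is \emph{not} countable when $G$ is infinite, since each $g\in G$ carries its own independent $S^1$-ambiguity. Second, and more seriously, the assertion $\tau(u,v)\to 1$ as $(u,v)\to(e,e)$ does not follow from continuity of $u\mapsto\Delta_u\rho$. You only know $p_u\cdot\Delta F_u\to 1$; since $p_u$ and $F_u$ are merely Borel selections, there is no reason $\tau(u,v)=p_{uv}\,\overline{p_u\,V_up_v}$ should depend continuously on $(u,v)$ at all.

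The fix, following \cite{Berg& tao & ziegler}, is to work instead with the single-variable object
\[
b(u,v):=F_u\cdot V_uF_v\cdot\overline{F_{uv}}\in P_{<m+1}(X,S^1),
\]
which satisfies $\tau(u,v)=\overline{\Delta b(u,v)}$. Now Lemma~\ref{sep:lem} applies honestly: $P_{<m+1}(X,S^1)$ is a Lie group with identity component $S^1$ and only countably many components. The crucial point is that the coset map $u\mapsto F_u\cdot P_{<m+1}(X,S^1)\in\mathcal{M}(X,S^1)/P_{<m+1}(X,S^1)$ is \emph{independent of the selection} (any two choices of $F_u$ differ by an element of $P_{<m+1}(X,S^1)$) and is a genuine crossed homomorphism in $u$. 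One exploits this canonicity, together with the Lie-group structure of $P_{<m+1}(X,S^1)$, to adjust $F_u$ by a phase polynomial so that $u\mapsto F_u$ becomes continuous near the identity; then $b(u,v)\to 1$ genuinely, and discreteness of $P_{<m+1}(X,S^1)/S^1$ forces $b(u,v)\in S^1$ on a neighborhood $U'$. Once $b(u,v)$ is a scalar, $\tau(u,v)=\overline{\Delta b(u,v)}=1$ \emph{exactly} --- so your final step of trivializing a residual $2$-cocycle $c(u,v)$ by averaging is not needed.
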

The proof of this Lemma is given in \cite{Berg& tao & ziegler} as part of the proof of Proposition 6.1.
\begin{rem}
	The idea of linearizing the term $p_u$ as in the lemma above was originally introduced by Furstenberg and Weiss in \cite{F&W}. Later, this idea was used by Ziegler in \cite{Z} and by Bergelson Tao and Ziegler in \cite{Berg& tao & ziegler} who studied the structure of the universal characteristic factors for the groups $\mathbb{Z}$ and $\mathbb{F}_p^\omega$ respectively. This idea also plays a role, somehow behind the scenes in the paper of Host and Kra \cite{HK}.
\end{rem}

Finally, we prove the following Lemma about phase polynomial cocycles 

\begin{lem}\label{ker:lem}
	Let $X$ be an ergodic $G$-system, let $H$ be a compact abelian group acting freely on $X$ which commutes with the $G$-action and let $p:H\rightarrow P_{<m}(G,X,S^1)$ be a measurable map whose image are phase polynomials of degree $<m$ that are also cocycles. Suppose that $p$ satisfies $p(hk)=p(h)V_h p(k)$ for all $h,k\in H$. Then $p$ is trivial on the connected component $H_0$ of the identity in $H$.
\end{lem}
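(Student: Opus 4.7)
The plan is to reduce the statement, via the Separation Lemma, to the fact that a measurable homomorphism from a connected compact group to a totally disconnected one must be trivial. The two main reductions to be carried out are (i) that each value $p(h)(g,\cdot)$ is a constant function on $X$, and (ii) that the resulting assignment $h \mapsto p(h)$ lands in $\widehat G$, which is totally disconnected.

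I would first replace $H$ by $H_0$, so without loss of generality $H$ is connected. By Lemma \ref{sep:lem}, any two elements of $P_{<m}(X,S^1)$ whose ratio is non-constant are separated in $L^2$ by $\sqrt{2}/2^{m-2}$; since $L^2(X)$ is separable, the quotient $A := P_{<m}(X,S^1)/S^1$ is therefore a \emph{countable discrete} abelian group. Since the measure-preserving action of the compact group $H$ on $X$ induces a strongly continuous unitary representation on $L^2(X)$, the natural action of $H$ on $P_{<m}(X,S^1) \subseteq L^2(X)$ by $V_h$ is $L^2$-continuous, and descends to a continuous action on the discrete space $A$. Because $H=H_0$ is connected, every $H$-orbit in $A$ is connected in a discrete space and hence a single point. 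Equivalently, $V_h\phi/\phi \in S^1$ for every $\phi \in P_{<m}(X,S^1)$ and every $h \in H$.

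Now fix $g \in G$ and define $\bar\chi_g : H \to A$ by $\bar\chi_g(h) = [p(h)(g,\cdot)]$. The cocycle identity gives $\bar\chi_g(hk) = \bar\chi_g(h) \cdot [V_h p(k)(g,\cdot)]$, and by the triviality of the $H$-action on $A$ just established, $[V_h p(k)(g,\cdot)] = \bar\chi_g(k)$, so $\bar\chi_g$ is a group homomorphism. It is measurable (as $p$ is), hence continuous into the countable discrete target (a measurable homomorphism from a compact group to a Polish group is continuous), and then trivial since $H$ is connected. Therefore $p(h)(g,\cdot)$ is constant in $x$ for every $h$ and $g$; write $p(h)(g,x) = c(h,g)$.

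It remains to observe that the cocycle identity in the $G$-variable forces $c(h,\cdot)$ to be a character of $G$, and the cocycle identity in $h$ (now trivialized because $V_h$ fixes constants) yields $c(hk,\cdot) = c(h,\cdot)c(k,\cdot)$. Hence $h\mapsto c(h,\cdot)$ is a measurable, hence continuous, homomorphism $H \to \widehat G = \prod_{p\in P}\mathbb{F}_p$. Since $\widehat G$ is totally disconnected and $H$ is connected, this homomorphism is trivial, i.e.\ $p(h) = 1$ for every $h \in H_0$. The key step, and the only one requiring care, is the triviality of the $V_h$-action on $A$; everything else is a straightforward bookkeeping of cocycle identities together with the standard fact about measurable homomorphisms from connected to totally disconnected groups.
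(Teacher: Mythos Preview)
Your proof is correct and follows essentially the same line as the paper: reduce via the Separation Lemma to the case where each $p(h)(g,\cdot)$ is constant, identify the resulting map with a measurable homomorphism $H_0\to\hat G$, and conclude by connectedness versus total disconnectedness. The only packaging difference is in the first reduction: the paper invokes Corollary~\ref{ker:cor} (an induction on the degree, peeling off one layer $P_{<d}/P_{<d-1}$ at a time using its countable index), whereas you argue in one shot that the continuous $H_0$-action by $V_h$ on the discrete quotient $A=P_{<m}(X,S^1)/S^1$ has singleton orbits, which then turns $\bar\chi_g$ into a genuine homomorphism. Your route is arguably cleaner since it avoids the induction, but both rest on the same Separation Lemma input and arrive at the same endpoint.
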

\begin{proof}
	We apply Corollary \ref{ker:cor} once for every $g\in G$. From Lemma \ref{connectedcomponent} we see that the restriction of $p$ to the connected component $H_0$ is a homomorphism which takes values in $P_{<1}(G,X,S^1)\cap Z^1(G,X,S^1)$.
	By ergodicity we can identify $P_{<1}(G,X,S^1)\cap Z^1(G,X,S^1)$ with the dual group $\hat G$ of $G$ (equipped with the usual topology of point-wise convergence). We conclude that $p|_{H_0}:H_0\rightarrow \hat G$ is a measurable homomorphism from a connected group to a totally disconnected group, hence trivial (see Lemma \ref{AC:lem}).
\end{proof}
We can finally prove Proposition \ref{FD:prop}.
\begin{proof}[Proof of Proposition \ref{FD:prop}]
	Let $X,U,\rho,m,p_u,F_u$ be as in Proposition \ref{FD:prop}. By Lemma \ref{sel:lem} we may assume that $u\mapsto p_u$ and $u\mapsto F_u$ are measurable. Moreover, by Lemma \ref{lin:lem} we may assume that there exists an open neighborhood $U'$ of the identity in $U$ on which $p_u$ is a cocycle in $u$ (i.e. $p_{uv}=p_u V_u p_v$ whenever $u,v,uv\in U'$). It is well known that every compact abelian group can be approximated by Lie groups (Theorem \ref{approxLieGroups}). In other words, there exists a closed subgroup $J'$ contained in $U'$ such that $U/J'$ is a Lie group. Let $p:J'\rightarrow P_{<m}(G,X,S^1)$ be the map $j\mapsto p_j$. From Lemma \ref{ker:lem} it follows that $p$ is trivial on the connected component of the identity in $J'$, let $J:=J'_0$. We conclude that $\Delta_j \rho$ is a coboundary for all $j\in J$. By Lemma \ref{cob:lem} and Lemma \ref{connectedcomponent} we have that $\rho$ is $(G,X,S^1)$-cohomologous to a cocycle which is invariant under the action of $J$.\\
	
	It is left to show that $U/J$ is a finite dimensional group. We have the following exact sequence $$1\rightarrow J'/J\rightarrow U/J\rightarrow U/J'\rightarrow 1$$
Here $J'/J$ is totally disconnected and $U/J'$ is a Lie group. By the structure theorem for compact abelian Lie groups (Theorem \ref{structureLieGroups}) we know that $U/J'$ is a finite extension of a Torus. Hence, $U/J'$ is a finite dimensional compact abelian group as in property (1) of Definition \ref{FD:def}.
\end{proof}
In the next section we work with these finite dimensional groups. Our results in the next section will only be useful in the case where the system $X$ splits.
\section{Reduction to the totally disconnected group case} \label{tdred:sec}
In this section we work with a finite dimensional group $U$ acting on a system $X$. By the structure of finite dimensional groups (Definition \ref{FD:def}) we can write $U\cong\left(\mathbb{R}^n\times\Delta\right) /\Gamma$ for some $n\in\mathbb{N}$, a totally disconnected compact abelian group $\Delta$ and a discrete subgroup $\Gamma$ of $\mathbb{R}^n\times \Delta$. Throughout this section it will be convenient to identify $\mathbb{R}^n\times\{1\}$ with $\mathbb{R}^n$ and $\{1\}\times \Delta$ with $\Delta$.
\begin{defn}
	Let $(H,+)$ and $(K,\cdot)$ be two abelian groups. We say that a map $\varphi:H\rightarrow K$ is a homomorphism on some $A\subseteq H$ if $\varphi(x+y)=\varphi(x)\cdot\varphi(y)$ whenever $x,y\in A$.
\end{defn}
Below is a simple lemma about homomorphisms on open subsets of $\mathbb{R}^n$.
\begin{lem} [Lifting of homomorphisms on open sets of $\mathbb{R}^n$]\label{lift:lem} Let $H$ be a compact abelian group and let $\varphi:\mathbb{R}^n\rightarrow H$ be a measurable map.
	Suppose that there exists an open ball around zero $W\subseteq \mathbb{R}^n$ such that $\varphi(v+w)=\varphi(v)+\varphi(w)$ whenever $v,w\in W$. Then there exists a homomorphism $\tilde{\varphi}:\mathbb{R}^n\rightarrow H$ such that $\tilde{\varphi}$ and $\varphi$ agree on $W$.
\end{lem}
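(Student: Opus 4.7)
The plan is to construct $\tilde\varphi$ by a rescaling-and-powering trick: for any $v \in \mathbb{R}^n$, choose $k \in \mathbb{N}$ large enough that $v/k \in W$ (possible since $W$ is an open ball around zero, hence absorbing) and set $\tilde\varphi(v) := \varphi(v/k)^k$.

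First I would verify well-definedness. The key algebraic fact is that for any $u \in \mathbb{R}^n$ and $j \in \mathbb{N}$ such that $u, 2u, \ldots, ju$ all lie in $W$, one has $\varphi(ju) = \varphi(u)^j$, proved by induction on $j$ from the partial homomorphism property $\varphi(a+b) = \varphi(a)\varphi(b)$ for $a, b \in W$: at each step we write $ju = (j-1)u + u$ and note both summands belong to $W$ by hypothesis. To compare two candidate rescalings $k_1, k_2$, I would pass to their common multiple $k = k_1 k_2$: by convexity of $W$ around the origin, the entire segment from $0$ to $v/k_i$ lies in $W$, so the multiples $j \cdot v/(k_1 k_2)$ appearing in the induction are in $W$. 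This gives $\varphi(v/k_i)^{k_i} = \varphi(v/k)^{k}$ for $i=1,2$, so $\tilde\varphi(v)$ does not depend on the choice of $k$.

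Next I would check that $\tilde\varphi$ is a homomorphism: given $v, w \in \mathbb{R}^n$, choose $k$ so large that both $v/k$ and $w/k$ lie in $W$, then use the hypothesis to write $\varphi((v+w)/k) = \varphi(v/k)\varphi(w/k)$, raise to the $k$-th power, and invoke commutativity of $H$ to obtain $\varphi(v/k)^k \varphi(w/k)^k = \tilde\varphi(v) \tilde\varphi(w)$. Agreement on $W$ is immediate: for $v \in W$ we may take $k = 1$ in the definition, giving $\tilde\varphi(v) = \varphi(v)$; for any other admissible $k$, the iterated formula with $u = v/k$ and $j = k$ confirms consistency, since the multiples $jv/k$ for $j \leq k$ stay on the segment from $0$ to $v \in W$.

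I do not anticipate a serious obstacle; the only point requiring care is that each invocation of $\varphi(ju) = \varphi(u)^j$ demands that all intermediate multiples $u, 2u, \ldots, ju$ belong to $W$, and this is exactly what the hypothesis that $W$ is a ball centered at $0$ (hence convex and star-shaped at the origin) guarantees whenever $ju \in W$. Although the statement does not require it, measurability of $\tilde\varphi$ would also come for free: on each open set $kW = \{v : v/k \in W\}$ the formula $\tilde\varphi(v) = \varphi(v/k)^k$ exhibits $\tilde\varphi$ as a composition of measurable maps, and the union of these sets covers $\mathbb{R}^n$.
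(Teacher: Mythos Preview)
Your argument is correct and genuinely different from the paper's. The paper proceeds by an iterative extension: it defines $\varphi_1$ on $3W$ by writing $a=x+y$ with $x\in W$, $y\in 2W$ and setting $\varphi_1(a)=\varphi(x)\varphi(y)$, checks this is well-defined and a homomorphism on $1.5W$, then repeats to obtain $\varphi_n$ which is a homomorphism on $(1.5)^n W$, and takes the limit. Your rescaling trick $\tilde\varphi(v)=\varphi(v/k)^k$ is shorter and cleaner; it exploits directly that $W$, being a ball at the origin, is star-shaped and absorbing, so divisibility in $\mathbb{R}^n$ lets you push everything back into $W$ in one step rather than enlarging the domain incrementally. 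The paper's approach has the mild advantage of working verbatim for partial homomorphisms defined only on $W$ (not all of $\mathbb{R}^n$), whereas yours needs $\varphi$ defined at $(v+w)/k$; but since here $\varphi$ is given on all of $\mathbb{R}^n$ this is moot.

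One small point to tighten in the homomorphism step: after writing $\varphi((v+w)/k)=\varphi(v/k)\varphi(w/k)$ and raising to the $k$-th power, you implicitly use $\tilde\varphi(v+w)=\varphi((v+w)/k)^k$, which by your definition requires $(v+w)/k\in W$. This does not follow from $v/k,w/k\in W$ alone. The fix is trivial---either choose $k$ large enough that $(v+w)/k\in W$ as well, or observe that $(v+w)/(2k)\in W$ automatically (by the triangle inequality) and invoke well-definedness with $2k$ in place of $k$.
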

\begin{rem}
	Lemma \ref{lift:lem} is a special case of \cite[Lemma 4.7]{Z}. Also we note that if $\varphi$ is measurable then so is $\tilde{\varphi}$.
\end{rem}

\begin{proof}
	Write $W=B(0,r)$ for the open ball of radius $r$ around $0$ for some $r>0$. First we prove that we can extend $\varphi$ to a function $\varphi_1$ which is a homomorphism on $1.5W = B(0,1.5 r)$.\\
	We construct a map $\varphi_1 : \mathbb{R}^n\rightarrow H$ as follows: 
	If $a\not\in 3W$ we let $\varphi_1(a)=1$ (or any other element in $H$). Otherwise, choose any $x\in W$ and $y\in 2W$ with $a=x+y$ and let $\varphi_1(a)=\varphi(x)\varphi(y)$. Assume for now that $\varphi_1$ is well defined (i.e. independent of the choice of $x,y$), we claim that $\varphi_1$ extends $\varphi$ and is a homomorphism on $1.5W$. Indeed, let $x,y\in 1.5W$ then write $x=1.5u$ and $y=1.5v$ for $u,v\in W$. We have, $$\varphi_1(1.5u+1.5v)=\varphi_1 (u+(0.5u+0.5v)+v)$$
	as $u,v\in W$ we see that $0.5u+0.5v\in W$ and so $0.5u+1.5v\in 2W$. From the assumption that $\varphi_1$ is well defined we have
	$$\varphi_1 (u+(0.5u+0.5v)+v)=\varphi(u)\varphi(0.5u+0.5v+v)=\varphi(u)\varphi(0.5u+0.5v)\varphi(v)$$
	where the last equality follows from the linearity of $\varphi$ on $W$. Using the linearity of $\varphi$ few more times we have
	$$\varphi(u)\varphi(0.5u+0.5v)\varphi(v)=\varphi(u)\varphi(0.5u)\varphi(0.5v)\varphi(v)=\varphi_1(1.5u)\varphi_1(1.5v)$$
	Combining everything we conclude that $\varphi_1(x+y)=\varphi_1(x)\varphi_1(y)$ as desired.\\ 
	
	It is left to show that $\varphi_1$ is well defined. Let $x,x'\in W$ and $y,y'\in 2W$ be so that $x+y=x'+y'$ we want to show that $\varphi(x)\varphi(y)=\varphi(x')\varphi(y')$\\
	By assumption, $x-x'=x+(-x')$ is a sum of two elements in $W$ hence we have that $\varphi(x-x')=\varphi(x)\varphi(x')^{-1}$ and so $\varphi(x)=\varphi(x')\varphi(x-x')$. It is thus enough to prove that $\varphi(y')=\varphi(y)\varphi(x-x')$. We have 
	$$\varphi(\frac{y'}{2})=\varphi(\frac{x-x'}{2}+\frac{y}{2})=\varphi(\frac{x-x'}{2})\varphi(\frac{y}{2}) $$
	Now, as $\frac{y'}{2},\frac{(x-x')}{2}$ and $\frac{y}{2}$ are elements in $W$ we see that $$\varphi(y')=\varphi(\frac{y'}{2})^2=(\varphi(\frac{x-x'}{2}))^2(\varphi(\frac{y}{2}))^2=\varphi(x-x')\varphi(y)$$
	We conclude that $\varphi_1$ is well defined.  
	\\
	
	Replace $\varphi$ with $\varphi_1$ and $W$ with $1.5W$, by the same arguments as before we can construct $\varphi_2$ which extends $\varphi_1$ on $1.5W$ and is a homomorphism on $(1.5)^2W$. Continue this way, there exists a sequence $\varphi_1,\varphi_2,....$ each extends the other, such that $\varphi_n$ is a homomorphism on $(1.5)^nW$ for every $n\in\mathbb{N}$.\\
	
	Thus, for every $x\in \mathbb{R}^n$ there exists $m$ so that $x\in B(0,(1.5)^mr)$, let $\tilde{\varphi}(x)=\varphi_m(x)$. It is easy to see that $\tilde{\varphi}$ is well defined, is a homomorphism and it agrees with $\varphi$ on $W$.
\end{proof}
Now, given a finite dimensional compact abelian group $U$ we consider it's connected component $U_0$ which is also finite dimensional (see \cite[Corollary 8.24]{HM}). We use the lifting Lemma to obtain some results on the map $p:U_0\rightarrow P_{<m}(G,X,S^1)$ given by $u\mapsto p_u$.
\begin{lem}\label{real:lem}
	Let $X$ be an ergodic $G$-system. Let $U$ be a finite dimensional compact abelian group acting freely on $X$ and commuting with the action of $G$, let $\rho:G\times X\rightarrow S^1$ be a cocycle and $m\geq 1$. Suppose that for every $u\in U$ there exists a phase polynomial $p_u\in P_{<m}(G,X,S^1)$ and a measurable map $F_u:X\rightarrow S^1$ such that $\Delta_u\rho = p_u\cdot \Delta F_u$. Let $U_0$ be the connected component of the identity in $U$ and write $U_0=\left(\mathbb{R}^n\times\Delta\right)/\Gamma$ for some $n\in\mathbb{N}$, a totally disconnected group $\Delta$ and a discrete subgroup $\Gamma$. Let $\pi:\mathbb{R}^n\times \Delta \rightarrow U_0$ be the projection map. Then
	\begin{enumerate}
		\item{There exists an open subgroup $V$ of $\Delta$ such that $$p_{\pi(uv)}=p_{\pi(u)}p_{\pi(v)}$$ whenever $u,v\in V$.}
		\item {There exists an open ball $W\subseteq\mathbb{R}^n$ such that $p_{\pi(w)}=1$ for all $w\in W$.}
		\item {For all $w\in\mathbb{R}^n$ we have that $p_{\pi(w)}$ is a $(G,X,S^1)$-coboundary.}
	\end{enumerate}
\end{lem}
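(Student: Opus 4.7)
The plan is to use the Separation Lemma to force $p_u$, for $u$ near the identity in $U_0$, to lie in the image of $\hat G\hookrightarrow P_{<m}(G,X,S^1)$ (characters of $G$, viewed as phase polynomials that are constant in $x$); from there, (1) follows because $V_u$ acts trivially on functions constant in $x$, (2) from the topological tension between the connected group $\mathbb{R}^n$ and the totally disconnected group $\hat G$, and (3) by a subgroup argument on $\mathbb{R}^n$.

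First I apply Lemma \ref{sel:lem} to make $u\mapsto(p_u,F_u)$ Borel, and Lemma \ref{lin:lem} to arrange $p_{u+v}=p_u\cdot V_u p_v$ on an open neighborhood $U'$ of $0\in U$, so that in particular $p_0=1$ and $p_{-u}=V_{-u}p_u^{-1}$. By Lemma \ref{sep:lem}, $P_{<m}(X,S^1)/S^1$ is a $\sqrt{2}/2^{m-2}$-separated (hence countable) set in $L^2(X)$. Combining these countable-valued maps $u\mapsto[p_u(g,\cdot)]$ across $g\in G$ with Steinhaus' theorem, I extract a positive-measure Borel set $A\subseteq U_0$ and a phase polynomial $\bar p_0$ with $p_u=c_u\cdot\bar p_0$ on $A$, where $c_u\colon G\to S^1$ is constant in $x$. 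Shrinking to a neighborhood $V_0\subseteq(A-A)\cap U'$ of $0$, for $w=u-v$ with $u,v\in A$ the cocycle identity yields
\[
p_w=V_{-v}(p_v^{-1}p_u)=V_{-v}(c_v^{-1}c_u)=c_v^{-1}c_u,
\]
a function of $g$ only; being also a cocycle in $g$, we have $p_w=\chi_w\in\hat G$, and the cocycle identity specializes to $\chi_{u+v}=\chi_u\chi_v$ on $V_0$.

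For (1), take any open subgroup $V\subseteq V_0$ of $\Delta$ (which exists by total disconnectedness of $\Delta$); for $u,v\in V$, $V_{\pi(u)}p_{\pi(v)}=p_{\pi(v)}$ since $p_{\pi(v)}$ is constant in $x$, so $p_{\pi(u+v)}=p_{\pi(u)}p_{\pi(v)}$. For (2), pick an open ball $W\subseteq\mathbb{R}^n$ with $\pi(W)\subseteq V_0$; the Borel local homomorphism $w\mapsto\chi_{\pi(w)}$ extends by Lemma \ref{lift:lem} to a measurable, hence continuous, homomorphism $\mathbb{R}^n\to\hat G$. Since $\hat G=\prod_{p\in P}\mathbb{F}_p$ is totally disconnected while $\mathbb{R}^n$ is connected, this extension is trivial, so $p_{\pi(w)}=1$ on $W$. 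For (3), the global identity $p_{u+v}/(p_u V_u p_v)\in B^1(G,X,S^1)$ combined with the fact that $V_u$ preserves coboundaries shows $H:=\{w\in\mathbb{R}^n:p_{\pi(w)}\in B^1(G,X,S^1)\}$ is a subgroup of $\mathbb{R}^n$; since $W\subseteq H$ is open and $\mathbb{R}^n$ is connected, $H=\mathbb{R}^n$.

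The main obstacle is the reduction step in the second paragraph: simultaneously controlling $[p_u(g,\cdot)]$ modulo constants-in-$x$ across all $g\in G$ requires carefully combining the per-$g$ discrete structure coming from Separation with Borel measurability in $u$ and the countability of $G$, typically via an iterative refinement or a Lusin-type continuity argument on the Polish space of phase polynomials. Once this reduction is in place, the structural deductions follow essentially immediately from the totally disconnected nature of $\hat G$ versus the connectedness of $\mathbb{R}^n$.
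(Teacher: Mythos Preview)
Your architecture for (1)--(3) is correct and, once the reduction to $p_w\in\hat G$ for $w$ near the identity is in place, coincides with the paper's argument. But the step you flag as the main obstacle is a genuine gap, and neither iterative refinement nor a Lusin argument closes it as stated. The quotient $P_{<m}(G,X,S^1)/P_{<1}(G,X,S^1)$ is the full product $\prod_{g\in G}\bigl(P_{<m}(X,S^1)/S^1\bigr)$, a countable product of countable discrete sets, hence typically uncountable; there is no reason a single level set of $u\mapsto[p_u]$ should have positive Haar measure in $U_0$, and Lusin continuity into this product with the product topology does not force local constancy in all coordinates simultaneously.

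The paper bypasses this with one observation you did not use: each $p_v$ is automatically a \emph{cocycle} in $g$ (since $\Delta_v\rho$ and $\Delta F_v$ are), so by Proposition~\ref{PPC} the function $p_v(g,\cdot)$ takes only finitely many values for each $g$, and then Proposition~\ref{pinv:prop} gives that $p_v$ is invariant under the connected group $U_0$. Thus $V_up_v=p_v$ for all $u\in U_0$, and the linearized identity $p_{uv}=p_uV_up_v$ becomes a genuine local homomorphism $p_{uv}=p_up_v$ on a neighborhood of the identity. Part~(1) is then immediate. For~(2) one lifts via Lemma~\ref{lift:lem} to a homomorphism $\tilde p:\mathbb{R}^n\to P_{<m}(G,X,S^1)$; now for each $g$ separately, $w\mapsto\tilde p(w)(g,\cdot)$ is a homomorphism from $\mathbb{R}^n$ into $P_{<m}(X,S^1)$, and Corollary~\ref{openker} forces it into $P_{<1}(X,S^1)$. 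Since this holds for every $g$, one gets $\tilde p(w)\in P_{<1}(G,X,S^1)\cap Z^1(G,X,S^1)\cong\hat G$, and the rest proceeds exactly as you outline. The point is that once a genuine homomorphism on $\mathbb{R}^n$ is available, the ``all $g$ at once'' difficulty dissolves: the countable-index argument is applied one $g$ at a time, and for each $g$ the conclusion holds on all of $\mathbb{R}^n$, not on some $g$-dependent neighborhood. So the missing ingredient in your sketch is precisely Proposition~\ref{pinv:prop}.
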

\begin{proof}
	In order to keep the same notations as above, we use the multiplicative notation to denote the additive operation of $\mathbb{R}^n$.\\
	
	Applying the linearization Lemma (Lemma \ref{lin:lem}), there exists an open neighborhood $U'$ of the identity in $U_0$ on which $p_u$ is a cocycle in $u$. Now, $\pi^{-1}(U')$ is an open neighborhood of the identity in $\mathbb{R}^n\times \Delta$ and so by Proposition \ref{opensubgroup} it contains a neighborhood $W\times V$ where $W$ is an open ball around zero in $\mathbb{R}^n$ and $V$ is an open subgroup of $\Delta$. We then have that $\pi(V)$ is a subgroup of $U_0$ and it lies inside $U'$ we conclude that $p_{\pi(uv)} = p_{\pi(u)} V_{\pi(u)} p_{\pi(v)} = p_{\pi(u)}p_{\pi(v)}$ where the last equality follows from Proposition \ref{pinv:prop}. This proves $(1)$.\\
	Let $p:U_0\rightarrow P_{<m}(G,X,S^1)$ be the map $u\mapsto p_u$. We lift $p$ by $\pi$, we have that $p\circ \pi : \mathbb{R}^n\times \Delta\rightarrow P_{<m}(G,X,S^1)$ is a homomorphism on $W\times V$. Applying Lemma \ref{lift:lem} once for every $v\in V$ and then gluing together, we see that there exists a homomorphism $\tilde{p}:\mathbb{R}^n\times V\rightarrow P_{<m}(G,X,S^1)$ which extends $p\circ \pi$ on $W\times V$.  We restrict $\tilde{p}$ to $\mathbb{R}^n$. Since the index of $P_{<1}(X,S^1)$ in $P_{<m}(X,S^1)$ is at most countable (Lemma \ref{sep:lem}) and $\mathbb{R}^n$ has no proper measurable subgroups of countable index (Corollary \ref{openker}), we conclude that $\tilde{p}|_{\mathbb{R}^n}$ takes values in $P_{<1}(G,X,S^1)$. Therefore the range of $\tilde{p}|_{\mathbb{R}^n}$ can be identified with $\hat G$. As there is no non-trivial homomorphism from $\mathbb{R}^n$ to $\hat G$, we conclude that $\tilde{p}$ is trivial on $\mathbb{R}^n$. Since $\tilde{p}$ and $p\circ \pi $ agree on $W$ we have that $p_w=1$ for all $w\in \pi(W)$. This proves $(2)$.\\
	For $(3)$ we denote $$H=\{s\in\mathbb{R}^n : p_{\pi(s)} \text{ is a coboundary}\}$$ using the facts that $p_1=1$, $p_{st}$ is cohomologous to $p_s V_s p_t = p_s p_t$ and $p_s$ is cohomologous to $\overline{p_{s^{-1}}}$ for all $s,t\in U_0$ we have that $H$ is a subgroup of $\mathbb{R}^n$. Moreover from $(2)$ we have that $H$ contains the open ball $W$, therefore $H=\mathbb{R}^n$. This proves $(3)$.
\end{proof}

In this situation we would like to use Lemma \ref{cob:lem} in order to eliminate the $\mathbb{R}^n$ part of $U_0$. Unfortunately this is impossible, because the image of $\mathbb{R}^n$ is a dense, not necessarily compact subgroup of $U_0$. In fact the image of $\mathbb{R}^n$ is closed if and only if $U_0$ is a torus. Therefore as a corollary we have 
\begin{prop} \label{con:prop}
	In the settings of Lemma \ref{real:lem} if $U_0$ is a torus then $\rho$ is $(G,X,S^1)$-cohomologous to a cocycle which is invariant under the action of $U_0$
\end{prop}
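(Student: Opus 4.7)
The plan is to reduce matters to showing that $\Delta_u \rho$ is a $(G,X,S^1)$-coboundary for every $u \in U_0$, and then to invoke Lemma \ref{cob:lem} together with the connectedness of $U_0$ to upgrade the resulting open-subgroup invariance to genuine $U_0$-invariance. The only part of the decomposition $U_0 = (\mathbb{R}^n \times \Delta)/\Gamma$ that is directly controlled coboundary-wise by Lemma \ref{real:lem} is the Euclidean part (via assertion (3)), so the real point is to show that when $U_0$ is a torus the $\Delta$-part of the decomposition contributes nothing.

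First I would verify that when $U_0$ is a torus, the image $\pi(\mathbb{R}^n)$ coincides with $U_0$. The image is connected as the continuous homomorphic image of $\mathbb{R}^n$, and by the remark preceding the proposition it is closed in $U_0$ exactly because $U_0$ is a torus; hence $\pi(\mathbb{R}^n)$ is a subtorus of $U_0$. Since $\pi(\Delta)$ is compact and totally disconnected and $U_0 = \pi(\mathbb{R}^n)\cdot \pi(\Delta)$, the quotient $U_0/\pi(\mathbb{R}^n)$ is both a torus (quotient of the torus $U_0$ by the closed connected subgroup $\pi(\mathbb{R}^n)$) and a quotient of the totally disconnected group $\pi(\Delta)$, hence trivial. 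Thus every $u \in U_0$ is of the form $\pi(w)$ for some $w \in \mathbb{R}^n$, and Lemma \ref{real:lem}(3) gives that $p_u$ is a $(G,X,S^1)$-coboundary. Combined with the hypothesis $\Delta_u \rho = p_u \cdot \Delta F_u$, this shows $\Delta_u \rho$ is a coboundary for every $u \in U_0$.

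At this point Lemma \ref{cob:lem}, applied to the compact abelian group $K = U_0$ (which acts freely on $X$ as a closed subgroup of $U$ and commutes with the $G$-action), yields that $\rho$ is $(G,X,S^1)$-cohomologous to a function invariant under some open subgroup of $U_0$. Because $U_0$ is connected (being the connected component of the identity), Lemma \ref{connectedcomponent} implies that its only open subgroup is $U_0$ itself, so in fact $\rho$ is cohomologous to a $U_0$-invariant function, and being cohomologous to the cocycle $\rho$ forces this function to satisfy the cocycle identity. The only real obstacle in the argument is the elimination of the $\Delta$-component in the first step; once $\pi(\mathbb{R}^n) = U_0$ is established, the conclusion follows by a clean assembly of Lemma \ref{real:lem}(3), Lemma \ref{cob:lem}, and the absence of proper open subgroups in connected groups.
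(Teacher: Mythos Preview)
Your argument is correct and follows essentially the same route as the paper: establish $\pi(\mathbb{R}^n)=U_0$, deduce from Lemma \ref{real:lem}(3) that $\Delta_u\rho$ is a coboundary for every $u\in U_0$, and then apply Lemma \ref{cob:lem} together with the connectedness of $U_0$. The only cosmetic difference is in step one: the paper uses that $\pi(\mathbb{R}^n)$ is dense in $U_0$ and closed when $U_0$ is a torus, while you reach the same conclusion via the quotient $U_0/\pi(\mathbb{R}^n)$ being simultaneously a torus and totally disconnected.
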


Assuming that $X$ splits, we combine this result with the results of the previous section. We have,
\begin{thm} [Eliminating the connected components of a system $X$ of order $<k$ which splits] \label{con:thm} Let $k\geq 1$ be such that Theorem \ref{Main:thm} has already been proven for smaller values of $k$. Let $X$ be an ergodic $G$-system of order $<k$ which splits. We can then write $X=U_0\times_{\rho_1} U_1\times...\times_{\rho_{k-1}}U_{k-1}$, where $\rho_i:G\times Z_{<i}(X)\rightarrow U_i$ is a cocycle of type $<i$ for all $1\leq i \leq k-1$.
	Let $\rho:G\times X\rightarrow S^1$ be a cocycle, such that for some $m\geq 0$ and  any automorphism $u:X\rightarrow X$,  $\Delta_u\rho$ is $(G,X,S^1)$-cohomologous to a phase polynomial $p_u\in P_{<m}(G,X,S^1)$. Then $\rho$ is $(G,X,S^1)$-cohomologous to a cocycle $\tilde{\rho}:G\times X\rightarrow S^1$ which is invariant under the action of $U_{j,0}$ for all $0\leq j \leq k-1$.
\end{thm}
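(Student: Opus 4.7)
The plan is to eliminate the tori $T_j := U_{j,0}$ one at a time, proceeding by downward induction on $j$ from $k-1$ down to $0$. Since $X$ splits, each $U_j \cong T_j\times D_j$ with $T_j$ a torus and $D_j$ totally disconnected, so $U_{j,0} = T_j$. The splitting condition guarantees that the projection $\rho_i^D$ of each structure cocycle $\rho_i$ to $D_i$ is invariant under the tori of the lower levels, which is precisely what will let us descend to quotient systems on which the next torus acts cleanly as automorphisms.

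For the base case $j = k-1$, the group $T_{k-1}$ acts freely on $X$ as automorphisms by translating the $T_{k-1}$-component at level $k-1$, commuting with $G$ since $U_{k-1}$ is abelian and there are no cocycles above. By hypothesis, for each $t\in T_{k-1}$, $\Delta_t\rho$ is cohomologous to some $p_t\in P_{<m}(G,X,S^1)$. Applying Proposition~\ref{FD:prop}, $\rho$ is cohomologous to a cocycle $\rho^{(k-1)}$ invariant under a closed connected subgroup $J\leq T_{k-1}$ with $T_{k-1}/J$ finite-dimensional. Since $T_{k-1}/J$ is connected, compact abelian, and finite-dimensional, it is itself a finite-dimensional torus; applying Proposition~\ref{con:prop} on the descent $X/J$ (with the induced action of $T_{k-1}/J$) and then lifting back produces a cocycle, still denoted $\rho^{(k-1)}$, cohomologous to $\rho$ and invariant under all of $T_{k-1}$.

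For the inductive step, suppose $\rho$ is cohomologous to $\rho^{(j+1)}$ invariant under $T_{j+1}\times\cdots\times T_{k-1}$. Then $\rho^{(j+1)}$ descends to a cocycle $\bar\rho^{(j+1)}$ on the quotient system
\[ X_j := Z_{<j+1}(X)\times_{\rho_{j+1}^D} D_{j+1}\times\cdots\times_{\rho_{k-1}^D} D_{k-1}, \]
whose higher-level cocycles $\rho_{j+1}^D,\ldots,\rho_{k-1}^D$ are $T_j$-invariant by the splitting condition. Hence $T_j$ acts freely on $X_j$ as automorphisms commuting with $G$. Each $t\in T_j$ acting on $X_j$ lifts (via the splitting structure and a standard Mackey-type construction) to an automorphism $\tilde u_t$ of $X$; applying the hypothesis of the theorem to $\tilde u_t$ and pushing down via the quotient map shows that $\Delta_t\bar\rho^{(j+1)}$ is cohomologous to a phase polynomial on $X_j$. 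Applying Propositions~\ref{FD:prop} and~\ref{con:prop} to $\bar\rho^{(j+1)}$ exactly as in the base case yields a cocycle on $X_j$ cohomologous to $\bar\rho^{(j+1)}$ and invariant under $T_j$, which pulls back via the quotient map to $\rho^{(j)}$ on $X$, cohomologous to $\rho$ and invariant under $T_j\times T_{j+1}\times\cdots\times T_{k-1}$ (invariance under the previously-handled tori is automatic since pullbacks from $X_j$ are invariant under them).

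The main obstacle is the Mackey-type step of verifying that the $T_j$-action on $X_j$ arises from a bona fide $T_j$-action on $X$ by automorphisms whose Conze--Lesigne polynomial data descends controllably, and dually that cohomology changes on $X_j$ pull back coherently to $X$. The splitting condition is precisely what enables both of these, and it is here that the inductive hypothesis on Theorem~\ref{Main:thm} for smaller $k$ may be invoked to control the data on the lower-level pieces $Z_{<j+1}(X)$. Iterating down to $j=0$ produces $\tilde\rho := \rho^{(0)}$, cohomologous to $\rho$ and invariant under $T_0\times\cdots\times T_{k-1} = \prod_{j=0}^{k-1} U_{j,0}$, as required.
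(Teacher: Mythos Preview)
Your overall architecture—handling the tori $T_j$ one at a time and invoking Propositions~\ref{FD:prop} and~\ref{con:prop} at each stage—matches the paper's. The difference is in how you arrange for $T_j$ to act by automorphisms. You quotient out the already-handled tori to pass to $X_j$, whereas the paper does the opposite: it keeps $X$ intact and instead \emph{modifies the structure cocycles} $\rho_{j+1},\dots,\rho_{k-1}$ so that they become $T_j$-invariant (using the induction hypothesis on this very theorem for the smaller-order system $Z_{<j+1}(X)$, together with the splitting condition for the $D$-parts). Once this is done, $T_j$ acts on $X$ itself by automorphisms and the hypothesis of the theorem applies directly, with no lifting needed.

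Your approach has a real gap precisely at the point you flag as ``the main obstacle.'' To verify the Conze--Lesigne equation for $\bar\rho^{(j+1)}$ on $X_j$, you propose lifting $t\in T_j$ to an automorphism $\tilde u_t$ of $X$ and pushing down. Two things are not justified: (i) the existence of such a lift requires that $\Delta_t\rho_{i}$ be a $(G,Z_{<i}(X),U_i)$-coboundary for each $i>j$, which is not automatic (coordinate-wise $S^1$-cohomology does not obviously glue to $U_i$-cohomology); and (ii) even granting the lift, descending the equation $\Delta_{\tilde u_t}\rho^{(j+1)}=p\cdot\Delta F$ from $X$ to $X_j$ requires showing that $p$ and $F$ can be chosen invariant under the higher tori, which needs something like Proposition~\ref{inv:prop}—but that proposition assumes the extension is by a phase-polynomial cocycle, which you do not yet know at this stage. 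The paper sidesteps both issues: by replacing $\rho_{j+1},\dots,\rho_{k-1}$ up front, $T_j$ acts on $X$ without any lifting, and the delicate descent step (pushing the Conze--Lesigne data from $X$ to $X/J$) is handled by an explicit construction using the induction hypothesis of Theorem~\ref{Main:thm} to build a phase polynomial $\phi_{\chi_u}$ absorbing the obstruction character. Your sketch would need essentially the same ingredients to be made complete.
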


\begin{proof}
	Fix $1\leq j \leq k-1$. We prove the theorem by induction on $k$. If $k=1$ then $X$ is just a point and the claim in the theorem is trivial. Now, assume that the claim holds for smaller values of $k$. We use the induction hypothesis in order to replace the cocycles $\rho_{j+1},...,\rho_{k-1}$ with cocycles that are invariant under the action of $U_{j,0}$. The cocycle $\rho_{j+1}:Z_{<j+1}(X)\rightarrow U_{j+1}$ is of type $<j+1$ and so by Theorem \ref{Main:thm} it's projection to the torus of $U_{j+1}$ is $(G,Z_{<j+1}(X),T(U_{j+1}))$-cohomologous to a phase polynomial. Therefore this projection of $\rho_{j+1}$ satisfies the assumptions of this Theorem, hence by the induction hypothesis it is  $(G,Z_{<j+1}(X),T(U_{j+1}))$-cohomologous to a cocycle which is invariant under $U_{j,0}$. On the other hand since $X$ splits the projection of $\rho_{j+1}$ to the totally disconnected part of $U_{j+1}$ is invariant under $U_{j,0}$. Therefore by remark \ref{coh:rem} we can replace $\rho_{j+1}$ with a cocycle that is invariant under $U_{j,0}$. Direct computation reveals that in this case $U_{j,0}$ acts by automorphisms on $Z_{<j+2}(X)$. Inductively, we can repeat the same argument for $\rho_{j+2},...,\rho_{k-1}$. At the end of the day we replace  $X$ with an isomorphic system in which the cocycles $\rho_{j+1},...,\rho_{k-1}$ are invariant under $U_{j,0}$.\\ In this case, the action of $U_{j,0}$ commutes with the $G$-action on $X$.\\ We now use the theory of section \ref{fdred:sec}. From Proposition \ref{FD:prop} it follows that $\rho$ is $(G,X,S^1)$-cohomologous to a cocycle $\rho':G\times X\rightarrow S^1$ which is invariant under some connected subgroup $J$ of $U_{j,0}$ such that $U_{j,0}/J$ is a finite dimensional torus. Since $\rho_{j+1},...,\rho_{k-1}$ are invariant under $U_{j,0}$ they're also invariant under $J$, in particular we can define a factor $Y = U_0\times_{\rho_1}U_1\times...\times_{\rho_j} U_j/J\times_{\rho_{j+1}'} U_{j+1}\times_{\rho_{j+2}'}...\times_{\rho_{k-1}'}U_{k-1}$ of $X$ where $\rho_{j+1}',...,\rho_{k-1}'$ are induced from $\rho_{j+1},...,\rho_{k-1}$ under the projection $U_j\rightarrow U_j/J$. Let $\pi:X\rightarrow Y$ be the factor map $\pi(u_0,...,u_{k-1}) = (u_0,...,u_j+J,...,u_{k-1})$. Since $\rho'$ is invariant under $J$ it is equal to $\pi^\star \rho''$ where $\rho'':G\times Y\rightarrow S^1$ is a cocycle. Now $U_{j,0}/J$ is a finite dimensional torus acting freely on $Y$ and commuting with the $G$-action. We want to apply Proposition \ref{con:prop}, but first we have to show that $\rho''$ satisfies a Conze-Lesigne type equation.\\
	As $\rho'$ is $(G,X,S^1)$-cohomologous to $\rho$ we have that 
	\begin{equation} \label{eq'}
	\Delta_u \rho' = p_u \cdot \Delta F_u
	\end{equation}
	For the same polynomials $p_u\in P_{<m}(G,X,S^1)$. We have that $\rho'$ is invariant under the action of $J$. Since $J$ is connected we conclude by Proposition \ref{pinv:prop} and equation (\ref{eq'}) that $\Delta F_u$ is invariant under the action of $J$. Since the action of $J$ commutes with the $G$-action, ergodicity implies that $\Delta_j F_u = \chi_u(j)$ for all $j\in J$ for some $\chi_u:U_j\rightarrow S^1$ (We can extend $\chi_u$ to a character of $U_j$).
	
	Applying Theorem \ref{Main:thm}, which by assumption is already proven for smaller values of $k$. We conclude that $\chi_u\circ\rho_{j-1}$ is $(G,Z_{<i}(X),S^1)$-cohomologous to a phase polynomial. Let $\phi_{\chi_u}$ be as in the proof of Theorem \ref{Main:thm}. We have that $\phi_{\chi_u}:Z_{<j}(X)\rightarrow S^1$ is a phase polynomial of degree $<O_k(1)$, and it satisfies $\Delta_j \phi_{\chi_u} = \chi_u(j)$ for all $j\in J$. Let $\tilde{\phi}_{\chi_u}=(\pi^X_{Z_{<j}(X)})^\star\phi_{\chi_u}$ we have that $F'_u:=F_u/\tilde{\phi}_{\chi_u}$ is invariant under $J$ and that $\Delta_u \rho' = p'_u\cdot \Delta F'_u$ where $p'_u = p_u\cdot \Delta \tilde{\phi}_{\chi_u}$ is a phase polynomial of degree $<O_{k,m}(1)$. Now $\rho'$, $p'_u$ and $F'_u$ are all invariant under $J$ and so there exists $\tilde{p}_u :G\times Y\rightarrow S^1$ and $\tilde{F}_u:Y\rightarrow S^1$ such that $p_u = \pi^\star \tilde{p}_u$ and $F_u = \pi^\star \tilde{F}_u$. It follows that $\Delta_u\rho'' = \tilde{p}_u \cdot \Delta \tilde{F}_u$ for all $u\in U_j/J$ also by functoriality we have that $\tilde{p}_u$ are also phase polynomials of degree $<O_{k,m}(1)$.
	
	We can therefore apply Proposition \ref{con:prop}. It follows that $\rho''$ is $(G,Y,S^1)$-cohomologous to a cocycle $\tilde{\rho}:G\times Y\rightarrow S^1$ which is invariant under $U_{j,0}/J$. Therefore, $\pi^\star \tilde{\rho}$ is $(G,X,S^1)$-cohomologous to $\rho'=\pi^\star \rho''$, and is invariant under $U_{j,0}$. As $\rho$ is $(G,X,S^1)$-cohomologous to $\rho'$ it is also $(G,X,S^1)$-cohomologous to $\pi^\star \tilde{\rho}$ which completes the proof.
\end{proof}
\begin{rem}
	It is also possible to use a similar argument as in \cite[Lemma B.11]{Berg& tao & ziegler}.
\end{rem}
Theorem \ref{TDred:thm} follows by repeatedly applying the theorem above. Each time eliminating the connected component of the next group. After $(k-1)$-iterations we get a totally disconnected factor of $X$. Our goal now is to study totally disconnected systems.
\section{Topological structure theorem and Weyl systems} \label{TSTweyl:sec}
The main result of this section is the following theorem (Compare with \cite[Lemma 4.7]{Berg& tao & ziegler})
\begin{thm}  [Totally disconnected extension is necessarily a direct product of finite $p$-groups of bounded exponent] \label{TST:thm} Let $k\geq 2$ be an integer such that Theorem \ref{Main:thm} has already been proven for smaller values of $k$. Let $X$ be a totally disconnected system of order $<k$ and suppose that $X=Z_{<k-1}(X)\times_{\sigma} U$ for some totally disconnected group $U$. Then there exists $d=O_k(1)$, constants $m_1,m_2,... \leq d$, and primes $p_1,p_2,...$ such that $U\cong \prod_{n=1}^\infty C_{p_n^{m_n}}$ as topological groups. Moreover for $p_i$ sufficiently large ($p_i>k$) we can take $m_i=1$.
\end{thm}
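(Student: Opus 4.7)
By Pontryagin duality, the claim is equivalent to showing that the countable discrete torsion abelian group $\hat U$ has the form $\bigoplus_n C_{p_n^{m_n}}$ with $m_n\leq d=O_k(1)$ and with $m_n=1$ when $p_n>k$. By Pr\"ufer's theorem on bounded $p$-primary abelian groups, this in turn reduces to bounding the exponent of each $p$-primary component $\hat U_p$ by $p^d$ (and by $p$ in the high-characteristic regime $p>k$). For each $\chi\in\hat U$ the pullback $\chi\circ\sigma:G\times Z_{<k-1}(X)\to S^1$ is a cocycle of type $<k-1$, so the induction hypothesis on Theorem \ref{Main:thm} applied to the totally disconnected (and therefore trivially splitting) factor $Z_{<k-1}(X)$ yields a phase polynomial $Q_\chi\in P_{<D}(G,Z_{<k-1}(X),S^1)$ and a measurable $F_\chi:Z_{<k-1}(X)\to S^1$ with $\chi\circ\sigma=Q_\chi\cdot\Delta F_\chi$ for some $D=O_k(1)$. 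If $Q_\chi$ is a coboundary, a direct computation as in Remark \ref{coh:rem} shows that $\overline{F_\chi}(x)\chi(u)$ would be $G$-invariant on $X$ and hence constant by ergodicity, forcing $\chi$ to be trivial. Consequently $\chi\mapsto[Q_\chi]$ is an injective group homomorphism from $\hat U$ into $H^1_{<D}(G,Z_{<k-1}(X),S^1)$, reducing the theorem to bounding the $p$-primary exponents of this cohomology group.

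The exponent bound is proved by induction on $k$. Applying Theorem \ref{TST:thm} inductively along the tower $Z_{<k-1}(X)=U_0\times_{\rho_1}U_1\times\cdots\times_{\rho_{k-2}}U_{k-2}$ supplied by Proposition \ref{abelext:prop} allows us to assume that each $U_i\cong\prod_n C_{p_n^{m_n^{(i)}}}$ with uniformly bounded $m_n^{(i)}$ and $m_n^{(i)}=1$ whenever $p_n>k$. For a $p$-primary $\chi\in\hat U_p$ and its phase polynomial cocycle $Q=Q_\chi$ of degree $<D$, I exploit the direct sum decomposition $G=G_p\oplus G^{(p)}$ of $G$ into its $p$-torsion and prime-to-$p$ parts. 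For any generator $g\in\mathbb{F}_p\subset G_p$ the relation $pg=0$ combined with the cocycle identity gives $\prod_{i=0}^{p-1}Q(g,T_g^ix)=1$, and iterating the derivative descent $\Delta_gQ\in P_{<D-1}$ along the structure groups of $Z_{<k-1}(X)$, together with the inductive exponent bounds on the lower $U_i$, yields a bound $p^{O_k(1)}$ on the order of $[Q]$ in cohomology. The high-characteristic regime $p>k$ is handled by the same scheme, using the sharper inductive hypothesis that all $p$-primary lower structure groups have exponent $p$.

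The principal obstacle is the simultaneous presence in $G$ of generators of unboundedly many distinct prime orders: a single character of $U$ may a priori encode data at several primes at once, and one must extract from this mixed behaviour a clean prime-by-prime torsion bound. This is where the argument genuinely departs from the single-prime analysis of \cite{Berg& tao & ziegler}. The critical new ingredient is the inductive fact that the lower structure groups already decompose as products of cyclic $p$-groups across primes, which makes it possible to analyse the $p$-primary cohomology using only the $G_p$-action on the $p$-primary coordinates, and thereby reduces the exponent bound to a purely $p$-local computation modeled on the argument of \cite{Berg& tao & ziegler}.
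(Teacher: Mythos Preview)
Your strategy of embedding $\hat U$ into cohomology via $\chi\mapsto[Q_\chi]$ is sound, and the injectivity argument from ergodicity is correct. The gap is in the second paragraph: you never actually prove the exponent bound on $[Q_\chi]$. You write that ``iterating the derivative descent $\Delta_gQ\in P_{<D-1}$ along the structure groups \ldots\ yields a bound $p^{O_k(1)}$ on the order of $[Q]$ in cohomology,'' but this sentence does not contain a mechanism. Proposition~\ref{PPC} gives $Q(g,\cdot)^{p^D}=1$ only for $g\in G_p$; for $g$ of order $q\neq p$ one gets $Q(g,\cdot)\in C_{q^D}$, and raising to the $p^D$-th power does nothing there. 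So $Q^{p^D}$ is not obviously $1$, nor obviously a coboundary, and nothing you have said bridges that gap. Your inductive structural information on the lower $U_i$ does not help either, since the obstruction lives in the prime-to-$p$ part of $G$, not in the structure groups.

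The paper closes exactly this gap with Lemma~\ref{valp:lem}: since $\chi\circ\sigma$ lands in $C_{p^n}$, one can arrange the cohomology to take place inside $C_{p^n}$, i.e.\ replace $Q_\chi$ by a phase polynomial $q$ of bounded degree that itself takes values in $C_{p^n}$. This step requires the existence of phase-polynomial $n$-th roots on totally disconnected systems (Appendix~\ref{roots:app}). Once $q$ is $C_{p^n}$-valued, Proposition~\ref{PPC} forces $q(g,\cdot)=1$ for $g\in G^{(p)}$ automatically, and then Proposition~\ref{TDPV:prop}/Theorem~\ref{HTDPV:thm} give $q^{p^d}=1$ globally with $d=O_k(1)$ (and $d=1$ for $p>k$). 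Ergodicity of the factor $Z_{<k-1}(X)\times_{(\chi\circ\sigma)^{p^d}}C_{p^{n-d}}$ then forces $n\le d$. Your proposal would become correct if you inserted this ``descend the target from $S^1$ to $C_{p^n}$'' step; without it the torsion bound is asserted rather than proved.
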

Phase polynomials on totally disconnected systems takes finitely many values (See Proposition \ref{TDPV:prop}, and Theorem \ref{HTDPV:thm}). Such polynomials have phase polynomial roots (See Corollary \ref{roots}). These results imply the following useful Lemma,
\begin{lem} \label{valp:lem}
	Let $X$ be a totally disconnected ergodic $G$-system of order $<k$. Let $\rho:G\times X\rightarrow S^1$ be a cocycle which takes values in $C_n$, and suppose that $\rho$ is $(G,X,S^1)$-cohomologous to a phase polynomial of degree $<d$, then $\rho$ is $(G,X,C_n)$-cohomologous to a phase polynomial of degree $<O_{d,k}(1)$.
\end{lem}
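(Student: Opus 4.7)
The plan is to exploit the constraint $\rho^n = 1$, which holds since $\rho$ takes values in $C_n$, to upgrade the given $(G,X,S^1)$-cohomology into a $(G,X,C_n)$-cohomology. Start with the hypothesis $\rho = P \cdot \Delta F$ where $P \in P_{<d}(G,X,S^1)$ and $F : X \to S^1$ is measurable. Raising this decomposition to the $n$-th power and using $\rho^n = 1$ yields
\[
\Delta(F^n) = (\Delta F)^n = \overline{P}^n,
\]
which is a phase polynomial of degree $<d$. By the polynomiality criterion for phase polynomials, this forces $F^n$ itself to be a phase polynomial of degree $<d+1$.

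Next I would invoke the totally disconnected hypothesis. On such systems, phase polynomials take only finitely many values (Proposition \ref{TDPV:prop}, Theorem \ref{HTDPV:thm}) and admit phase polynomial roots of controlled degree (Corollary \ref{roots}). Applying this to $F^n \in P_{<d+1}(X,S^1)$, I would extract a phase polynomial $F_0 \in P_{<e}(X,S^1)$ with $F_0^n = F^n$ and $e = O_{d,k}(1)$. Setting $H := F/F_0$, one has $H^n \equiv 1$, so $H : X \to C_n$. Substituting $F = F_0 \cdot H$ back into $\rho = P \cdot \Delta F$ yields $\rho = Q \cdot \Delta H$ where $Q := P \cdot \Delta F_0$.

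The function $Q$ is a phase polynomial cocycle of degree $<O_{d,k}(1)$: indeed, $\Delta F_0 \in P_{<e-1}(G,X,S^1)$ by the homomorphism property of the derivative, and $P \in P_{<d}(G,X,S^1)$, so their product lies in $P_{<\max(d,e)}(G,X,S^1) = P_{<O_{d,k}(1)}(G,X,S^1)$. A short computation confirms that $Q$ remains a cocycle since $H$ depends only on $x$. Because $\rho$ and $\Delta H$ both take values in $C_n$, so does $Q$, and the identity $\rho = Q \cdot \Delta H$ with $H : X \to C_n$ is precisely a $(G,X,C_n)$-cohomology between $\rho$ and the $C_n$-valued phase polynomial $Q$ of degree $<O_{d,k}(1)$, as desired.

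The main subtlety is ensuring that the degree of the root $F_0$ depends only on $d$ and $k$, independently of $n$. This uniformity is exactly what the totally disconnected hypothesis buys: $F^n$ is a phase polynomial of degree $<d+1$ on a totally disconnected system of order $<k$, so it takes values in a finite cyclic group $C_R$ with $R = R(d,k)$, and the $n$-th root extraction provided by Corollary \ref{roots} lands in phase polynomials of degree bounded only in terms of $d$ and $k$, not in terms of $n$. Verifying this uniformity carefully is the only non-routine step in the argument.
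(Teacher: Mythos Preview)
Your overall strategy matches the paper's: write $\rho = q\cdot\Delta F$, observe $F^n\in P_{<d+1}(X,S^1)$, find a phase polynomial $n$-th root $\psi$ of $F^n$, and replace $(q,F)$ by $(q\cdot\Delta\psi,\,F/\psi)$. The gap is precisely where you flag it, in the last paragraph, and it is a genuine one.

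Your assertion that $F^n$ takes values in $C_R$ with $R=R(d,k)$ is not what Theorem \ref{TDPV:thm} gives. That theorem bounds only the \emph{exponents} $l_i$ in the factorization $R=p_1^{l_1}\cdots p_m^{l_m}$ by $O_d(1)$; the primes $p_i$ themselves are not bounded in terms of $d,k$. Consequently, Corollary \ref{roots} does \emph{not} produce an $n$-th root of uniformly bounded degree: its degree bound is $O_{p,d,m,n}(1)$, which depends both on the prime $p$ and on the exponent of the target cyclic group. Since an $n$-th root of an element of $C_{p_i^{l_i}}$ lives in $C_{p_i^{l_i+v_{p_i}(n)}}$, a direct application of Corollary \ref{roots} yields a degree depending on $p_i$ (possibly large) and on $v_{p_i}(n)$ (possibly large), hence on $n$.

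The paper closes this gap with a case analysis on each coordinate $P_i=\pi_i\circ F^n$. Writing $n=p_i^{r_i}n'$ with $\gcd(n',p_i)=1$, the $n'$-th root is obtained for free by raising to a suitable power in $C_{p_i^{l_i}}$. For the $p_i^{r_i}$-th root one distinguishes: if both $p_i$ and $r_i$ are $O_{k,d}(1)$, then Corollary \ref{roots} applies with all parameters bounded; otherwise, using Proposition \ref{PPC} and Theorem \ref{HTDPV:thm} one shows $q(g,\cdot)^n=1$ for $g\in G_{p_i}$, which forces $P_i$ to be $G$-invariant and hence constant, so its root is trivially constant. This dichotomy is what makes the degree bound uniform in $n$, and it is missing from your argument.
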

\begin{proof}
	Write 
	\begin{equation}  \label{valp:eq0}
	\rho=q\cdot \Delta F
	\end{equation} then, 
	\begin{equation} \label{valp:eq1}
	1=\rho^n = q^n\cdot \Delta F^n
	\end{equation}
	It follows that $P:=F^n$ is a phase polynomial of degree $<d+1$.\\
	As $X$ is totally disconnected, Theorem \ref{TDPV:thm} implies that $P$ takes values in some closed subgroup $H$ of $S^1$ where $H\cong C_{{p_1}^{l_1}}\times...\times C_{{p_m}^{l_m}}$ for $l_1,...,l_m=O_k(1)$ and distinct primes $p_1,..,p_m$.\\
	Let $\pi_i: H\rightarrow C_{p_i^{l_i}}$ be one of the coordinate maps and write $P_i = \pi_i\circ P$, clearly $P_i$ is also a phase polynomial of degree $<d+1$ and we have that $P=\prod_{i=1}^m P_i$.
	Therefore, our goal is to find for each $1\leq i \leq m$ a phase polynomial $\psi_i$ of degree $<O_{d,k}(1)$ that is also an $n$'th root of $P_i$. \\
	
	Fix $1\leq i \leq m$, then $P_i$ takes values in $C_{{p_i}^{l_i}}$. Write $n=p_i^{r_i}\cdot n'$ for some integer $n'$ that is co-prime to $p_i$. First, we find an $(n')$'th root of $P_i$. To do this, let $\alpha\in\mathbb{N}$ be such that $n'\cdot \alpha = 1 \text{ mod } p_i^{l_i}$, and let $\phi_i = P_i^{\alpha}$ we conclude that $$\phi_i^{n'} = P_i {n'\alpha} = P_i$$ Clearly $\phi_i$ is also a phase polynomial of degree $<d+1$. It is therefore left to find a $p_i^{l_i}$'th root for $\phi_i$. We have two cases, if $p_i,r_i=O_{k,d}(1)$ then the claim follows from Corollary \ref{roots}. Otherwise, suppose that either $p_i$ or $r_i$ are sufficiently large. Let $G=G_{p_i}\oplus G'$ where $G_{p_i}$ is the subgroup of $G$ of elements of order $p_i$. In this case we claim that $q(g,x)^n=1$ for all $g\in G_{p_i}$. Indeed, if $r_i$ is sufficiently large then Proposition \ref{PPC} implies $q(g,x)^n=1$ for all $g\in G_{p_i}$. Otherwise if $p_i$ is sufficiently large $(p_i>k)$ then by Theorem \ref{HTDPV:thm} we have that $q(g,x)$ takes values in $C_{p_i}$, hence $q(g,x)^n = 1$.\\
From equation (\ref{valp:eq1}) we see that in both cases $P$ is invariant under $g\in G_{p_i}$ and therefore so is $P_i$. Recall that $P_i$ takes values in $C_{p_i^{l_i}}$ and so from Proposition \ref{PPC} it is invariant under $g\in G$ whose order is coprime to $p$ as well. It follows that $P_i$ is $G$-invariant, hence by ergodicity it is a constant. Since $\phi_i$ is some power of $P_i$ it is also a constant. Hence $\phi_i$ has an $p_i^{r_i}$'th root.\\

We conclude that either way, there exists a phase polynomial $\psi_i$ of degree $<O_{d,k}(1)$ with $\psi_i^n = P_i$. Now we glue all coordinates together, let $\psi:X\rightarrow H$ be the product of all coordinates $\psi(x)=\psi_1(x)\cdots\psi_m(x)$,  as $\psi_1,...,\psi_m$ are phase polynomials of degree $<O_{k,d}(1)$, so is $\psi$. Since $\psi_i^n=P_i$ we conclude that $\psi^n=P$.\\

To finish the proof we now let $F'=F/\psi$ and $q'=q\cdot \Delta \psi$. From equation (\ref{valp:eq0}) we see that $$\rho = q' \cdot \Delta F'$$
	As $\psi$ is an $n$'th root of $P=F^n$ we have that $F'$ takes values in $C_n$ and therefore so is $q'$. Moreover as $\psi$ is a phase polynomial of degree $<O_{k,d}(1)$ we have that so is $q'$ as required.
\end{proof} 
\begin{proof} [Proof of Theorem \ref{TST:thm}]
	Let $X=Z_{<k-1}(X)\times_\sigma U$ be as in the theorem. By Proposition \ref{Sylow} we can write $U=\prod_p U_p$ where $U_p$ are the $p$-sylow subgroups of $U$.\\
	Fix any prime $p$ and let $\chi:U_p\rightarrow S^1$ be any continuous character of $U_p$. As $U_p$ is a $p$-group, the image of $\chi$ is a cyclic group $C_{p^n}$ for some $n\in\mathbb{N}$. By Theorem \ref{Main:thm}, $\chi\circ\sigma$ is $(G,X,S^1)$-cohomologous to a phase polynomial of degree $<O_k(1)$. Therefore, by Lemma \ref{valp:lem} it is $(G,X,C_{p^n})$-cohomologous to a phase polynomial $q:G\times X\rightarrow C_{p^n}$ of degree $<O_k(1)$ (potentially higher bound than before). It follows by Lemma \ref{PPC} that $q$ is trivial for all $g$ of order co-prime to $p$. If $g$ is of order $p$ then by Proposition \ref{TDPV:prop} we have that $q^{p^d}(g,x)=1$ and if $p$ is sufficiently large (greater than $k$)  then by Theorem \ref{HTDPV:thm} we can take $d=1$. By the cocycle identity it follows that $q^{p^d}(g,x)=1$ for all $g\in G$. Since $\sigma$ is cohomologous to $q$ we conclude that $\sigma^{p^d}$ is a coboundary.\\
	The system $Z_{<k}(X)\times_{\sigma^{p^d}} C_{p^{n-d}}$ is a factor of $Z_{<k-1}(X)\times_\sigma U$ and therefore a factor of $X$. Since $X$ is ergodic so is every factor, which means that $n=d$.\\
	We conclude that $U_p$ is a $p^d$-torsion subgroup for some $d=O_k(1)$. Theorem \ref{torsion} implies that $U_p$ is a direct product of copies of $C_{p^r}$ for $r\leq d$. As required.
	\end{proof}
Using this result we can finally prove that such systems are Weyl (Assuming the induction hypothesis of Theorem \ref{Main:thm}). Namely,
\begin{thm} [Totally disconnected systems are Weyl] \label{TDisweyl} Let $k\geq 1$ be an integer such that Theorem \ref{Main:thm} has already been proven for all smaller values of $k$. Let $X$ be a totally disconnected system of order $<k$, then $X$ is isomorphic to a Weyl system.
\end{thm}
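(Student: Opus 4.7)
The plan is to argue by induction on $k$. The base case $k=1$ is trivial, since $Z_{<0}(X)$ is a point. For the inductive step, use Proposition \ref{abelext:prop} to write $X = Z_{<k-1}(X) \times_\sigma U$, where $U$ is a compact abelian group and $\sigma : G \times Z_{<k-1}(X) \to U$ is a cocycle of type $<k-1$. Since $X$ is totally disconnected, so are $U$ and the structure groups of $Z_{<k-1}(X)$, and the induction hypothesis gives that $Z_{<k-1}(X)$ is Weyl, with all its structure cocycles phase polynomials of degree $<O_k(1)$. Hence it suffices to show that, up to $(G,Z_{<k-1}(X),U)$-cohomology, $\sigma$ may be replaced by a phase polynomial of degree $<O_k(1)$.

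By Theorem \ref{TST:thm} write $U \cong \prod_{n=1}^{\infty} C_{p_n^{m_n}}$ with $m_n \leq d$ for some $d = O_k(1)$; let $\pi_n : U \to C_{p_n^{m_n}}$ be the coordinate projection and fix a faithful character $\psi_n : C_{p_n^{m_n}} \hookrightarrow S^1$. For each $n$, the cocycle $\psi_n \circ \pi_n \circ \sigma : G \times Z_{<k-1}(X) \to S^1$ is again of type $<k-1$ by Lemma \ref{PP}(ii). Since $Z_{<k-1}(X)$ is totally disconnected of order $<k-1<k$ (so trivially satisfies the splitting condition), the inductive hypothesis on Theorem \ref{Main:thm} applies and shows that $\psi_n \circ \pi_n \circ \sigma$ is $(G,Z_{<k-1}(X),S^1)$-cohomologous to a phase polynomial of degree $<O_k(1)$. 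Lemma \ref{valp:lem}, applied with the cyclic group $C_{p_n^{m_n}}$, then upgrades this to a cohomology within $C_{p_n^{m_n}}$: there exist a phase polynomial $q_n : G \times Z_{<k-1}(X) \to C_{p_n^{m_n}}$ of degree $<O_k(1)$ and a measurable $F_n : Z_{<k-1}(X) \to C_{p_n^{m_n}}$ such that $\pi_n \circ \sigma = q_n \cdot \Delta F_n$.

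Finally, assemble the coordinates. Because $U$ carries the product topology, the maps $q := (q_n)_n : G \times Z_{<k-1}(X) \to U$ and $F := (F_n)_n : Z_{<k-1}(X) \to U$ are measurable, and coordinate-wise we have $\sigma = q \cdot \Delta F$. Remark \ref{coh:rem} gives an isomorphism $X = Z_{<k-1}(X) \times_\sigma U \cong Z_{<k-1}(X) \times_q U$. Moreover $q$ is a phase polynomial of degree $<O_k(1)$ into $U$, since by the product topology the condition $\Delta_{h_1}\cdots\Delta_{h_N} q = 1_U$ can be checked one coordinate at a time. Combined with the inductive statement that $Z_{<k-1}(X)$ is Weyl, this exhibits $X$ as a Weyl system.

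The main obstacle is not any one step in isolation but the uniformity of the degree bound in $n$: one must check that the degrees produced by Theorem \ref{Main:thm} and then by Lemma \ref{valp:lem} depend only on $k$ (via $k-1$) and on $d$, and not on the particular cyclic factor $C_{p_n^{m_n}}$. This is where the uniform torsion bound $m_n \leq d = O_k(1)$ supplied by Theorem \ref{TST:thm} is essential, since without it the degrees $<O_{d,k}(1)$ coming from Lemma \ref{valp:lem} could blow up along the sequence and the coordinate-wise assembled $q$ would fail to be a phase polynomial of any fixed finite degree.
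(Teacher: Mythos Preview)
Your proof is correct and follows essentially the same approach as the paper's: induct on $k$, write $X = Z_{<k-1}(X)\times_\sigma U$, use Theorem \ref{TST:thm} to decompose $U$ as a product of bounded cyclic $p$-groups, apply the inductive case of Theorem \ref{Main:thm} and then Lemma \ref{valp:lem} coordinate-wise, and reassemble into a $U$-valued phase polynomial cocycle. Your explicit emphasis on the uniformity of the degree bound in $n$ (and its dependence on the torsion bound $d=O_k(1)$ from Theorem \ref{TST:thm}) makes precise exactly the point the paper states in the sentence ``Since all of the phase polynomials $q_i$ are of bounded degree which only depends on $k$ (and is independent of $i$)\ldots''.
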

\begin{proof} [Proof of Theorem \ref{TDisweyl}] We prove by induction on $k$. If $k=1$ then $X$ is a point and the claim is trivial. Let $k\geq 2$ and assuming inductively that the theorem holds for systems of order $k-1$. Write $X=Z_{<k-1}(X)\times_\sigma U$, by induction hypothesis we can assume that $Z_{<k-1}(X)$ is a Weyl system. Moreover, from Theorem \ref{TST:thm} we have that $U\cong \prod_{i=1}^\infty C_{p_i^{m_i}}$ where $m_i=O_k(1)$ and $m_i=1$ if $p_i$ is sufficiently large. Let $\tau_i:U\rightarrow C_{p_i^{m_i}}$ be one of the coordinate maps. We think of $C_{p_i^{m_i}}$ as a subgroup of $S^1$, we get from Theorem \ref{Main:thm} that $\tau_i\circ\sigma$ is $(G,Z_{<k-1}(X),S^1)$-cohomologous to a phase polynomial of degree $<O_k(1)$ (into $S^1$). Finally, from Lemma \ref{valp:lem} we have that $\tau_i\circ\sigma$ is $(G,Z_{<k-1}(X),C_{p_i^{m_i}})$-cohomologous to a phase polynomial $q_i:G\times X\rightarrow C_{p_i^{m_i}}$ of possibly higher but bounded ($<O_k(1)$) degree. Since this is true for every coordinate map, we conclude that $\sigma$ is $(G,Z_{<k-1}(X),U)$-cohomologous to $q:G\times Z_{<k-1}(X)\rightarrow U$ where $q(x):=(q_1(x),q_2(x),...)$. Since all of the phase polynomials $q_i$ are of bounded degree which only depends on $k$ (and is independent of $i$) it is easy to see that $q$ is of bounded degree. Since cohomologous cocycles defines isomorphic systems (see Remark \ref{coh:rem}), it follows that $X=Z_{<k-1}(X)\times_\sigma U$ is isomorphic to $Z_{<k-1}(X)\times_q U$. Since $q$ is a phase polynomial and $Z_{k-1}(U)$ is a Weyl system this completes the proof. 	
\end{proof}

\section{Proof of Theorem \ref{MainT:thm}} \label{low:sec}
The proof of Theorem \ref{MainT:thm} follows by similar methods as in \cite{Berg& tao & ziegler}. However the multiplicity of generators of different prime orders in $G$ leads to some new difficulties in the "finite group case" which can be solved by working out each prime separately.
\subsection{Reduction of Theorem \ref{MainT:thm} to solving a Conze-Lesigne type equation on a totally disconnected system}

Similarly to Theorem \ref{Main:thm}, using the differentiation lemma (Lemma \ref{dif:lem}) we can translate Theorem \ref{MainT:thm} to solving a Conze-Lesigne type equation. It is thus left to show
\begin{thm}  [Conze-Lesigne type equation for functions on totally disconnected system]\label{TDCL:thm}
	Let $k\geq 1$ be such that Theorem \ref{Main:thm} has already been proven for smaller values of $k$ and let $X=Z_{<k-1}(X)\times _{\rho} U$ be a totally disconnected ergodic $G$-system of order $<k$. Let $f:G\times X\rightarrow S^1$ be a function of type $<m$ for some $m\in\mathbb{N}$ and suppose that
	
	$$\Delta_t f \in P_{<m}(G,X,S^1)\cdot B^1(G,X,S^1)$$
	for all automorphisms $t:X\rightarrow X$. Then $f$ is $(G,X,S^1)$-cohomologous to $P\pi^\star \tilde{f}$ for some $P\in P_{<O_{k,m}(1)}(G,X,S^1)$ and a measurable $\tilde{f}:Z_{<k-1}(X)\rightarrow S^1$ where $\pi:X\rightarrow Z_{<k-1}(X)$ is the factor map.
\end{thm}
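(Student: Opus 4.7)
My plan is to exploit the Conze–Lesigne-type hypothesis with respect to the vertical translations $V_u$ for $u\in U$, and then descend to $Z_{<k-1}(X)$ by killing the $U$-dependence modulo a global phase polynomial of bounded degree. By hypothesis, applied with $t = V_u$, there exist a phase polynomial $p_u\in P_{<m}(G,X,S^1)$ and a measurable $F_u:X\to S^1$ with $\Delta_u f = p_u\cdot \Delta F_u$. The measure selection lemma (Lemma \ref{sel:lem}) and the linearization lemma (Lemma \ref{lin:lem}) let me assume $u\mapsto(p_u,F_u)$ is Borel and that $p_{uv}=p_u\cdot V_u p_v$ for all $u,v$ in some open neighborhood $U'$ of the identity in $U$. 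Since $U$ is compact and totally disconnected, $U'$ contains an open subgroup $K$ of $U$ on which $u\mapsto p_u$ is a genuine cocycle.

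The heart of the argument is to trivialize this cocycle modulo a global phase polynomial: I want to produce $P\in P_{<O_{k,m}(1)}(G,X,S^1)$ with $p_u = \Delta_u P$ for $u$ in some open subgroup $K_0\subseteq K$. Theorem \ref{TST:thm} gives $U\cong\prod_n C_{p_n^{m_n}}$ with uniformly bounded $m_n=O_k(1)$, so $K$ contains a cofinite sub-product $K_0 = \prod_{n\ge N}C_{p_n^{m_n}}$. I analyze the cocycle $u\mapsto p_u$ prime by prime: for each generator $u_n$ of $C_{p_n^{m_n}}$, the relation $u_n^{p_n^{m_n}}=1$ combined with the cocycle identity yields $\prod_{i=0}^{p_n^{m_n}-1} V_{u_n}^i p_{u_n}=1$. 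Because $X$ is totally disconnected, phase polynomials take values in finite cyclic groups of bounded order and admit phase polynomial roots of uniformly bounded degree, which lets me solve $\Delta_{u_n} P_n = p_{u_n}$ for a phase polynomial $P_n$ on $X$ of bounded degree. The measurability of $u\mapsto p_u$, the product decomposition of $K_0$, and the separability of the relevant polynomial classes (Lemma \ref{sep:lem}) then allow me to glue the $P_n$ into a single measurable $P$ of degree $<O_{k,m}(1)$.

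Once $p_u=\Delta_u P$ on $K_0$, the function $f/P$ satisfies $\Delta_u(f/P) = \Delta F_u \in B^1(G,X,S^1)$ for every $u\in K_0$. The straightening lemma (Lemma \ref{cob:lem}) then makes $f/P$ cohomologous to a cocycle $f'$ invariant under an open subgroup $K_1\subseteq K_0$. The quotient $U/K_1$ is a finite abelian group, so $f'$ descends to a factor of $X$ that is a finite extension of $Z_{<k-1}(X)$, and we are reduced to handling the finitely many remaining generators $u_1,\dots,u_{N-1}$ together with any generators of $K_0/K_1$. For each such $u$ (of finite order), I repeat the same cohomological trivialization, absorb the correction into $P$, and iterate; the degree of the accumulated phase polynomial stays $<O_{k,m}(1)$ thanks to the uniform torsion bound of Theorem \ref{TST:thm}. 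After finitely many steps the resulting cocycle is $U$-invariant, i.e.\ is a pullback $\pi^\star\tilde f$ from $Z_{<k-1}(X)$.

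The hardest step will be the bounded-degree cohomological trivialization of $u\mapsto p_u$ across infinitely many primes at once: one must choose the $P_n$'s consistently so that their infinite product converges measurably on $X$ and so that the total degree of $P$ remains bounded uniformly in $n$. The low-characteristic regime is the delicate one, where the root-taking step costs extra degree; this is precisely where Theorem \ref{TST:thm} (uniform $p^d$-torsion with $d=O_k(1)$, and $d=1$ when $p>k$) and the existence of phase polynomial roots of bounded degree on totally disconnected systems are essential. The high-characteristic refinement asserted in Theorem \ref{MainH:thm} should then fall out of the same argument by taking $d=1$ for every prime $p>k$.
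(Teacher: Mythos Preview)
Your overall architecture matches the paper's: linearize the Conze--Lesigne data on an open subgroup of $U$, integrate the cocycle $u\mapsto p_u$ to a global phase polynomial $P$, straighten $f/P$ to something invariant under an open subgroup, and thereby reduce to the case of a \emph{finite} $U$ (the paper's Theorem~\ref{MainF:thm}), which is then handled separately (Theorem~\ref{FC:thm}).

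There are, however, two genuine gaps. First, your ``prime-by-prime'' integration of $u\mapsto p_u$ does not work as described. Solving $\Delta_{u_n}P_n = p_{u_n}$ for each generator $u_n$ gives you functions $P_n$ that are not compatible with one another: there is no reason $\Delta_{u_m}P_n$ should equal $p_{u_m}$, so no simple product or gluing produces a single $P$ with $\Delta_u P = p_u$ on all of $K_0$. The paper avoids this entirely by first using Theorem~\ref{TDisweyl} to replace $\sigma$ by a phase polynomial cocycle of degree $<O_k(1)$, and then invoking the Polynomial Integration Lemma (Lemma~\ref{PIL:lem}), which in one stroke produces $Q\in P_{<O_{k,m}(1)}(G,X,S^1)$ with $\Delta_u Q = p_u$ for \emph{all} $u$ in the open subgroup. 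You should cite these two results rather than attempt an ad hoc gluing.

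Second, your treatment of the finite-$U$ case (``repeat the same cohomological trivialization \dots and iterate'') is far too thin. Once you have passed to a finite quotient $U/U''$, the linearization lemma gives you nothing: the only open neighborhood of the identity in a discrete finite group on which you can force $p_{uv}=p_u V_u p_v$ may be trivial, so you cannot simply integrate again. The paper's proof of Theorem~\ref{FC:thm} is a substantial construction: for each generator $e_j$ one uses the telescoping identity $\prod_{t=0}^{p_j^{n_j}-1}V_{e_j}^t\Delta_{e_j}f=1$ to force $\prod_t V_{e_j}^t F_j$ to be a phase polynomial, extracts a $p_j^{n_j}$-th phase-polynomial root (splitting into low and high characteristic, using Corollary~\ref{roots} and Theorem~\ref{HTDPV:thm}), builds an explicit $F$ from the normalized $\tilde F_j$ across all coordinates, computes $\Delta_{e_j}F$ via the cross-terms $\omega_{i,j}=\Delta_{e_i}\tilde F_j/\Delta_{e_j}\tilde F_i$ (which are phase polynomials of bounded degree), and then repeats the whole process once more before a final application of Lemma~\ref{PIL:lem}. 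None of this is captured by ``repeat and iterate,'' and the cross-terms $\omega_{i,j}$ are precisely the obstruction to your one-generator-at-a-time picture.
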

\begin{rem}
	Note that we do not require that the function $f:G\times X\rightarrow S^1$ is a cocycle. This theorem is a counterpart of \cite[Theorem 4.5]{Berg& tao & ziegler}
\end{rem}
\subsection{Reduction to a finite $U$}
Now we turn to the proof of Theorem \ref{TDCL:thm} assuming the induction hypothesis of Theorem \ref{Main:thm}.\\
Just like in \cite{Berg& tao & ziegler} we first show that it is suffice to prove the theorem in the case where the group $U$ is a finite group.\\

We recall the following results from \cite{Berg& tao & ziegler}
\begin{lem}  [Decent of type]\label{dec:lem} \cite[Lemma 5.1]{Berg& tao & ziegler} Let $Y$ be a $G$-system, let $k,m\geq 1$, and let $X=Y\times_{\rho} U$ be an ergodic extension of $Y$ be a phase polynomial cocycle $\rho:G\times Y\rightarrow U$ of degree $<m$. Let $\pi:X\rightarrow Y$ be a factor map and let $f:Y\rightarrow S^1$ be a function such that $\pi^\star f$ is of type $<k$, then $f$ is of type $<k+m+1$.
\end{lem}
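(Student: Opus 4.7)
The strategy is to isolate the phase-polynomial obstruction in a single auxiliary descent claim and then apply that claim on the cubic extension $X^{[k]}\to Y^{[k]}$. This is the approach used in the corresponding lemma (\cite[Lemma 5.1]{Berg& tao & ziegler}).

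\textbf{Step 1 (Auxiliary descent).} First I would prove the following building block: if $g:G\times Y\to S^1$ is any function such that $\pi^\star g$ is a $(G,X,S^1)$-coboundary, where $X=Y\times_\rho U$ is an ergodic extension by a phase polynomial cocycle $\rho$ of degree $<m$, then $d^{[m+1]}g$ is a $(G,Y^{[m+1]},S^1)$-coboundary. Write $\pi^\star g=\Delta F$ for some $F:X\to S^1$. Because $\pi^\star g$ is invariant under every vertical translation $V_v$, we have $\Delta(V_vF/F)=1$, so $V_vF/F$ is $G$-invariant; by ergodicity of $X$ it equals a constant $\chi_0(v)\in S^1$, and the cocycle identity for vertical translations forces $\chi_0\in\hat U$. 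Fourier-expanding $F$ over the fibers then pins $F$ to the single mode $F(y,u)=F_0(y)\chi_0(u)$, and a direct computation yields
\[
g(g_0,y)=\chi_0(\rho(g_0,y))\cdot\Delta_{g_0}F_0(y).
\]
Since $\chi_0\circ\rho$ is a phase polynomial of degree $<m$, Lemma \ref{PP}(iii) gives $d^{[m+1]}(\chi_0\circ\rho)=1$, and Lemma \ref{PP}(iv) gives $d^{[m+1]}(\Delta F_0)\in B^1(G,Y^{[m+1]},S^1)$. Multiplying, $d^{[m+1]}g$ is a coboundary.

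\textbf{Step 2 (Pass to the cube).} Set $g:=d^{[k]}f$, a function $G\times Y^{[k]}\to S^1$. Using the commutation of cubic derivatives with factor-pullbacks, $d^{[k]}(\pi^\star f)=(\pi^{[k]})^\star g$, where $\pi^{[k]}:X^{[k]}\to Y^{[k]}$ is the product factor map. The hypothesis that $\pi^\star f$ is of type $<k$ is therefore that $(\pi^{[k]})^\star g$ is a coboundary on $X^{[k]}$. Moreover $X^{[k]}$ is an extension of $Y^{[k]}$ by the cocycle $d^{[k]}\rho:G\times Y^{[k]}\to U^{2^k}$, and coordinatewise this cocycle is an alternating product of shifts of $\rho$, so it is still a phase polynomial cocycle of degree $<m$.

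\textbf{Step 3 (Conclude).} Applying Step 1 with $(Y^{[k]},X^{[k]},d^{[k]}\rho,U^{2^k},g)$ in place of $(Y,X,\rho,U,g)$ gives that $d^{[m+1]}g$ is a coboundary on $(Y^{[k]})^{[m+1]}=Y^{[k+m+1]}$. Since $d^{[m+1]}g=d^{[m+1]}d^{[k]}f=d^{[k+m+1]}f$, this is precisely the statement that $f$ is of type $<k+m+1$.

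\textbf{Main obstacle.} The delicate point is the ergodicity step in Step 1: when Step 1 is invoked on the cube in Step 3, the diagonal $G$-action on $X^{[k]}$ is not ergodic (its invariants form the $\sigma$-algebra $\mathcal{I}_k$). Consequently $\chi_0$ must be allowed to depend measurably on $\mathcal{I}_k$, and the Fourier decomposition of $F$ must be performed fiberwise over $\mathcal{I}_k$. The bookkeeping needed to ensure that $\chi_0\circ d^{[k]}\rho$ is still a phase polynomial of degree $<m$ (so that Lemma \ref{PP}(iii) still kills it under $d^{[m+1]}$) is the most technically involved part of the argument, but it is standard within the Host-Kra cube framework.
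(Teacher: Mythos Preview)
Your outline is essentially the argument of \cite[Lemma~5.1]{Berg& tao & ziegler}, which is all the paper invokes here (no independent proof is given), so the approach matches. One small correction: the cocycle exhibiting $X^{[k]}$ as an abelian extension of $Y^{[k]}$ under the diagonal $G$-action is the coordinatewise cocycle $\rho^{[k]}(g,(y_w)_w)=(\rho(g,y_w))_w$ into $U^{2^k}$, not $d^{[k]}\rho$; there are no alternating signs, but since each coordinate is still a phase polynomial of degree $<m$ your Step~1 applies unchanged and the conclusion is unaffected.
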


\begin{lem} [Polynomial integration lemma] \cite[Lemma B.6]{Berg& tao & ziegler}\label{PIL:lem}
	Let $m,k\geq 1$, let $X=Y\times_{\rho} U$ be an ergodic abelian extension of a $G$-system $Y$ by a cocycle $\rho:G\times Y\rightarrow U$ that is also a phase polynomial of degree $<k$. For all $u\in U$ let $q_u: X\rightarrow S^1$ be a phase polynomial of degree $<m$ which obeys the cocycle identity $q_{uv}=q_u V_u q_v$ for all $u,v\in U$. Then there exists a phase polynomial $Q:X\rightarrow S^1$ of degree $<O_{k,m}(1)$ such that $\Delta_u Q = q_u$ for all $u\in U$.
\end{lem}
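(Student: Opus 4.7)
The plan is to construct the integrating polynomial directly as $Q(y,u):=q_u(y,1_U)$ and then bound its phase polynomial degree by induction on $m$. Applying the cocycle identity with $u=v=1_U$ forces $q_{1_U}=1$, and a short manipulation using $q_{vu}=q_v\cdot V_vq_u$ together with the identity $V_vq_u(y,1)=q_u(y,v)$ and the abelianness of $U$ gives
\[
\Delta_vQ(y,u)=\frac{q_{vu}(y,1)}{q_u(y,1)}=\frac{q_v(y,1)\,q_u(y,v)}{q_u(y,1)}=q_v(y,u),
\]
so the equation $\Delta_vQ=q_v$ holds automatically; the entire content of the lemma is to bound the phase polynomial degree of this canonical $Q$.

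For the base case $m=1$, each $q_u$ is a phase polynomial of degree $<1$ and hence constant in $x$ by ergodicity, so $q_u=\chi(u)$ for some continuous character $\chi:U\to S^1$. Then $Q(y,u)=\chi(u)$ satisfies $\Delta_gQ(y,u)=\chi(\rho(g,y))$, which by functoriality lies in $P_{<k}(X,S^1)$, so $Q\in P_{<k+1}(X,S^1)$. For the inductive step, assume the bound $d_{m-1}=O_{k,m-1}(1)$ for degree $<m-1$. Differentiating the cocycle identity of $q$ shows that for each $g\in G$ the map $u\mapsto\Delta_gq_u$ is again a $U$-cocycle with values in $P_{<m-1}(X,S^1)$, so the induction hypothesis supplies $R_g\in P_{<d_{m-1}}(X,S^1)$ with $\Delta_uR_g=\Delta_gq_u$; the same canonical construction yields the explicit choice $R_g(y,u):=\Delta_gq_u(y,1)$. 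Expanding $\Delta_gQ$ via the cocycle identity of $q$ then produces
\[
\Delta_gQ(y,u)=q_{\rho(g,y)}(T_gy,1)\cdot\Delta_gq_u(y,1)=\pi^\star H_g(y)\cdot R_g(y,u),
\]
where $\pi:X\to Y$ is the projection and $H_g(y):=q_{\rho(g,y)}(T_gy,1)$ is a $Y$-measurable remainder.

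The main obstacle is to bound the phase polynomial degree of $H_g$ on $Y$ uniformly in $g$. Morally, $H_g$ is the composition of the polynomial-valued cocycle $v\mapsto q_v(\cdot,1)$ (of degree $<m$ in $x$) with the polynomial cocycle $v=\rho(g,y)$ (of degree $<k$ in $y$), evaluated along the $G$-orbit of the cross-section $y\mapsto(y,1)$. To bound its degree one differentiates $H_g$ in $g'\in G$, uses the $\rho$-cocycle identity to write $\rho(g,T_{g'}y)=\rho(g,y)\cdot(\rho(g',T_gy)/\rho(g',y))$ (where the correction factor is a phase polynomial of degree $<k-1$ in $y$), and expands via $q_{ab}=q_a\cdot V_aq_b$ to peel off one layer. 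A nested induction on $k$ and $m$ then yields $H_g\in P_{<d_H}(Y,S^1)$ for some $d_H=O_{k,m}(1)$. Combining these bounds gives $\Delta_gQ\in P_{<\max(d_{m-1},d_H)}(X,S^1)$ uniformly in $g$, and the polynomiality criterion upgrades this to $Q\in P_{<d_m}(X,S^1)$ with $d_m=O_{k,m}(1)$, closing the induction.
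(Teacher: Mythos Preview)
Your construction $Q(y,u):=q_u(y,1_U)$ is exactly the one the paper (and the cited source) uses; see the ``Furthermore we can take $Q(y,uu_0):=p_u(y,u_0)$'' clause in the exact integration lemma (Lemma~\ref{Eint:lem}). The verification of $\Delta_vQ=q_v$ is correct, and your inductive scheme for the degree bound is the right shape.

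The one place where your write-up is thin is the bound on $H_g(y)=q_{\rho(g,y)}(T_gy,1)$. What you describe informally (``differentiate in $g'$, use the cocycle identity for $\rho$ to peel off a degree-$<k-1$ correction, nested induction on $k,m$'') is precisely the content of the composition lemma, stated in the paper as Lemma~\ref{B.5} (\cite[Lemma~B.5(iii)]{Berg& tao & ziegler}). Rather than redoing that argument from scratch, you can simply note that $H_g(y)=Q(T_gy,\rho(g,y))$ and invoke Lemma~\ref{B.5} with $p=Q$, $j=0$, and $s(y,u)=\rho(g,y)$ (a phase polynomial of degree $<k$ pulled back from $Y$), once you know $Q$ is a polynomial---but of course that is circular. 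The clean way out, and what BTZ actually do, is to apply Lemma~\ref{B.5} directly to the expression $Q(y,u)=q_u(y,1)=(\Delta_u F)(y,1)$ for any measurable $F$ with $\Delta_uF=q_u$ (which exists by Lemma~\ref{B.4}), viewing it as a composition of the degree-$<m$ polynomial $(y,u)\mapsto q_v(y,u)$ with the degree-$<k+1$ coordinate polynomial $(y,u)\mapsto u$ and the constant section $s\equiv 1$. That single appeal to Lemma~\ref{B.5} replaces your nested induction and gives the $O_{k,m}(1)$ bound in one stroke; otherwise your proof matches the original.
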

The following Proposition is a corollary of Lemma \ref{PIL:lem} and it states that a Conze-Lesigne type equation can be reduced to a factor.
\begin{prop} \label{inv:prop}
	Let $X,Y,\rho$ be as in Lemma \ref{PIL:lem}. Let $f:G\times X\rightarrow S^1$ be a function that is $(G,X,S^1)$-cohomologous to a phase polynomial $p:G\times X\rightarrow S^1$ of degree $<m$, for some $m\geq 0$. Write $f=p\cdot \Delta F$, and suppose that $f$ is invariant under the action of $U$. Then there exists a phase polynomial $p':G\times X\rightarrow S^1$ of degree $<O_{k,m}(1)$ and a measurable map $F':X\rightarrow S^1$ such that $p'$ and $F'$ are invariant under the action of $U$ and $f=p'\cdot \Delta F'$. 
\end{prop}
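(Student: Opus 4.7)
The plan is to produce a phase polynomial $R\colon X\to S^1$ of degree $<O_{k,m}(1)$ satisfying $\Delta_u R=\overline{\Delta_u F}$ for every $u\in U$, and then to take $F':=RF$ and $p':=p/\Delta R$; the remaining verifications will be essentially automatic. To motivate the choice of $R$, note that since $U$ commutes with $G$ the operators $V_u$ preserve phase polynomials, so applying $V_u$ to $f=p\cdot\Delta F$ and using the $U$-invariance of $f$ yields
\[
\Delta_u p \;=\; \Delta F/V_u\Delta F \;=\; \Delta\bigl(\overline{\Delta_u F}\bigr)\qquad (u\in U).
\]
Thus for every $u\in U$, the function $\Delta_u p\colon G\times X\to S^1$, a phase polynomial of degree $<m$ in $x$, is a $(G,X,S^1)$-coboundary witnessed by $\overline{\Delta_u F}$.

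The first real step is to promote $\overline{\Delta_u F}$ from a merely measurable function to a phase polynomial on $X$. For each fixed $u\in U$ and every $g\in G$ the identity above gives $\Delta_g(\overline{\Delta_u F})=\Delta_u p(g,\cdot)\in P_{<m}(X,S^1)$, so the polynomiality criterion for phase polynomials forces $\overline{\Delta_u F}\in P_{<m+1}(X,S^1)$. A direct computation from $V_{uv}=V_uV_v$ shows that the map $u\mapsto q_u:=\overline{\Delta_u F}$ satisfies the cocycle identity $q_{uv}=q_u\cdot V_uq_v$, and it is measurable in $u$ by the continuity of the $U$-action on $L^2(X)$.

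At this point we are in the exact setting of the Polynomial Integration Lemma (Lemma~\ref{PIL:lem}) applied to the abelian extension $X=Y\times_\rho U$, where $\rho$ is a phase polynomial of degree $<k$: the cocycle $u\mapsto q_u$ of phase polynomials of degree $<m+1$ lifts to a phase polynomial $R\colon X\to S^1$ of degree $<O_{k,m}(1)$ with $\Delta_u R=\overline{\Delta_u F}$ for all $u\in U$. Setting $F':=RF$ one computes
\[
V_uF' \;=\; V_uR\cdot V_uF \;=\; R\cdot\Delta_uR\cdot V_uF \;=\; R\cdot(F/V_uF)\cdot V_uF \;=\; RF \;=\; F',
\]
so $F'$ is $U$-invariant, while $p':=p/\Delta R=f/\Delta F'$ makes $f=p'\cdot\Delta F'$ hold automatically and is a phase polynomial of degree $<O_{k,m}(1)$. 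Its $U$-invariance follows by applying $\Delta$ (the $G$-derivative) to $\Delta_uR=\overline{\Delta_uF}$: this yields $\Delta\Delta_uR=\Delta\overline{\Delta_uF}=\Delta_up$, which rearranges exactly to $V_up'=p'$.

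The proof is essentially bookkeeping once Lemma~\ref{PIL:lem} is in hand. The only conceptual input is the observation that $\overline{\Delta_u F}$, a priori just a measurable function, is automatically a phase polynomial of bounded degree because its $G$-derivatives are. I do not foresee any substantive obstacle beyond verifying the hypotheses (measurability in $u$ and the cocycle identity) of Lemma~\ref{PIL:lem}, both handled cleanly by the continuity of the $V_u$-action on $L^2(X)$ and a one-line computation.
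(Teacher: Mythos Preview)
Your proof is correct and follows essentially the same approach as the paper: both arguments observe that the $U$-invariance of $f$ forces $\Delta_u F$ (equivalently its conjugate) to be a phase polynomial cocycle in $u$, then invoke Lemma~\ref{PIL:lem} to integrate it and thereby correct $F$ to a $U$-invariant $F'$. The only cosmetic difference is that the paper integrates $u\mapsto\Delta_u F$ to obtain $Q$ and sets $F'=F/Q$, whereas you integrate the conjugate $u\mapsto\overline{\Delta_u F}$ to obtain $R$ and set $F'=RF$; these are the same construction with $R=\overline{Q}$.
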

\begin{proof}
	By assumption we can write
	
	$$f=p\cdot \Delta F$$
	for some $p\in P_{<m}(G,X,S^1)$ and $F:X\rightarrow S^1$ a measurable map. As $f$ is invariant under $U$ and the action of $U$ commutes with the $G$-action we have $$\Delta_u p \cdot \Delta \Delta_u F=1$$
	It follows that $\Delta_u F$ is a phase polynomial of degree $<m+1$ (In fact of degree $<m$ using Lemma \ref{vdif:lem}). Moreover it satisfies the cocycle identity $\Delta_{uv} F = \Delta_u F V_u \Delta_v F$. Therefore, applying Lemma \ref{PIL:lem} we conclude that there exists a phase polynomial $Q:G\times X\rightarrow S^1$ of degree $<O_{k,m}(1)$ with the property that $\Delta_u F = \Delta_u Q$. Let $p'=p\cdot \Delta Q$ and $F'=F/Q$ we have
	$$f= p'\cdot \Delta F'$$
	As $F'$ is invariant under $U$ and $f$ is invariant under $U$ we have that $p'$ is invariant under $U$ and this completes the proof.
	
\end{proof}
We prove the following counterpart of Proposition 6.1 from \cite{Berg& tao & ziegler}. Given an integer $k$ we assume (inductively) that Theorem \ref{Main:thm} holds for smaller values of $k$ we have that
\begin{thm} \label{MainF:thm}
	In order to prove Theorem \ref{TDCL:thm} it suffices to do so in the case where $U$ is finite.
\end{thm}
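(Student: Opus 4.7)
The plan is to exploit the total disconnectedness of $U$ to reduce to a finite-index quotient. Apply the Conze-Lesigne hypothesis to the automorphisms $V_u$ for $u\in U$: for each $u$ we may write $\Delta_u f = p_u \cdot \Delta F_u$ with $p_u \in P_{<m}(G,X,S^1)$ and $F_u:X\to S^1$ measurable. The measure selection Lemma~\ref{sel:lem} makes $u\mapsto p_u,F_u$ Borel, and the linearization Lemma~\ref{lin:lem} lets us arrange that $p_u$ is a cocycle in $u$ on some open neighborhood $U'$ of the identity. Since $U$ is compact and totally disconnected, $U'$ contains an open (hence finite index) subgroup $V\subseteq U$, on which $p_{vw}=p_v\cdot V_v p_w$.

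Next I would integrate the family $\{p_v\}_{v\in V}$. By Theorem~\ref{TDisweyl} the system $X$ is a Weyl system, so after replacing $\rho$ by a cohomologous cocycle we may assume it is a phase polynomial of degree $<O_k(1)$; the induced cocycle for the extension $X\to X/V$ is then a phase polynomial of bounded degree. The polynomial integration Lemma~\ref{PIL:lem}, applied to the $V$-action, produces a phase polynomial $Q:X\to S^1$ of degree $<O_{k,m}(1)$ with $\Delta_v Q = p_v$ for every $v\in V$. Setting $f':=f/Q$, we get $\Delta_v f' = \Delta F_v$ for all $v\in V$, i.e. $\Delta_v f'$ is a coboundary. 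By Lemma~\ref{cob:lem} applied to the $V$-action on $X$, $f'$ is $(G,X,S^1)$-cohomologous to a function $f''$ invariant under some open subgroup $V'\subseteq V$. Because $V$ is open in $U$, so is $V'$, and hence $U/V'$ is finite.

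Now $f''=\pi^\star\tilde f$ for a unique $\tilde f:G\times X/V'\to S^1$, where $X/V' = Z_{<k-1}(X)\times_{\bar\rho}(U/V')$ has finite structure group $U/V'$. One must check that $\tilde f$ satisfies the hypotheses of Theorem~\ref{TDCL:thm} with $m$ replaced by some $m'=O_{k,m}(1)$: the type of $\tilde f$ is controlled via the descent Lemma~\ref{dec:lem} applied to the phase polynomial extension $X\to X/V'$ of bounded degree, and the Conze-Lesigne hypothesis descends using Proposition~\ref{inv:prop} — given an automorphism $t$ of $X/V'$ (automorphisms of interest being vertical translations by $U/V'$ and lifts of automorphisms of $Z_{<k-1}(X)$, both of which lift to automorphisms $\tilde t$ of $X$), write $\pi^\star \Delta_t \tilde f = \Delta_{\tilde t} f'' = p\cdot\Delta F$ on $X$ and use the $V'$-invariance of $\pi^\star \Delta_t\tilde f$ together with Proposition~\ref{inv:prop} to produce $V'$-invariant $(p',F')$ representatives that descend to $X/V'$. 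Once the hypotheses are verified, the assumed finite-$U$ case of Theorem~\ref{TDCL:thm} gives $\tilde f \sim P\cdot\pi_{Z_{<k-1}(X)}^\star \tilde f_0$ with $P\in P_{<O_{k,m}(1)}(G,X/V',S^1)$. Pulling back to $X$ and multiplying by $Q$ yields the desired decomposition on $X$.

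The main obstacle I expect is the descent in the last paragraph: verifying carefully that the Conze-Lesigne hypothesis passes from $f''$ on $X$ to $\tilde f$ on $X/V'$ for \emph{all} automorphisms of the quotient, not merely the obvious vertical ones. Running the lifting argument uniformly and applying Proposition~\ref{inv:prop} to produce $V'$-invariant representatives of the phase polynomial and coboundary pieces is the delicate part; the preceding steps are comparatively routine applications of the linearization, polynomial integration, and straightening lemmas already in hand.
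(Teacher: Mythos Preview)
Your proposal is correct and follows essentially the same route as the paper: linearize the $p_u$ on an open subgroup of $U$, integrate via Lemma~\ref{PIL:lem} (applied for each $g\in G$) to produce $Q$, straighten $f/Q$ with Lemma~\ref{cob:lem} to obtain invariance under an open subgroup $V'$, and then descend to the finite-$U$ quotient using Lemma~\ref{dec:lem} for the type and Proposition~\ref{inv:prop} for the Conze--Lesigne hypothesis. Your worry at the end is slightly overstated: in the finite case (Theorem~\ref{FC:thm}) only the vertical automorphisms $V_u$, $u\in U/V'$, are actually used, and for those the descent via Proposition~\ref{inv:prop} is exactly what the paper does.
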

\begin{proof}
	Let $X$ be a totally disconnected system of order $<k$ and write $X=Z_{<k-1}(X)\times_{\sigma} U$. By Theorem \ref{TDisweyl} we can assume that $\sigma:G\times Z_{<k-1}(X)\rightarrow U$ is a phase polynomial of degree $<O_{k}(1)$. Let $f:G\times X\rightarrow S^1$ be a function of type $<m$ such that for all $u\in U$ we have that $\Delta_u f=p_u\cdot \Delta F_u$ for some $p_u\in P_{<m}(G,X,S^1)$ and $F_u\in\mathcal{M}(X,S^1)$. From the linearization Lemma (Lemma \ref{lin:lem}) we know that there exists an open neighborhood $U'$ of the identity in $U$ on which $p_u$ is a cocycle in $u$. $U$ is totally disconnected, therefore by Theorem \ref{TST:thm} it is isomorphic to a direct product of finite cyclic groups. We conclude that $U'$ contains a cylinder neighborhood. In other words, we can replace $U'$ with an open subgroup such that $U=U'\times W$ for some finite group $W$.\\
	
	We pass from $U$ to $U'$. First we write $X=Y\times_{\sigma'} U'$ for $Y=Z_{<k-1}(X)\times_{\sigma''} W$ where $\sigma'$ and $\sigma''$ are the projection of $\sigma$ to $U'$ and $W$. By construction of $U'$ we have that $p_{uv}=p_u V_u p_v$, for all $u,v\in U'$. We now use the polynomial integration Lemma (Lemma \ref{PIL:lem}) once for every $g\in G$ and then gluing together. We get that there exists a phase polynomial $Q:G\times X\rightarrow S^1$ of degree $<O_k(1)$ such that $\Delta_u Q = p_u$ for all $u\in U'$. In particular for every $u\in U'$ we have that $\Delta_u (f/Q)$ is a $(G,X,S^1)$-coboundary. Therefore by Lemma \ref{cob:lem} we conclude that $f/Q$ is $(G,X,S^1)$-cohomologous to a function $f':G\times X\rightarrow S^1$ that is invariant under the action of some open subgroup $U''$ of $U'$.\\
	Let $\varphi:U\rightarrow U/U''$ be the quotient map and let $X'=Z_{<k-1}(X)\times_{\varphi\circ\sigma} U/U''$ with the factor map $\pi:X\rightarrow X'$. We can then write  $f'=\pi^\star\tilde{f}$ where $\tilde{f}:G\times X'\rightarrow S^1$.\\
	Since $f$ is of type $<m$ and $Q$ is a phase polynomial of degree $<O_{k,m}(1)$ we have that $f/Q$ is of type $<O_{k,m}(1)$ (Lemma \ref{PP} (iii)). By construction $\pi^\star \tilde{f}$ is $(G,X,S^1)$-cohomologous to $f/Q$ and therefore it is also of type $<O_{k,m}(1)$. Using Lemma \ref{dec:lem} we have that $\tilde{f}$ is of type $<O_{k,m}(1)$. Now, since $Q$ is a phase polynomial of degree $<O_{k,m}(1)$ we have that $\Delta_u (f/Q)$ is $(G,X,S^1)$-cohomologous to a phase polynomial. In particular, $\pi^\star \Delta_{\varphi(u)} \tilde{f}=\Delta_u \pi^\star \tilde{f}$ is $(G,X,S^1)$-cohomologous to a phase polynomial of degree $<O_{k,m}(1)$. Thus for every $u\in U/U''$ we can write $\pi^\star \Delta_u\tilde{f}=q_u\cdot \Delta F_u$ for some phase polynomial $q_u$ of degree $<O_{k,m}(1)$. Applying Proposition \ref{inv:prop} we can assume that $q_u,F_u$ are invariant under $U''$, therefore $\Delta_u\tilde{f}$ is $(G,X',S^1)$-cohomologous to a phase polynomial of degree $<O_{k,m}(1)$. Now, applying Theorem \ref{TDCL:thm} for the system $X'$, which is an extension of $Z_{<k-1}(X')$ by a finite group $U/U''$. We conclude that $\tilde{f}$ is $(G,X',S^1)$- cohomologous to $P\cdot \tilde{\pi}^\star f_0$ where $P\in P_{<O_{k,m}(1)}(G,X',S^1)$, $f_0:Z_{<k-1}(X)\rightarrow S^1$ is a measurable map and $\tilde{\pi}:X\rightarrow Z_{<k-1}(X)$ is the factor map. As $P$ is a phase polynomial of degree $<O_{k,m}(1)$ and $\tilde{f}$ is of type $<O_{k,m}(1)$, arguing as before we have that $f_0$ is also of type $<O_{k,m}(1)$.
	Finally, applying Theorem \ref{Main:thm} for the system $Z_{<k-1}(X)$, we have that $f_0$ is $(G,Z_{<k-1}(X),S^1)$-cohomologous to a phase polynomial of degree $<O_{k,m}(1)$. Lifting everything up, we conclude that $f$ is $(G,X,S^1)$-cohomologous to a phase polynomial of degree $<O_{k,m}(1)$.
\end{proof}
\subsection{Proving the theorem for a finite $U$}
We prove the following counterpart of Proposition 7.1 from \cite{Berg& tao & ziegler} for totally disconnected systems.
\begin{thm} [Theorem \ref{TDCL:thm} for a finite $U$]\label{FC:thm}
	Let $k\geq 1$ and $U$ be a finite group. Suppose that $X=Z_{<k-1}(X)\times _{\rho} U$ is a totally disconnected ergodic $G$-system of order $<k$. Let $f:G\times X\rightarrow S^1$ be a function of type $<m$ for some $m\in\mathbb{N}$ such that
	$$\Delta_t f \in P_{<m}(G,X,S^1)\cdot B^1(G,X,S^1)$$
	for all automorphism $t:X\rightarrow X$. Then $f$ is $(G,X,S^1)$-cohomologous to $P\cdot\pi^\star \tilde{f}$ for some $P\in P_{<O_{k,m}(1)}(G,X,S^1)$ and a measurable $\tilde{f}:Z_{<k-1}(X)\rightarrow S^1$ where $\pi:X\rightarrow Z_{<k-1}(X)$ is the factor map.
\end{thm}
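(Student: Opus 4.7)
Proof plan: The plan is to use the Conze-Lesigne hypothesis applied to the vertical rotations $V_u$ ($u\in U$), which are automorphisms of $X$ commuting with the $G$-action, in order to absorb the $u$-dependence of $\Delta_u f$ into a single phase polynomial factor $Q$, and then use a straightening argument to reduce to a factor on which $f/Q$ is pulled back from $Z_{<k-1}(X)$. The argument proceeds by induction on $|U|$, the base case $|U|=1$ (where $X=Z_{<k-1}(X)$) being trivial.

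For the inductive step, applying the hypothesis to $V_u$ yields $\Delta_u f=p_u\cdot\Delta F_u$ with $p_u\in P_{<m}(G,X,S^1)$ and $F_u\in\mathcal M(X,S^1)$, chosen Borel measurably in $u$ via Lemma \ref{sel:lem}. The linearization lemma (Lemma \ref{lin:lem}) arranges $p_{uv}=p_u\cdot V_u p_v$ on some neighborhood of the identity. Since $U$ is finite, promoting this to a cocycle on a subgroup $U'\leq U$ requires a prime-by-prime analysis based on the decomposition $U=\prod_p U_p$ of Theorem \ref{TST:thm}, using that $p_u^{\mathrm{ord}(u)}$ is a phase-polynomial coboundary and hence, by the separation lemma (Lemma \ref{sep:lem}) together with total disconnectedness, actually a constant admitting a phase-polynomial root of the appropriate order. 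By Theorem \ref{TDisweyl} we may assume the defining cocycle $\rho$ is a phase polynomial of degree $<O_k(1)$, so the polynomial integration lemma (Lemma \ref{PIL:lem}), applied pointwise in $g\in G$ and glued measurably via Lemma \ref{sel:lem} again, produces a phase polynomial $Q:G\times X\to S^1$ of degree $<O_{k,m}(1)$ with $\Delta_u Q=p_u$ for all $u\in U'$. Setting $f_1:=f/Q$, we then have $\Delta_u f_1\in B^1(G,X,S^1)$ for every $u\in U'$, and Lemma \ref{cob:lem} yields $f_2$ cohomologous to $f_1$ and invariant under some open subgroup $U''\leq U'$.

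If $U''=U$ then $f_2=\pi^\star\tilde f$ directly and we finish with $P:=Q$. Otherwise let $\pi_1:X\to X_1:=Z_{<k-1}(X)\times_{\bar\rho}U/U''$ be the quotient map and write $f_2=\pi_1^\star\tilde f_2$. The type of $\tilde f_2$ is controlled via Lemma \ref{dec:lem}, and the Conze-Lesigne property of $\tilde f_2$ on $X_1$ descends from that of $f$ on $X$: every automorphism $t_1$ of $X_1$ lifts to an automorphism $t$ of $X$, and Proposition \ref{inv:prop} allows one to choose $U''$-invariant representatives of the phase-polynomial and coboundary parts of $\Delta_t f_2$, which then descend to $X_1$. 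Since $|U/U''|<|U|$, the induction hypothesis applied to $\tilde f_2$ on $X_1$ yields $\tilde f_2\sim P_1\cdot\bar\pi^\star\tilde f$ for $\bar\pi:X_1\to Z_{<k-1}(X_1)=Z_{<k-1}(X)$, and lifting back via $\pi_1^\star$ and combining with $Q$ gives the desired decomposition $f\sim (Q\cdot\pi_1^\star P_1)\cdot\pi^\star\tilde f$ with polynomial part of degree $<O_{k,m}(1)$. The principal difficulty is the prime-by-prime step promoting $u\mapsto p_u$ to a genuine cocycle on a useful subgroup $U'$; this is where the multi-prime nature of $G=\bigoplus_p\mathbb F_p$ complicates the classical argument of Bergelson-Tao-Ziegler, since generators of different prime orders in $G$ act on each $p$-Sylow $U_p$ in constrained ways that must be tracked carefully across the various cyclic factors of $U$.
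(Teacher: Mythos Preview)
Your strategy is different from the paper's and, as written, has two genuine gaps.

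First, the induction on $|U|$ does not yield the claimed $O_{k,m}(1)$ bound. Each pass through your loop applies Lemma~\ref{PIL:lem} (polynomial integration) and Proposition~\ref{inv:prop} (descent of the Conze--Lesigne data), both of which replace the polynomial degree $m$ by some $O_{k,m}(1)$. After peeling off the cyclic factors of $U\cong\prod_{i=1}^N C_{p_i^{n_i}}$ one at a time, the final degree bound depends on $N$, i.e.\ on $|U|$, which is not allowed. The paper avoids this by a \emph{single-pass} construction: it works with all generators $e_1,\dots,e_N$ simultaneously, normalizes each $F_j$ so that $\prod_{t} V_{e_j}^t \tilde F_j=1$, builds one explicit $F$ over the whole group, and shows that the cross-terms $\omega_{i,j}=\Delta_{e_i}\tilde F_j/\Delta_{e_j}\tilde F_i$ and their telescoped products $\xi_j$ are phase polynomials of degree $<O_{k,m}(1)$ uniformly in $i,j$ (via Corollary~\ref{roots} for small primes and a Taylor-expansion argument using Theorem~\ref{HTDPV:thm} for large primes). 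This is exactly the content you defer to a ``prime-by-prime analysis''; without it the bound is not uniform in $|U|$. Note that the paper's high-characteristic proof (Section~\ref{high:sec}) \emph{does} induct on the number of cyclic factors, but there the exact integration lemma keeps the degree bounded by the fixed $k$ throughout; in low characteristic you have no such exact bound.

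Second, the step you sketch to promote $u\mapsto p_u$ to a cocycle on a nontrivial subgroup is not correct as stated. For finite $U$ the linearization lemma is vacuous (any singleton is open), so you must produce the cocycle structure by hand. From the telescoping identity one gets that $\prod_{t=0}^{n-1} V_u^t p_u$ is a phase-polynomial coboundary, but this is \emph{not} ``actually a constant'' as you claim; it equals $\Delta$ of a phase polynomial of degree $<m+1$, and the whole point of the paper's argument is to extract a phase-polynomial $n$-th root of that object (Corollary~\ref{roots} when $p=O_{k,m}(1)$, and a separate computation when $p$ is large). Two smaller issues: Lemma~\ref{cob:lem} for a finite group only promises invariance under \emph{some} open subgroup, which could be trivial---you should instead use Lemma~\ref{B.4} directly to integrate $u\mapsto F_u$ once it is a $U'$-cocycle; and the assertion that every automorphism of $X_1$ lifts to $X$ is unjustified (though harmless in practice, since the proof only ever uses the vertical rotations $V_u$, which do lift).
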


\begin{proof}
	Finite groups are trivially totally disconnected and so by Theorem \ref{TST:thm} we have that $U$ is isomorphic to $\prod_{i=1}^N C_{{p_i}^{n_i}}$, for some $n_1,n_2,...,n_N = O_k(1)$ and $N$ unbounded but finite. Moreover if $p_i$ is sufficiently large with respect to $k$ we can take $n_i=1$. \\
	Let $e_1,...,e_N$ be the standard basis for $U$ and we write 
	\begin{equation}\label{fe1}
	\Delta_{e_i} f = Q_i \cdot \Delta F_i
	\end{equation}
	For a phase polynomial $Q_i\in P_{<m}(G,X,S^1)$ and $F_i:X\rightarrow S^1$ a measurable map.\\
	we have the telescoping identity
	$$\prod_{t=0}^{p_j^{n_j}-1} V_{e_j}^t\Delta_{e_j} f = 1$$
	and so equation (\ref{fe1}) implies that 
	$$\Delta \prod_{t=0}^{p_j^{n_j}-1} V_{e_j}^tF_j = \prod_{t=0}^{p_j^{n_j}-1}V_{e_j}^t \overline{Q_j}\in P_{<m}(G,X,S^1)$$
	In particular we have that $$\prod_{t=0}^{p_j^{n_j}-1} V_{e_j}^tF_j\in P_{<m+1}(X,S^1)$$
	We now clean up this term. We claim that there exists a phase polynomial $\psi_j$ of degree $<O_{k,m}(1)$ which is invariant under $e_j$ and is a ${p_j}^{n_j}$'th root of $\prod_{t=0}^{p_j^{n_j}-1} V_{e_j}^tF_j$. \\
	First we notice that $\prod_{t=0}^{p_j^{n_j}-1} V_{e_j}^t F_j$ is invariant under $e_j$. Now, if $p_j=O_{k,m}(1)$, then we view $\prod_{t=0}^{p_j^{n_j}-1} V_{e_j}^t F_j$ as a phase polynomial on the factor induced by quotienting out $\left<e_j\right>$. Then we apply Corollary \ref{roots} and find a phase polynomial of degree $<O_{k,m}(1)$ that is also a $p_j^{n_j}$'th root of $\prod_{t=0}^{p_j^{n_j}-1} V_{e_j}^t F_j$. Lifting everything up we conclude that there exists a phase polynomial $\psi_j$ of degree $<O_{k,m}(1)$ that is invariant under $e_j$ and is a $p_j^{n_j}$'th root of $\prod_{t=0}^{p_j^{n_j}-1} V_{e_j}^t F_j$.\\
	Otherwise if $p_j$ is sufficiently large then we can take $n_j=1$. Now, the phase polynomial $\prod_{t=1}^{p_j-1} V_{e_j}^t F_j$ is defined on a totally disconnected system and so by Proposition \ref{TDPV:prop} we have that up to a constant multiple, $\prod_{t=1}^{p_j-1} V_{e_j}^t F_j$ takes values in a finite subgroup $H$ of $S^1$. Rotating $F_j$ by a $p_j$'th root of this constant we may assume that $\prod_{t=1}^{p_j-1} V_{e_j}^t F_j$ takes values in $H$ without changing equation (\ref{fe1}). Let $\pi_i:H\rightarrow C_{q_i^{l_i}}$ be one of the coordinate maps of $H$, we study the term $\pi_i\circ\prod_{t=1}^{p_j-1} V_{e_j}^t F_j$. If $q_i\not = p_j$ then $p_j$ is invertible with respect to the multiplicative operation on $\mathbb{Z}/q_i^{l_i}\mathbb{Z}$. We conclude that some power of  $\prod_{t=1}^{p_j-1} V_{e_j}^t \pi_i\circ F_j$ is a $p_j$'th root that is also a phase polynomial of degree $<m+1$ and is invariant under $V_{e_j}$, let $\psi_{i,j}$ denote this root. Otherwise if $q_i=p_j$ then $q_i$ is also sufficiently large, Theorem \ref{HTDPV:thm} implies that $l_i=1$. Observe that the derivative of $\pi_i\circ\prod_{t=1}^{p_j-1}V_{e_j}^t F_j$ satisfies
	
	\begin{equation} \label{fe1.1}
	\pi_i\circ\prod_{t=0}^{p_j-1} V_{e_j}^t Q_j = \pi_i\circ\prod_{t=0}^{p_j-1} \left( \Delta_{e_j} ^t  Q_j\right)^{\binom{p_j}{t+1}}
	\end{equation}
	
	Write $G=G_{p_j} \oplus G'$ where $G_{p_j}$ is the subgroup of elements of order $p_j$, and $G'$ be it's complement. The terms in equation (\ref{fe1.1}) are of order $q_i=p_j$ and therefore if $g\in G'$ then by Proposition \ref{PPC} both of them are trivial. For $g\in G_{p_j}$ we claim that the right hand side is trivial. 
	For such $g\in G_{p_j}$ we have by Proposition \ref{PPC} and Theorem \ref{HTDPV:thm} that $Q_j(g,\cdot)$ takes values in $C_{p_j}$. Observe that $p_j$ divides $\binom{p_j}{t+1}$ for all $0\leq t< p_j-1$, and since $p_j$ is sufficiently large  $\Delta_{e_j}^{p_j-1}$ eliminates $Q_j$ (by Lemma \ref{vdif:lem}), hence $\prod_{t=0}^{p_j-1} \left( \Delta_{e_j} ^t  Q_j\right)^{\binom{p_j}{t+1}}=1$. Since $Q_j$ is a cocycle we conclude that the terms in equation (\ref{fe1.1}) are trivial for every $g\in G$. Therefore by ergodicity $\pi_i\circ\prod_{t=1}^{p_j-1} V_{e_j}^t F_j$ is a constant and so we can find a $p_j$'th root which we denote by $\psi_{i,j}$.\\
	
	Thus, in any case every coordinate of $\prod_{t=1}^{p_j^{n_j}-1} V_{e_j}^t\circ F_j$ has a root that is also a polynomial of degree $<m+1$ and is invariant under $V_{e_j}$. Gluing all the coordinates $\psi_{i,j}$ together, we see that there exists a phase polynomial $\psi_j(x)$ which is the product of all $\psi_{i,j}(x)$ of degree $<m+1$ that is a $p_j^{n_j}$'th root of $\prod_{t=1}^{p_j^{n_j}-1} V_{e_j}^t F_j$. This proves the claim.\\
	
	Now set $\tilde{F}_j  := F_j/\psi_j$ and $\tilde{Q}_j := Q_j\cdot \Delta \psi_j$ then $\tilde{Q}_j$ is a phase polynomial of degree $<O_{k,m}(1)$ and we have
	
	\begin{equation}\label{fe2}
	\Delta_{e_j} f = \tilde{Q}_j\cdot \Delta \tilde{F}_j
	\end{equation}
	and
	\begin{equation} \label{fe3}
	\prod_{t=1}^{p_j^{n_j}-1} V_{e_j}^t \tilde{F}_j=1
	\end{equation}
	Write $[t_1,...,t_N]=e_1^{t_1}\cdot...\cdot e_N^{t_N}$ and $X=Z_{<k-1}(X)\times_{\sigma} U$ and let $F:X\rightarrow S^1$ be given by 
	$$F(y,[t_1,...,t_N]) = \prod_{i=1}^N\prod_{0\leq t_i'<t_i} \tilde{F}_i(y,[t_1,...,t_{i-1},t_i',0,...,0])$$
	with the convention that $\prod_{0\leq t_i' < t_i} a_{t_i'}=\left(\prod_{t_i<t_i'\leq 0} a_{t_i'}\right)^{-1}$. Equation (\ref{fe2}) implies that $F$ is well defined.\\
	We compute the derivatives of $F$. We have,
	$$\Delta_{e_j}F(y,[t_1,...,t_N])=\prod_{i=1}^N \prod_{0\leq t_i'<t_i} \Delta_{e_j} F_j(y,[t_1,...,t_{i-1},t_i',0,...,0])$$
	For any $1\leq j\leq N$. On the other hand, we have that telescoping identity
	\[
	\prod_{j=1}^N \prod_{0\leq t_j'<t_j} \Delta_{e_j} F_i(y,[t_1,...,t_{j-1},t_j',0,...,0]) = \frac{F_i(y,[t_1,...,t_N])}{F_i(y,1)}
	\]
	and thus,
	\begin{equation}\label{fe4}
	\Delta_{e_j}F(y,[t_1,...,t_N])=\frac{\tilde{F}_j(y,[t_1,...,t_N])}{\tilde{F}_j(y,1)}\prod_{i=1}^N \prod_{0\leq t_i'<t_i} \omega_{i,j}(y,[t_1,...,t_{i-1},t_i',0,...,0])
	\end{equation}
	where $\omega_{i,j}=\frac{\Delta_{e_i}\tilde{F}_j}{\Delta_{e_j}\tilde{F}_i}$. Note that from (\ref{fe2}) we have that $\omega_{i,j}$ is a phase polynomial of degree $<O_{k,m}(1)$. \\
	
	As $\sigma$ is a phase polynomial of degree $<O_k(1)$ we have that $(y,u)\mapsto u$ is a phase polynomial of degree $<O_k(1)$, which implies that $(y,[t_1,...,t_n])\rightarrow [t_1,...,t_{i-1},t_i',0,...,0]$ is also a phase polynomial of degree $<O_k(1)$ as $[t_1,...,t_n]\mapsto[t_1,...,t_{i-1},t_i',0,...,0]$ is a constant multiple of a homomorphism. From Lemma \ref{B.5} we have that $(y,[t_1,...,t_n])\mapsto \omega_{i,j}(y,[t_1,...,t_{i-1},t_i',0,...,0])$ are all phase polynomials of degree $<O_{k,m}(1)$.\\
	We claim that the map $$\xi_j(y,[t_1,...,t_N]) =\prod_{i=1}^N\prod_{0\leq t_i'<t_i}\omega_{i,j}(y,[t_1,...,t_{i-1},t_i',0,...,0])$$ is a phase polynomial of degree $<O_{k,m}(1)$. Clearly $\xi_j$ is a function of phase polynomials. From Theorem \ref{TDPV:thm} we see that there exists a finite subgroup $H\leq S^1$ such that all $\omega_{i,j}(y,[t_1,...,t_{i-1},t_i',...])$ takes values in $H$ (up to a constant multiple). Therefore so does $\xi_j$. Let $\pi_n:H\rightarrow C_{q_n^{l_n}}$ be one of the coordinate maps, now if $q_n=O_{k,m}(1)$ then Corollary \ref{roots} implies that $\pi_n\circ\xi_j$ is a phase polynomial of degree $<O_{k,m}(1)$. Otherwise if $q_n$ is sufficiently large then $\pi_n\circ \omega_{i,j}(y,[t_1,...,t_{i-1},t_i',0,...,0])$ is a phase polynomial in $t_i'$ that, by Theorem \ref{HTDPV:thm}, takes values in $C_{q_n}$. By Taylor expansion we may thus write
	
	$$ \omega_{i,j}(y,[t_1,...,t_{i-1},t_i',0,...,0])=\prod_{0\leq j \leq O_{k,m}(1)} \left[ \Delta_{e_i}^j \omega_{i,j}(y,[t_1,...,t_{i-1},t_i',0,...,0])\right]^{\binom{t_i'}{j}}$$
	and thus
	$$\prod_{0\leq t_i'\leq t_i} \omega_{i,j}(y,[t_1,...,t_{i-1},t_i',0,...,0])=\prod_{0\leq j \leq O_{k,m}(1)} \left[ \Delta_{e_i}^j \omega_{i,j}(y,[t_1,...,t_{i-1},t_i',0,...,0])\right]^{\binom{t_i}{j+1}}$$
	Lemma \ref{B.5} then implies that $\pi_n\circ\xi_j$ is also a phase polynomial of degree $<O_{k,m}(1)$. We conclude that $\xi_j\in P_{<O_{k,m}(1)}(X)$, from (\ref{fe4}) we have that
	$$\Delta_{e_j} F (y,u) = \frac{\tilde{F}_j(y,u)}{\tilde{F}_j(y,1)}\cdot P_{<O_{k,m}(1)}(X,S^1)$$
	Now let $f'=f/\Delta F$ then 
	\begin{equation}\label{eqp}
	\Delta_{e_j}f' \in \left(\pi^\star F_j'\right)\cdot P_{<O_{k,m}(1)}(G,X,S^1) 
	\end{equation}
	where $\pi^\star F_j' (y,u) = \tilde{F}_j (y,1)$.\\
	We repeat the same argument as above now with $f'$ instead, we have the telescoping identity
	$$\prod_{t=0}^{p_j^{n_j}-1}V_{e_j}^tf'=1$$
	Which implies that
	$$\pi^\star \Delta (F_j')^{p_j^{n_j}}\in P_{<O_{k,m}(1)}(G,X,S^1)$$ Therefore $(F_j')^{p_j^{n_j}}$ is a phase polynomial of degree $<O_{k,m}(1)$ on $Z_{<k-1}(X)$.\\
	We apply the same procedure as before, we see that there exists a $p_j^{n_j}$'th root $P_j$ for the phase polynomial $(F_j')^{p_j^{n_j}}$. Let $F_j'' = F_j'/P_j$, then $F_j''$ takes values in $C_{p_j^{n_j}}$ and from (\ref{eqp}) we have $$(\Delta_{e_j} f') \in \pi^\star F_j'' \cdot P_{<O_{k,m}(1)}(G,X,S^1)$$
	We define $F^\star :X\rightarrow S^1$ to be the function $F^\star(y,[t_1,...,t_N]):=\prod_{j=1}^N F_j''(y)^{t_j}$; $F^\star$ is well defined since $F_j''$ takes values in $C_{{p_j}^{n_j}}$. We observe that $\pi^\star \Delta F_j'' =\Delta_{e_j}\Delta F^\star$. Let $f''=f'/\Delta F^\star$, then $f''$ is cohomologous to $f$ and $\Delta_{e_j}f''\in P_{<O_{k,m}(1)}(G,X,S^1)$ for all $1\leq j \leq N$. The cocycle identity implies that $\Delta_u f''\in P_{<O_{k,m}(1)}(G,X,S^1)$ for every $u\in U$.  We integrate this term using Lemma \ref{PIL:lem} once for every $g\in G$, we have that $\Delta_u f'' = \Delta_u P$ for some $P\in P_{<O_{k,m}(1)}(G,X,S^1)$. It follows that $f''/P$ is invariant under $U$ and so $f''=P\cdot \pi^\star \tilde{f}$ where $\tilde{f}:G\times Z_{<k-1}(X)\rightarrow S^1$.
\end{proof}

\section{The High Characteristic Case} \label{high:sec}
Throughout this section, we denote $\text{char} (G) = \min\{p:p\in\mathcal{P}\}$. We prove the following version of Theorem \ref{Main:thm}
\begin{thm}\label{reductionH:thm}
	Let $1\leq k,j \leq \text{char}(G)$ and $X$ be an ergodic $G$-system of order $<j$. Let $\rho:G\times X\rightarrow S^1$ be a cocycle of type $<k$, then $\rho$ is $(G,X,S^1)$-cohomologous to a phase polynomial of degree $<k$.
\end{thm}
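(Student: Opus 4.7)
The plan is to adapt the proof of Theorem \ref{Main:thm} with sharpened degree bounds throughout, exploiting the high characteristic hypothesis at each step. The key sharpening ingredients are Theorem \ref{HTDPV:thm} (phase polynomials of degree $<k$ take values in cyclic groups of prime order $p>k$), the high characteristic case of Theorem \ref{TST:thm} (structure groups are products of $C_p$'s with $p>k$), and the high characteristic case of Theorem \ref{TDisweyl} (totally disconnected systems are Weyl with polynomial cocycles of degree bounded by $k$). Under these hypotheses every root-taking, gluing, and Taylor-expansion step in the general proof yields polynomials of degree exactly $<k$ rather than $<O_{k,m}(1)$.

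I would proceed by induction on $j$. The base case $j=1$ is trivial: $X$ is a point and $\rho$ is a character of $G$, already a phase polynomial of degree $<1\leq k$. For the inductive step I would mimic the argument given for Theorem \ref{Main:thm}: first iterate Lemma \ref{dif:lem} to show that $\Delta_{t_1}\cdots\Delta_{t_r}\rho$ becomes a coboundary for $r$ large enough, then perform downward induction on $r$. At each stage, apply Theorem \ref{TDred:thm} to reduce to a totally disconnected factor (the Conze--Lesigne equation required for Theorem \ref{TDred:thm} has polynomial term of degree $<k-1$ by the inner inductive hypothesis applied to $\Delta_t \rho$), and then apply a high characteristic version of Theorem \ref{MainT:thm} on that factor. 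Pulling back via Lemma \ref{Cdec:lem} keeps us inside the class of cocycles of type $<k$, and functoriality of phase polynomials ensures the resulting cohomology is to a phase polynomial of degree $<k$.

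The core technical work is the high characteristic totally disconnected case: following the proof of Theorem \ref{FC:thm}, one must check that each step preserves the sharp bound. In the general case the proof took roots of phase polynomials of the form $\prod_{t=0}^{p_j^{n_j}-1} V_{e_j}^t F_j$, which could inflate the degree. In high characteristic, Theorem \ref{TST:thm} forces $n_j=1$ and $p_j>k$, so these phase polynomials land in $C_{p_j}$ by Theorem \ref{HTDPV:thm}; since $p_j>k$, the binomial coefficients $\binom{p_j}{t+1}$ vanish modulo $p_j$ for $1\leq t\leq p_j-2$, so the $p_j$-th roots can be extracted without any degree loss. Similarly, the polynomial integration lemma (Lemma \ref{PIL:lem}) and the Taylor expansion step for $\omega_{i,j}$ preserve the bound $<k$ in this regime because every intermediate phase polynomial factors through a cyclic group whose prime exceeds the relevant degree.

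The main obstacle will be verifying the sharp degree bound for all the auxiliary constructions invoked along the way, in particular Lemma \ref{valp:lem}, Proposition \ref{inv:prop}, and the reduction to finite $U$ (Theorem \ref{MainF:thm}), each of which was stated with an inflated bound $<O_{k,m}(1)$. In the high characteristic regime, these can all be re-derived with the sharp bound by the same arithmetic: $n$-th roots of phase polynomials taking values in $C_{p^r}$ with $p>k$ lose no degree, and coordinate-wise splitting via Theorem \ref{HTDPV:thm} allows one to handle each prime separately. This matches the author's remark that the high characteristic case follows by the same arguments as in Bergelson--Tao--Ziegler, with the careful tracking of degrees as the only new ingredient.
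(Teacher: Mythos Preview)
Your overall shape—reduce to a totally disconnected factor and then prove a sharp statement there—matches the paper. But the paper does \emph{not} re-run the proof of Theorem~\ref{FC:thm} with tighter bookkeeping; it uses a structurally different argument on the totally disconnected factor, and your proposal has a real gap at exactly this point.

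Concretely, the sharp totally disconnected statement the paper proves (Theorem~\ref{TDH:thm}) is formulated not for arbitrary functions of type $<k$ but for functions that are additionally \emph{line-cocycles} and \emph{quasi-cocycles of order $<k-1$}. These auxiliary notions are essential. In the finite-$U$ step the paper does not go through the $\xi_j$ construction and Lemma~\ref{B.5} at all; instead it integrates the linearized $q_u$ directly via the \emph{Exact} Integration Lemma (Lemma~\ref{Eint:lem}), which yields degree $<l+j$ rather than $<O_{k,m}(1)$ and so produces $Q$ of degree exactly $<k$. After dividing by $Q$, the function $f/Q$ is no longer a cocycle—only a quasi-cocycle of order $<k-1$—so to pass to the quotient after applying Lemma~\ref{cob:lem} one needs the \emph{Exact} Descent Lemma (Lemma~\ref{Edec:lem}) rather than Lemma~\ref{Cdec:lem}; the line-cocycle property is carried along to control the values of the resulting polynomial and to make the induction on $k$ close. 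Your assertion that Lemma~\ref{PIL:lem} ``preserves the bound $<k$'' is precisely where the sketch breaks: it does not, and the paper replaces it by Lemma~\ref{Eint:lem}. Likewise, re-running the $\omega_{i,j}$/Taylor portion of Theorem~\ref{FC:thm} routes through Lemma~\ref{B.5}, whose bound is not sharp; the paper sidesteps this entirely in high characteristic and instead inducts on the number of cyclic factors of $U$, peeling off one $C_p$ at a time.

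One further point: the paper does not need sharp bounds in the reduction-to-totally-disconnected step. It first invokes the already-proved non-sharp Theorem~\ref{Main:thm} merely to obtain \emph{some} Conze--Lesigne equation, which suffices for Theorem~\ref{con:thm}; the sharp degree is established only afterwards on the totally disconnected factor via Theorem~\ref{TDH:thm} and Lemma~\ref{Cdec:lem}. Your downward induction on $r$ is therefore unnecessary, and attempting to keep sharp bounds throughout that outer loop is both harder and not what the paper does.
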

As with the original version, we want to prove this theorem in two steps. First we want a reduction to a totally disconnected case and then we will solve this case.\\
We begin by introducing some definitions and an important Lemma from \cite{Berg& tao & ziegler}
\begin{defn} [Quasi-cocycles]
	Let $X$ be an ergodic $G$-system, let $f:G\times X\rightarrow S^1$ be a function. We say that $f$ is a quasi-cocycle of order $<k$ if for every $g,g'\in G$ one has $$f(g+g',x)=f(g,x)\cdot f(g',T_gx)\cdot p_{g,g'}(X)$$ for some $p_{g,g'}\in P_{<k}(X,S^1)$. 
\end{defn}
\begin{lem} [Exact decent] \label{Edec:lem} Let $X$ be an ergodic $G$-system of order $<k$ for some $k\geq 0$. Let $\pi:X\rightarrow Y$ be a factor map. Suppose that a function $f:G\times Y\rightarrow S^1$ is a quasi-cocycle of order $<k$. If $\pi^\star f$ is of type $<k$ then so is $f$.
\end{lem}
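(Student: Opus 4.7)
The plan is to convert the quasi-cocycle hypothesis into a genuine cocycle identity on the $k$-cube and then deduce the descent of the trivialization via a fiber-averaging argument that uses the order-$<k$ assumption on $X$.

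First, I would verify that $d^{[k]}f$ is actually a cocycle on $G\times Y^{[k]}$. Applying $d^{[k]}$ to the quasi-cocycle identity
$$f(g+g',y)=f(g,y)\cdot f(g',T_gy)\cdot p_{g,g'}(y),\qquad p_{g,g'}\in P_{<k}(Y,S^1),$$
gives
$$d^{[k]}f(g+g',\mathbf{y})=d^{[k]}f(g,\mathbf{y})\cdot d^{[k]}f(g',T_g\mathbf{y})\cdot d^{[k]}p_{g,g'}(\mathbf{y}),$$
and Lemma \ref{PP}(iii) forces $d^{[k]}p_{g,g'}\equiv 1$ on $Y^{[k]}$ since each $p_{g,g'}$ is a phase polynomial of degree $<k$. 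Hence $\sigma:=d^{[k]}f\in Z^1(G,Y^{[k]},S^1)$, and the task reduces to showing that $\sigma$ is a $(G,Y^{[k]},S^1)$-coboundary.

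Second, the hypothesis that $\pi^\star f$ is of type $<k$, combined with the functoriality $d^{[k]}(\pi^\star f)=(\pi^{[k]})^\star d^{[k]}f$ where $\pi^{[k]}\colon X^{[k]}\to Y^{[k]}$ is the induced factor map, gives $(\pi^{[k]})^\star\sigma=\Delta F$ for some measurable $F\colon X^{[k]}\to S^1$. I would then consider the fiberwise average $\tilde F:=E(F\mid Y^{[k]})\in L^\infty(Y^{[k]})$. Using that $\pi^{[k]}$ intertwines the diagonal $G$-actions and that $\sigma(g,\cdot)$ is $Y^{[k]}$-measurable, a direct computation (change of variables against the $G$-invariant cube measure $\mu_Y^{[k]}$) yields
$$\tilde F(T_g y)=\sigma(g,y)\cdot\tilde F(y)\qquad\mu_Y^{[k]}\text{-a.e.}$$
In particular $|\tilde F|$ is $G$-invariant; on the set $\{|\tilde F|>0\}$ the normalized function $F':=\tilde F/|\tilde F|$ satisfies $\Delta F'=\sigma$, giving the desired trivialization.

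The main obstacle is therefore the non-vanishing statement $|\tilde F|>0$ $\mu_Y^{[k]}$-a.e., and this is precisely where the assumption $X=Z_{<k}(X)$ enters. The idea is to argue by contradiction: a positive-measure zero set $\{|\tilde F|=0\}$ would produce, via a fiber-wise Fourier-type decomposition of $F$, a non-trivial orthogonal component to the algebra $(\pi^{[k]})^\star L^\infty(Y^{[k]})$ inside $L^\infty(X^{[k]})$ that is compatible with the cube measure in a way that produces a nonzero $\phi\in L^\infty(X)$ with $\|\phi\|_{U^k(X)}=0$; by Proposition \ref{UCF} and $X=Z_{<k}(X)$ such a $\phi$ must vanish, a contradiction. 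Making this step precise -- in the absence of any group structure on the fibers of $\pi^{[k]}$, and distinguishing it from the weaker Lemma \ref{Cdec:lem} which already covers the case where $f$ is a cocycle -- is the technical heart of the argument and is precisely the reason the hypothesis is placed on $X$ (rather than on $Y$) and why "quasi-cocycle" is the correct generality.
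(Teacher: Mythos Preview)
The paper states this lemma without proof (it is lifted from the Bergelson--Tao--Ziegler machinery, though no explicit citation is attached), so there is no in-paper argument to compare against. I will therefore assess your proposal directly.

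Your first reduction is correct and is indeed the point of the quasi-cocycle hypothesis: applying $d^{[k]}$ kills the defect $p_{g,g'}\in P_{<k}(Y,S^1)$ by Lemma~\ref{PP}(iii), so $\sigma=d^{[k]}f$ is a genuine cocycle on $Y^{[k]}$. The conditional-expectation computation in steps 3--5 is also correct once one knows that $\pi^{[k]}:(X^{[k]},\mu_X^{[k]})\to(Y^{[k]},\mu_Y^{[k]})$ is a factor map for the diagonal action; this is standard Host--Kra but should be cited.

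The gap is step 6, and it is a real one. Your proposed contradiction---manufacturing from the vanishing set $\{\tilde F=0\}$ a nonzero $\phi\in L^\infty(X)$ with $\|\phi\|_{U^k(X)}=0$---is not a mechanism that can be made to work as stated. The function $F$ lives on $X^{[k]}$, and orthogonality to $(\pi^{[k]})^\star L^\infty(Y^{[k]})$ with respect to $\mu_X^{[k]}$ does not hand you a single-variable function on $X$ with vanishing Gowers seminorm: the cube measure is far from a product, so there is no marginal to take. Concretely, already in the abelian-extension case $X=Y\times_\rho U$ the trivializing function $F$ may restrict to a nontrivial character of $U^{2^k}$ on each fiber of $\pi^{[k]}$, forcing $\tilde F\equiv 0$ with no immediate contradiction to $X=Z_{<k}(X)$. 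Ruling this out is exactly the content of the lemma, and it requires a structural input you have not supplied---in Host--Kra and in \cite{Berg& tao & ziegler} one instead controls the invariant $\sigma$-algebra $\mathcal{I}_k$ of $X^{[k]}$ relative to that of $Y^{[k]}$ (using that $X$ has order $<k$ to identify $\mathcal I_{k-1}$), and deduces that the coboundary witness can be taken $Y^{[k]}$-measurable directly, rather than via non-vanishing of an average. As you yourself acknowledge, this step is the heart of the matter, and the sketch does not close it.
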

\begin{defn} [Line-cocycle]\label{lc:def} 
	Let $X$ be an ergodic $G$-system and let $f:G\times X\rightarrow S^1$ be a function. We say that $f$ is a line cocycle, if for every $g\in G$ of order $n$ we have $\prod_{t=0}^{n-1} f(g, T_g^tx) = 1$
\end{defn}

We claim that Theorem \ref{reductionH:thm} follows from
\begin{thm} (Reduction to totally disconnected case in High characteristics) \label{TDH:thm}
	Let $1\leq j,k \leq \text{char} (G)$. Let $X$ be a totally disconnected and Weyl ergodic $G$-system of order $<j$ where the phase polynomial cocycles $\sigma_1,...,\sigma_j$ that defines $X$ are of degrees $<1,...,<j$. Then every $f:G\times X\rightarrow S^1$ of type $<k$ which is also a line cocycle and a quasi-cocycle of order $<k-1$, is $(G,X,S^1)$-cohomologous to a phase polynomial $P$ of degree $<k$. Moreover, for $g\in G$ of order $n$ we have that $P(g,\cdot)$ takes values in $C_n$.
\end{thm}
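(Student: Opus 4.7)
My plan is to mirror the proof structure of Theorem \ref{FC:thm} (the finite $U$ case of Theorem \ref{TDCL:thm}), exploiting the hypothesis $j,k \leq \text{char}(G)$ to keep every degree bound at exactly $<k$ rather than $<O_{k,m}(1)$, and to propagate the line-cocycle property so that the final phase polynomial $P(g,\cdot)$ lands in $C_n$ for every $g$ of order $n$. The argument will proceed by induction on $j$, with the base case $j=1$ vacuous since $X$ is a point.

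For the inductive step, write $X = Z_{<j-1}(X) \times_{\sigma_j} U_j$ with $\sigma_j$ a phase polynomial of degree $<j$. First I would reduce to the case where $U_j$ is finite, in the spirit of Theorem \ref{MainF:thm}: the Differentiation Lemma (Lemma \ref{dif:lem}) together with the quasi-cocycle hypothesis on $f$ yields, for each automorphism $u \in U_j$, a decomposition $\Delta_u f = p_u \cdot \Delta F_u$ with $p_u \in P_{<k-1}(G,X,S^1)$, and the Linearization Lemma (Lemma \ref{lin:lem}) makes $u \mapsto p_u$ a cocycle in $u$ on some neighborhood $U'$ of the identity. Applying Theorem \ref{TST:thm}, and using $k,j \leq \text{char}(G)$ to force each exponent to equal $1$, I can arrange $U_j = U' \times W$ with $W$ finite; integrating the cocycle portion via Lemma \ref{PIL:lem} and straightening the residual coboundary via Lemma \ref{cob:lem} descends the problem to a factor with finite structure group.

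With $U_j = \prod_{i=1}^N C_{p_i}$ finite, I would then carry out the root-extraction and gluing argument of Theorem \ref{FC:thm}. For each standard generator $e_i$ I write $\Delta_{e_i} f = Q_i \cdot \Delta F_i$ with $Q_i \in P_{<k-1}(G,X,S^1)$, then exploit the telescoping identity $\prod_{t=0}^{p_i-1} V_{e_i}^t \Delta_{e_i} f = 1$ (coming from $e_i^{p_i}=0$) to show that $\prod_t V_{e_i}^t F_i$ is a phase polynomial of degree $<k$. The crucial simplification is that $p_i \geq \text{char}(G) > k$, so Theorem \ref{HTDPV:thm} pins the relevant phase polynomials on the totally disconnected base to take values in $C_{p_i}$ and a $p_i$-th root $\psi_i$ of degree $<k$ exists directly, without the $O_{k,m}(1)$ inflation of the low characteristic case. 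Assembling these roots gives $F$ and then $F^\star$ so that $f / \Delta(F \cdot F^\star)$ is invariant under $U_j$, and hence descends along $\pi : X \rightarrow Z_{<j-1}(X)$ to a function $\tilde{f}$. The Exact Descent Lemma (Lemma \ref{Edec:lem}), applied with the quasi-cocycle hypothesis, shows $\tilde{f}$ is still of type $<k$; the inductive hypothesis applied to the totally disconnected Weyl system $Z_{<j-1}(X)$ of order $<j-1$ then delivers a phase polynomial of degree $<k$ cohomologous to $\tilde{f}$, which lifts back to the desired $P$ on $X$.

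The main obstacle will be verifying the line-cocycle corollary that $P(g,\cdot) \in C_n$ for every $g \in G$ of order $n$. The line-cocycle identity for $f$ together with the trivial telescoping of $\Delta F$ along a $g$-orbit forces $P(g,\cdot)^n = 1$; combined with Proposition \ref{PPC}, which in the high characteristic regime makes phase polynomials trivial on generators of coprime order, this should pin $P(g,\cdot)$ inside $C_n$. The delicate bookkeeping is to check at each stage (each root extraction $\psi_i$, each application of Lemma \ref{PIL:lem}, each pullback via $\pi$) that the intermediate objects inherit an appropriate line-cocycle style identity, so that the property survives all the way down to $Z_{<j-1}(X)$ and back; this is where the high characteristic hypothesis is used most actively, via Theorem \ref{HTDPV:thm}, to guarantee that the correcting roots can always be chosen inside the finite cyclic subgroup dictated by the line-cocycle condition.
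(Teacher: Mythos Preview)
Your proposal has a genuine gap at the very first step of the inductive argument: you claim that ``the Differentiation Lemma (Lemma~\ref{dif:lem}) together with the quasi-cocycle hypothesis on $f$ yields, for each automorphism $u\in U_j$, a decomposition $\Delta_u f = p_u\cdot\Delta F_u$ with $p_u\in P_{<k-1}(G,X,S^1)$.'' Neither input produces this. Lemma~\ref{dif:lem} only tells you that $\Delta_u f$ has lower \emph{type}; the quasi-cocycle hypothesis controls $f(g+g',x)/f(g,x)f(g',T_gx)$, not vertical derivatives. The paper obtains the decomposition by a \emph{double} induction: one first inducts on $k$, and then on $j$. For $t\in U_{j-1}$, Lemma~\ref{ED:lem} shows that $\Delta_t f$ is of type $<k-j+1$, is a line cocycle, and is a quasi-cocycle of order $<k-j$; since $k-j+1<k$, the induction hypothesis \emph{on $k$} then supplies $q_t\in P_{<k-j+1}(G,X,S^1)$ with $\Delta_t f$ cohomologous to $q_t$ and $q_t(g,\cdot)\in C_n$. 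Your single induction on $j$ cannot manufacture this.

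The exact degree $<k-j+1$ (not merely $<k-1$) is also essential for the next step, which you mishandle. You propose to integrate via Lemma~\ref{PIL:lem}, whose output degree is only $O_{k,m}(1)$; the paper instead uses the \emph{Exact} Integration Lemma (Lemma~\ref{Eint:lem}), which gives $Q$ of degree $<(k-j+1)+(j-1)=k$ precisely. With your claimed $p_u\in P_{<k-1}$ and Lemma~\ref{PIL:lem}, the bound would blow up. A further point: in the finite $U$ case the paper does \emph{not} mirror Theorem~\ref{FC:thm}'s simultaneous treatment of all generators via the functions $F$ and $F^\star$; instead it inducts on the number of cyclic factors, peeling off one $C_p$ at a time and using Lemma~\ref{Eint:lem} again to integrate a single $q_e$ exactly to degree $<k$. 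Finally, your outline never addresses the regime $j>k$, which the paper handles separately (by reference to \cite[Section 8.5]{Berg& tao & ziegler}).
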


\begin{proof}[Proof of Theorem \ref{reductionH:thm} assuming Theorem \ref{TDH:thm}]
	Let $k,j,X,\rho$ be as in Theorem \ref{reductionH:thm}, we prove this theorem by induction on $k$. As $\rho$ is of finite type, applying Theorem \ref{Main:thm} we have that $\rho$ is cohomologous to a phase polynomial of some degree. Therefore we can apply Theorem \ref{con:thm} to eliminate the connected components of $X$. As in Theorem \ref{TDred:thm}, $X$ admits a totally disconnected and Weyl factor $Y$, where the phase polynomial cocycles $\sigma_1,...,\sigma_l$ are of degree $<1,...,<l$. And $\rho$ is cohomologous to $\pi^\star \rho'$ where $\rho':G\times Y\rightarrow S^1$. By Lemma \ref{Cdec:lem} we have that $\rho'$ is of type $<k$ and therefore by Theorem \ref{TDH:thm} we have that it is cohomologous to a phase polynomial $P$ of degree $<k$. We conclude that $\rho$ is cohomologous to $\pi^\star P$ which is also a phase polynomial of degree $<k$.
\end{proof}
It is left to prove Theorem \ref{TDH:thm}, the proof is going to be very similar to \cite[Theorem 8.6]{Berg& tao & ziegler}. We first deal with the easy case $k=1$. In this case $f$ is a quasi-cocycle of order $<0$, and is thus a cocycle. It is well known that a cocycle of type $<1$ is cohomologous to a constant $c(g)$ (See \cite[Lemma C.5]{HK} or \cite[Lemma B.9]{Berg& tao & ziegler} or \cite{MS}), so we can write $f(g,x)=c(g)\cdot \Delta_g F(x)$ for some measurable $F:X\rightarrow S^1$. Since $f$ is a cocycle, $c$ is a character of $G$, and therefore $c(g)\in C_n$ for $g$ of order $n$ and the claim follows.

Now suppose that $2\leq k \leq \text{char}(G)$ and assume inductively that the claim has already been proven for smaller values of $k$. We have an analogous of Theorem \ref{TST:thm}
\begin{thm}  [Exact topological structure theorem for totally disconnected systems] \label{TSTH:thm}Let $1\leq k \leq \text{char}(G)$ be such that Theorem \ref{TDH:thm} holds for all values smaller or equal to $k$. Let $X$ be an ergodic totally disconnected Weyl  $G$-system of order $<k$ where the phase polynomial cocycles $\sigma_1,...,\sigma_{k-1}$ are of degree $<1,...,<k-1$, then we can write $Z_{<j}(X)=Z_{<j-1}(X)\times _{\sigma_{j-1}} U_j$, where $U_j \cong \prod_{p\in A} C_p$ 
\end{thm}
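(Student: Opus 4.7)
The plan is to imitate the proof of Theorem \ref{TST:thm}, but exploit the high-characteristic hypothesis $k\le\text{char}(G)$ to eliminate the $O(1)$ slack and conclude that each $p$-primary part of $U_j$ is genuinely $p$-torsion (rather than $p^d$-torsion for some $d=O_k(1)$). First I would apply the Sylow decomposition (Proposition \ref{Sylow}) to write $U_j=\prod_p U_{j,p}$, where $U_{j,p}$ is the $p$-Sylow subgroup. By Pontryagin duality, in order to show that $U_{j,p}$ is a direct product of copies of $C_p$ it suffices to show that every continuous character $\chi:U_{j,p}\to S^1$ satisfies $\chi^p=1$; aggregating over all primes then yields $U_j\cong\prod_{p\in A}C_p$ for an appropriate multiset $A$.

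To show $\chi^p=1$, fix a character $\chi$ of order $p^n$ and aim for $n=1$. Because $\sigma_{j-1}$ is by hypothesis a phase polynomial cocycle of degree $<j-1$, so is $q:=\chi\circ\sigma_{j-1}:G\times Z_{<j-1}(X)\to C_{p^n}$, and the key numerical inequality $j-1<k\le\text{char}(G)\le p$ guarantees that $\deg q$ is strictly smaller than $p$. For $g\in G$ of prime order $p$, Theorem \ref{HTDPV:thm} applied to $q(g,\cdot)$ forces its values into $C_p$, so that $q(g,\cdot)^p=1$. For $g$ of prime order coprime to $p$, Proposition \ref{PPC} (applied to the phase polynomial $q$ taking values in the $p$-group $C_{p^n}$) yields $q(g,\cdot)=1$ directly. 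Since $G=\bigoplus_{p\in P}\mathbb{F}_p$ is generated by its prime-order elements, iterating the cocycle identity extends these facts to $q^p(g,\cdot)=1$ for every $g\in G$.

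Consequently $\chi^p\circ\sigma_{j-1}$ is identically trivial, and the extension $Z_{<j-1}(X)\times_{\chi^p\circ\sigma_{j-1}}C_{p^{n-1}}$---which is a factor of $Z_{<j}(X)$ and therefore of the ergodic system $X$---becomes a direct product $Z_{<j-1}(X)\times C_{p^{n-1}}$ carrying the trivial $G$-action on $C_{p^{n-1}}$. Ergodicity then forces $p^{n-1}=1$, i.e.\ $n=1$. Pontryagin duality now makes each $U_{j,p}$ a compact $p$-torsion abelian group, hence a direct product of copies of $C_p$, and combining across primes gives the stated structure of $U_j$.

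The main obstacle I anticipate is the coprime-order case: unlike in the proof of Theorem \ref{TST:thm}, where Lemma \ref{valp:lem} is used to first replace the cohomology class by a $C_{p^n}$-valued phase polynomial at the cost of a larger degree, here $\chi\circ\sigma_{j-1}$ is already a phase polynomial of the sharp degree $<j-1$, and one must rely instead on the full strength of Proposition \ref{PPC} and Theorem \ref{HTDPV:thm}. The high-characteristic hypothesis is precisely what makes both of these tools available simultaneously without any loss in the torsion bound, which is what ultimately upgrades the conclusion from $C_{p^{O_k(1)}}$ to $C_p$.
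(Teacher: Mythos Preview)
Your proposal is correct and follows the paper's intended approach; the paper does not write out a proof of Theorem \ref{TSTH:thm} at all, merely stating that it is ``an analogous of Theorem \ref{TST:thm}'', and your argument is exactly that analogy carried out with the high-characteristic sharpening. One small imprecision worth noting: Theorem \ref{HTDPV:thm} only asserts that $q(g,\cdot)$ takes values in $C_p$ \emph{up to a constant}, i.e.\ that $q(g,\cdot)^p$ is constant, not that it equals $1$. To get $q(g,\cdot)^p=1$ you need to feed in the cocycle identity $\prod_{i=0}^{p-1}q(g,T_g^i x)=1$ together with the strict inequality $j-1<p$ (an inductive refinement of Proposition \ref{PPC}); the paper's own proof of Theorem \ref{TST:thm} glosses over this in exactly the same way. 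Alternatively, you could invoke the standing hypothesis that Theorem \ref{TDH:thm} holds---its ``Moreover'' clause gives a cohomologous phase polynomial $P$ with $P(g,\cdot)\in C_n$ whenever $g$ has order $n$, which yields $P(g,\cdot)^p=1$ without any constant ambiguity and is likely the route the paper has in mind given that it includes that hypothesis explicitly.
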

When $j=1$, $X$ is just a point and Theorem \ref{TDH:thm} is trivial. Now suppose $2\leq j\leq \text{char}(G)$ and assume inductively the claim has already been proven for the same value of $k$ and smaller values of $j$.

We first deal with the lower case $j\leq k$, write $X=U_0\times_{\sigma_1}U_1\times...\times_{\sigma_{j-1}} U_{j-1}$ and let $t\in U_{j-1}$ we have
\begin{lem} \label{ED:lem} \cite[Lemma 8.8]{Berg& tao & ziegler}
	$\Delta_t f$ is a line cocycle, is of type $<k-j+1$  and is a quasi-coboundary of order $<k-j$.
\end{lem}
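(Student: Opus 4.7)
The plan is to verify each of the three assertions in turn, with the first two being short applications of tools already at hand and the third the main substance.

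For the line cocycle property, the key observation is that the vertical translation $V_t$ commutes with $T_g$, so for $g \in G$ of order $n$ a direct computation gives
$$\prod_{s=0}^{n-1} \Delta_t f(g, T_g^s x) = \frac{\prod_{s=0}^{n-1} f(g, T_g^s(tx))}{\prod_{s=0}^{n-1} f(g, T_g^s x)} = \frac{1}{1} = 1,$$
where both products equal $1$ by the line-cocycle property of $f$ applied at $tx$ and at $x$. For the type statement, I would appeal directly to the differentiation lemma (Lemma \ref{dif:lem}): with $f$ of type $<k$ and $t$ an automorphism of $X$ (a system of order $<j$), one obtains that $\Delta_t f$ is of type $<k - \min(k, j) = k-j$, which by monotonicity (Lemma \ref{PP}(i)) a fortiori gives the weaker bound $<k-j+1$ asserted in the lemma.

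For the quasi-coboundary claim, which is the main content, I would start by differentiating the quasi-cocycle identity
$$f(g+g', x) = f(g, x) f(g', T_g x) p_{g,g'}(x), \qquad p_{g,g'} \in P_{<k-1}(X, S^1),$$
under $\Delta_t$; using $t T_g = T_g t$ gives
$$\Delta_t f(g+g', x) = \Delta_t f(g, x) \cdot \Delta_t f(g', T_g x) \cdot \Delta_t p_{g, g'}(x),$$
where $\Delta_t p_{g, g'} \in P_{<k-2}(X, S^1)$ by the homomorphism property (in the ergodic setting, $\Delta_t$ drops the polynomial degree by one). Hence $\Delta_t f$ is itself a quasi-cocycle of order $<k-2$. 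Together with the type $<k-j+1$ and the line cocycle property, this puts $\Delta_t f$ into the hypotheses of Theorem \ref{TDH:thm} at a strictly smaller value of $k$. The inductive hypothesis then furnishes a representative of the cohomology class of $\Delta_t f$ of the form $q \cdot \Delta F$ with $q$ a phase polynomial of degree $<k-j+1$, and a final polynomial-integration step (Lemma \ref{PIL:lem}) absorbs the leading term into a coboundary to push the degree down to $<k-j$.

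The hardest step will be that last degree accounting: the induction hypothesis alone only yields a phase polynomial of degree $<k-j+1$, and the extra unit of reduction required to reach $<k-j$ must be squeezed out of the sharper quasi-cocycle order $<k-2$ (rather than the generic $<k-j$ permitted by the induction) via a careful application of the integration step.
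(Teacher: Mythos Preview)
The paper does not actually prove this lemma; it cites \cite[Lemma 8.8]{Berg& tao & ziegler} directly. Your arguments for the line-cocycle property and the type bound are correct (and your type computation via Lemma~\ref{dif:lem} in fact gives the stronger $<k-j$, since we are in the regime $j\le k$).

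The gap is in the third assertion. Your differentiation of the quasi-cocycle identity, combined with Lemma~\ref{vdif:lem}, yields only that $\Delta_t p_{g,g'}\in P_{<k-2}(X,S^1)$, hence that $\Delta_t f$ is a quasi-cocycle of order $<k-2$. When $j=2$ this is exactly $<k-j$, but for $j>2$ it is strictly weaker than what is claimed, and the subsequent application of the induction hypothesis of Theorem~\ref{TDH:thm} (which requires quasi-cocycle order $<k'-1$ to match type $<k'$) would not be available. Your proposed repair---invoke the induction hypothesis to write $\Delta_t f=q\cdot\Delta F$ with $q\in P_{<k-j+1}(G,X,S^1)$ and then ``push the degree down'' via Lemma~\ref{PIL:lem}---does not work for two reasons. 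First, it conflates two different notions: the phase-polynomial degree of $q(g,\cdot)$ in $x$ is not the same as the quasi-cocycle order of $q$ in $g$; knowing $q(g,\cdot)\in P_{<k-j+1}$ for each $g$ only gives that the defect $q(g+g',x)/\bigl(q(g,x)q(g',T_gx)\bigr)$ lies in $P_{<k-j+1}$, not $P_{<k-j}$. Second, Lemma~\ref{PIL:lem} integrates a $U$-cocycle $u\mapsto q_u$ to produce $Q$ with $\Delta_u Q=q_u$; it says nothing about lowering the $G$-quasi-cocycle order and is simply the wrong tool here.

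The actual argument in \cite{Berg& tao & ziegler} exploits more than a single vertical derivative: it uses the full Weyl tower structure $X=U_0\times_{\sigma_1}\cdots\times_{\sigma_{j-1}}U_{j-1}$ with $\sigma_i$ of degree $<i$, together with an exact descent/differentiation argument tailored to that filtration, to force the defect of $\Delta_t f$ down to degree $<k-j$ rather than merely $<k-2$. You should consult that proof directly rather than trying to recover the bound from a single application of Lemma~\ref{vdif:lem}.
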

By the induction hypothesis, $\Delta_t f$ is $(G,X,S^1)$-cohomologous to a phase polynomial $q_t\in P_{<k-j+1}(G,X,S^1)$, and $q_t(g,\cdot)$ takes values in $C_n$ for $g$ of order $n$. Since $\Delta_t f$ is a line cocycle and a quasi-cocycle of order $<k-j$ so is $q_t$.
\subsection{Reduction to the finite $U$ case} We now argue as in Theorem \ref{MainF:thm}. We show that is suffices to prove \ref{TDH:thm}, in the case when $U_{j-1}$ is finite.\\
We will take advantage of the following result of Bergelson Tao and Ziegler.
\begin{lem}  [Exact integration Lemma]\label{Eint:lem}\cite[Proposition 8.9]{Berg& tao & ziegler}
	Let $j \geq 0$, let $U$ be a compact abelian group, and let $X = Y \times_{\rho} U$ be an ergodic $G$-system with $Y \geq Z_{<j}(X)$, where $\sigma:G\times Y\rightarrow U$ is a phase polynomial cocycle of degree $<j$. For any $t\in U$, let $p_t:X\rightarrow S^1$ be a phase polynomial of degree $< l$, and suppose that for any $t,s\in U$, $p_{t\cdot s} = p_s(x) p_t(V_sx)$. Then there exists a phase polynomial $Q:X\rightarrow S^1$ of degree $<l+j$ such that $\Delta_t Q = p_t(x)$. Furthermore we can take $Q(t,uu_0):=p_u (y,u_0)$ for some $u_0\in U$.
\end{lem}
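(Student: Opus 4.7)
I will construct $Q$ by the explicit formula suggested in the "furthermore" clause and verify the two required properties separately: the derivative identity and the exact degree bound.

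\emph{Construction and derivative identity.} Pick $u_0 \in U$ from the conull set on which $y \mapsto p_u(y, u_0)$ is well defined for a.e.\ $u \in U$ (such a $u_0$ exists by Fubini). Define $Q : X \to S^1$ by
\begin{equation*}
Q(y, v) \;:=\; p_{v u_0^{-1}}(y, u_0).
\end{equation*}
The cocycle identity $p_{ts}(x) = p_s(x)\, p_t(V_s x)$, applied with $s = v u_0^{-1}$, $t$ arbitrary and $x = (y, u_0)$ (so that $V_s x = (y, v)$), yields $p_{t v u_0^{-1}}(y, u_0) = p_{v u_0^{-1}}(y, u_0)\, p_t(y, v)$. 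Dividing by $Q(y,v) = p_{v u_0^{-1}}(y, u_0)$ gives $\Delta_t Q(y, v) = p_t(y, v)$, as required.

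\emph{Degree bound.} Because $V_t$ commutes with every $T_h$, the operators $\Delta_t$ and $\Delta_h$ commute. Hence for any $h_1, \dots, h_k \in G$,
\begin{equation*}
\Delta_t\!\left(\Delta_{h_1}\!\cdots\Delta_{h_k} Q\right) \;=\; \Delta_{h_1}\!\cdots\Delta_{h_k}(\Delta_t Q) \;=\; \Delta_{h_1}\!\cdots\Delta_{h_k} p_t.
\end{equation*}
Taking $k = l$ and using $p_t \in P_{<l}(X, S^1)$, the right-hand side is $1$ for every $t \in U$, so $\alpha := \Delta_{h_1}\!\cdots\Delta_{h_l} Q$ is $U$-invariant and descends via $\pi_Y : X \to Y$ to a function $\tilde\alpha : Y \to S^1$. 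The remainder of the argument consists of showing $\tilde\alpha \in P_{<j}(Y, S^1)$: once this is established, applying the $j$ additional derivatives $\Delta_{h_{l+1}},\ldots,\Delta_{h_{l+j}}$ to $\pi_Y^\star\tilde\alpha$ produces $1$ on $X$, giving $\Delta_{h_1}\!\cdots\Delta_{h_{l+j}} Q = 1$ and hence $Q \in P_{<l+j}(X, S^1)$.

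\emph{Showing $\tilde\alpha \in P_{<j}(Y, S^1)$.} Evaluating $\alpha$ at $v = u_0$, using $T_g^X(y, u_0) = (T_g y, \sigma(g, y) u_0)$ and $Q(y', s u_0) = p_s(y', u_0)$, gives
\begin{equation*}
\tilde\alpha(y) \;=\; \prod_{\epsilon \in \{0,1\}^l} p_{\sigma(\epsilon\cdot h,\, y)}\bigl(T_{\epsilon \cdot h}\, y,\; u_0\bigr)^{(-1)^{|\epsilon|}},
\end{equation*}
where $\epsilon \cdot h = \sum_i \epsilon_i h_i$. I proceed by induction on $l$. The base $l = 0$ gives $\tilde\alpha(y) = Q(y, u_0) = p_1(y, u_0) = 1$, since $p_1 \equiv 1$ from the cocycle identity. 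For the inductive step, one isolates the $\epsilon_l$-factor in the product, uses the cocycle identity of $p$ to split each $p_{\sigma(\epsilon' + h_l,\, y)}$ into $p_{\sigma(\epsilon',\, y)}\cdot p_{\sigma(h_l,\, T_{\epsilon'\cdot h} y)}\circ V_{(\cdot)}$, and then applies the multiplicative cocycle structure $\sigma(g+g', y) = \sigma(g,y)\sigma(g', T_g y)$ to reorganize. This expresses $\tilde\alpha$ as a product of two pieces: a term coming from the $l=l-1$ induction (applied to the cocycle $t \mapsto \Delta_{h_l} p_t$, still satisfying the same cocycle identity in $t$), and a correction term whose $y$-dependence enters only through a composition with $\sigma(\cdot, y) \in P_{<j}(Y, U)$ and hence (via the chain-rule-style Lemma \ref{B.5}) lies in $P_{<j}(Y, S^1)$. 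Multiplicativity of $P_{<j}(Y, S^1)$ finishes the induction.

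\textbf{Main obstacle.} The genuinely delicate step is the induction for $\tilde\alpha$: the two intertwined cocycle identities (for $\sigma$ on $Y$ and for $p_\cdot$ on $X$) combined with evaluation on the non $G$-invariant fiber $\{v = u_0\}$ produce a combinatorial product over $\{0,1\}^l$ that must be resolved without losing the exact degree bound $<j$. Controlling this composition---so that the degree $<l$ of $p_t$ and the degree $<j$ of $\sigma(\cdot,y)$ combine exactly rather than inflating to some $O_{l,j}(1)$---is where Lemma \ref{B.5} is essential and is the reason this lemma is sharper than the earlier Lemma \ref{PIL:lem}.
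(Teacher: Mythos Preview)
The paper does not prove this lemma; it is quoted from \cite[Proposition 8.9]{Berg& tao & ziegler} without argument, so there is no in-paper proof to compare against. I therefore evaluate your attempt on its own merits.

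Your construction of $Q$ and the verification that $\Delta_t Q = p_t$ are correct, and the reduction of the degree claim to showing that $\tilde\alpha := (\Delta_{h_1}\cdots\Delta_{h_l} Q)|_{Y}$ lies in $P_{<j}(Y,S^1)$ is the right strategy.

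The gap is in that final step. Your inductive sketch for $\tilde\alpha \in P_{<j}$ is not a proof: the ``reorganize'' step is not carried out, and the tool you invoke to finish --- Lemma~\ref{B.5} --- yields only a bound of the form $O_{d,j,d_1,\ldots}(1)$ on the degree, \emph{not} the exact bound $<j$. You recognize this yourself in the ``Main obstacle'' paragraph (the whole point is to avoid inflation to $O_{l,j}(1)$), but then you resolve it by appealing to precisely the lemma whose conclusion \emph{is} that inflation. Concretely, a correction term of the shape $y \mapsto p_{\sigma(h_l,\,T_{\epsilon'\cdot h}y)}(\cdot)$ composes a degree-$<l$ object $p_{(\cdot)}$ with a degree-$<j$ object $\sigma(h_l,\cdot)$; Lemma~\ref{B.5} certifies polynomiality of such a composition but with degree depending on both $l$ and $j$, not $<j$ alone. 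So as written, your argument establishes only that $Q \in P_{<O_{l,j}(1)}(X,S^1)$, which is the content of the weaker Lemma~\ref{PIL:lem}, not of the exact Lemma~\ref{Eint:lem}.

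To obtain the sharp bound one has to exploit more carefully that the alternating product over $\{0,1\}^l$ annihilates the degree-$<l$ structure of $p_{(\cdot)}$ \emph{before} composing with $\sigma$, so that what survives is genuinely controlled only by the degree of $\sigma$. Your induction gestures at this cancellation but does not execute it; the proof in \cite{Berg& tao & ziegler} carries out exactly this computation.
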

We write $\Delta_t f = q_t\cdot \Delta F_t$, by Lemma \ref{lin:lem} (linearization Lemma) there exists an open neighborhood of the identity $U_{j-1}'$ in $U_{j-1}$ on which $q_t$ is a cocycle in $t$. As in Theorem \ref{MainF:thm} we can write $U_{j-1} = U'\times W$ for some finite $W$ and write $X = Y\times _{\sigma'} U'$ where $Y=Z_{<j-1}(X)\times_{\sigma''} W$ and $\sigma',\sigma''$ are the projections of $\sigma_{j-1}$. Note that as $\sigma_{j-1}$ is of degree $<j-1$, $\sigma'$ is also.\\
Applying Lemma \ref{Eint:lem} once for every $g\in G$, we can write $q_t = \Delta_t Q$ for all $t\in U_{j-1}$ and some phase polynomial such that $Q(g,y,uu_0)=q_u(g,y,u_0)$ for all $y\in Y$ and $u\in U'$ and some $u_0\in U'$. As $q_u(g,\cdot)$ take values in $C_n$ whenever $g$ is of order $n$, so is $Q$.\\

We now claim that $Q:G\times X\rightarrow S^1$ is a quasi-cocycle of order $<k-1$. Indeed, for any $g,h\in G$ and $x=(y,uu_0)\in X$, we have
$$\frac{Q(g+h,x)}{Q(g,x)Q(h,T_gx)} = \frac{Q(g+h,x)}{Q(g,x)Q(h,x)\Delta_g Q(h,x)}$$
$$=\frac{q_u(g+h,y,u_0)}{q_u(g,y,u_0)q_u(h,y,u_0)\Delta_g Q(h,x)}$$
$$=\frac{q_u(g+h,y,u_0)}{q_u(g,yu,u_0)q_u(h,T_gy,\sigma_{j-1}(g,y)u_0)}\frac{q_u(h,T_gy,\sigma_{j-1}(g,y)u_0)}{q_u(h,y,u_0)Q(h,x)}$$
$$=P_{u,g,h}(y,u_0)\Delta_g \frac{q_u(h,x)}{Q(h,x)}$$
where $P_{u,g,h}(x)=\frac{q_u(g+h,x)}{q_u(g,x)q_u(h,T_g x)}$. The term $\Delta_g \frac{q_u(h,x)}{Q(h,x)}$ is a phase polynomial of degree $<k-1$. As $q_u$ is a quasi-cocycle of order $<k-j$ we have that $P_{u,g,h}(x)$ is a phase polynomial of degree $<k-j$. $u\mapsto q_u$ is a cocycle in $u$ and therefore so is $u\mapsto P_{u,g,h}$. Thus, applying Lemma \ref{Edec:lem} we can integrate $P_{u,g,h}$ applying Lemma \ref{vdif:lem} we have that $P_{u,g,h}(y,u_0)$ is also a phase polynomial of degree $<k-1$ and the claim follows.\\
Now write $f'=f/Q$ and apply Lemma \ref{cob:lem} we have that $f'$ is $(G,X,S^1)$ to $f''$ which is invariant under some open subgroup $U''$ of $U$. Let $\pi:Y\times_{\overline\sigma_{j-1}} U_{j-1}/U''$ be the factor map induced by the projection $U_{j-1}\rightarrow U_{j-1}/U''$ we can write $f'' = \pi^\star \tilde{f}$ for some $\tilde{f}:G\times Y\times_{\overline\sigma_{j-1}} U_{j-1}/U''\rightarrow S^1$. Since $Q(g,\cdot)$ takes values in $C_n$ for $g$ of order $n$ and is a phase polynomial whose degree is smaller than any prime divides $n$, by Proposition \ref{linecocycle:prop} we have that $Q$ is a line-cocycle. As $\pi^\star \tilde{f}$ is cohomologous to $f/Q$, we conclude that $\pi^\star \tilde{f}$ and hence $\tilde{f}$ are line cocycles. Similarly as $f,Q$ are quasi-cocycles of order $<k-1$. $\pi^\star \tilde{f}$ is also. \\
Lastly, as $Q$ is a phase polynomial of degree $<k$ it is of type $<k$. Since $f$ is also of type $<k$ we conclude that $\pi^\star \tilde{f}$ is of type $<k$ applying Lemma \ref{Edec:lem} we have that $\tilde{f}$ is of type $<k$. 
We can now apply the finite case of Theorem \ref{TDH:thm} to complete the proof.
\subsection{The finite group case}
We now establish Theorem \ref{TDH:thm} for a finite $U_j$. \\
First we recall the following Lemma 
\begin{lem} [Free actions of compact abelian groups have no cohomology] \cite[Lemma B.4]{Berg& tao & ziegler}, \cite[Lemma C.8]{HK} \label{B.4}
	Let $U$ be a compact abelian group acting freely on $X$ by measure preserving transformations. Then every cocycle $\rho:U\times X\rightarrow S^1$ is a $(U,X,S^1)$-coboundary.
\end{lem}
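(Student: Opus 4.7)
The plan is to exploit the free-action hypothesis, which by definition identifies $X$ (measure-theoretically) with a product $Y \times U$ on which $V_u(y,v) = (y, uv)$. Under this identification, the cocycle $\rho : U \times X \to S^1$ becomes a jointly measurable function $\rho : U \times Y \times U \to S^1$ satisfying $\rho(u_1 u_2, y, v) = \rho(u_1, y, v) \cdot \rho(u_2, y, u_1 v)$ for every $u_1, u_2 \in U$ and almost every $(y,v)$. My goal is to produce a measurable $F : X \to S^1$ with $\Delta_u F = \rho(u, \cdot)$ for every $u \in U$, directly realizing $\rho$ as an element of $B^1(U,X,S^1)$.

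The key construction is a ``base-point'' trick along the $U$-direction. Fix a reference element $v_0 \in U$ and define $F(y, v) := \rho(v v_0^{-1}, (y, v_0))$. Substituting $u_1 = v v_0^{-1}$ and $u_2 = u$ into the cocycle identity, and using $V_{v v_0^{-1}}(y, v_0) = (y, v)$, one obtains
\[
\rho\bigl(u v v_0^{-1}, (y, v_0)\bigr) = \rho\bigl(v v_0^{-1}, (y, v_0)\bigr) \cdot \rho(u, (y, v)).
\]
Rearranging gives precisely $\rho(u, (y, v)) = F(y, uv)/F(y, v) = \Delta_u F(y, v)$, which is the desired coboundary relation.

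The main obstacle is measure-theoretic: when $U$ is non-atomic, the slice $Y \times \{v_0\}$ has measure zero in $X$, so a priori the restriction $\rho(\cdot, (y, v_0))$ need not be defined. The remedy is a Fubini argument. Since $\rho$ is jointly measurable on $U \times Y \times U$, for almost every choice of $v_0 \in U$ the map $(u, y) \mapsto \rho(u, (y, v_0))$ is a well-defined measurable function, and moreover the cocycle identity holds for a.e.\ $(u, y)$ once the last coordinate is set equal to that $v_0$. Selecting any such good $v_0$ makes $F$ a well-defined measurable function on $X$ and the computation above valid $\mu$-almost everywhere, completing the proof.
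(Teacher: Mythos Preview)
Your argument is correct and is essentially the standard proof found in the cited references \cite[Lemma B.4]{Berg& tao & ziegler} and \cite[Lemma C.8]{HK}: use the free-action hypothesis to write $X\cong Y\times U$, define $F$ via a base point in the $U$-fibre, and verify $\Delta_u F=\rho(u,\cdot)$ from the cocycle identity, with Fubini used to select a $v_0$ for which the relevant slice is well-defined. One small point you leave implicit: the Fubini step gives $\Delta_u F=\rho(u,\cdot)$ only for almost every $u\in U$, whereas the definition of $B^1(U,X,S^1)$ requires it for every $u$; this is easily upgraded using the cocycle identity (given any $u_0$, pick $u$ with both $u$ and $u_0u$ good and solve for $\rho(u_0,\cdot)$), but it is worth mentioning.
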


Using Theorem \ref{TSTH:thm} we can write $U_j = C_{p_1}^{L_1}\times...\times C_{p_m}^{L_m}$ for some finite $L_1,...,L_m$ and a finite $m\in\mathbb{N}$. We proceed by induction on $L=L_1+...+L_m$, the case $L=0$ is trivial. Suppose that $L\geq 1$, then without loss of generality $L_1\geq 1$ and we assume inductively that the claim has already been proven for $L-1$. Write $U_j = C_{p_1}^{L_1-1}\times \left<e\right>\times C_{p_2}^{L_2}\times... C_{p_m}^{L_m}$ where $e$ is a generator for $C_{p}$ where $p=p_1$.

Recall that $\Delta_e f$ is $(G,X,S^1)$-cohomologous to a phase polynomial $q_e$ of degree $<k-j+1$, such that $q_e(g,\cdot)$ takes values in $C_n$ where $n$ is the order of $g$. We write,
\begin{equation}\label{Heq1}
\Delta_e f = q_e \cdot \Delta F_e
\end{equation}
Arguing as in Theorem \ref{FC:thm} we can assume that $F_e$ satisfies that \begin{equation}\label{Heq2}
\prod_{i=0}^{p-1} V_e^i F_e = 1
\end{equation} without changing equation (\ref{Heq1}) and $q_e$ is still a phase polynomial of degree $<k-j+1$.\\
We now define a function $q_{e^s}(g,x)$ for all $0\leq s <p$ by the formula $$q_{e^s}(g,x) :=  \prod_{i=0}^{s-1} q_e(g, V_{e}^i x)$$ Since $q_e$ is a phase polynomial of degree $<k-j+1$, $q_{e^s}$ is also, and by equation (\ref{Heq1}) and (\ref{Heq2}) we have that $\prod_{i=0}^{p-1} V_e^i q_e = 1$ and so $u\mapsto q_u$ is a cocycle for $u\in \left<e\right>$.\\
Since $\Delta_e f$ is cohomologous to $q_e$, by the cocycle identity we have
\begin{equation}
\Delta_{e^s} f (g,x)= \prod_{i=0}^{s-1}\Delta_{e} f(g,V_e^ix)
\end{equation}
It follows that $\Delta_u f$ is cohomologous to $q_u(g,x)$ for all $u\in \left<e\right>$.\\
Now, as $q_u$ is cocycle in $u$, applying the Polynomial integration lemma (Lemma \ref{Eint:lem}) there exists a phase polynomial $Q$ of degree $<k$ such that $\Delta_u Q = q_u$, and $Q(g,\cdot)$ take values in $C_n$ where $n$ is the order of $g$. Moreover, arguing as in the previous section we also have that $Q$ is a quasi-cocycle of order $<k-1$.\\
From equation (\ref{Heq1}) we have $$\Delta_e (f/Q) = \Delta F_e$$
We have the telescoping identity
$$\prod_{i=0}^{p-1}V_e^i \Delta_e (f/Q)=1$$ We conclude that $\Delta \prod_{i=0}^{p-1}V_{e^i}F_e=1$ and so by ergodicity $\prod_{i=0}^{p-1}V_{e^i}F_e$ is a constant in $S^1$. Thus, we can rotate $F_e$ by a $p$'th root of this constant, we can then assume that  $$\prod_{i=0}^{p-1}V_{e^i}F_e=1$$
Now, by (\ref{Heq2}) we can define $$F_{e^s} :=\prod_{i=0}^{s-1}V_e^i F_e$$ Direct computation shows that $F_u$ is a cocycle for $u\in \left<e\right>$ hence a coboundary (Lemma \ref{B.4}), thus we can write $F_e = \Delta_e F$ for some $F:X\rightarrow S^1$. We conclude that $\Delta_e (f/(Q\cdot \Delta F))=1$ and so $f/Q$ is cohomologous to $f'$ which is invariant under $\left<e\right>$, arguing as before and using the induction hypothesis we conclude that $f/Q$ is cohomologous to a phase polynomial $P$. From Theorem \ref{HTDPV:thm} we have that $P(g,\cdot)$ takes values in $C_n$ where $n$ is the order of $g$. Thus $f$ is $(G,X,S^1)$-cohomologous to $Q\cdot P$ and this completes the proof.
\subsection{The higher order case}
The case $j\geq k$ is completely analogous to the proof of Bergelson Tao and Ziegler, \cite[Section 8.5]{Berg& tao & ziegler} and so is omitted. We therefore completed the proof of Theorem \ref{MainH:thm}. 
\section{Finishing the proof of Theorem \ref{Mainr:thm}} \label{proof}
Now that Theorem \ref{Main:thm} is established the proof which we began in section \ref{sr:sec} is now complete. It is left to prove the other direction of Theorem \ref{Mainr:thm}. That is,
\begin{thm*} Let $X$ be an ergodic system of order $<k$ and suppose that  $Z_{<1}(X),...,Z_{<k}(X)$ are strongly Abramov. Then there exists a totally disconnected factor $Y$ such that for all $m\in\mathbb{N}$ the homomorphism $$\pi_l^\star : H_{<m}^1(G,Z_{<l}(Y),S^1)\rightarrow H_{<m}(G,Z_{<l}(X),S^1)$$ is onto for all $1\leq l \leq k$ where $\pi_l:Z_{<l}(X)\rightarrow Z_{<l}(Y)$ is the factor map.
\end{thm*}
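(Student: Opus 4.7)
I construct $Y$ explicitly as the factor of $X$ generated by the family
\[\mathcal{F}:=\bigcup_{d\geq 1}\bigl\{P\in P_{<d}(X,S^1):\ \mathrm{image}(P)\subseteq\mathrm{tors}(S^1)\bigr\},\]
where $\mathrm{tors}(S^1)=\bigcup_{n\geq 1}C_n$ is the group of roots of unity. To see that $Y$ is totally disconnected I argue inductively along its Host--Kra tower: writing $Z_{<l+1}(Y)=Z_{<l}(Y)\times_{\tau_l}V_l$ via Proposition~\ref{abelext:prop}, every character of $V_l$ corresponds to a vertical eigenfunction on $Z_{<l+1}(Y)$, which by the generation of $Y$ by $\mathcal{F}$ and the functoriality of phase polynomials is the image of an element of $\mathcal{F}$ and hence takes values in $\mathrm{tors}(S^1)$. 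Thus $\widehat{V_l}$ is a torsion abelian group, equivalently $V_l$ is profinite, i.e., totally disconnected.

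For surjectivity of $\pi_l^\star$, let $\rho:G\times Z_{<l}(X)\to S^1$ be a cocycle of type $<m$. The strongly Abramov hypothesis makes the abelian extension $Z_{<l}(X)\times_\rho S^1$ Abramov of some bounded order $l_m=O_{k,m}(1)$; applied to the vertical eigenfunction $\Phi(x,z)=z$ and combined with the separation lemma (Lemma~\ref{sep:lem})---which forces the approximating phase-polynomial eigenfunctions of degree $<l_m$ to coincide with $\Phi$ up to a constant in the limit---this produces a measurable $F:Z_{<l}(X)\to S^1$ such that $q:=\rho\cdot\Delta F$ is a phase polynomial cocycle of some bounded degree $<d=O_{k,m}(1)$. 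Thus $\rho$ is $(G,Z_{<l}(X),S^1)$-cohomologous to the bounded-degree phase polynomial cocycle $q$. The crux of the argument is then the following claim: \emph{every phase polynomial cocycle $q:G\times Z_{<l}(X)\to S^1$ of bounded degree takes values in $\mathrm{tors}(S^1)$.}

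To justify this claim I exploit that $G=\bigoplus_{p\in P}\mathbb{F}_p$ is a torsion group: for any $g\in G$ of prime order $p$, iterating the cocycle identity $p$ times yields the line-cocycle relation $\prod_{i=0}^{p-1}T_g^i\,q(g,\cdot)=1$. Combining this with the structural properties of phase polynomials on $\bigoplus_{p\in P}\mathbb{F}_p$-systems already established in the paper (Proposition~\ref{PPC} together with the character-theoretic analysis on the Kronecker factor that underlies Theorem~\ref{TST:thm}) forces $q(g,\cdot)$ to take values in a finite cyclic subgroup of $S^1$; decomposing an arbitrary $g\in G$ into its prime-order components and iterating the cocycle identity then extends the conclusion to all $g\in G$.

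Granted the claim, each $q(g,\cdot)$ is a bounded-degree phase polynomial on $Z_{<l}(X)$ with torsion image, whose pullback to $X$ lies in $\mathcal{F}$ and is therefore $Y$-measurable; by the functoriality of the Host--Kra construction it is in fact measurable with respect to $Z_{<l}(Y)$. Consequently $q=\pi_l^\star q'$ for a uniquely determined cocycle $q':G\times Z_{<l}(Y)\to S^1$, and Lemma~\ref{Cdec:lem} (decent of type) guarantees that $q'$ is itself of type $<m$. This yields $[\rho]=[q]=\pi_l^\star[q']$ in $H^1_{<m}(G,Z_{<l}(X),S^1)$, proving the surjectivity. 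The principal obstacle in this plan is the torsion-values claim: all the remaining steps amount to functorial bookkeeping, whereas the claim requires a careful combination of the line-cocycle identity with the phase-polynomial calculus developed for totally disconnected systems (Theorems~\ref{TST:thm} and \ref{TDisweyl}) and its extension to bounded-degree cocycles on an arbitrary strongly Abramov $X$.
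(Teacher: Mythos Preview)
Your approach is genuinely different from the paper's, and the difference is instructive. The paper builds $Y$ \emph{dynamically}: it inductively constructs a compact connected abelian group $\mathcal H_l$ of automorphisms of $Z_{<l}(X)$ (containing each $U_{l-1,0}$) by lifting transformations through the Conze--Lesigne equations, sets $Y=X/\mathcal H_k$, and then uses Proposition~\ref{pinv:prop} (phase polynomials with finitely many values are invariant under connected groups) to push cocycles down to $Y$. Your construction is \emph{static}: you declare $Y$ to be the factor generated by torsion-valued phase polynomials and rely on Proposition~\ref{PPC} to see that any phase-polynomial cocycle automatically has torsion image. The two constructions are morally dual---``quotient by the connected part'' versus ``keep only the torsion part''---but the paper's route makes total disconnectedness of $Y$ immediate (you have literally killed every connected component of every structure group), whereas yours makes measurability of $q$ with respect to $Y$ immediate.

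There are, however, two genuine gaps in your plan. First, your derivation that $\rho$ is cohomologous to a phase polynomial cocycle invokes the separation lemma on the extension $Z_{<l}(X)\times_\rho S^1$, but Lemma~\ref{sep:lem} needs ergodicity, and nothing in the strongly-Abramov hypothesis guarantees that this extension is ergodic. The paper handles this explicitly: in its Case~II it passes via Mackey theory to a cohomologous cocycle $\tau$ with $\tau^n=1$, and then runs an induction on $m$ together with divisibility of the connected group $\mathcal H_l$ to conclude. Your framework has no analogue of this step, and the non-ergodic case is exactly where a naive ``approximate $\Phi(x,z)=z$ by phase polynomials'' argument breaks down (in a non-ergodic system $P_{<1}$ contains nonconstant invariant functions and the separation dichotomy fails).

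Second, your argument that $Y$ is totally disconnected is incomplete. You assert that a character $\chi\in\widehat{V_l}$ ``is the image of an element of $\mathcal F$'', but the vertical coordinate $(y,v)\mapsto\chi(v)$ is not a priori a phase polynomial on $Y$ (it is one only if $\chi\circ\tau_l$ is a phase-polynomial cocycle, which is precisely what one is trying to establish), nor is it clear why generation of $Y$ by $\mathcal F$ forces every such character to come from a torsion-valued phase polynomial on $X$. The paper avoids this issue entirely because its $Y$ is $X$ modulo a connected group containing each $U_{l,0}$, so the structure groups of $Y$ are quotients $U_l/U_{l,0}$ and are totally disconnected by construction. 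If you want to salvage your approach you will need an independent argument linking the Host--Kra tower of $Y$ to the family $\mathcal F$; alternatively, you can graft the paper's connected-automorphism construction onto your framework and observe (via Proposition~\ref{pinv:prop}) that every element of $\mathcal F$ is $\mathcal H$-invariant, which would show your $Y$ is a factor of the paper's $X/\mathcal H$ and hence totally disconnected.
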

\begin{proof}
 We prove the claim by induction on $k$, for $k=1$ the system $X$ is trivial and we can take $Y=X$. Let $k>1$ and suppose that the claim has already been proven for systems of order $<k-1$. Fix $1\leq l<k$ and recall that $Z_{<l+1}(X) = Z_{<l}(X)\times_{\sigma_l} U_l$ for some compact abelian group $U_l$ and a cocycle $\sigma_l:G\times Z_{<l}(X)\rightarrow U_l$ of type $<l$. Therefore by applying the induction hypothesis and Theorem \ref{Main:thm} for all $\chi\in\hat U_l$ we have that $\chi\circ\sigma_l$ is $(G,Z_{<l}(X),S^1)$-cohomologous to a phase polynomial.\\

Our main tool now is Proposition \ref{pinv:prop} which says that polynomials are invariant under connected groups. We prove by induction on $l<k$ that there exists a compact connected abelian group $\mathcal{H}_l$ which acts on $Z_{<l}(X)$ by automorphisms and contains the group of transformations $U_{l-1,0}$. For $l=1$, we let $\mathcal{H}_1$ be the trivial group. Suppose we already constructed $\mathcal{H}_l$ for some $l\geq 1$. For every $\chi\in\hat U_l$, $\chi\circ\sigma_l$ is cohomologous to a phase polynomial. Since $\mathcal{H}_l$ is connected Proposition \ref{pinv:prop} implies that $\Delta_h \chi\circ\sigma_l$ is a $(G,Z_{<l}(X),S^1)$-coboundary. Since this holds for every $\chi\in U_l$ we conclude that $\Delta_h \sigma_l$ is a $(G,Z_{<l}(X),U_l)$-coboundary.\\

For $h\in\mathcal{H}_l$ and $F:Z_{<l}(X)\rightarrow U_l$ let $S_{h,F}(x,u) := (hx,F(x)u)$. Denote by $H_{l+1}:=\{S_{h,F}:\Delta_h\sigma_l=\Delta F\}$. Clearly $H_{l+1}$ acts on $Z_{<l+1}(X)$, direct computation reveals that this action commutes with the $G$-action. Since $\mathcal{H}_{l}$ is connected, by Proposition \ref{pinv:prop} we have that $H_{l+1}$ is $2$-step nilpotent. Finally if $u\in U_l$ is a constant, then $S_{1,u}\in\mathcal{H}_l$ therefore we can view $U_l$ as a subgroup of $H_{l+1}$. Finally since $\Delta_h \sigma_l$ is a $(G,Z_{<l}(X),U_l)$-coboundary, the projection $H_{l+1}\rightarrow \mathcal{H}_l$ is onto. In other words we have a short exact sequence
$$1\rightarrow U_{l+1}\rightarrow H_{l+1}\rightarrow \mathcal{H}_l\rightarrow 1$$ since $U_{l+1}$ and $\mathcal{H}_l$ are compact so is $H_{l+1}$.\\

Now let $\mathcal{H}_{l+1}$ be the connected component of the identity in $H_{l+1}$. Then $\mathcal{H}_{l+1}$ is a compact connected nilpotent group and so is abelian (Proposition \ref{connilabel:prop}). Clearly it contains $U_{l,0}$ and it acts on $Z_{<l+1}(X)$ by automorphisms. This proves the induction step. Let $Y=X/\mathcal{H}$ where $\mathcal{H}=\mathcal{H}_k$. We return to the proof of the original claim.

Fix $1\leq l \leq k$ and $m\in\mathbb{N}$ and let $\rho:G\times Z_{<l}(X)\rightarrow S^1$ be a cocycle of type $<m$. Let $\tilde{X}:= Z_{<l}(X)\times_\rho S^1$. Since $Z_{<l}(X)$ is strongly Abramov we have that $\tilde{X}$ is Abramov of order $<l_m$. We prove that $\rho$ is cohomologous to a phase polynomial, to do this we consider two cases:\\
\textbf{Case I:} $\tilde{X}$ is ergodic. If $P:\tilde{X}\rightarrow S^1$ is a phase polynomial then by Proposition \ref{pinv:prop} we see that $\Delta_s P = \chi(s)$ for all $s\in S^1$ for some character $\chi:S^1\rightarrow S^1$. Therefore $P = \chi \cdot F$ where $F:Z_{<l}(X)\rightarrow S^1$.\\ If $\chi:S^1\rightarrow S^1$ is not the identity then $P=\chi\cdot F$ is orthogonal to the identity. Since the system is Abramov, the polynomials generate $L^2(\tilde{X})$ and therefore some polynomial must be of the form $P(x,s)=sF(x)$ for $x\in Z_{<l}(X),s\in S^1$ and $F:Z_{<l}(X)\rightarrow S^1$.
By taking derivatives we see that $$\rho \cdot \Delta F = \Delta P$$ Since $\Delta \Delta_s P = 1$ and $s$ commutes with the action of $G$, the cocycle $\Delta P$ is defines a phase polynomial on $Z_{<l}(X)$. In other words, $\rho$ is $(G,Z_{<l}(X),S^1)$-cohomologous to a phase polynomial of degree $<l_m-1$. By Proposition \ref{pinv:prop} this polynomial is measurable with respect to $Z_{<l}(X)/\mathcal{H}_l \cong Z_{<l}(Y)$. This complete the proof in the case where $\tilde{X}$ is ergodic.\\
\textbf{Case II} If $\tilde{X}$ is not ergodic. Then by the theory of Mackey $\rho$ is $(G,Z_{<l}(X),S^1)$-cohomologous to a minimal cocycle $\tau:G\times Z_{<l}(X)\rightarrow S^1$ which takes values in a proper closed subgroup of $S^1$ (See \cite[Corollary 3.8]{Zim}). In particular this means that $\tau^n = 1$ for some $n\in\mathbb{N}$. We claim by induction on $m$ that if $X$ is as in the theorem, then every $(G,X,S^1)$-cocycle of type $<m$ is cohomologous to a phase polynomial of degree $<l_m$. For $m=0$ the claim is trivial; Therefore by induction hypothesis we may assume that as a cocycle into $S^1$, $\Delta_h \tau$ is $(G,X,S^1)$-cohomologous to a phase polynomial for all $h\in \mathcal{H}_l$. Write,
$$\Delta_h \tau = p_h\cdot \Delta F_h$$ Since $\tau^n=1$ we conclude that $p_h^n$ is a coboundary. The cocycle identity and Proposition \ref{pinv:prop} implies that $p_{h^n}$ is a coboundary for all $h\in \mathcal{H}_l$. Since $\mathcal{H}_l$ is connected it is also divisible and so $\Delta_h \tau$ is a coboundary for all $h\in\mathcal{H}_l$. Finally, since $\mathcal{H}_l$ acts freely on $Z_{<l}(X)$ by automorphisms Lemma \ref{cob:lem} implies that $\tau$ is $(G,Z_{<l}(X),S^1)$-cohomologous to a cocycle that is invariant under $\tilde{H}_l$, hence measurable with respect to $Z_{<l}(Y)$ (and therefore is cohomologous to a phase polynomial by Theorem \ref{Main:thm}). Since $\tau$ and $\rho$ are cohomologous this completes the proof.
\end{proof}
\section{A $G$-system that is not Abramov} \label{example}
In this section we provide an example of a $G$-system $X$ of order $<3$ that is not an Abramov system. This example is a generalization of the Furstenberg-Weiss example for a $\mathbb{Z}$-cocycle that is not cohomologous to a polynomial (which is given in details in \cite{HK02}). Our example is constructed as an extension of a finite dimensional compact abelian group (see Definition \ref{FD:def}) that is not a Lie group by the circle group. Such groups are called solenoids and are known for their pathological properties.\\ We first construct the underline group and the $G$-action on it.
\subsection{The underline group}
Let $\mP$ denote the set of all prime numbers. We construct a $2$-dimensional compact abelian group as follows: Let $\mP_1,\mP_2$ be disjoint infinite sets such that $\mP=\mP_1\bigsqcup\mP_2$ and let $\Delta_1 := \prod_{p\in \mP_1} C_p$ and $\Delta_2:= \prod_{p\in \mP_2} C_p$. The group $\mathbb{Z}$ lies in $\Delta_1$ and in $\Delta_2$ diagonally. To see this, consider the map $1\mapsto (\omega_{p_1},\omega_{p_2},\omega_{p_3},...)$ where $\omega_{p_i}$ is the first root of unity of order $p_i$. This gives a rise to the following $1$-dimensional compact abelian groups, $U_1:=(\mathbb{R}\times \Delta_1)/\mathbb{Z}$ and $U_2:=(\mathbb{R}\times \Delta_2)/\mathbb{Z}$. We note that for every $i\in\{1,2\}$ $$\{1\}\rightarrow \Delta_1\rightarrow U_i\rightarrow \mathbb{R}/\mathbb{Z}\rightarrow \{1\}$$ is a short exact sequence. Let $U:=U_1\times U_2$, since $\mathbb{Z}$ is dense in $\Delta_1$ and $\Delta_2$ one can show that $U$ is connected.
\subsection{The $G$-action}
Let $G=G_1\oplus G_2$ where $G_{i} = \bigoplus_{p\in\mP_i}\mathbb{F}_p$ we construct a homomorphism $\sigma:G\rightarrow U$ such that the $G$-system $(U,G)$ is an ergodic Kronecker system:\\ 
Let $i,j$ such that $\{i,j\}=\{1,2\}$, let $g$ be a generator of order $p\in \mP_i$ of $G_{i}$. We denote by $v_g\in \Delta_j$ the unique $p$'th root of the element $(\omega_{p_1},\omega_{p_2},...)\in\Delta_j$. Such root exists since any prime in $\mP_i$ does not divide the supernatural order of $\Delta_j$. We let $\sigma_i(g)$ be the element $(\frac{1}{p},v_g)\in U_j$ (so $G_i$ acts on $U_j$) this is an element of order $p$ in $U_j$ hence $\sigma$ extends uniquely to a homomorphism.\\
In particular we have a $G$-action on $U$ by $T_gu = \sigma(g)\cdot u$ where $\sigma(g)=(\sigma_1(g),\sigma_2(g))$. We claim that this action is ergodic, equivalently that the image of $G$ under $\sigma$ is dense (See \cite[Corollary 3.8]{Zim}). Let $\pi:\mathbb{R}\times\Delta_1 \times\mathbb{R}\times \Delta_2\rightarrow U$ be the quotient map. This is a continuous and open map onto $U$.
We consider a general open set of the form $W_1\times v_1\cdot V_1\times W_2 \times v_2\cdot V_2$ where $W_1,W_2$ are balls in $\mathbb{R}$, $v_iV_i$ is a co-set of some open subgroup $V_i\leq \Delta_i$ for $i=1,2$ (by Proposition \ref{opensubgroup} every open subset contains a set of this form). Rotating by an element in $\mathbb{Z}$ we may assume that $W_1,W_2$ intersects non-trivially with $(0,1)$. It is enough to show that $\pi^{-1}(\sigma(G))$ intersect with this set. Let $s,t$ be the size of $\Delta_1/V_1$ and $\Delta_2/V_2$ respectively. Let $n$ be a sufficiently large number depending only on $W_1,W_2,s,t$ that we will choose later. Let $g_1\in G_1$ and $g_2\in G_2$ be two generators of orders $p_1,p_2$ for $p_1,p_2>n$.
Recall $v_{g_1},v_{g_2}$ from the definition of $\sigma$. $v_{g_i}^{p_i}$ is a generator of $\Delta_i$ and therefore so is $v_{g_i}$. Since $\Delta_1/V_1,\Delta_2/V_2$ are finite we can find powers $m_i,m_{i}'\leq \max\{s,t\}$ such that $v_{g_i}^{m_i}\in v_i\cdot V_i$ and $v_{g_i}^{m_{i}'}\in V_i$.
Hence for every $k$ we have that $v_{g_i}^{m_i+km_{i}'}$ is in $v_i\cdot V_i$. Since $m_i'$ depend only on $s,t$, for $n$ sufficiently large one of the elements in $\{\frac{m_i+km_i'}{p_i}:k\in\mathbb{N}\}$ must intersect with $W_i\cap (0,1)$. Let $g=((m_1+km_1')g_1,(m_2+km_2')g_2)$ then $\sigma(g)$ is an element in the image of $W_1\times v_1\cdot V_1 \times W_2\times v_2 V_2$ under $\pi$. Thus, $\sigma(G)$ intersects with any open subset in $U$, therefore is dense.
\subsection{A cocycle that is not cohomologous to a phase polynomial}
For every $g\in G_1\oplus G_2$ choose any $\alpha_g=(a_g,b_g)\in\mathbb{R}^2$ such that $a_g \text{ mod 1}$ is equal to the first coordinate of $\sigma(g)$ in $U_1$ and $b_g \text{ mod 1}$ to the first coordinate in $U_2$. In particular if $g\in G_1$ then $a_g=0$ and if $g\in G_2$ then $b_g=0$.\\
Let $x,y\in\mathbb{R}^2$ we define a bilinear form $\phi:\mathbb{R}^2\times \mathbb{R}^2\rightarrow S^1$ by the formula $\phi(x,y)=e(x_1y_2-x_2y_1)$ where $e(a)=e^{2\pi i a}$, and let $\tilde{f}(x) = \phi(x,\lfloor x\rfloor)$ where $\lfloor x\rfloor$ is the integer part of $x$ coordinate-wise. For every $k\in\mathbb{Z}^2$ we have,
\begin{equation}\label{eq1}
\tilde{f}(x+k)=\tilde{f}(x)\cdot \phi(x,k)
\end{equation}
This gives a rise to a function $f:G\times \mathbb{R}^2\rightarrow S^1$ which is given by $$f(g,x) = \frac{\tilde{f}(x+\alpha_g)}{\tilde{f}(x)}\cdot \overline{ \phi(\alpha_g,x)}$$ Equation (\ref{eq1}) implies that $f$ is well defined on $(\mathbb{R}/\mathbb{Z})^2$. We note that $(\mathbb{R}/\mathbb{Z})^2\cong U/(\Delta_1\times \Delta_2)$ and so we can think of $f$ as a function on $U$ which is invariant under multiplication by an element in $\Delta_1\times \Delta_2$.
\begin{lem} \label{fCL}
	The vertical derivatives of $f$ are cohomologous to a constant, these constants form a character of $\mathbb{R}^2$.
\end{lem}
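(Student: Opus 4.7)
The plan is to compute $\Delta_s f(g,x)$ directly from the definition of $f$ for $s\in\mathbb{R}^2$, viewed as acting by translation on the quotient $\mathbb{T}^2\cong U/(\Delta_1\times\Delta_2)$ under which $f$ is invariant, and then to recognize the result as a $G$-coboundary times an explicit constant-in-$x$ factor that is bilinear in $s$. A direct expansion using $f(g,x)=\tilde{f}(x+\alpha_g)/\tilde{f}(x)\cdot\overline{\phi(\alpha_g,x)}$ gives
\[
\Delta_s f(g,x) \;=\; \Delta_{\alpha_g}\Delta_s\tilde{f}(x)\cdot\overline{\phi(\alpha_g,s)},
\]
the second factor coming from the ratio $\overline{\phi(\alpha_g,x+s)}/\overline{\phi(\alpha_g,x)}=\overline{\phi(\alpha_g,s)}$ by bilinearity. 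The analysis therefore reduces to understanding $\Delta_s\tilde{f}$, which a priori lives on $\mathbb{R}^2$ and does not descend to $\mathbb{T}^2$.

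The key step will be a straightening trick. Iterating the identity $\tilde{f}(x+k)=\tilde{f}(x)\cdot\phi(x,k)$ from equation (\ref{eq1}) shows that $\Delta_s\tilde{f}(x+k)=\Delta_s\tilde{f}(x)\cdot\phi(s,k)$ for every $k\in\mathbb{Z}^2$, which by bilinearity is exactly the transformation law obeyed by the bi-character $\lambda_s(x):=\phi(s,x)$. I will therefore define
\[
F_s(x):=\Delta_s\tilde{f}(x)/\lambda_s(x),
\]
verify that $F_s$ is $\mathbb{Z}^2$-invariant, and conclude that it descends to a measurable function on $\mathbb{T}^2$, equivalently a $(\Delta_1\times\Delta_2)$-invariant function on $U$.

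Substituting $\Delta_s\tilde{f}=\lambda_s\cdot F_s$ and using $\Delta_{\alpha_g}\lambda_s(x)=\phi(s,\alpha_g)$ together with $\Delta_{\alpha_g}F_s=\Delta_g F_s$ (since $T_g$ is translation by $\alpha_g$ on $\mathbb{T}^2$) then yields
\[
\Delta_s f(g,x) \;=\; \phi(s,\alpha_g)^2\cdot \Delta_g F_s(x),
\]
using the antisymmetry $\overline{\phi(\alpha_g,s)}=\phi(s,\alpha_g)$. So $\Delta_s f$ is $(G,U,S^1)$-cohomologous to the constant-in-$x$ cocycle $c_s(g):=\phi(s,\alpha_g)^2$, and bilinearity of $\phi$ in its first slot gives $c_{s+s'}(g)=c_s(g)\cdot c_{s'}(g)$, so $s\mapsto c_s$ is a character of $\mathbb{R}^2$ in the sense required.

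The main technical obstacle is isolating the straightening $\lambda_s$ that makes $F_s$ genuinely $\mathbb{Z}^2$-invariant; the antisymmetric bi-character structure of $\phi$ makes $\lambda_s=\phi(s,\cdot)$ the natural candidate, after which the descent of $F_s$ follows by matching transformation laws and everything else is mechanical bilinear manipulation.
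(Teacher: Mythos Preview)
Your proof is correct and is essentially identical to the paper's argument: the paper defines $F_t(x):=\frac{\tilde f(x+t)}{\tilde f(x)}\cdot\overline{\phi(t,x)}$, which is exactly your $F_s=\Delta_s\tilde f/\lambda_s$, checks via equation (\ref{eq1}) that it descends to $(\mathbb{R}/\mathbb{Z})^2$, and then records the direct computation $\Delta_t f=\overline{\phi(\alpha_g,t)}\phi(t,\alpha_g)\cdot\Delta_{\alpha_g}F_t$, the constant being your $\phi(s,\alpha_g)^2$ by antisymmetry. You have simply written out the ``direct computation'' that the paper leaves implicit.
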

\begin{proof}
	Let $t\in\mathbb{R}^2$ and let $F_t(x) :=\frac{\tilde{f}(x+t)}{\tilde{f}(x)}\cdot \overline{ \phi(t,x)}$, equation (\ref{eq1}) implies that $F_t$ is well defined on $(\mathbb{R}/\mathbb{Z})^2$. A direct computation shows that $\Delta_t f = \overline{ \phi(\alpha_g,t)}\phi(t,\alpha_g)\cdot \Delta_{\alpha_g} F_t$. The constant $\overline{ \phi(\alpha_g,t)}\phi(t,\alpha_g)$ gives a rise to a character of $\mathbb{R}^2$, $\chi(g,x):= \phi(\alpha_g,x)\cdot \overline{\phi(x,\alpha_g)}$ such that $\Delta_t (f\cdot \chi) = \Delta F_t$.
\end{proof}
Now we extend $\chi$ to a character of $U$ we define a map $\varphi:G\times \Delta_1\times \Delta_2 \rightarrow S^1$ by the formula $\varphi(g,v_1,v_2):=(v_2(g))^2\cdot (v_1(g))^{-2}$ where $v_1,v_2$ are identified with the corresponding elements in $\hat G\cong \Delta_1\times \Delta_2$. It is not hard to see that $\chi\cdot\varphi:G\times \mathbb{R}^2\times \Delta_1\times\Delta_2\rightarrow S^1$ is well defined as a function (homomorphism) on $U$ (i.e. it is invariant under multiplication by an element in $\mathbb{Z}$).\\
$f\cdot \chi\cdot\varphi$ is not a cocycle, but it satisfies the following facts:
\begin{thm} \label{cocycle}
	$f\cdot\chi\cdot \varphi$ is a line cocycle and $\Delta_h f\cdot \chi\cdot \varphi (g,\cdot) = \Delta_g f\cdot \chi\cdot \varphi(h,\cdot)$
\end{thm}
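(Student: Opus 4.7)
The plan is to verify both assertions by direct computation, exploiting bilinearity and antisymmetry of $\phi$, the transformation law (\ref{eq1}) for $\tilde f$, and a Chinese Remainder Theorem identity between the characters $v_g(h), v_h(g)$ and $\phi(\alpha_g,\alpha_h)$. The initial useful simplification is that $\phi(x,\alpha_g)=\overline{\phi(\alpha_g,x)}$ by antisymmetry, so that $\chi(g,x)=\phi(\alpha_g,x)^2$ and consequently $(f\cdot\chi)(g,x)=\tilde f(x+\alpha_g)/\tilde f(x)\cdot \phi(\alpha_g,x)$; this eliminates the $\overline{\phi(\alpha_g,x)}$ factor in $f$ and puts $f\chi$ into a cleaner multiplicative form that is easier to manipulate on both the $\mathbb{R}^2$-part and the $\Delta_1\times\Delta_2$-part of $U$.

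For the line cocycle property, I would fix $g\in G$ of prime order $n$ and compute $\prod_{t=0}^{n-1}(f\cdot\chi\cdot\varphi)(g,T_g^tx)$ by separating the $\mathbb{R}^2$-contribution (from $f\chi$) and the $\Delta_1\times\Delta_2$-contribution (from $\varphi$). The $\tilde f$-ratios telescope to $\tilde f(x+n\alpha_g)/\tilde f(x)$, which equals $\phi(x,n\alpha_g)$ by (\ref{eq1}) since $n\alpha_g\in\mathbb{Z}^2$. The product of $\phi(\alpha_g,\cdot)$-factors simplifies to $\phi(\alpha_g,nx)$ using $\phi(\alpha_g,\alpha_g)=1$ and bilinearity, and then $\phi(x,n\alpha_g)\cdot\phi(\alpha_g,nx)=1$ by antisymmetry, so the $f\chi$-contribution to the product is trivial. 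For $\varphi$, since $T_g^t$ shifts the $\Delta_j$-coordinate by $v_g^t$ (where $v_g\in\Delta_j$ opposite to $g\in G_i$), the product collects to $v_g(g)^{n(n-1)}$ times a character of $g$ raised to the $n$-th power; these are both $1$ because $v_g\in\hat G_j$ annihilates $g\in G_i$ (since $\mathcal{P}_i\cap\mathcal{P}_j=\emptyset$), and any character of $G$ to the $n$-th power vanishes on an order-$n$ element.

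For the symmetry property, I would compute $\Delta_h(f\chi\varphi)(g,x)$ and $\Delta_g(f\chi\varphi)(h,x)$ piece by piece. The $\tilde f$-component of $\Delta_h(f\chi)(g,x)$ is $\tilde f(x+\alpha_g+\alpha_h)\tilde f(x)/[\tilde f(x+\alpha_g)\tilde f(x+\alpha_h)]$, manifestly symmetric in $g,h$; the residual $\phi(\alpha_g,\cdot)$-factor produces $\phi(\alpha_g,\alpha_h)$ on one side and $\phi(\alpha_h,\alpha_g)=\phi(\alpha_g,\alpha_h)^{-1}$ on the other, giving $\Delta_h(f\chi)(g,x)/\Delta_g(f\chi)(h,x)=\phi(\alpha_g,\alpha_h)^2$. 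For $\varphi$, the derivatives are $\Delta_h\varphi(g,x)=v_h(g)^{\epsilon}$ and $\Delta_g\varphi(h,x)=v_g(h)^{\epsilon'}$ with signs $\epsilon,\epsilon'\in\{\pm 2\}$ determined by which of $G_1,G_2$ contains $g,h$. The CRT identity $v_h(g)\cdot v_g(h)=e(1/(p_gp_h))$, a consequence of $v_g$ being the unique $p_g$-th root of $(\omega_p)_{p\in\mathcal{P}_j}$ and $\gcd(p_g,p_h)=1$, makes $\Delta_h\varphi(g,x)/\Delta_g\varphi(h,x)$ exactly the inverse of the $f\chi$-defect, so the two cancel and the symmetry holds.

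The main obstacle is the careful bookkeeping between the continuous $\mathbb{R}^2$-factor (seen only by $f$ and $\chi$) and the discrete $\Delta_1\times\Delta_2$-factor (seen only by $\varphi$), and in particular verifying that the arithmetic CRT identity $v_h(g)v_g(h)=e(1/(p_gp_h))$ exactly matches the bilinear form $\phi(\alpha_g,\alpha_h)$ arising from the $f\chi$-computation. A case split is also needed according to whether $g,h$ lie in the same $G_i$ or in opposite summands: when both lie in $G_i$, the lifts $\alpha_g,\alpha_h$ are supported in the same coordinate so $\phi(\alpha_g,\alpha_h)=1$, and the $\varphi$-derivatives simultaneously trivialize because $v_h\in\hat G_j$ and $g\in G_i$; the genuine cancellation between $f\chi$ and $\varphi$ occurs only when $g\in G_1,h\in G_2$ (or vice versa), which is the decisive case to check.
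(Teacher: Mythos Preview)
Your plan is correct and tracks the paper's proof closely. For the line-cocycle property the two arguments are essentially identical: telescope the $\tilde f$-ratios, use $\phi(\alpha_g,\alpha_g)=1$ and bilinearity to collapse the $\phi$-contribution, and invoke $n$-torsion for the $\varphi$-contribution (the paper writes this as $\varphi(g,v)^n\cdot\varphi(g,v_g)^{\binom{n}{2}}=1$, which is the same computation you describe).

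For the symmetry identity the overall reduction is also the same: both you and the paper separate the $f\chi$-part from the $\varphi$-part, arrive at the obstruction $\overline{\varphi(g,v_h)}\cdot\chi(h,\alpha_g)\cdot\varphi(h,v_g)$ (equivalently, your ratio $\phi(\alpha_g,\alpha_h)^2$ against the $\varphi$-defect), and then must show it vanishes for generators $g\ne h$. The difference is only in this last step. You propose to compute explicitly, using the Chinese Remainder identity $v_h(g)\,v_g(h)=e\bigl(1/(p_gp_h)\bigr)$ and matching it against $\phi(\alpha_g,\alpha_h)$. The paper instead argues by torsion: since $\chi(h,\cdot)\varphi(h,\cdot)$ is a character of $U$ and $\sigma(g)\in U$ has order $p_g$, the quantity $\chi(h,\alpha_g)\varphi(h,v_g)$ is a $p_g$-th root of unity, and so is $\varphi(g,v_h)$; hence the obstruction has order dividing both $p_g$ and $p_h$ and is therefore trivial. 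Your explicit route is more transparent and self-contained; the paper's order argument is slicker but leans on the (asserted, not proved) well-definedness of $\chi\varphi$ as a character of $U$, which is itself a CRT-flavoured check. Either way works.
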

Assuming this theorem, one can define a cocycle $\rho:G\times U\rightarrow S^1$ to be the unique cocycle which equals to $f\cdot\chi\cdot\varphi$ on the generators of $G$. That is,

\begin{equation} \label{rho}\rho(g,u) = \prod_{i=1}^\infty T_{g_1}T_{g_2}...T_{g_{i-1}} \prod_{k=0}^{g_i} f\cdot\chi\cdot \varphi(e_i,T_{e_i}^k)
\end{equation}
We note that the infinite product is well defined because it is finite for every given $g\in G$.

\begin{proof}[Proof of Theorem \ref{cocycle}]
	Let $g\in G$ be an element of order $n$
	$$\prod_{k=0}^{n-1} f\cdot\chi\cdot\varphi(g,T_g^k(x,v)) = \prod_{k=0}^{n-1} \frac{\tilde{f}(x+(k+1)\alpha_g)}{\tilde{f}(x+k\alpha_g)}\overline{\phi(x+k\alpha_g,\alpha_g)} \cdot \varphi(g,v+v_g^k)$$
	
	The first term is a telescoping series. Since $\phi(\alpha_g,\alpha_g)=1$ the second term is $\overline{\phi(nx+\binom{n}{2}\alpha_g,\alpha_g)}=\overline{ \phi(x, n\alpha_g)}$, the third term equals to $\varphi(g,v)^n \cdot \varphi(g,v_g) ^{\binom{n}{2}}$ which is trivial since $n$ divides $2\cdot \binom{n}{2}$. We conclude that
	$$\prod_{k=0}^{n-1} f\cdot\chi\cdot\varphi(g,T_g^k(x,v)) = \frac{\tilde{f}(x+n\alpha_g)}{\tilde{f}(x)}\overline{ \phi(x,n\alpha_g)} = 1$$
	In other words $f\cdot\chi\cdot\varphi$ is a line-cocycle.\\
	
	We now prove the second property, since $\Delta_t f \cdot \chi (g,x)=  \Delta_{\alpha_g} F_t(x)$ one has that $$\Delta_{\alpha_h} f\cdot \chi(g,x) = \Delta_{\alpha_g} F_{\alpha_h}(x) = \Delta_{\alpha_g} f(h,x) $$ Let $\sigma(g) = (\alpha_g,v_g)$ where $v_g\in \Delta_1\times \Delta_2$, it is enough to show that
	
	$$1 = \Delta_{v_h} \overline{\varphi(g,v)} \Delta_{\alpha_g} \chi(h,x)\cdot \Delta_{v_g}\varphi(h,v)$$
	Since $\chi(h,\alpha_g)\cdot \varphi(h,v_g)$ is a homomorphism in $g$ one has that the product above is a homomorphism in $g$ and a homomorphism in $h$. Hence it is enough to check equality in the case where $h$ and $g$ are generators of order $p_h$ and $p_g$ respectively. If $h=g$ then the claim is obvious since $\chi(h,\alpha_h)=1$ while if $h\not = g$ then the term above is of order $p_h$ and of order $p_g$ and so is trivial.
\end{proof}
Let $\rho$ be as in (\ref{rho}) we have,
\begin{thm}
	The system $X=U\times_{\rho} S^1$ is an ergodic system of order $<3$ that is not Abramov.
\end{thm}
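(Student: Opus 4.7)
The plan is to establish three properties: $X$ is ergodic, of order $<3$, and not Abramov of any order. For ergodicity, I invoke Mackey theory: it suffices to show that $\rho^n$ is not a $(G,U,S^1)$-coboundary for any $n\neq 0$. Expanding a putative solution $H:U\to S^1$ of $\rho^n=\Delta H$ against $\hat U$ and using the explicit form $\rho=f\cdot\chi\cdot\varphi$ on the generators, one is led to the Furstenberg--Weiss obstruction transported from $\mathbb{T}^2$ via the quotient $U\to U/(\Delta_1\times\Delta_2)\cong\mathbb{T}^2$: the bilinear piece $\phi(\alpha_g,\lfloor x\rfloor)$ hidden inside $f$ is genuinely of second order in $x$ and cannot be eliminated by a character-valued coboundary on the torus. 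The additional $\varphi$-twist lies in the totally disconnected factor and is handled separately by comparing the torsion orders of $v_g$ with the order of $g$.

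For the order statement, Lemma \ref{fCL} gives a Conze--Lesigne type equation $\Delta_t (f\cdot\chi) = \Delta F_t$ for every $t\in\mathbb{R}^2$, while the homomorphism $\varphi$ contributes only character-valued vertical derivatives. Combining these yields $\Delta_t\rho=c_t\cdot\Delta F_t'$ for every translation $t\in U$, with $c_t\in\hat G$. Since $U$ acts freely on itself and this action commutes with the $G$-action, this Conze--Lesigne equation forces $\rho$ to be of type $<2$ in the sense of Definition \ref{type:def}, and Proposition \ref{abelext:prop} applied in reverse then gives that $X=U\times_\rho S^1$ is an ergodic system of order $<3$.

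For failure of the Abramov property I would argue by contradiction, exactly as in Case I of the converse direction of Theorem \ref{Mainr:thm} carried out in section \ref{proof}: if $X$ were Abramov, the connectedness of the vertical $S^1$-action forces every phase polynomial on $X$ to split as $\chi(s)F(u)$ with $\chi\in\widehat{S^1}$, and orthogonality in $L^2(X)$ produces a phase polynomial of the form $P(u,s)=s\cdot F(u)$. Taking a $G$-derivative gives $\rho\cdot\Delta F=\Delta P$, so $\rho$ would have to be $(G,U,S^1)$-cohomologous to a phase polynomial $q$ on $U$ of bounded degree. The main obstacle, and the technical heart of the argument, is to rule this out. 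My plan is to classify phase polynomials on the Kronecker system $(U,G)$ using the density of $\sigma(G)$ in $U$: any such $q$ decomposes (after a character twist absorbed into $F$) as $q(g,u)=c(g)\cdot\chi_g(u)$ with $\chi_g\in\hat U$, via an inductive application of Proposition \ref{pinv:prop} to the $U$-action on itself. Substituting into $\rho=q\cdot\Delta F$ and pushing forward to the torus quotient $\mathbb{T}^2$ then rewrites the Furstenberg--Weiss bilinear cocycle as a character plus a coboundary on $\mathbb{T}^2$, contradicting the classical obstruction worked out by Host and Kra in \cite{HK02}. Controlling the interaction between the torus part and the totally disconnected part of $U$ during this pushforward is where I expect the main subtlety.
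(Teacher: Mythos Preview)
Your plan is broadly correct and reaches the same three conclusions the paper does, but the paper organizes the argument more economically and you should be aware of the shortcuts it takes.

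For the order statement your approach coincides with the paper's: the Conze--Lesigne equation $\Delta_t\rho=\lambda_t\cdot\Delta F_t$ (with $\lambda_t\in\hat G$) is used to exhibit $d^{[2]}\rho$ as a coboundary on $U^{[2]}$, so $\rho$ is of type $<2$ and $X$ has order $<3$.

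For ergodicity the paper does \emph{not} compute anything against $\hat U$. Instead it argues: if $\rho^n$ were a coboundary then the Conze--Lesigne equation forces $\lambda_t^n$ to be a coboundary, hence $\lambda_{t^n}$ is a coboundary; since $U$ is connected it is divisible, so $\lambda_u$ is a coboundary for every $u\in U$, and Lemma~\ref{cob:lem} makes $\rho$ cohomologous to a constant. Thus the entire ergodicity question is reduced to the single fact that $\rho$ is not cohomologous to a constant. Your Fourier-expansion route would work but duplicates effort you will need again below.

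For non-Abramov the paper is much shorter than your plan. Your classification of phase polynomials on $(U,G)$ via Proposition~\ref{pinv:prop} is exactly right and immediately gives that any phase polynomial cocycle on the connected Kronecker system $U$ is a character of $G$ (degree $<1$). The paper combines this with the vertical $S^1$-action to conclude in one line that every phase polynomial on $X$ has degree $<2$ and is therefore measurable with respect to $U$; hence the coordinate function $(u,s)\mapsto s$ is orthogonal to every phase polynomial, and $X$ is not Abramov. You do not need to push anything forward to $\mathbb{T}^2$ at this stage: once you know $\rho$ (or any power) is not cohomologous to a constant, the $n\neq 0$ case in $P(u,s)=s^nF(u)$ is already excluded by the ergodicity argument.

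In short: both proofs ultimately rest on the single obstruction ``$\rho$ is not $(G,U,S^1)$-cohomologous to a constant.'' The paper invokes this once and derives both ergodicity (via divisibility of $U$) and non-Abramov (via connectedness of $X$) from it, whereas your plan would establish essentially the same obstruction twice by independent pushforward computations. Proving that one fact carefully (your torus-quotient idea is appropriate here, though the paper leaves it implicit) and then following the paper's two short deductions would streamline your argument considerably.
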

\begin{proof}
	First we claim that the system is ergodic. It is enough to show that $\rho$ is not $(G,U,S^1)$-cohomologous to a cocycle taking values in some proper subgroup of $S^1$ (See \cite[Corollary 3.8]{Zim}). If by contradiction it is then there exists an $n\in\mathbb{N}$ such that $\rho^n$ is a $(G,U,S^1)$-coboundary. Then the Conze-Lesigne equation gives $$\Delta_t \rho = \lambda_t\cdot \Delta F_t$$ for every $t\in U$ where $\lambda_t:G\rightarrow S^1$ is a homomorphism such that $\lambda_t^n$ is a coboundary. From the cocycle identity we conclude that $\lambda_{t^n}$ is a coboundary. Since $U$ is connected, every $u\in U$ can be written as $t^n$. Lemma \ref{cob:lem} then implies that $\rho$ is cohomologous to constant. This is a contradiction and so $X$ is ergodic.\\
	Now we prove that $X$ is of order $<3$, equivalently show that $\rho$ is of type $<2$. By the Conze-Lesigne equation with $t=x'\cdot x^{-1}$ we have $$\frac{\rho(x)}{\rho(x')} = \lambda_{x-x'} \Delta F_{x-x'}(x)$$ Since the action of $G$ on $U$ is given by a homomorphism we have that the map $(x,x')\mapsto \Delta F_{x-x'}(x)$ is a derivative of $G(x,x'):=F_{x-x'}(x)$ in $U\times U$ therefore is a coboundary, hence $(x,x')\mapsto \frac{\rho(x)}{\rho(x')}$ is cohomologous to $\lambda_{x-x'}$ which is invariant under the action of $G$ and so is of type $<1$. We conclude that $\rho$ is of type $<2$. Since $X$ is connected there are only phase polynomials of degree $<2$. Such phase polynomials are measurable with respect to the Kronecker system $U$. In particular the map $(u,x)\mapsto x$ is orthogonal to every phase polynomial. Therefore $X$ is not Abramov as required.
\end{proof}
\appendix
\section{Topological groups and measurable homomorphisms}
In this section we survey some results on topological groups. We concentrate on polish groups and measurable homomorphisms on them and on totally disconnected groups.
\subsection{Homomorphisms of polish groups}

\begin{defn}
	We say that a topological group $G$ is a polish group if it is separable and completely metrizable. If $G$ is compact this is equivalent to the existence of some invariant metric on $G$ (i.e. a metric such that $d(x,y)=d(gx,gy)$).
\end{defn}

Every topological group is a measure space with respect to the Borel $\sigma$-algebra. It is well known that every locally compact abelian group $G$ admits a unique (up to scalar multiplication) invariant measure $\mu$. This measure is inner and outer regular and it assigns finite measure for compact subsets. In particular in the case where $G$ is compact we can normalize so that $\mu(G)=1$. The existence of such measures leads to many fruitful corollaries.
\begin{prop} [A.Weil]  \cite[Lemma 2.3]{Ch}\label{A.Weil}
	Let $G$ be a locally compact polish group and let $A\subseteq G$ be a measurable subset of positive measure then $AA^{-1}$ contains an open neighborhood of the identity.
\end{prop}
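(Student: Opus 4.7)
The plan is to follow the classical argument of A.~Weil, which proceeds by combining regularity of the Haar measure with a simple measure-theoretic pigeonhole argument. The overall strategy is: replace $A$ by a compact subset of nearly the same measure, trap it inside an open set of only slightly larger measure, and then use the uniform continuity of the translation action on indicator functions of compact sets to show that small perturbations by elements $g$ near the identity force $gA \cap A$ to have positive measure. Any such $g$ then lies in $AA^{-1}$.

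More concretely, first I would invoke inner regularity of the Haar measure $\mu$ on $G$ to replace $A$ by a compact subset $K \subseteq A$ with $\mu(K) > 0$; it suffices to find an open neighborhood $V$ of the identity with $V \subseteq KK^{-1}$. Next, I would use outer regularity to choose an open set $W \supseteq K$ with $\mu(W) < 2\mu(K)$. The key geometric input is then that for compact $K$ inside open $W$, there exists an open neighborhood $V$ of the identity such that $VK \subseteq W$; this follows from compactness of $K$ together with continuity of the multiplication map and a standard covering argument (for each $k \in K$, pick a neighborhood $V_k$ of $e$ with $V_k k \subseteq W$, refine so $V_k' V_k' \subseteq V_k$, extract a finite subcover $\{V_{k_i}' k_i\}$ of $K$, and intersect the $V_{k_i}'$ to obtain $V$).

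With $V$ chosen this way, for every $g \in V$ one has $gK \subseteq W$ and, by translation invariance of $\mu$, $\mu(gK) = \mu(K)$. Since $\mu(gK) + \mu(K) = 2\mu(K) > \mu(W)$, the two sets $gK$ and $K$ cannot be disjoint inside $W$; hence there exist $k_1, k_2 \in K$ with $gk_1 = k_2$, which gives $g = k_2 k_1^{-1} \in KK^{-1} \subseteq AA^{-1}$. Thus $V \subseteq AA^{-1}$, as required.

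I do not expect any serious obstacle here; the only point that needs care is the existence of the neighborhood $V$ with $VK \subseteq W$, but this is a standard consequence of compactness of $K$ and joint continuity of the group operation in a locally compact group. The polish hypothesis is not really used beyond ensuring that $\mu$ is well-defined and regular on Borel sets; the argument goes through for any locally compact Hausdorff group equipped with its Haar measure.
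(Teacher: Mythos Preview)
Your argument is correct and is exactly the classical proof due to A.~Weil: reduce to a compact set by inner regularity, enclose it in an open set of less than twice the measure, use compactness and continuity of multiplication to find a neighborhood $V$ of the identity with $VK\subseteq W$, and conclude by the pigeonhole inequality $\mu(gK)+\mu(K)>\mu(W)$. All steps are standard and there are no gaps.

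Note, however, that the paper does not actually supply its own proof of this proposition: it is stated with a citation to \cite[Lemma 2.3]{Ch} and used as a black box. So there is no ``paper's proof'' to compare against; you have simply reproduced the well-known argument that the cited reference records.
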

This implies the following useful Proposition.
\begin{prop}\label{countableindex}
	Let $G$ be a locally compact polish abelian group and let $H$ be a Borel subgroup of at most countable index. Then $H$ is open.
\end{prop}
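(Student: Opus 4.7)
The plan is to combine A.~Weil's theorem (Proposition \ref{A.Weil}) with a straightforward measure-theoretic pigeonhole argument. Equip $G$ with its (left-invariant) Haar measure $\mu$, which is nontrivial since $G$ is locally compact and assigns positive measure to every non-empty open subset. Since $H$ has at most countable index, we may write $G = \bigsqcup_{i\in I} g_i H$ where $I$ is countable and the $g_i H$ are the distinct cosets of $H$.

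Next I would use countable additivity together with translation invariance of $\mu$ to force $H$ to have positive measure. Indeed, each coset $g_i H$ has the same measure as $H$, and $\mu(G)>0$ (even if $\mu(G)=\infty$, it is certainly nonzero), so if $\mu(H)=0$ we would obtain $\mu(G) = \sum_{i\in I} \mu(g_iH) = 0$, a contradiction. Therefore $\mu(H) > 0$. Note that the hypothesis that $H$ is Borel is used precisely here to make $\mu(H)$ well-defined.

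Now A.~Weil's theorem (Proposition \ref{A.Weil}) applied to the positive-measure Borel set $H$ produces an open neighborhood $V$ of the identity with $V \subseteq HH^{-1}$. Since $H$ is a subgroup, $HH^{-1} = H$, so $V \subseteq H$. Finally, any subgroup containing a neighborhood of the identity is open: for each $h \in H$, the translate $hV$ is an open neighborhood of $h$ contained in $H$, so $H = \bigcup_{h\in H} hV$ is open. This completes the proof.

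There is no real obstacle in this argument; the only subtle point is ensuring that the pigeonhole step goes through even when $\mu(G)=\infty$, which it does since a countable union of null sets is null.
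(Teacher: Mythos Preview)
Your proof is correct and follows essentially the same approach as the paper: use countable index plus translation invariance of Haar measure to show $\mu(H)>0$, then apply A.~Weil's theorem to obtain an open neighborhood of the identity inside $HH^{-1}=H$, hence $H$ is open. Your treatment of the case $\mu(G)=\infty$ is in fact slightly more careful than the paper's.
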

\begin{proof}
	Let $\mu$ be the Haar measure on $G$. Since $H$ has countable index there exists $g_1,g_2,...$ such that $G=\bigsqcup_{i=1}^\infty g_i H$, in particular it follows that $$0<\mu(G)= \sum_{i=1}^\infty \mu(g_i H)$$ since the measure is invariant the right hand side is an infinite sum of $\mu(H)$. This is only possible if the measure of $H$ is positive (Note that if $G$ is compact this also implies that the measure is finite). Now, by Proposition \ref{A.Weil} we have that $H-H$ contains an open neighborhood $U$ of the identity. Since $H-H\subseteq H$ we have that $H=\bigcup_{h\in H} hU$ and so is open.
\end{proof}
As a corollary we have
\begin{cor}\label{openker} Let $G$ be a locally compact abelian polish group, let $L$ be a locally compact abelian group of at most countable cardinality. Then any measurable homomorphism $\varphi:G\rightarrow L$ factors through an open subgroup of $G$.
\end{cor}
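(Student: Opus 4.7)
The plan is to analyze the kernel of $\varphi$ directly, as the image is automatically countable. Set $N := \ker(\varphi) = \varphi^{-1}(\{e_L\})$. I first want to verify two properties of $N$: that it is Borel, and that it has at most countable index in $G$.

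For the Borel property: since $L$ is a locally compact (hence Hausdorff) abelian group, the singleton $\{e_L\}$ is closed, in particular Borel. Measurability of $\varphi$ then immediately gives that $N$ is a Borel subgroup of $G$. For the index: the first isomorphism theorem (as a statement about abstract groups) gives $[G:N] = |\varphi(G)| \leq |L| \leq \aleph_0$.

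With these two facts in hand, I can apply Proposition \ref{countableindex} directly to the Borel subgroup $N \leq G$ to conclude that $N$ is open in $G$. Then $\varphi$ factors as $G \twoheadrightarrow G/N \hookrightarrow L$, where the first map is the quotient by the open subgroup $N$; this is exactly the sense in which $\varphi$ factors through an open subgroup.

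There is no real obstacle — the corollary is a short consequence of Proposition \ref{countableindex} combined with the elementary observation that a homomorphism to a countable target has kernel of countable index. The only small point to be careful about is the Borel measurability of $\{e_L\}$, which is automatic under the standing Hausdorff assumption on locally compact groups.
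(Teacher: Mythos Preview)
Your proof is correct and follows exactly the same approach as the paper's: show that $\ker(\varphi)$ is a Borel subgroup of at most countable index, then invoke Proposition~\ref{countableindex} to conclude it is open. The paper's version is just a more terse statement of the same argument.
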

\begin{proof}
	The kernel of $\varphi$ is a Borel subgroup of at most countable index. Therefore the claim follows from the previous proposition.
\end{proof}

Another important Corollary of Proposition \ref{A.Weil} is the following automatic continuity lemma.

\begin{lem} [Automatic continuity of measurable homomorphisms] \cite[Theorem 2.2]{Ch}\label{AC:lem}
	Any measurable homomorphism from a locally compact polish group into a polish group is continuous.
\end{lem}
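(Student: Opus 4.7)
The plan is to reduce the problem to continuity at the identity and then exploit Haar measure together with Weil's lemma (Proposition \ref{A.Weil}) to produce the requisite neighborhood. Since $\varphi : G \to H$ is a group homomorphism, it suffices to show that for every neighborhood $V$ of the identity in $H$ there is a neighborhood $U$ of the identity in $G$ with $\varphi(U) \subseteq V$: the translation $g \mapsto g_0 g$ is a homeomorphism in both $G$ and $H$, so continuity at one point of $G$ propagates to continuity everywhere.

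Given such a $V$, the first step is to pick a symmetric neighborhood $W$ of the identity in $H$ with $W W^{-1} \subseteq V$, using only the joint continuity of the group operations in $H$. Since $H$ is polish, hence separable, there exist countably many elements $h_1, h_2, \ldots \in H$ such that $H = \bigcup_{n \geq 1} h_n W$. Pulling back along $\varphi$, which is measurable by hypothesis, yields a countable Borel cover $G = \bigcup_{n \geq 1} \varphi^{-1}(h_n W)$.

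Now I would use local compactness of $G$: fix a compact neighborhood $K$ of the identity in $G$, which has finite positive Haar measure. Intersecting the above cover with $K$ presents $K$ as a countable union of Borel sets, so at least one piece $A := K \cap \varphi^{-1}(h_n W)$ must have strictly positive Haar measure. Proposition \ref{A.Weil} then provides an open neighborhood $U$ of the identity in $G$ with $U \subseteq A A^{-1}$. For any $g \in U$, writing $g = a b^{-1}$ with $a, b \in A$ gives
\[
\varphi(g) \;=\; \varphi(a)\,\varphi(b)^{-1} \;\in\; (h_n W)(h_n W)^{-1} \;=\; W W^{-1} \;\subseteq\; V,
\]
so $\varphi(U) \subseteq V$, completing the proof.

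The only subtle point is guaranteeing the existence of a compact neighborhood of the identity of positive Haar measure (automatic in any locally compact group, since any compact neighborhood of the identity is the support of some bump and the Haar measure is strictly positive on nonempty open sets), and then ensuring the countable-cover argument is not vacuous — this is where both local compactness of $G$ (for Haar measure with finite-measure compact sets) and separability of $H$ (for the countable cover) are genuinely used. There is no serious obstacle beyond these standard ingredients.
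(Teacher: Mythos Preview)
Your proof is correct and is precisely the classical Weil--Pettis argument: separability of $H$ gives a countable cover by translates of a small neighborhood, pulling back and intersecting with a compact set of positive Haar measure forces one piece to have positive measure, and then Proposition~\ref{A.Weil} produces the required neighborhood of the identity. There are no gaps.

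For comparison: the paper does not supply its own proof of this lemma. It simply quotes the result from \cite[Theorem~2.2]{Ch}, so there is no internal argument to compare against. Your write-up is exactly the standard proof one finds in that reference, and it meshes well with the surrounding appendix since Proposition~\ref{A.Weil} is already stated there for this purpose.
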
 

\subsection{Totally disconnected groups}
\begin{defn} \cite[Exercise E8.6]{HM} Let $X$ be a locally compact Hausdorff space. Then the following are equivalent
	\begin{itemize}
		\item{Every connected component in $X$ is a Singleton}
		\item {$X$ has a basis consisting of open closed sets}
	\end{itemize}
	We say that $X$ is totally disconnected if one of the above is satisfied.
\end{defn}

In this section we will be interested in compact (Hausdorff) totally disconnected groups. These groups are also called pro-finite groups, in fact one can show that every such group is an inverse limit of finite groups (see \cite[Proposition 1.1.3]{profinite}).

\begin{prop}[Open neighborhood of the identity of compact Hausdorff totally disconnected groups contains an open subgroup]\label{opensubgroup}
	Let $G$ be a compact Hausdorff totally disconnected group. Let $1\in U\subseteq G$ be an open neighborhood of the identity, then $U$ contains an open subgroup of $G$.
\end{prop}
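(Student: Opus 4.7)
The plan is to first replace the open set $U$ by a clopen (open and closed) neighborhood of the identity sitting inside $U$, and then build an open subgroup inside that clopen set using continuity of multiplication together with the compactness of $G$.

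First I would reduce to the case where $U$ itself is clopen. In a compact Hausdorff space the connected component of any point coincides with its quasi-component, that is, the intersection of all clopen sets containing the point. Since $G$ is totally disconnected every component is a singleton, so the clopen sets separate points and hence form a basis for the topology. In particular there exists a clopen set $V$ with $1\in V\subseteq U$, and $V$ is compact (being a closed subset of the compact space $G$).

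Next I would construct a symmetric open neighborhood $W$ of $1$ such that $W\cdot V\subseteq V$. For every $v\in V$ the pair $(1,v)$ lies in the open set $m^{-1}(V)\subseteq G\times G$, where $m$ is the multiplication map, so we can pick a basic open neighborhood $W_v\times O_v$ of $(1,v)$ contained in $m^{-1}(V)$. The family $\{O_v\}_{v\in V}$ is an open cover of the compact set $V$, so finitely many sets $O_{v_1},\dots,O_{v_n}$ suffice, and $W_0:=\bigcap_{i=1}^n W_{v_i}$ is an open neighborhood of $1$ with $W_0\cdot V\subseteq V$. Setting $W:=W_0\cap W_0^{-1}$ gives a symmetric open neighborhood of $1$ still satisfying $W\cdot V\subseteq V$.

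Finally I would take $H=\bigcup_{n\geq 1}W^n$, the subgroup generated by $W$. Since $W$ is open and symmetric, $H$ is an open subgroup of $G$. An easy induction on $n$ shows $W^n\cdot V\subseteq V$ for every $n\geq 1$: assuming $W^{n-1}\cdot V\subseteq V$, one has $W^n\cdot V=W\cdot(W^{n-1}\cdot V)\subseteq W\cdot V\subseteq V$. Hence $H\cdot V\subseteq V$, and evaluating at $1\in V$ yields $H\subseteq V\subseteq U$. The only mildly delicate step is the compactness argument used to upgrade the pointwise continuity of multiplication at $(1,1)$ to the translation-invariant inclusion $W\cdot V\subseteq V$; everything else, including the passage from neighborhood to subgroup via symmetry, is routine.
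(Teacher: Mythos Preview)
Your argument is correct and is the standard proof of this classical fact about profinite groups: reduce to a compact clopen neighborhood $V$ using that clopen sets form a basis in a compact Hausdorff totally disconnected space, use compactness of $V$ and continuity of multiplication to find a symmetric open $W$ with $W\cdot V\subseteq V$, and then observe that the open subgroup $H=\bigcup_{n\ge 1}W^n$ generated by $W$ is trapped inside $V$.

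The paper does not supply its own proof; it simply cites \cite[Proposition~1.1.3]{profinite} (Neukirch--Schmidt--Wingberg). Your write-up is essentially the argument one finds there, so there is nothing substantive to compare.
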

The proof of this Proposition can be found in \cite[Proposition 1.1.3]{profinite}. As a corollary we have,
\begin{cor}[The dual of totally disconnected group is a torsion group]  \label{chartdg} Let $G$ be a compact abelian totally disconnected group and let $\chi:G\rightarrow S^1$ be a continuous character. Then the image of $\chi$ is finite.
\end{cor}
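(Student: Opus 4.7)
The plan is to combine the structure of $S^1$ near the identity with Proposition \ref{opensubgroup} to force $\chi$ through a finite quotient. First I would choose a small open neighborhood $V$ of $1\in S^1$ that contains no nontrivial subgroup of $S^1$; the standard choice is $V=\{e^{2\pi i t}:|t|<1/4\}$, since any subgroup of $S^1$ meeting $V\setminus\{1\}$ contains elements of arbitrarily large order, and hence escapes $V$.

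Next, by continuity of $\chi$, the preimage $\chi^{-1}(V)$ is an open neighborhood of the identity in $G$. Since $G$ is compact, Hausdorff, and totally disconnected, Proposition \ref{opensubgroup} provides an open subgroup $H\leq G$ with $H\subseteq \chi^{-1}(V)$. Then $\chi(H)$ is a subgroup of $S^1$ contained in $V$, so by the choice of $V$ we conclude $\chi(H)=\{1\}$, i.e.\ $\chi$ factors through the quotient $G/H$.

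Finally, an open subgroup of a compact topological group automatically has finite index: the cosets of $H$ form an open cover of $G$, which by compactness admits a finite subcover, and distinct cosets are disjoint. Therefore $G/H$ is finite, and so the image of $\chi$, being a homomorphic image of $G/H$, is finite as well. There is no real obstacle here beyond picking the neighborhood $V$ correctly; the content of the corollary is really just Proposition \ref{opensubgroup} applied to the continuity of $\chi$ plus the elementary observation that $S^1$ has small neighborhoods of the identity containing no nontrivial subgroup.
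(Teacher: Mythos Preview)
Your proof is correct and follows essentially the same approach as the paper: pick a neighborhood of $1\in S^1$ with no nontrivial subgroups, pull it back, apply Proposition~\ref{opensubgroup} to obtain an open subgroup $H$ on which $\chi$ vanishes, and conclude that $\chi$ factors through the finite quotient $G/H$. You are simply more explicit than the paper in specifying the neighborhood $V$ and in justifying why $G/H$ is finite.
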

\begin{proof}
	Choose an open neighborhood of the identity $U$ in $S^1$ that contains no non-trivial subgroups. Then $\chi^{-1}(U)$ is an open neighborhood of $G$. Now, let $H$ be an open subgroup such that $H\subseteq \chi^{-1}(U)$. It follows that $\chi(H)$ is trivial and so $\chi$ factors through $G/H$ which is finite.
\end{proof}
We note that the other direction also holds, but we do not use this fact here.

Since compact totally disconnected groups are pro-finite groups. Some of the theory of finite groups can be generalized to these groups. For example we have,
\begin{prop}[Sylow Theorem]\cite[Corollary 8.8]{HM} \label{Sylow}
	A compact abelian group is totally disconnected if and only if it is a direct
	product of $p$-groups.
\end{prop}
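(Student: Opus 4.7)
The plan is to use Pontryagin duality to reduce the statement to the well-understood structure theory of discrete torsion abelian groups, with Corollary \ref{chartdg} as the crucial bridge.

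For the easy direction, suppose $G \cong \prod_p G_p$ where each $G_p$ is a compact abelian $p$-group (every element has order a power of $p$). Each $G_p$ is pro-finite: by duality $\widehat{G_p}$ is a discrete $p$-torsion abelian group, so all continuous finite quotients of $G_p$ are $p$-groups, realizing $G_p$ as an inverse limit of finite $p$-groups. In particular each $G_p$ is totally disconnected, and since an arbitrary product of totally disconnected spaces is totally disconnected, so is $G$.

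For the converse, assume $G$ is a compact abelian totally disconnected group. By Corollary \ref{chartdg}, every continuous character $\chi:G\to S^1$ has finite image, hence finite order. Therefore the Pontryagin dual $\widehat G$ is a discrete torsion abelian group, and the classical primary decomposition gives $\widehat G = \bigoplus_p A_p$, where $A_p$ is the $p$-primary component. Taking duals and using that Pontryagin duality converts direct sums of discrete abelian groups into direct products of compact abelian groups, we obtain $G \cong \prod_p \widehat{A_p}$. Each $\widehat{A_p}$ is dual to a discrete abelian $p$-group, and a short calculation with characters shows every element of $\widehat{A_p}$ has $p$-power order, so $\widehat{A_p}$ is a compact abelian $p$-group as required.

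The main technical input is the torsion property of $\widehat G$ supplied by Corollary \ref{chartdg}; once this is in hand, everything else is formal manipulation with Pontryagin duality. An alternative, more hands-on route would define $G_p$ as the closure of the subgroup of $p$-power torsion elements and directly prove that $\prod_p G_p \to G$ is a topological isomorphism using the basis of open subgroups from Proposition \ref{opensubgroup}; the expected obstacle there is to verify both injectivity (distinct primes generate topologically independent subgroups) and surjectivity (any element approximates as a product across different $p$-torsion parts) in the product topology, which the duality argument handles automatically.
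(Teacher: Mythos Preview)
The paper does not supply its own proof of this proposition; it is quoted verbatim from \cite[Corollary 8.8]{HM}, so there is nothing to compare against. Your duality argument is the standard route and is essentially how the cited reference proceeds.

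There is, however, one genuine slip in your final sentence. You assert that ``a short calculation with characters shows every element of $\widehat{A_p}$ has $p$-power order.'' This is false: take $A_p$ to be the Pr\"ufer group $\mathbb{Z}[1/p]/\mathbb{Z}$; its Pontryagin dual is the $p$-adic integers $\mathbb{Z}_p$, which is torsion-free. In the compact setting the correct meaning of ``$p$-group'' is \emph{pro-$p$-group}, i.e.\ an inverse limit of finite $p$-groups, or equivalently a compact abelian group whose dual is $p$-primary torsion. That is exactly what you have after dualizing, so the fix is simply to replace the mistaken torsion claim by the observation that $\widehat{A_p}$ has $p$-torsion dual and is therefore pro-$p$. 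This is also the property the paper actually uses downstream (in the proof of Theorem \ref{TST:thm} one only needs that every character of $U_p$ lands in some $C_{p^n}$).
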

We also need the following structure theorem for torsion groups (of bounded torsion).
\begin{thm}[Structure theorem for abelian groups of bounded torsion]\label{torsion}\cite[Chapter 5, Theorem 18]{M}
	Let $G$ be a compact abelian group, and let $n$ be an integer such that $g^n=1_G$ for all $g\in G$. Then $G$ is topologically and algebraically isomorphic to $\prod_{i=1}^\infty C_{m_i}$ such that for all $i$, the integer $m_i$ divides $n$.
\end{thm}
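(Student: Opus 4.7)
The plan is to reduce this to a purely algebraic statement about discrete abelian groups via Pontryagin duality, and then invoke the Pr\"ufer--Baer theorem.

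First I would pass to the Pontryagin dual $\hat{G}$. Since $G$ is a compact abelian group (and metrizable by the paper's standing convention on topological groups), $\hat{G}$ is a countable discrete abelian group. The exponent bound transfers immediately to $\hat{G}$: for every character $\chi \in \hat{G}$ and every $g \in G$ one has $\chi(g)^n = \chi(g^n) = \chi(1_G) = 1$, so $\chi^n \equiv 1$, and hence $\hat{G}$ itself has exponent dividing $n$.

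Next I would invoke the Pr\"ufer--Baer theorem, which asserts that every abelian group of bounded exponent is isomorphic to a direct sum of finite cyclic groups. Applied to $\hat{G}$, this yields an algebraic isomorphism $\hat{G} \cong \bigoplus_{i=1}^\infty C_{m_i}$, where each $m_i$ divides $n$ (the order of any cyclic summand must divide the exponent of the ambient group). Finally I would dualize back: Pontryagin duality converts algebraic direct sums of discrete groups into topological direct products of their compact duals, and each finite cyclic group is self-dual, so by reflexivity $G \cong \hat{\hat{G}} \cong \prod_{i=1}^\infty \widehat{C_{m_i}} \cong \prod_{i=1}^\infty C_{m_i}$ as topological groups, with each $m_i \mid n$, which is the desired conclusion.

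The main obstacle is the Pr\"ufer--Baer theorem itself. Its standard proof first reduces to the $p$-primary case by decomposing $\hat{G}$ along the (finitely many) primes dividing $n$, and then, within each $p$-primary component, constructs a maximal independent family of elements of maximal $p$-power order by Zorn's lemma and shows that such a family generates the component as an internal direct sum. The remaining steps in the argument above -- transferring the exponent bound to the dual and converting the direct-sum decomposition of $\hat{G}$ into a direct-product decomposition of $G$ via Pontryagin reflexivity -- are routine.
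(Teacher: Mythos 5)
Your argument is correct and is the standard one: the paper itself gives no proof of this statement, citing it directly from Morris \cite[Chapter 5, Theorem 18]{M}, and the route you describe (transfer the exponent bound to the discrete dual $\hat{G}$, apply the Pr\"ufer--Baer decomposition into cyclic summands of order dividing $n$, then dualize back using $\widehat{\bigoplus_i A_i}\cong\prod_i\hat{A_i}$ and self-duality of finite cyclic groups) is exactly how the cited reference establishes it. Nothing further is needed.
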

One way to generate totally disconnected groups is to begin with an arbitrary compact abelian group and quotient it out by it's connected component.
\begin{lem}\label{connectedcomponent}
	Let $G$ be a compact abelian group and let $G_0$ be the connected component of the identity. Since the multiplication and the inversion maps are continuous one has that $G_0$ is a subgroup of $G$. We have
	\begin{itemize}
		\item{$G_0$ has no non-trivial open subgroups}
		\item {Every open subgroup of $G$ contains $G_0$}
		\item {$G/G_0$ equipped with the quotient topology is totally disconnected compact group}
	\end{itemize}
\end{lem}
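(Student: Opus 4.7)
The plan is to handle the three assertions in sequence, each resting on the same standard observation that open subgroups of topological groups are automatically closed.

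The key preliminary is: in any topological group, an open subgroup $H$ is also closed, since its complement is the union of the remaining cosets, each of which is a translate of $H$ and therefore open. With this in hand, assertion (1) follows at once: if $H$ is an open subgroup of $G_0$, then $H$ is a clopen subset of $G_0$ containing the identity, and connectedness of $G_0$ forces $H = G_0$. Assertion (2) follows by intersection: given an open subgroup $H\le G$, the set $H\cap G_0$ is an open subgroup of $G_0$, hence by (1) equals $G_0$, which says $G_0\subseteq H$.

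For assertion (3), compactness of $G/G_0$ is immediate from continuity of the quotient map $\pi\colon G\to G/G_0$, and $G/G_0$ is Hausdorff because $G_0$ is closed. The substance is total disconnectedness. I would let $C$ denote the connected component of $\bar 1$ in $G/G_0$; then $C$ is a closed subgroup, and $\pi^{-1}(C)$ is a closed subgroup of $G$ containing $G_0$. The goal becomes showing $\pi^{-1}(C)$ is connected, since then $\pi^{-1}(C)\subseteq G_0$ forces $C=\{\bar 1\}$, and a translation argument shows this implies every connected component in $G/G_0$ is a singleton.

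Connectedness of $\pi^{-1}(C)$ will come from the following standard fact applied to the restriction $\pi|_{\pi^{-1}(C)}\colon \pi^{-1}(C)\to C$: if $f\colon X\to Y$ is a continuous open surjection with $Y$ connected and every fiber connected, then $X$ is connected. Indeed, a clopen partition $X=A\sqcup B$ would give open sets $f(A),f(B)$ covering $Y$, so by connectedness of $Y$ some fiber $f^{-1}(y)$ would meet both $A$ and $B$, contradicting its own connectedness. In the present setting $\pi$ is open as a topological-group quotient, its restriction to $\pi^{-1}(C)$ remains a continuous open surjection onto $C$, every fiber is a translate of $G_0$ and hence connected, and $C$ is connected by construction, so the lemma applies. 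The only step with any real content is this fiber-to-total connectedness argument; the rest is the clopen trick for open subgroups repeated in slightly different guises.
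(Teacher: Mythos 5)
Your proof is correct and complete; the paper states this lemma as a standard fact in its appendix and gives no proof of its own, so there is nothing to compare against. All three parts are handled by standard arguments: the clopen trick for open subgroups gives (1) and (2), and the fiber-connectedness lemma for (3) is applied correctly --- in particular you rightly note that the restriction of the open quotient map $\pi$ to the saturated set $\pi^{-1}(C)$ remains open onto $C$, which is the one point where care is needed.
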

As a Corollary we have,
\begin{prop}[Quotient of profinite group is a profinite group] \label{Quotient} 
	Let $G$ be a profinite group, let $N$ be a subgroup of $G$. Then $G/N$ with the induced topology is a profinite group.
\end{prop}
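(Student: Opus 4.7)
The plan is to check the three defining properties of a profinite group—compactness, Hausdorffness, and total disconnectedness—for the quotient $G/N$, where I implicitly take $N$ to be a \emph{closed} normal subgroup (without this, $G/N$ fails to be Hausdorff, and the compact abelian setting of the paper makes normality automatic anyway). Compactness is immediate: the quotient map $\pi\colon G\to G/N$ is continuous and surjective. Hausdorffness is the standard fact that a topological-group quotient by a closed normal subgroup is Hausdorff, using that $\pi$ is open (since $\pi^{-1}(\pi(U))=UN$ is open whenever $U$ is).

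The substantive part is total disconnectedness, and here the plan is to produce a neighborhood basis of clopen subgroups at the identity of $G/N$ and then deduce the conclusion. Given an open neighborhood $V$ of the identity in $G/N$, the preimage $\pi^{-1}(V)$ is an open neighborhood of $1_G$ in the profinite group $G$. Applying Proposition \ref{opensubgroup} to $G$, I obtain an open subgroup $H\le G$ with $H\subseteq \pi^{-1}(V)$. Since $\pi$ is open, $\pi(H)=HN/N$ is an open subgroup of $G/N$ lying inside $V$.

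To conclude, I would invoke the standard fact that any open subgroup of a topological group is also closed (its complement is a union of cosets, each of which is open), so each $\pi(H)$ produced above is clopen. This gives a neighborhood basis of clopen subgroups at the identity of $G/N$. Any connected subset containing the identity must then sit inside each such clopen subgroup, hence inside every neighborhood of the identity, and so collapses to $\{1\}$ by Hausdorffness. By homogeneity of the topological-group structure, every connected component of $G/N$ is a singleton, i.e., $G/N$ is totally disconnected. I do not expect any real obstacle here: once Proposition \ref{opensubgroup} and the openness of $\pi$ are in hand, the rest is routine topological-group bookkeeping, and there is no need to invoke the deeper inverse-limit characterization of profinite groups.
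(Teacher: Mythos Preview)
Your proof is correct and follows essentially the same route as the paper: both hinge on Proposition~\ref{opensubgroup} together with the openness of the quotient map to produce open (hence clopen) subgroups in $G/N$, and then conclude that the identity component is trivial. The paper phrases the last step as a one-line contradiction via Lemma~\ref{connectedcomponent} (an open subgroup must contain the connected component), whereas you spell out the neighborhood-basis argument directly; you are also more careful in flagging that $N$ should be closed for $G/N$ to be Hausdorff, which the paper leaves implicit.
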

\begin{proof}
	Let $C$ denote the connected component of the identity in $G/N$. Let $x\in C$, let $\pi:G\rightarrow G/N$ be the projection map. Then $\pi^{-1}(\{x\})$ is a closed subset of $G$. If by contradiction $x\not = 1$ then by Proposition \ref{opensubgroup} we have that the complement contains an open subgroup $V$. Quotient homomorphisms are open and so by Lemma \ref{connectedcomponent}, $\pi(V)$ contains the connected component of $G/N$ which is absurd.
\end{proof}
\subsection{Lie groups}
\begin{defn}
	A topological group $G$ is said to be a Lie group, if as a topological space it is a finite dimensional, differentiable manifold over $\mathbb{R}$ and the multiplication and the inversion maps are smooth.
\end{defn}
A compact abelian group is a Lie group if and only if it's Pontryagin dual is finitely generated. The structure theorem for finitely generated abelian group then implies
\begin{thm}[Structure Theorem for compact abelian Lie groups]\cite[Theorem 5.2]{S} \label{structureLieGroups} A compact abelian group $G$ is a Lie group if and only if $n\in\mathbb{N}$ such that $G\cong (S^1)^n\times C_k$ where $C_k$ is some finite group with discrete topology.
\end{thm}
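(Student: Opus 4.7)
The plan is to exploit Pontryagin duality and reduce the statement to the classification of finitely generated discrete abelian groups, exactly as the remark preceding the theorem indicates. The reverse direction is immediate: the torus $(S^1)^n$ is a compact abelian Lie group of dimension $n$, a finite group $C_k$ with the discrete topology is a compact abelian Lie group of dimension $0$, and their direct product inherits a smooth manifold structure as $|C_k|$ disjoint copies of $(S^1)^n$, for which multiplication and inversion are smooth.

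For the forward direction, let $G$ be a compact abelian Lie group. My first task would be to show that the Pontryagin dual $\hat{G}$ is a finitely generated (discrete) abelian group. To this end I would consider the identity component $G_0\leq G$, which is a closed connected abelian Lie subgroup. Its Lie algebra $\mathfrak{g}$ is abelian of dimension $n:=\dim G$, and the exponential map $\exp\colon\mathfrak{g}\to G_0$ is a surjective continuous homomorphism with discrete kernel. Since $G_0$ is compact and $\mathfrak{g}\cong\mathbb{R}^n$, the kernel must be a lattice of full rank $n$, so $G_0\cong(S^1)^n$. The quotient $G/G_0$ is then a $0$-dimensional compact Lie group, hence finite (it is compact and discrete). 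Dualizing the short exact sequence $1\to G_0\to G\to G/G_0\to 1$ yields
\[
0\to \widehat{G/G_0}\to \hat{G}\to \widehat{G_0}\to 0,
\]
in which $\widehat{G_0}\cong\mathbb{Z}^n$ and $\widehat{G/G_0}$ is finite, so $\hat{G}$ is a finitely generated abelian group.

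I would then invoke the fundamental theorem for finitely generated abelian groups to write $\hat{G}\cong\mathbb{Z}^n\oplus A$ for some finite abelian group $A$. Taking Pontryagin duals and using the identifications $\hat{\mathbb{Z}}\cong S^1$ and $\hat{A}\cong A$ for finite abelian $A$, one obtains $G\cong(S^1)^n\times \hat{A}$ (as topological groups, because Pontryagin duality is functorial and preserves finite products), so we may take $C_k:=\hat{A}$. The main delicate step in this plan is the reduction to finite generation of $\hat{G}$, which rests on the classification of compact connected abelian Lie groups as tori and on the finiteness of the component group $G/G_0$; once that structural input is in place, the decomposition of $G$ is a purely algebraic matter via duality.
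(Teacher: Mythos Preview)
Your proposal is correct and follows exactly the approach the paper indicates: the paper does not give a detailed proof but simply notes that a compact abelian group is a Lie group if and only if its Pontryagin dual is finitely generated, and then invokes the structure theorem for finitely generated abelian groups. Your argument fills in precisely these steps, supplying in addition the standard identification $G_0\cong(S^1)^n$ via the exponential map and the finiteness of $G/G_0$.
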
 \label{approxLieGroups}
Thus, compact abelian Lie groups are very simple. Fortunately it turns out that every group can be approximated by compact abelian Lie groups
\begin{thm}\cite[Corollary 8.18]{HM} \label{GY:thm}
	Let $G$ be a compact abelian group and let $U$ be a neighborhood of the identity in $G$. Then $U$ contains a subgroup $N$ such that $G/N$ is a Lie group.
\end{thm}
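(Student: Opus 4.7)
The plan is to produce the subgroup $N$ as a finite intersection of kernels of continuous characters and then use a compactness argument to fit $N$ inside the prescribed neighborhood $U$. The crucial input is Pontryagin duality for compact abelian groups, which says that the continuous characters $\hat G$ separate the points of $G$: for every $g\neq e$ there exists $\chi\in\hat G$ with $\chi(g)\neq 1$. I would take this as a black box.

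First I would let $\mathcal N$ be the family of all closed subgroups of $G$ of the form $N_{\chi_1,\dots,\chi_n}:=\bigcap_{i=1}^n \ker\chi_i$ for finitely many $\chi_1,\dots,\chi_n\in\hat G$, and note that $\mathcal N$ is closed under finite intersections. For such an $N$, the map $(\chi_1,\dots,\chi_n):G\to (S^1)^n$ descends to a continuous injective homomorphism $G/N\hookrightarrow (S^1)^n$, which is a topological embedding because $G/N$ is compact and $(S^1)^n$ is Hausdorff. Its image is therefore a closed subgroup of the torus $(S^1)^n$, and by the structure theorem for compact abelian Lie groups (Theorem \ref{structureLieGroups}) every such subgroup is itself a compact abelian Lie group; hence $G/N$ is a Lie group for every $N\in\mathcal N$.

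Next I would verify that $\bigcap_{N\in\mathcal N} N=\{e\}$: if $g\neq e$, then character separation yields some $\chi\in\hat G$ with $\chi(g)\neq 1$, so $g\notin\ker\chi\in\mathcal N$. Finally, to locate an $N\in\mathcal N$ inside $U$ I would argue by compactness. Suppose for contradiction that no $N\in\mathcal N$ is contained in $U$. Then each $N\cap(G\setminus U)$ is a nonempty closed subset of the compact group $G$, and the family $\{N\cap(G\setminus U):N\in\mathcal N\}$ has the finite intersection property, since $\mathcal N$ is closed under finite intersections and $(N_1\cap N_2)\cap(G\setminus U)\subseteq \bigl(N_1\cap(G\setminus U)\bigr)\cap\bigl(N_2\cap(G\setminus U)\bigr)$. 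By compactness of $G$ the total intersection would then be nonempty, contradicting
\[
\Bigl(\bigcap_{N\in\mathcal N} N\Bigr)\cap(G\setminus U)=\{e\}\cap(G\setminus U)=\emptyset,
\]
since $e\in U$. This yields an $N\in\mathcal N$ with $N\subseteq U$ and $G/N$ a Lie group, as required. The only substantive obstacle is the Pontryagin-duality step supplying enough characters; once point-separation is granted, the rest is a routine filter-base compactness argument combined with the elementary fact that closed subgroups of tori are Lie.
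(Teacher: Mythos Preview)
Your proof is correct. The paper itself does not prove this theorem: it is simply quoted as \cite[Corollary 8.18]{HM} and used as a black box, so there is no argument in the paper to compare against. Your approach via Pontryagin duality and a compactness/finite-intersection-property argument is the standard one and is entirely sound. One small expository point: the inclusion you wrote, $(N_1\cap N_2)\cap(G\setminus U)\subseteq (N_1\cap(G\setminus U))\cap(N_2\cap(G\setminus U))$, is actually an equality and is not quite the reason the family has the finite intersection property; the real reason is that $N_1\cap\cdots\cap N_k$ again lies in $\mathcal N$ and hence, under the contradiction hypothesis, is not contained in $U$. But this is cosmetic---the argument as a whole is fine.
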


From the above we conclude
\begin{prop}\label{connilabel:prop}
	If $G$ is a compact metric connected $k$-step nilpotent group. Then $G$ is abelian
\end{prop}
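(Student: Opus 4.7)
The plan is to approximate $G$ by Lie-group quotients, reduce to proving that a connected compact nilpotent Lie group is abelian, and then deduce that $\overline{[G,G]}$ must be trivial because it is contained in every neighborhood of the identity.

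First I would invoke a Gleason--Yamabe type approximation (the natural generalization of Theorem \ref{GY:thm} to compact groups, which holds without the abelian hypothesis): for every open neighborhood $U$ of the identity in $G$ there is a closed normal subgroup $N\subseteq U$ such that $G/N$ is a Lie group. Because $G$ is connected, so is $G/N$; because $G$ is $k$-step nilpotent, so is $G/N$. Hence it suffices to prove the proposition for connected compact Lie groups.

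The key step is to show that every connected compact nilpotent Lie group $H$ is abelian. For this I would pass to the Lie algebra $\mathfrak{h}$: since $H$ is nilpotent, so is $\mathfrak{h}$, and since $H$ is compact, $\mathfrak{h}$ carries an $\mathrm{Ad}$-invariant inner product and is therefore reductive, decomposing as $\mathfrak{h}=\mathfrak{z}(\mathfrak{h})\oplus [\mathfrak{h},\mathfrak{h}]$ with $[\mathfrak{h},\mathfrak{h}]$ semisimple. A non-zero semisimple Lie algebra has trivial center and so cannot be an ideal in a nilpotent Lie algebra; thus $[\mathfrak{h},\mathfrak{h}]=0$, $\mathfrak{h}$ is abelian, and $H$ is a torus.

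Applying this to each quotient $G/N$ shows $[G,G]\subseteq N\subseteq U$. Since $U$ was an arbitrary open neighborhood of the identity and $G$ is Hausdorff, we conclude $\overline{[G,G]}=\bigcap_U N=\{1\}$, so $G$ itself is abelian. The main obstacle is the Lie-algebra step, namely checking that a compact connected nilpotent Lie group must be a torus; the rest is a routine combination of the approximation theorem with the Hausdorff separation of $G$.
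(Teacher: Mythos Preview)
Your proposal is correct and follows essentially the same route as the paper: approximate $G$ by Lie quotients via Gleason--Yamabe, use that a compact connected nilpotent Lie group is a torus, and conclude $[G,G]\subseteq N$ for every such $N$, hence $[G,G]=\{1\}$. If anything, your version is more complete: you explicitly note that one needs the non-abelian Gleason--Yamabe theorem (the paper's Theorem~\ref{GY:thm} is stated only for abelian $G$), and you supply the Lie-algebra argument for why a compact connected nilpotent Lie group is a torus, which the paper simply asserts.
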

\begin{proof}
Let $\{U_i\}$ be a countable neighborhood of $e$ with $\bigcap_{i\in\mathbb{N}} U_i = \{e\}$. For each choose a group $N_i$ as in the theorem above. The group $G/N_i$ is a compact connected $k$-step nilpotent Lie group and therefore is a Torus. We conclude that $[G,G]\subseteq N_i$ for all $i$ and so is trivial. Hence $G$ abelian.
\end{proof}
\section{Some results about phase polynomials}
\begin{prop}  [Values of phase polynomial cocycles]\label{PPC} Let $X$ be an ergodic $G$-system, let $d\geq 0$ and $q:G\times X\rightarrow S^1$ be a phase polynomial of degree $<d$ that is also a cocycle. Then for $g\in G$, $q(g,\cdot)$ takes values in $C_m$ where $m$ is the order of $g$ to the power of $d$.
\end{prop}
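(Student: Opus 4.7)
The plan is to prove the claim by induction on $d$, writing $\phi := q(g,\cdot)$ and $m$ for the order of $g$. The base case $d=1$ is immediate: $\phi$ is a phase polynomial of degree $<1$, so by ergodicity it equals some constant $c$; applying the cocycle identity iteratively gives $1 = q(0,\cdot) = q(mg,\cdot) = \prod_{k=0}^{m-1} V_g^k \phi = c^m$, hence $c\in C_m = C_{m^1}$.

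For the inductive step I want to use two tools. First, a multiplicative Newton/Taylor expansion: a straightforward induction on $k$, starting from the identity $V_g\phi = \phi\cdot\Delta_g\phi$, gives
\[
V_g^k\phi \;=\; \prod_{j=0}^{d-1}(\Delta_g^j\phi)^{\binom{k}{j}},
\]
where the product truncates at $d-1$ since $\phi$ has degree $<d$. Multiplying over $k=0,\dots,m-1$ and using the hockey-stick identity $\sum_{k=0}^{m-1}\binom{k}{j}=\binom{m}{j+1}$ together with $\prod_k V_g^k\phi = q(mg,\cdot) = 1$ yields
\[
\phi^m \;=\; \prod_{j=1}^{d-1}(\Delta_g^j\phi)^{-\binom{m}{j+1}}.
\]

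Second, I want to control the orders of the $\Delta_g^j\phi$. The key observation is that, because $G$ is abelian, for every $j\ge 0$ the function $\Delta_g^j q$ (derivatives taken in the $x$-variable in direction $T_g$) is again a cocycle in its $G$-argument; this is a direct verification from the cocycle identity, done inductively on $j$. Moreover $\Delta_g^j q(h,\cdot)$ is a phase polynomial of degree $<d-j$ in $x$, and evaluation at $h=g$ gives $\Delta_g^j q(g,\cdot) = \Delta_g^j\phi$. Applying the inductive hypothesis to the cocycle-phase polynomial $\Delta_g^j q$ of degree $<d-j$ at the element $g$ of order $m$ therefore yields
\[
(\Delta_g^j\phi)^{m^{d-j}} \;=\; 1 \qquad (1\le j\le d-1).
\]

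Raising the displayed expression for $\phi^m$ to the $m^{d-1}$-th power gives
\[
\phi^{m^d} \;=\; \prod_{j=1}^{d-1}(\Delta_g^j\phi)^{-m^{d-1}\binom{m}{j+1}},
\]
and each factor is trivial because $m^{d-j}\mid m^{d-1}\binom{m}{j+1}$ for every $1\le j\le d-1$ (the power of $m$ alone already suffices). This finishes the induction. The main obstacle is really just the bookkeeping: checking that the iterated derivative $\Delta_g^j q$ is still a cocycle so that the inductive hypothesis applies to it, and verifying the multiplicative Newton expansion — after which the divisibility $m^{d-j}\mid m^{d-1}\binom{m}{j+1}$ makes everything collapse.
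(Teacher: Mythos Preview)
Your proof is correct and follows essentially the same inductive strategy as the paper: both use the identity $\prod_{k=0}^{m-1} q(g,T_g^k x)=1$ coming from $mg=0$, together with the fact that $\Delta_h q$ remains a cocycle (in the $G$-variable) of one lower degree. The paper's version is a bit leaner --- it simply writes $q(g,T_{kg}x)=q(g,x)\cdot\Delta_{kg}q(g,x)$ and applies the induction hypothesis once to get $q(g,x)^m\in C_{m^{d-1}}$, bypassing the full Newton expansion and the separate handling of each $\Delta_g^j\phi$.
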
 
\begin{proof}
	We prove by induction on $d$, for $d=0$ the claim is trivial. 
	Assuming for smaller values of $d$, let $q:G\times X\rightarrow S^1$ be a phase polynomial of degree $<d$ and fix $g\in G$ of order $n$, then the cocycle identity implies that $$1=q(ng,x)=\prod_{k=0}^{n-1}q(g,T_{kg}x)$$
	We write $q(g,T_{kg}x)=q(g,x)\cdot \Delta_{kg} q(g,x)$ we have that $q(g,x)^n \cdot \prod_{k=0}^{n-1}\Delta_{kg} q(g,x)=1$. By induction hypothesis we have that $\prod_{k=0}^{n-1}\Delta_{kg} q(g,x)$ is an element of $C_{n^{d-1}}$, hence $q(g,x)\in C_{n^d}$. This completes the proof.
\end{proof}

We need the following version of \cite[Lemma B.5 (i)]{Berg& tao & ziegler}
\begin{lem}  [Vertical derivatives of phase polynomials are phase polynomials of smaller degree]\label{vdif:lem}
	Let $X$ be an ergodic $G$-system. Let $U$ be a compact abelian group acting freely on $X$ which commutes with the action of $G$. Let $P:X\rightarrow S^1$ be a phase polynomial of degree $<d$ for some integer $d\geq 1$. Then $\Delta_u P$ is a phase polynomial of degree $<d-1$ for every $u\in U$.
\end{lem}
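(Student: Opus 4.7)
The plan is to commute the vertical derivative $\Delta_u$ past horizontal derivatives and then reduce to a one-line ergodicity argument. The crucial input from the hypotheses is that $V_u$ commutes with every $T_h$, so derivatives taken in the vertical and horizontal directions commute.

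First I would establish the commutation. For any measurable $f:X\to S^1$, $h\in G$ and $u\in U$,
$$\Delta_h(V_u f)(x)=\frac{f(T_h V_u x)}{f(V_u x)}=\frac{f(V_u T_h x)}{f(V_u x)}=V_u(\Delta_h f)(x),$$
and dividing by $\Delta_h f$ yields $\Delta_h\Delta_u f=\Delta_u\Delta_h f$. Iterating, for any $h_1,\dots,h_{d-1}\in G$ one has
$$\Delta_{h_1}\cdots\Delta_{h_{d-1}}\Delta_u P=\Delta_u\bigl(\Delta_{h_1}\cdots\Delta_{h_{d-1}}P\bigr).$$

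Next I would set $Q:=\Delta_{h_1}\cdots\Delta_{h_{d-1}}P$ and use the polynomial hypothesis on $P$: since $P\in P_{<d}(X,S^1)$, for every $h\in G$ we have $\Delta_h Q=\Delta_h\Delta_{h_1}\cdots\Delta_{h_{d-1}}P=1$ $\mu$-a.e. Thus $Q$ is $G$-invariant, and by ergodicity of $X$ it is $\mu$-a.e.\ equal to a constant $c\in S^1$. Consequently $\Delta_u Q=(V_u c)/c=1$ $\mu$-a.e.

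Combining the two steps, $\Delta_{h_1}\cdots\Delta_{h_{d-1}}\Delta_u P=1$ $\mu$-a.e.\ for every choice of $h_1,\dots,h_{d-1}\in G$, which by definition means $\Delta_u P\in P_{<d-1}(X,S^1)$, as claimed. There is no serious obstacle in this argument; the only point requiring the standing hypothesis is the commutation $V_uT_h=T_hV_u$, which is exactly what the assumption that the $U$-action commutes with the $G$-action provides. (Note that freeness of the $U$-action is not actually needed for this statement.)
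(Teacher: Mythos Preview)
Your proof is correct and uses the same two ingredients as the paper's proof: the commutation $\Delta_h\Delta_u=\Delta_u\Delta_h$ coming from $V_uT_h=T_hV_u$, and ergodicity to kill the bottom. The only difference is presentational: the paper packages the argument as an induction on $d$ (base case $d=1$ via ergodicity, inductive step via commutation and the polynomiality criterion), whereas you unroll the induction and commute $\Delta_u$ past all $d-1$ horizontal derivatives at once. Your observation that freeness of the $U$-action is not used here is also accurate.
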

\begin{proof} 
	By induction on $d$, for $d=1$ ergodicity implies that  $P$ is a constant and so $\Delta_u P =1$. Let $d\geq 2$ and assume inductively that the claim is true for $d-1$. Given a phase polynomial $P:X\rightarrow S^1$ of degree $<d$, we have that $\Delta P : G\times X\rightarrow S^1$ is a phase polynomial of degree $<d-1$. By the induction hypothesis we conclude that $\Delta_u \Delta P$ is a phase polynomial of degree $<d-2$. As the action of $U$ commutes with the action of $G$ we have that $\Delta \Delta_u P$ is a phase polynomial of degree $<d-2$. It follows that $\Delta_u P$ is a phase polynomial of degree $<d-1$ as desired.
\end{proof}
Proposition \ref{PPC} and Lemma \ref{vdif:lem} implies 
\begin{cor} \label{ker:cor} 
	Let $X$ be an ergodic $G$-system, let $U$ be a compact abelian group acting freely on $X$ and commuting with the action of $G$. Suppose that there exists a measurable map $u\mapsto f_u$ from $U$ to $P_{<d}(X,S^1)$ which satisfies the cocycle identity (i.e. $f_{uv}=f_u V_u f_v$) for all $u,v\in U$. Then there exists an open subgroup $V$ of $U$ such that $f_v\in P_{<1}(X,S^1)$ for all $v\in V$.
\end{cor}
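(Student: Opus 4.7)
The plan is to proceed by induction on $d$. The base case $d=1$ is trivial: the map already lands in $P_{<1}(X, S^1)$, so we may take $V = U$.

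For the inductive step, assume the statement for $d-1$. The key observation is that $u \mapsto f_u$ descends to a homomorphism from $U$ into the quotient group $P_{<d}(X, S^1)/P_{<d-1}(X, S^1)$. By Lemma \ref{vdif:lem}, $V_u f_v / f_v = \Delta_u f_v$ lies in $P_{<d-1}(X, S^1)$ for every $u, v \in U$; therefore the cocycle identity $f_{uv} = f_u \cdot V_u f_v$ gives $f_{uv} \equiv f_u \cdot f_v$ modulo $P_{<d-1}(X, S^1)$. Moreover this quotient group is at most countable: the separation lemma (Lemma \ref{sep:lem}) combined with separability of $L^2(X)$ shows that $P_{<d}(X, S^1)/S^1$ has at most countably many elements, and $P_{<d}/P_{<d-1}$ is a further quotient of this.

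Consequently, the kernel $K := \{u \in U : f_u \in P_{<d-1}(X, S^1)\}$ is a Borel subgroup of countable index in the compact polish group $U$, and is therefore open by Proposition \ref{countableindex}. Being an open subgroup of a compact group, $K$ is itself compact, still acts freely on $X$, and still commutes with the $G$-action. The restriction of $u \mapsto f_u$ to $K$ is then a measurable cocycle taking values in $P_{<d-1}(X, S^1)$, so the induction hypothesis supplies an open subgroup $V$ of $K$ on which $f_v \in P_{<1}(X, S^1)$. Openness of $K$ in $U$ implies openness of $V$ in $U$, completing the induction.

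The crucial point to get right is the choice of quotient: one might first attempt to form a homomorphism $u \mapsto f_u \cdot S^1 \in P_{<d}/S^1$, but this fails in general because $V_u f_v$ need not agree with $f_v$ up to a constant. Lemma \ref{vdif:lem} is exactly what tells us the correct quotient is by $P_{<d-1}$, turning the cocycle identity into a genuine homomorphism and enabling the induction to reduce the polynomial degree by one at each stage.
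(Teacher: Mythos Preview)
Your proof is correct and follows essentially the same route as the paper's: induction on $d$, using Lemma \ref{vdif:lem} to turn the cocycle identity into a homomorphism $U\to P_{<d}/P_{<d-1}$, invoking the Separation Lemma for countability of the target, and then Proposition \ref{countableindex} (the paper cites its Corollary \ref{openker}) to get openness of the kernel before applying the inductive hypothesis. The final paragraph explaining why quotienting by $P_{<d-1}$ rather than by $S^1$ is the right move is a nice pedagogical addition not present in the paper.
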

\begin{proof}
	We prove by induction on $d$, for $d=1$ we can take $V=U$. Let $d>1$ and assume the claim is true for smaller values of $d$. Let $u\mapsto f_u$ be a map from $U$ to $P_{<d}(X,S^1)$. The cocycle identity implies that $f_{uv}=f_u f_v \cdot \Delta_u f_v$. Applying Lemma \ref{vdif:lem} we have that $\Delta_u f_v\in P_{<d-1}(X,S^1)$ and so after quotienting out $P_{<d-1}(X,S^1)$ we have that the map $U\rightarrow P_{<d}(X,S^1)/P_{<d-1}(X,S^1)$ sending $u$ to the equivalent class of $f_u$ is a homomorphism. Since $d>1$, Lemma \ref{sep:lem} (Separation Lemma) implies that $P_{<d-1}(X,S^1)$ has at most countable index in $P_{<d}(X,S^1)$. Corollary \ref{openker} implies that the kernel, $U'$ is an open subgroup. \\
	We conclude that $f_{u'}\in P_{<d-1}(X,S^1)$ for all $u'\in U'$, and so the induction hypothesis implies that there exists an open subgroup $V$ of $U'$ such that $f_v\in P_{<1}(X,S^1)$ for all $v\in V$. As $V$ is open in $U'$ and $U'$ is open in $U$ we have that $V$ is open in $U$.
\end{proof}
we also need the following Lemma from \cite{Berg& tao & ziegler}
\begin{lem}   [Composition of polynomials is again polynomial] \cite[Lemma B.5 (iii)]{Berg& tao & ziegler}\label{B.5} Let $U,V$ be two abelian groups and $X=Y\times_{\rho} U$ be an ergodic extension of a $G$-system $Y$, by a phase polynomial cocycle $\rho:G\times Y\rightarrow U$ of degree $<k$ for some $k\geq 1$. Then, If $p:X\rightarrow V$ is a phase polynomial of degree $<d$, and $v_1:X\rightarrow U,v_2:X\rightarrow U,...,v_j:X\rightarrow U$ are a collection of phase polynomials of degree $<d_1,<d_2,...,<d_j$ and $s:X\rightarrow U$ is a phase polynomial of degree $<d'$. Then the function $P:X\rightarrow V$ given by the formula
	$$p(y,u)=(\Delta_{v_1(y,u)}...\Delta_{v_j(y,u)}p)(y,s(y,u))$$
	is a phase polynomial of degree 		 $<O_{d,j,d1,...,dj,d',k}(1)$.
\end{lem}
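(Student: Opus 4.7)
I would proceed by induction on $d$, the degree of $p$. The base case $d=1$ is immediate: by ergodicity of $X$ the function $p$ is constant, so every vertical derivative $\Delta_v p$ is trivial and hence $P\equiv 1$, a phase polynomial of degree $<1$.

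For the inductive step, the first move is to unpack the iterated vertical derivatives explicitly using the identity
$$\Delta_{v_1}\cdots\Delta_{v_j}p(x)=\prod_{\epsilon\in\{0,1\}^j} p\bigl(V_{v_1}^{\epsilon_1}\cdots V_{v_j}^{\epsilon_j}x\bigr)^{(-1)^{|\epsilon|}}.$$
Evaluating at $x=(y,s(y,u))$ writes $P(y,u)$ as a finite product of factors of the form $p(y,W_\epsilon(y,u))^{\pm 1}$, where each $W_\epsilon:= v_1^{\epsilon_1}\cdots v_j^{\epsilon_j} s$ is a pointwise product of $U$-valued phase polynomials and therefore itself a $U$-valued phase polynomial of degree bounded in $d_1,\ldots,d_j,d'$ (using that $P_{<n}(X,U)$ is a group under pointwise multiplication). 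Since the group of bounded-degree polynomials is closed under products, it suffices to show that whenever $W:X\to U$ is a phase polynomial of some bounded degree $<e$, the composition $q(y,u):=p(y,W(y,u))$ is a phase polynomial of degree $<O_{d,e,k}(1)$.

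To prove this reduced statement I apply the polynomiality criterion: it is enough to show $\Delta_g q$ is a polynomial of one degree less for every $g\in G$. Using the cocycle relation $p(T_g y,\rho(g,y)x)=p(y,x)\,\Delta_g p(y,x)$ with $x=\rho(g,y)^{-1} T_g W(y,u)$, a direct computation gives
$$\Delta_g q(y,u)=\frac{p(y,A(y,u))}{p(y,W(y,u))}\cdot \Delta_g p(y,A(y,u)),$$
where $A(y,u):=\rho(g,y)^{-1}W(y,u)\Delta_g W(y,u)$ is a $U$-valued polynomial of degree $<O_{e,k}(1)$ (as the product of $\rho(g,\cdot)^{-1}$ of degree $<k$, $W$ of degree $<e$, and $\Delta_g W$ of degree $<e-1$). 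The second factor is $\Delta_g p$ (of degree $<d-1$ by the definition of $P_{<d}$) composed with the substitution $(y,u)\mapsto(y,A(y,u))$; the inductive hypothesis applied to $\Delta_g p$ in place of $p$ shows this factor is a phase polynomial of bounded degree.

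The main obstacle is the first factor $p(y,W\cdot B)/p(y,W)$ with $B:=\rho(g,\cdot)^{-1}\Delta_g W$: for each \emph{constant} $b\in U$ this would equal $(\Delta^V_b p)(y,W(y,u))$, which since $\Delta^V_b p$ has degree $<d-1$ (Lemma \ref{vdif:lem}) would be polynomial of bounded degree by the inductive hypothesis; but here $B$ is point-dependent, so this ratio is precisely the $j=1$ instance of the lemma for the same polynomial $p$ of degree $<d$. The plan is to strengthen the induction into a joint induction on $(d,j)$, where for fixed $d$ all values of $j$ are handled simultaneously: each additional application of $\Delta_h$ to such a ratio either produces a $\Delta_h p$-type factor eligible for the inductive hypothesis on $d$, or produces further ratio factors whose underlying iterated vertical derivatives $\Delta_{v_1}^V\cdots\Delta_{v_j}^V p$ eventually vanish because this derivative has degree $<d-j$ and so is trivial once $j\geq d$. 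Tracking how the degrees of the intermediate arguments $A,B$ grow (they remain bounded in $d,e,k$) is the most delicate bookkeeping, but it yields an absolute bound $N(d,e,k)$ on the degree of $q$ and consequently on the degree of $P$ in terms of $d,j,d_1,\ldots,d_j,d',k$.
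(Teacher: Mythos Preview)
The paper does not give its own proof of this lemma; it is quoted verbatim from \cite[Lemma B.5(iii)]{Berg& tao & ziegler} and used as a black box, so there is nothing in the present paper to compare your argument against.

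Your plan is on the right track and is essentially the standard route. Two remarks. First, your opening reduction---expanding $\Delta_{v_1}\cdots\Delta_{v_j}p$ as an alternating product and reducing to the pure-substitution statement $q(y,u)=p(y,W(y,u))$---is fine, and the computation of $\Delta_g q$ is correct. Second, the place that needs sharpening is the induction scheme in the last paragraph. After reducing to $j=0$ you rediscover a $j=1$ instance (the ratio factor), so a naive induction on $d$ alone loops. The clean fix is not really a ``joint'' induction but a \emph{nested} one: outer induction on $d$, and for fixed $d$ a downward induction on $j$ from $j=d$ to $j=0$. The base $j=d$ is genuine: for each fixed $x$ the elements $v_1(x),\dots,v_d(x)\in U$ are constants, so by iterating Lemma~\ref{vdif:lem} the function $\Delta_{v_1(x)}\cdots\Delta_{v_d(x)}p$ lies in $P_{<0}(X,S^1)=\{1\}$, hence $P\equiv 1$. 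For the downward step $j+1\Rightarrow j$, differentiating the $j$-instance by some $g\in G$ produces factors that either involve $\Delta_g p$ (degree $<d-1$, handled by the outer hypothesis) or are $(j{+}1)$-instances (handled by the inner hypothesis), with all intermediate $U$-valued arguments remaining phase polynomials of degree bounded in terms of $e,k$. Tracking those bounds is exactly the ``delicate bookkeeping'' you mention, and once the nested induction is set up this way it closes without circularity.
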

\begin{prop} [Phase polynomial are invariant under connected components] \label{pinv:prop} (see \cite[Lemma 2.1]{BTZ})  Let $X$ be an ergodic $G$-system of order $<k$, let $U$ be a compact abelian connected group acting freely on $X$ (not necessarily commuting with the $G$-action). Let $P:G\times X\rightarrow S^1$ be a phase polynomial of degree $<d$ such that for every $g\in G$ there exists $M_g\in\mathbb{N}$ such that $P(g,\cdot)$ takes at most $M_g$ values. (e.g $P$ is a phase polynomial cocycle). Then $P$ is invariant under the action of $U$.
\end{prop}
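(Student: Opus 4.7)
Plan: I would proceed by induction on $d$. The base case $d=1$ is immediate: a phase polynomial of degree $<1$ satisfies $\Delta_h P(g,\cdot)=1$ for all $h\in G$, so $P(g,\cdot)$ is $G$-invariant and hence constant by the ergodicity of $X$, which is trivially $U$-invariant.

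For the inductive step, assuming the proposition for degree $<d-1$, fix $g\in G$ and $u\in U$, and consider the vertical derivative $\Delta_u P(g,\cdot)(x)=P(g,ux)/P(g,x)$. Since $P(g,\cdot)$ takes at most $M_g$ values in a finite set $S_g\subset S^1$, the derivative $\Delta_u P(g,\cdot)$ takes at most $M_g^2$ values in $S_g\cdot S_g^{-1}$. By Lemma \ref{vdif:lem}, $\Delta_u P(g,\cdot)$ is a phase polynomial of degree $<d-1$, so the inductive hypothesis (applied to the function $(g,x)\mapsto\Delta_u P(g,x)$, which again has bounded finite image for each $g$) yields that $\Delta_u P(g,\cdot)$ is $U$-invariant for every $u\in U$.

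To conclude, use the freeness of the $U$-action to identify $X\cong Y\times U$ with $U$ acting by translation on the second coordinate. The $U$-invariance of $\Delta_u P(g,\cdot)$ says that $P(g,y,uv)/P(g,y,v)$ is independent of $v$; call this quantity $\phi_u(y)$. Setting $v=1_U$ gives $\phi_u(y)=P(g,y,u)/P(g,y,1_U)$, and a direct verification using the $v$-independence shows that $u\mapsto\phi_u(y)$ is a measurable group homomorphism $U\to S^1$. By automatic continuity (Lemma \ref{AC:lem}) it is a continuous character $\chi_y$ of $U$, whose image lies in $S_g\cdot S_g^{-1}$, a finite set. Since $U$ is compact and connected, $\chi_y(U)$ is a compact connected subgroup of $S^1$ contained in a finite set, hence is trivial. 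Therefore $P(g,y,u)=P(g,y,1_U)$ for a.e.\ $(y,u)\in Y\times U$, i.e.\ $P(g,\cdot)$ is $U$-invariant.

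The main obstacle is the inductive step, specifically the application of Lemma \ref{vdif:lem}, which genuinely requires the $U$-action to commute with the $G$-action. In all applications of this proposition in the paper (for instance within Theorem \ref{Mainr:thm}), $U$ consists of automorphisms in the sense of Definition \ref{Aut:def}, so this commutation is automatic and the above argument goes through.
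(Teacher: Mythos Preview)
Your proof is correct (under the commutation assumption you identify) but follows a genuinely different route from the paper. The paper does not induct on $d$; instead it argues directly that for $u$ close to the identity in $U$, continuity of the $U$-action gives $\Delta_u P(g,\cdot)\to 1$ in measure, and then the Separation Lemma (Lemma~\ref{sep:lem}) forces $\Delta_u P(g,\cdot)$ to be constant, since a non-constant phase polynomial is uniformly bounded away from any constant. The set of $u$ for which $\Delta_u P(g,\cdot)$ is constant is then an open subgroup by the cocycle identity, hence all of $U$ by connectedness, yielding the character $u\mapsto\chi(u)$ in one stroke. Your route---induction on $d$ via Lemma~\ref{vdif:lem}, followed by the explicit $Y\times U$ decomposition from freeness---is more elementary in that it avoids the Separation Lemma, at the cost of a longer argument and an extra a.e.-in-$y$ layer. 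Both approaches converge on the same endgame: a character of the connected group $U$ with finite image must be trivial.

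Your closing caveat is well taken and applies equally to the paper's proof: the claim that $u\mapsto\Delta_u P(g,\cdot)$ lands in $P_{<d}(X,S^1)$, needed to invoke Lemma~\ref{sep:lem}, already requires that $V_u$ commute with the $G$-derivatives, so the parenthetical ``not necessarily commuting'' in the statement seems to be an overclaim in either version.
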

\begin{proof}
	Fix $g\in G$ and consider the map $u\mapsto \Delta_u P(g,\cdot)$. This is a measurable map from $U$ to $P_{<d}(X,S^1)$, we have that $\Delta_u P$ converges in measure to the constant $1$ as $u$ converges to the identity in $U$. From Lemma \ref{sep:lem} we conclude that $\Delta_u P$ must be almost everywhere constant for $u$ close to the identity. From the cocycle identity we have that the subset $U' = \{u\in U : \Delta_u P \text{ is a constant}\}$ is an open subgroup of $U$. As $U$ is connected we conclude that $U'=U$ and so $\Delta_u P(g,\cdot) = \chi(u)$ for a character $\chi:U\rightarrow S^1$. Now, since $U$ is connected and $\chi$ is continuous we have that the image of $\chi$ is either trivial or is $S^1$. But, the latter contradicts the assumption that $P(g,\cdot)$ takes finitely many values, it follows that $\Delta_u P(g,\cdot)=1$ for every $u\in U$. In other words, $P$ is invariant under the action of $U$.
\end{proof}
\begin{rem}
	The group $S^1$ in the Proposition can be replaced by any compact abelian group using Pontryagin duality. If $P:G\times X\rightarrow V$ is a phase polynomial for some compact abelian group $V$, then for every $\chi\in\hat V$ we have that $\chi\circ P:G\times X\rightarrow S^1$ is a phase polynomial of the same degree. By applying Proposition \ref{pinv:prop} we have that $\chi(\Delta_uP)=1$ for every $u\in U$. As the characters separates points this would imply that $\Delta_u P = 1$, hence $P$ is invariant under $U$.
\end{rem}
\begin{prop} [Phase polynomials on totally disconnected systems take finitely many values]\label{TDPV:prop} 
	Let $X$ be an ergodic totally disconnected $G$-system of order $<k$ (see Definition \ref{TD:def}). Let $P:X\rightarrow S^1$ be a phase polynomial of degree $<d$, then up to a constant multiple, $P$ takes values in a finite subgroup of $S^1$.
\end{prop}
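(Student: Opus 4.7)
The plan is to proceed by a double induction: an outer induction on the order $k$ of the system and, for each fixed $k$, an inner induction on the degree $d$ of the polynomial. The case $k=1$ is vacuous ($X$ is a point) and the case $d=1$ is immediate from ergodicity. For the inductive step I would write $X = Z_{<k-1}(X)\times_\sigma U_{k-1}$ with $U_{k-1}$ totally disconnected and exploit the free vertical action of $U_{k-1}$ on $X$ to force $P$ to be invariant under a large subgroup.

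More precisely, given $P\in P_{<d}(X,S^1)$ with $d\geq 2$, Lemma \ref{vdif:lem} shows that the map $u\mapsto \Delta_u P$ is a cocycle from $U_{k-1}$ into $P_{<d-1}(X,S^1)$. Applying Corollary \ref{ker:cor} produces an open subgroup $V\leq U_{k-1}$ with $\Delta_v P\in P_{<1}(X,S^1)$ for all $v\in V$; by ergodicity this common value is a constant $\chi(v)$, giving a measurable, hence continuous (Lemma \ref{AC:lem}), character $\chi:V\to S^1$. Since $V$ is compact and totally disconnected, Corollary \ref{chartdg} forces $\chi(V)$ to be finite, so $N:=\ker\chi$ is an open (finite-index) subgroup of $U_{k-1}$ under which $P$ is invariant. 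Thus $P$ descends to a phase polynomial $\tilde P$ of degree $<d$ on the factor $Y:=X/N = Z_{<k-1}(X)\times_{\bar\sigma} F$, where $F:=U_{k-1}/N$ is a finite group and $Y$ is still totally disconnected of order $<k$.

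To finish I would extract the values of $\tilde P$ from two independently controllable pieces. Set $Q(z):=\tilde P(z,e)$ and $S_f(z):=\tilde P(z,f)/Q(z)=(\Delta_f\tilde P)(z,e)$ for $f\in F$. Each $\Delta_f\tilde P$ is a phase polynomial of degree $<d-1$ on $Y$, so by the inner induction on $d$ the values of $S_f$ lie in a finite coset of a finite subgroup of $S^1$. On the other hand the product $R:=\prod_{f\in F}V_f\tilde P$ is a phase polynomial of degree $<d$ on $Y$ which is $F$-invariant by reindexing, hence descends to a phase polynomial $R_0$ of degree $<d$ on $Z_{<k-1}(X)$; the outer induction on $k$ then places the values of $R_0$ in a finite coset of a finite subgroup. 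Evaluating at $u=e$ gives the identity $R_0(z) = Q(z)^{|F|}\prod_f S_f(z)$, which forces $Q^{|F|}$, and therefore $Q$ itself (taking $|F|$-th roots only enlarges the ambient finite subgroup by a copy of $C_{|F|}$), to take values in a finite coset of a finite subgroup. The identity $\tilde P(z,f) = Q(z)S_f(z)$ then pins $\tilde P$, and hence $P$ after pullback, inside a finite coset of a finite subgroup of $S^1$.

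The main technical obstacle is the bookkeeping around ``up to a constant multiple''---one must ensure that the ambient subgroup of $S^1$ stays genuinely finite at each step rather than merely finitely generated or dense. This is precisely where the totally disconnected hypothesis pays off repeatedly: at each induction step the new constants arise from continuous characters of compact totally disconnected groups, which by Corollary \ref{chartdg} take values in roots of unity, so all the finite cosets and subgroups that accumulate along the way assemble into a single finite subgroup of $S^1$.
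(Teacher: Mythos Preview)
Your argument is essentially correct and takes a genuinely different route from the paper's. The paper runs a single induction on $k$: after finding the open subgroup $U'$ on which $\Delta_u P=1$, it does \emph{not} pass to the finite quotient. Instead it observes that $U/U'$ has some exponent $m$, and by iterating the homomorphism $U\to P_{<d-j}/P_{<d-j-1}$ it shows directly that $\Delta_u P^{m^{d-1}}=1$ for every $u\in U$, so the single power $P^{m^{d-1}}$ descends to $Z_{<k-1}(X)$ and the induction on $k$ finishes immediately. Your double induction with the decomposition $\tilde P(z,f)=Q(z)S_f(z)$ and the averaged polynomial $R=\prod_f V_f\tilde P$ reaches the same conclusion but with more moving parts; the paper's ``take a large power'' trick is shorter and avoids the constant-tracking entirely.

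One genuine loose end in your write-up: your closing justification that ``the new constants arise from continuous characters of compact totally disconnected groups'' covers the constant $\chi(v)$ in the first reduction but does \emph{not} cover the constants $c_f$ attached to $\Delta_f\tilde P$ by the inner induction on $d$---those come from the induction hypothesis, not from any character. What you actually need is that each $c_f$ is a root of unity, and the clean reason is the telescoping identity: if $n_f$ is the order of $f\in F$ then $1=\prod_{j=0}^{n_f-1}V_{f^j}\Delta_f\tilde P$, and since every factor takes values in the same coset $c_fH_f$ this forces $c_f^{n_f}\in H_f$. With that one line the bookkeeping closes and your proof is complete.
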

\begin{proof}
	We induct on $k$, for $k=1$ the claim is trivial. Let $k\geq 2$ and assume the claim has already been proven for $k-1$. Let $X$ be as in the Proposition, then by Proposition \ref{abelext:prop} we can write $X=Z_{<k-1}(X)\times_\rho U$.\\
	Consider the map $u\mapsto \Delta_u P$. Clearly, $u\mapsto \Delta_u P$ satisfies the cocycle identity and so Corollary \ref{ker:cor} implies that there exists an open subgroup $V$ s.t $\Delta_u P\in P_{<1}(X,S^1)$ for every $v\in V$. Ergodicity implies that $\Delta_u P$ is a constant in $S^1$. The induced map $V\rightarrow S^1$ sending $v$ to the constant $\Delta_v P$ is a homomorphism. As $V$ is totally disconnected the kernel $U'$ is an open subgroup which satisfies that $\Delta_u P =1$ for every $u\in U'$.\\
	Now, let $u\in U$. As $U/U'$ is a finite group, there exists $m\in\mathbb{N}$ such that $u^m\in U'$. We follow the argument in the proof of Corollary \ref{ker:cor}, but instead of reducing to an open subgroup we take a power. As in Corollary \ref{ker:cor} we have that $U\rightarrow P_{<d-1}(X,S^1)/P_{<d-2}(X,S^1)$ sending $u$ to the equivalent class of $\Delta_u P$ is a homomorphism. As $\Delta_{u^m}P=1$ for all $u\in U$ we conclude that $\Delta_u P^m$ is a phase polynomial of degree $<d-2$. Iterating this process, we conclude that $\Delta_u P^{m^{d-1}}=1$ for all $u\in U$, in other words $P^{m^{d-1}}$ is invariant under $U$. Viewing $P^{m^{d-1}}$ as a phase polynomial of degree $<d$ on $Z_{<k-1}(X)$, and applying the induction hypothesis we see that up to constant multiplication $P^{m^{d-1}}$ takes values in some finite subgroup $H$ of $S^1$. Let $c$ be an $m^{d-1}$'th root of this constant, we have that $P/c$ takes values in the finite group $H$, as desired.
	
\end{proof}
As a Corollary we have,
\begin{thm} [Phase polynomials of degree $<d$ on totally disconnected systems take $<O_d(1)$ values on finitely many primes]\label{TDPV:thm}
	Let $X$ be an ergodic totally disconnected $G$-system of order $<k$ (see Definition \ref{TD:def}), let $F:X\rightarrow S^1$ be a phase polynomial of degree $<d$. Then up to constant multiplication, $F$ takes values in the group $C_m$ where $m=p_1^{l_1}\cdot...\cdot p_n^{l_n}$ for some $n\in\mathbb{N}$, distinct primes $p_1,...,p_n$ and $l_1,...,l_n=O_{d}(1)$.
\end{thm}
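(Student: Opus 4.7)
The plan is to refine Proposition \ref{TDPV:prop} by bounding each prime-power exponent $l_i$ in terms of $d$ alone. The strategy is to reduce the problem one prime at a time via the Chinese remainder theorem, and then combine Proposition \ref{PPC} with the observation that every element of $G=\bigoplus_{p\in P}\mathbb{F}_p$ has squarefree order.

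First I would invoke Proposition \ref{TDPV:prop} to obtain a constant $c\in S^1$ so that $F/c$ takes values in a finite cyclic subgroup $C_m$ of $S^1$. Writing $C_m\cong \prod_{i=1}^n C_{p_i^{l_i}}$ over distinct primes by the Chinese remainder theorem, let $F_i:X\to C_{p_i^{l_i}}$ be the coordinate projections of $F/c$. Each $F_i$ is again a phase polynomial of degree $<d$, because the projection $C_m\to C_{p_i^{l_i}}$ is a group homomorphism and hence commutes with the $\Delta$-operator. It therefore suffices to prove the following per-prime statement: for any prime $p$ and any phase polynomial $Q:X\to C_{p^l}$ of degree $<d$ on an ergodic $G$-system $X$, one has $l\leq d-1$.

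To prove this, observe that the derivative $\Delta Q:G\times X\to S^1$, $(g,x)\mapsto \Delta_g Q(x)$, is a cocycle in $g$ and belongs to $P_{<d-1}(G,X,S^1)$ (directly from the definition of phase polynomial of degree $<d$, since $\Delta_{h_1}\cdots\Delta_{h_{d-1}}\Delta_g Q=\Delta_g\Delta_{h_1}\cdots\Delta_{h_{d-1}}Q=\Delta_g 1=1$). Proposition \ref{PPC} applied to this phase polynomial cocycle yields, for each $g\in G$ of order $n$, that $\Delta_g Q(\cdot)$ takes values in $C_{n^{d-1}}$. The decisive point is that every element of $G=\bigoplus_{p\in P}\mathbb{F}_p$ has squarefree order, so the $p$-adic valuation of $n$ is at most $1$; combined with the inherited fact that $\Delta_g Q\in C_{p^l}$, this gives
\[
\Delta_g Q \in C_{n^{d-1}}\cap C_{p^l} \subseteq C_{p^{\min(d-1,l)}},
\]
and in particular $(\Delta_g Q)^{p^{d-1}}=1$ for every $g\in G$. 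Hence $Q^{p^{d-1}}$ is $G$-invariant, and ergodicity of $X$ forces it to equal some constant $\gamma\in S^1$. Selecting a $p^{d-1}$-th root $\gamma'$ of $\gamma$, the function $Q/\gamma'$ takes values in $C_{p^{d-1}}$, so up to multiplication by the single constant $\gamma'$, $Q$ lies in $C_{p^{d-1}}$.

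Applying this to each $F_i$ produces constants $\gamma_i'$ with $F_i/\gamma_i':X\to C_{p_i^{d-1}}$. Absorbing $c\cdot\prod_i\gamma_i'$ into one overall constant and using the isomorphism $\prod_i C_{p_i^{d-1}}\cong C_{\prod_i p_i^{d-1}}$ inside $S^1$ (Chinese remainder theorem over distinct primes), we conclude that up to a single constant multiplication, $F$ takes values in a cyclic group $C_{m'}$ with $l_i\leq d-1=O_d(1)$ for every $i$, as required. No step is technically difficult once Proposition \ref{TDPV:prop} is in hand; the entire improvement of the bound is driven by the squarefree-order observation on $G$ combined with Proposition \ref{PPC}, and this is the only new ingredient compared to the proof of Proposition \ref{TDPV:prop}.
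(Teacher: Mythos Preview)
Your proof is correct and follows essentially the same strategy as the paper: both start from Proposition \ref{TDPV:prop}, then combine Proposition \ref{PPC} with the fact that every element of $G$ has squarefree order to force the prime-power exponents down to $d-1$. The only cosmetic difference is that you decompose prime-by-prime via the Chinese remainder theorem, whereas the paper argues globally by first observing that only finitely many primes (those below $\alpha$) can occur and then killing $\Delta F$ with the single exponent $m=\prod p_i^{d-1}$; the underlying mechanism is identical. One small imprecision: in your justification that $\Delta_g Q\in P_{<d-1}$, the quantity $\Delta_{h_1}\cdots\Delta_{h_{d-1}}Q$ is not literally $1$ but a constant (being a phase polynomial of degree $<1$ on an ergodic system), so $\Delta_g$ of it vanishes for that reason.
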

\begin{proof}
	By Proposition \ref{TDPV:prop} we have that up to constant multiplication $F$ takes values in $C_\alpha$ for some finite $\alpha\in\mathbb{N}$. Let $q=\Delta F$, then $q$ is a phase polynomial of degree $<d-1$ which is also a cocycle and it takes values in $C_\alpha$. Let $n$ be such that $\alpha<p_n$ where $p_n$ is the $n$'th prime and write $G=G_n\oplus G'$ where $G_n=\bigoplus_{p\in P,p<p_n}\mathbb{F}_p$ and $G'$ it's complement. From Proposition \ref{PPC} and the fact that $\alpha<p_n$ we conclude that $q(g,\cdot)=1$ for all $g\in G'$.  Let $m=\prod_{i=1}^n p_i^{d-1}$, from Proposition \ref{PPC} we see that $q^m=1$ and so by ergodicity $F^m$ is constant. Let $c$ an $m$'th root for $\overline {F^m}$, we have that $c\cdot F$ takes values in $C_m$.
\end{proof}
In the next theorem we will generalize the results above to the case where $p\geq k$.
\begin{thm} \label{HTDPV:thm} 
	Let $X$ be an ergodic $G$-system, let $F:X\rightarrow S^1$ be a phase polynomial of degree $<k$, and let $p$ be a prime such that $p\geq k$. Suppose now that $F$ takes values in $C_{p^m}$ for some $m\in\mathbb{N}$, then up to constant multiplication $F$ takes values in $C_p$
\end{thm}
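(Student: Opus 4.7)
The plan is to prove this by induction on $k$, using that the hypothesis $p\ge k$ makes $p$ divide $\binom{p}{j}$ for $1\le j\le k-1$, which is the numerical lever that lets the bound on the torsion of $F$ drop to $p$.

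The base case $k=1$ is immediate: any phase polynomial of degree $<1$ is $G$-invariant, hence a constant in $C_{p^m}$ by ergodicity, which after multiplying by a constant lies in $C_p$. For the inductive step I would decompose $G = G_p \oplus G'$ where $G_p := \bigoplus_{p'=p}\mathbb{F}_{p}$ and $G'$ is the direct sum of the $\mathbb{F}_{p'}$ with $p'\ne p$. The first claim is that $F$ is $G'$-invariant: the cocycle $q := \Delta F$ is a phase-polynomial cocycle of degree $<k-1$ whose values lie in $C_{p^m}$, and for any $g\in G'$ of order $n$ Proposition \ref{PPC} forces $q(g,\cdot)\in C_{n^{k-1}}\cap C_{p^m}=\{1\}$, since $\gcd(n,p)=1$. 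In particular $F^p$ is $G'$-invariant.

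The core of the argument is to show $\Delta_g F^p=1$ for every $g\in G_p$. Since $G_p$ is $p$-torsion, $V_g^p F = F$, and the binomial identity (valid in $S^1$ since $\Delta_g^k F=1$) gives
\[
F \;=\; V_g^p F \;=\; \prod_{j=0}^{k-1}(\Delta_g^j F)^{\binom{p}{j}}.
\]
Because $p\ge k$, every exponent $\binom{p}{j}$ for $1\le j\le k-1$ is divisible by $p$, and by the inductive hypothesis applied to each $\Delta_g^j F$ (a phase polynomial of degree $<k-j<k$ taking values in $C_{p^m}$, with $p\ge k>k-j$), the quantity $\beta_j(g):=(\Delta_g^j F)^p$ is a \emph{constant} in $S^1$. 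Writing $\binom{p}{j}=p\,c_j$, applying the same expansion to $\Delta_g^{j_0-1}F$ in place of $F$ for each $1\le j_0\le k-1$, and using $c_1=1$, yields
\[
\beta_{j_0}(g) \;=\; \prod_{j=j_0+1}^{k-1} \beta_j(g)^{-c_{j-j_0+1}}.
\]

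Now a downward induction on $j_0$ from $k-1$ down to $1$ closes the argument: the base case $j_0=k-1$ has an empty product on the right, giving $\beta_{k-1}(g)=1$, and the inductive step propagates this to all $j_0\ge 1$. In particular $\beta_1(g)=(\Delta_g F)^p=1$, so $F^p$ is invariant under every $g\in G_p$. Combined with the $G'$-invariance established earlier, $F^p$ is $G$-invariant and therefore constant by ergodicity; any $p$-th root $c$ of $\overline{F^p}$ then satisfies $cF\in C_p$. The only non-routine step is the backward induction on $j_0$; everything else is the standard binomial identity in $S^1$ together with the key divisibility $p\mid\binom{p}{j}$, which is exactly where $p\ge k$ enters.
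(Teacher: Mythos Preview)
Your proof is correct. Both your argument and the paper's begin identically: split $G=G_p\oplus G'$, use Proposition~\ref{PPC} to kill the $G'$-part, and reduce to showing $(\Delta_g F)^p=1$ for $g\in G_p$ via the binomial expansion of $V_g^p=(1+\Delta_g)^p$ together with $p\mid\binom{p}{j}$ for $1\le j\le k-1$.

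The difference is in how that last step is finished. The paper passes to the additive picture $F:X\to\mathbb{Z}/p^m\mathbb{Z}$ and observes that the binomial identity reads $\bigl(1+\tfrac{p-1}{2}\Delta_g^++\cdots\bigr)\,\Delta_g^+(pF)=0$; since the bracketed operator is unipotent on polynomials of degree $<k-1$, a Neumann-series inversion gives $\Delta_g^+(pF)=0$ directly, with no induction on $k$. You instead stay multiplicative and run a double induction: the outer induction on $k$ supplies that each $\beta_j(g)=(\Delta_g^jF)^p$ is constant, and then applying the same binomial relation to $\Delta_g^{j_0-1}F$ yields the recursion $\beta_{j_0}=\prod_{i>j_0}\beta_i^{-c_{i-j_0+1}}$, which a downward sweep collapses to $\beta_1=1$. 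Your route is slightly longer but avoids the passage to $\mathbb{Z}/p^m\mathbb{Z}$ and the operator-inversion step; the paper's route is shorter and handles all $k$ at once.
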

\begin{proof}
	Consider $\Delta F:G\times X\rightarrow C_{p^m}$, this is a phase polynomial of degree $<k$. Write $G=G_p\oplus G'$ where $G_p$ is the $p$-torsion subgroup of $G$, then for every $g\in G'$ we have by Proposition \ref{PPC} that $\Delta_g F$ takes values in $C_n$ for some $n$ co-prime to $p$, as $\Delta F$ takes values in $C_{p^m}$ we conclude that $\Delta_g F=1$ for all $g\in G'$. We claim that $\Delta_g F^p = 1$ for all $g\in G_p$ (Using ergodicity and the cocycle identity, this would imply that $F^p$ is constant).\\
	Now we argue as in \cite[Lemma D.3 (i)]{Berg& tao & ziegler}.
	
	Taking logarithm it is sufficient to show that if $F:X\rightarrow \mathbb{Z}/{p^m}\mathbb{Z}$ is an (Additive) polynomial of degree $<k$ then $pF$ is a constant.
	
	Let $g\in G_p$, then $T_g^p F = F$ write $T_g = 1+\Delta^+_g$ where $\Delta^+_g f (x)= f(T_g(x))-f(x)$ is the additive derivative. We conclude using the binomial formula that $\sum_{i=0}^p \binom{p}{i}(\Delta_g^+)^iF=F$. Since $F$ has degree $k$ and $k\leq p$ we have that $(\Delta_g^+)^pF=0$. We conclude that $$p\Delta_g^+F+\binom{p}{2} (\Delta_g^+)^2 F+...+p(\Delta_g^+)^{p-1} F=0$$
	
	which we rewrite as
	
	$$\left(1+\frac{p-1}{2}\Delta_g^++...(\Delta_g^+)^{p-2}\right)(\Delta_g^+) pF =0$$
	Inverting the expression in the bracket using Neumann series (and using the fact that $(\Delta_g^+)^{p-1}$ annihilates $(\Delta_g^+) pF$) we conclude that $\Delta_g^+ pF=0$ for any $g\in G_p$, as we said before $\Delta_g^+ pF=0$ for every $g\in G'$ as well. Using the cocycle identity we conclude that $\Delta_g^+ pF=0$ for every $g\in G$ hence ergodicity implies that $pF$ is a constant.
	
\end{proof}
\begin{rem}
	We can generalize the result of Theorem \ref{HTDPV:thm} for phase polynomials which takes finitely many values. For if $F:X\rightarrow S^1$ takes values in some finite subgroup $H$ of $S^1$, we can write $H= C_{{p_1}^{l_1}}\times...\times C_{{p_N}^{l_N}}$. Composing $F$ with each of the coordinate maps $\pi_1,...,\pi_N$ yields a polynomial as in the previous theorem, hence if $p_1,...,p_N$ are sufficiently large then $F$ takes values in $C_{p_1}\times...\times C_{p_N}$.

\end{rem}
We denote by $P^C_{<k}(G,X,S^1)$ the subgroup of $P_{<k}(G,X,S^1)$ of phase polynomials of degree $<k$ that are also cocycles. Clearly $P^C_{<k}(G,X,S^1)$ is a closed subgroup. 
\begin{cor} [Compact subgroups of $P^C_{<k}(G,X,S^1)$] Let $G=\bigoplus_{p\in \mathcal{P}}\mathbb{F}_p$ for some multiset of primes. Let $X$ be an ergodic $G$-system and let $k<\min\mathcal{P}$. Then any compact subgroup $K$ of $P^C_{<k}(G,X,S^1)$ is a direct product of copies of $C_p$ for some primes in $\mathcal{P}$. 
\end{cor}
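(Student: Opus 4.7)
The plan is to argue in four steps: first that $K$ is profinite; then to apply the Sylow decomposition $K=\prod_{p\in\mathcal{P}}K_p$; then to show each $K_p$ is $p$-torsion; and finally to invoke the structure theorem for bounded torsion groups (Theorem \ref{torsion}) to conclude each $K_p$ is a direct product of copies of $C_p$. The main obstacle is the third step, which is where the hypothesis $k<\min\mathcal{P}$ is used crucially.

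To show $K$ is profinite, I would consider for each $g\in G$ of prime order $p\in\mathcal{P}$ the continuous evaluation homomorphism $\mathrm{ev}_g\colon K\to P_{<k}(X,S^1)$, $q\mapsto q(g,\cdot)$. By Proposition \ref{PPC}, its image lies in $P_{<k}(X,C_{p^k})$. Lemma \ref{sep:lem} together with the finiteness of the constant subgroup $C_{p^k}\subset P_{<k}(X,C_{p^k})$ shows that $P_{<k}(X,C_{p^k})$ is discrete in the topology of convergence in measure, so the compact subgroup $\mathrm{ev}_g(K)$ is finite. The diagonal map $K\hookrightarrow\prod_{g\in G}\mathrm{ev}_g(K)$ is a continuous injection (cocycles are determined by their values on $G\times X$) into a profinite group, hence a topological embedding since $K$ is compact; thus $K$ is profinite. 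Proposition \ref{Sylow} then gives the Sylow decomposition, and since $|\mathrm{ev}_g(K)|$ is a power of $p\in\mathcal{P}$ only such primes occur.

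The core of the argument is to show: for every $q\in P^C_{<k}(G,X,S^1)$ and every $g\in G$ of prime order $p\in\mathcal{P}$, one has $q(g,\cdot)^p=1$. Proposition \ref{PPC} gives $q(g,\cdot)\in C_{p^k}$, and since $k<p$, Theorem \ref{HTDPV:thm} provides a decomposition $q(g,\cdot)=c\cdot\chi_g$ with $c\in C_{p^k}$ a constant and $\chi_g\colon X\to C_p$ a phase polynomial of degree $<k$. Writing $\chi_g=e^{2\pi i\phi/p}$ with $\phi\colon X\to\mathbb{Z}/p\mathbb{Z}$ a polynomial of degree $<k$, the cocycle identity applied to $pg=0$ yields
\[
1=q(pg,x)=\prod_{i=0}^{p-1}q(g,T_g^ix)=c^p\prod_{i=0}^{p-1}\chi_g(T_g^ix).
\]
Combining the Taylor expansion $\phi(T_g^ix)=\sum_{j=0}^{k-1}\binom{i}{j}\Delta_g^j\phi(x)$ with the hockey-stick identity $\sum_{i=0}^{p-1}\binom{i}{j}=\binom{p}{j+1}$ reduces $\sum_{i=0}^{p-1}\phi(T_g^ix)$ to $\sum_{j=0}^{k-1}\binom{p}{j+1}\Delta_g^j\phi(x)$. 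For $0\le j\le k-1$ we have $1\le j+1\le k<p$, so $p$ divides $\binom{p}{j+1}$ and the sum vanishes in $\mathbb{Z}/p\mathbb{Z}$. Therefore $\prod_{i=0}^{p-1}\chi_g(T_g^ix)=1$, forcing $c^p=1$ and hence $q(g,\cdot)^p=1$.

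To conclude: for $q\in K_p$ the $p$-Sylow property gives $q^{p^n}=1$ for some $n$. Applying the claim to each prime $p'\in\mathcal{P}$ shows $q(h,\cdot)^{p'}=1$ for $h\in G_{p'}$, and comparing with $q(h,\cdot)^{p^n}=1$ forces $q|_{G_{p'}}=1$ when $p'\ne p$. Hence $q^p=1$, so $K_p$ is a compact abelian $p$-torsion group, and Theorem \ref{torsion} presents it as a direct product of copies of $C_p$; thus $K=\prod_{p\in\mathcal{P}}K_p$ is a direct product of $C_p$'s for primes in $\mathcal{P}$. The main obstacle is the Taylor argument, and it is essential that $k<\min\mathcal{P}$: if $k\ge p$, the term $j=p-1$ in the Taylor expansion contributes $\binom{p}{p}=1\not\equiv 0\pmod p$ and the argument fails, consistent with Theorem \ref{TST:thm} where in lower characteristic the structure groups can carry higher $p$-power torsion.
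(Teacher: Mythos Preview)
Your proof is correct and follows essentially the same strategy as the paper's: show $K$ is profinite, decompose via Sylow, establish that each $p$-part is $p$-torsion, and apply Theorem~\ref{torsion}. The paper's proof is considerably terser --- it embeds $P^C_{<k}(G,X,S^1)$ into $\prod_{p}P^C_{<k}(G_p,X,S^1)$ and asserts each factor is $p$-torsion by appeal to Theorem~\ref{HTDPV:thm}, without spelling out the constant-killing step $c^p=1$ that you carry out explicitly via the Taylor/hockey-stick computation; that computation is essentially the content of Proposition~\ref{linecocycle:prop}, which you have rederived.
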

\begin{proof}
	$K$ is a closed subgroup of $P^C_{<k}(G,X,S^1)$ which is closed in the direct product $\prod_{p\in\mathcal{P}} P^C_{<k}(\mathbb{F}_p^\omega,X,S^1)$. From the theorem above, since $k<p$ we have that each of the $P^C_{<k}(\mathbb{F}_p,X,S^1)$ is a $p$-torsion group. In particular $K$ is totally disconnected and the $p$-sylow subgroup of $K$ is $p$-torsion. By Theorem \ref{torsion} we have that $K_p$ is a direct product of $C_p$. The proof is now complete by Theorem \ref{Sylow}.
\end{proof}
We generalize another result from \cite[Lemma D.3]{Berg& tao & ziegler}

\begin{prop} [line cocycles] \label{linecocycle:prop} 
	Let $X$ be an ergodic $G$-system. Let $F:X\rightarrow S^1$ be a phase polynomials of degree $<k$ and suppose that $F$ takes values in $C_n$ for some $n=p_1\cdot...\cdot p_j$ where $k< p_1,p_2,...,p_j$. Let $g\in G$, then $\prod_{t=0}^{n-1} T_g^t F = 1$. 
\end{prop}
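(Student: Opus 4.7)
The plan is to reduce modulo each prime $p_i$ via the Chinese Remainder Theorem and then use the ``Freshman's dream'' identity $(T-1)^{p_i} = T^{p_i} - 1$ in $\mathbb{F}_{p_i}[T]$ to rewrite the operator $\sum_{t=0}^{n-1} T^t$ as a polynomial in $T-1$ of sufficiently high degree to annihilate $F$.

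Since $n = p_1 \cdots p_j$ with the $p_i$ distinct primes, CRT gives $C_n \cong \prod_{i=1}^j C_{p_i}$ with coordinate projections $\pi_i$. Setting $F_i := \pi_i \circ F$, each $F_i : X \to C_{p_i}$ is a phase polynomial of degree $<k$, since group homomorphisms commute with multiplicative derivatives. Because $C_n \to \prod_i C_{p_i}$ is injective, the identity $\prod_{t=0}^{n-1} T_g^t F = 1$ in $C_n$ is equivalent to $\prod_{t=0}^{n-1} T_g^t F_i = 1$ in $C_{p_i}$ for every $i$. Fix such an $i$, set $q := p_i$ and $m := n/q$, and pass to additive notation by identifying $C_q$ with $\mathbb{F}_q$ via a logarithm; the multiplicative derivative becomes $T_h - 1$, the phase-polynomial hypothesis becomes $(T_g - 1)^k F_i = 0$, and the goal becomes $\sum_{t=0}^{n-1} T_g^t F_i = 0$.

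The key computation takes place in the polynomial ring $\mathbb{F}_q[T]$. Frobenius gives $(T-1)^q = T^q - 1$, whence
\[
T^n - 1 = (T^q)^m - 1 = \bigl((T-1)^q + 1\bigr)^m - 1 = \sum_{j=1}^m \binom{m}{j}(T-1)^{qj}.
\]
Dividing both sides of $(T-1)\sum_{t=0}^{n-1} T^t = T^n - 1$ by $T-1$ in the integral domain $\mathbb{F}_q[T]$ produces the operator identity
\[
\sum_{t=0}^{n-1} T^t = \sum_{j=1}^m \binom{m}{j}(T-1)^{qj-1}.
\]
Specializing $T$ to $T_g$ and applying to $F_i$, every summand $\binom{m}{j}(T_g - 1)^{qj-1} F_i$ vanishes: indeed $qj - 1 \ge q - 1 \ge k$ since by hypothesis $p_i > k$, so $(T_g - 1)^{qj-1} F_i = 0$. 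Hence $\sum_{t=0}^{n-1} T_g^t F_i = 0$, equivalently $\prod_{t=0}^{n-1} T_g^t F_i = 1$ in $C_{p_i}$; combining over $i$ finishes the proof.

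I do not expect a genuine obstacle beyond spotting the right algebraic trick; the assumption $p_i > k$ is precisely the margin required so that the lowest power $(T-1)^{q-1}$ on the right hand side already annihilates a degree-$<k$ phase polynomial, and this is the exact point at which the ``high characteristic'' hypothesis is used.
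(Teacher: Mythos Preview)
Your proof is correct. Both your argument and the paper's begin with the Chinese Remainder reduction to $F_i:X\to C_{p_i}$, but then diverge. The paper decomposes $g=g_i+g'$ with $g_i$ in the $p_i$-torsion part of $G$, invokes Proposition~\ref{PPC} (applied to the coboundary cocycle $\Delta F_i$) to show that $F_i$ is $T_{g'}$-invariant, and thereby reduces to proving $\sum_{t=0}^{p_i-1}T_{g_i}^tF_i=0$; this shorter sum is then expanded via the binomial identity $\sum_{t=0}^{p-1}T^t=\sum_{t=0}^{p-1}\binom{p}{t+1}(T-1)^t$ and killed term by term. Your argument bypasses both the decomposition of $G$ and the appeal to Proposition~\ref{PPC}: by working directly in $\mathbb{F}_q[T]$ and using Frobenius to rewrite $T^n-1$ as a polynomial in $(T-1)^q$, you handle the full operator $\sum_{t=0}^{n-1}T_g^t$ in one stroke, for arbitrary $g\in G$ regardless of its order. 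Your route is more self-contained and slightly slicker; the paper's route makes more explicit why the off-prime part of $g$ contributes nothing, at the cost of an extra lemma.
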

\begin{proof}
	Write $C_n = C_{p_1}\times...\times C_{p_j}$, and let $\pi_i : C_n\rightarrow C_{p_i}$ be the projection map.  We show that $\prod_{t=0}^{n-1} T_g^t F_i=1$ where $F_i = \pi_i \circ F$.
	
	First we decompose $G$ as $G=G_{p_i}\oplus G'$ where $G_{p_i} = \{g\in G : p_ig=0\}$ and $G'$ it's complement. Taking logarithm, it is enough to show that every polynomial $F:X\rightarrow \mathbb{R}/\mathbb{Z}$ with $nF=0$ satisfies that $\sum_{t=0}^{n-1} T_g^t F = 0$. 
	
	Given $g\in G$ we can write $g=g_i+g'$ where $g_i\in G_{p_i}$ and $g'\in G'$. Since $F_i$ is a phase polynomial takes values in $\mathbb{Z}/{p_i}\mathbb{Z}$, by Proposition \ref{PPC} it is invariant under $T_{g'}$ we conclude that  $\sum_{t=0}^{n-1} T_g^t F=\sum_{t=0}^{n-1} T_{g_i}^t F$. If $g_i=0$ then $\sum_{t=0}^{n-1}T_gF=nF=0$. Otherwise since $\Delta_g^+ = T_g -1$, using the binomial formula we have,
	$$\sum_{t=0}^{p_i-1}T_{g_i}^t F_i=\sum_{t=0}^{p_i-1} \binom{p_i}{t+1}(\Delta_{g_i}^+)^t F_i$$
	Since $F$ is a phase polynomial of degree $<k$, direct computation shows that so is $F_i$. Repeated application of Lemma \ref{vdif:lem} and the fact that $k<p_i$ implies that $(\Delta^+_{g_i})^{p_i-1}$ annihilates $F_i$. Since $p_i$ divides $\binom{p_i}{t+1}$ for $0\leq t\leq p_i-1$ we conclude that
	$$\sum_{t=0}^{p_i-1}T_{g_i}^t F_i=0$$
	As $g_i$ is of order $p_i$ and $p_i$ divides $n$ we have that $\sum_{t=0}^{n-1} T_{g_i}^t F_i$ is a constant multiple of $\sum_{t=0}^{p_i-1}T_{g_i}^t F_i=0$ hence trivial. Thus, for every $1\leq i\leq j$ and any $g\in G$ we have that $$\sum_{t=0}^{n-1}T_g F_i = 0$$
	we conclude that $$\sum_{t=0}^n T_g F=0$$ which completes the proof.
\end{proof}

\section{Roots of phase polynomials} \label{roots:app}
In this appendix we modify the results from appendix D in \cite{Berg& tao & ziegler}, about roots of phase polynomials.

When the group $G$ has unbounded torsion, it is not true that every phase polynomial has an $n$'th root that is also a phase polynomial (Compare with \cite[Corollary D.7]{Berg& tao & ziegler}). However, when phase polynomials take finitely many values (for example when the underline space is totally disconnected), we can use the tools developed by Bergelson Tao and Ziegler in \cite{Berg& tao & ziegler} and construct polynomial roots.

We begin with the following version of Proposition D.5 from \cite{Berg& tao & ziegler}, the proof is identical and is therefore omitted.
\begin{prop}
	Let $P:X\rightarrow \mathbb{Z}/_{p^m}\mathbb{Z}$ be an (additive) polynomial of degree $<d$, and let $\mathbb{Z}/{p^l}\mathbb{Z}$ be a cyclic group. Embed $\{0,1,...,p-1\}$ into $\mathbb{Z}/{p^l}\mathbb{Z}$ in the obvious manner. Then for any $0\leq j\leq m-1$ the map $b_j(P)$ is a polynomial of degree $<O_{l,d,p,j}(1)$. Where $b_j:\mathbb{Z}/{p^l}\mathbb{Z}\rightarrow\{0,1,...,p-1\}$ is the $j$'th digit map.
\end{prop}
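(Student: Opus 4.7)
The proposition is announced as having an identical proof to \cite[Proposition D.5]{Berg& tao & ziegler}, so I would follow that template. The natural approach is induction on $j$, expressing digit extraction $b_j$ via algebraic operations (Lagrange interpolation, division by powers of $p$) which preserve the ergodic-polynomial property up to a bounded blow-up in degree. The key analytic tool is the composition Lemma \ref{B.5}, which controls how polynomial functions applied to ergodic polynomials affect the degree.

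For the base case $j=0$, the map $b_0$ is reduction modulo $p$ followed by the canonical embedding of $\{0,1,\dots,p-1\}$ into $\mathbb{Z}/p^l\mathbb{Z}$. The first step, $\pi \circ P : X \to \mathbb{Z}/p\mathbb{Z}$, is automatically a polynomial of degree $<d$ because the reduction $\pi : \mathbb{Z}/p^m\mathbb{Z} \to \mathbb{Z}/p\mathbb{Z}$ is a group homomorphism. For the lift back into $\mathbb{Z}/p^l\mathbb{Z}$, I would exploit the fact that the differences $k-i$ for $0 \le i \neq k \le p-1$ are coprime to $p$, and hence invertible in $\mathbb{Z}/p^l\mathbb{Z}$; Lagrange interpolation then produces a polynomial
\[
L(y) = \sum_{k=0}^{p-1} k \prod_{\substack{0 \le i \le p-1 \\ i \neq k}} \frac{y - i}{k - i} \in (\mathbb{Z}/p^l\mathbb{Z})[y]
\]
of degree $p-1$ with $L(i) = i$ for $0 \le i \le p-1$. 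Composing $L$ (via Lemma \ref{B.5}) with a polynomial representative of $\pi \circ P$ valued in $\mathbb{Z}/p^l\mathbb{Z}$ would produce $b_0(P)$ as a polynomial of controlled degree depending on $l$, $d$, $p$.

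For the inductive step, I would use the identity
\[
b_j(P) \;=\; b_0\!\left( \frac{P - \sum_{i<j} b_i(P)\, p^i}{p^j} \right),
\]
in which the numerator takes values in the subgroup $p^j(\mathbb{Z}/p^m\mathbb{Z})$ (by definition of the lower-order digits) so that the quotient is well-defined as a map $X \to \mathbb{Z}/p^{m-j}\mathbb{Z}$. By the inductive hypothesis, each $b_i(P)$ is a polynomial of degree $< O_{l,d,p,i}(1)$, and sums of polynomials of degrees $< D_1, D_2$ are polynomials of degree $< \max(D_1,D_2)$, so the expression inside the outer $b_0$ is a polynomial of degree depending only on $l$, $d$, $p$, $j$. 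Applying the base case to this new polynomial then completes the induction, with the degree bounds composing multiplicatively at each stage to yield the claimed $O_{l,d,p,j}(1)$ bound.

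The step I expect to be hardest is the base case: lifting a polynomial valued in $\mathbb{Z}/p\mathbb{Z}$ to one valued in $\{0,\dots,p-1\} \subset \mathbb{Z}/p^l\mathbb{Z}$ is essentially the problem we are trying to solve, and is slightly circular as stated. The BTZ resolution uses the Mahler/Newton forward-difference expansion to write $b_0$ (as a function on $\mathbb{Z}/p^m\mathbb{Z}$ modulo $p^l$) as a finite $\mathbb{Z}/p^l\mathbb{Z}$-linear combination of binomial coefficient functions $\binom{x}{k}$, whose compositions with an ergodic polynomial are again polynomials with controlled degree growth via Lemma \ref{B.5}. Tracking the dependence of the degree on $l$, $d$, $p$, $j$ through this expansion and through the inductive division steps is the main bookkeeping task, but no new ergodic-theoretic input beyond Lemma \ref{B.5} and the homomorphism property of reduction is needed.
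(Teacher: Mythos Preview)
Your outline is correct and matches the Bergelson--Tao--Ziegler argument that the paper explicitly defers to (the paper omits the proof entirely, stating it is identical to \cite[Proposition~D.5]{Berg& tao & ziegler}). One small caution: Lemma~\ref{B.5} as stated in this paper presupposes that $X=Y\times_\rho U$ is an extension by a phase-polynomial cocycle, which is not assumed in the proposition, so it is not quite the right citation. The BTZ proof of Proposition~D.5 does not actually invoke that composition lemma; instead it works directly with the forward-difference calculus, showing combinatorially that the relevant binomial-coefficient and digit-extraction functions of $P$ remain polynomials of bounded degree. Your inductive scheme, the digit-recursion identity, and your identification of the base-case lift as the crux are all on target.
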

Just like in \cite{Berg& tao & ziegler} this Proposition implies that functions of phase polynomials are phase polynomials. However in our case we have to add an assumption about the values of the phase polynomials. The proof is identical.

\begin{cor} [Functions of phase polynomials are phase polynomials]\label{roots}. Let $\varphi_1,...,\varphi_m$ be $m$ phase polynomials of degree $<d$ for some $d,m\geq 1$ takes values in $C_{p^d}$, let $n\geq 1$, and let $F(\varphi_1,...,\varphi_m)$ be some function of $\varphi_1,...,\varphi_m$ which takes values in the cyclic group $C_{p^n}$. Then $F(\varphi_1,...,\varphi_m)$ is a $(X,S^1)$ phase polynomial of degree $<O_{p,d,m,n}(1)$
\end{cor}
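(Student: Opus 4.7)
The plan is to reduce to an additive setting by taking logarithms: identify $C_{p^d}$ and $C_{p^n}$ with $\mathbb{Z}/p^d\mathbb{Z}$ and $\mathbb{Z}/p^n\mathbb{Z}$, so that each $\varphi_i$ corresponds to an additive polynomial $\phi_i:X\to\mathbb{Z}/p^d\mathbb{Z}$ of degree $<d$, and $F$ corresponds to a function $\tilde F:(\mathbb{Z}/p^d\mathbb{Z})^m\to\mathbb{Z}/p^n\mathbb{Z}$. It then suffices to show that $\tilde F(\phi_1,\ldots,\phi_m):X\to\mathbb{Z}/p^n\mathbb{Z}$ is an additive polynomial of degree $<O_{p,d,m,n}(1)$; exponentiating recovers the desired phase-polynomial statement.

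The first step is to apply the preceding proposition to each $\phi_i$. Writing $\phi_i=\sum_{j=0}^{d-1}p^jb_j(\phi_i)$ in base $p$ and embedding the digits $\{0,\ldots,p-1\}\hookrightarrow\mathbb{Z}/p^n\mathbb{Z}$, the proposition (applied with $l=n$) gives that each $b_j(\phi_i)$ is an additive polynomial of degree $<O_{p,d,n}(1)$. Simultaneously I would expand $\tilde F$ through its own output digits: write $\tilde F(v_1,\ldots,v_m)=\sum_{k=0}^{n-1}p^kG_k(b_0(v_1),\ldots,b_{d-1}(v_m))$, where $G_k:\mathbb{F}_p^{md}\to\mathbb{F}_p$ encodes the $k$-th base-$p$ digit of $\tilde F$ as a function of the digits of its inputs. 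Since every function $\mathbb{F}_p^N\to\mathbb{F}_p$ is realised by a polynomial of total degree at most $(p-1)N$ with integer coefficients (this is the standard polynomial representation of functions on $\mathbb{F}_p^N$, obtained e.g.\ via Lagrange interpolation or Fermat's little theorem), each $G_k$ lifts to an integer polynomial of total degree $<O_{p,d,m}(1)$.

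Substituting the $b_j(\phi_i)$'s into this polynomial representation, $\tilde F(\phi_1,\ldots,\phi_m)$ becomes a $\mathbb{Z}/p^n\mathbb{Z}$-linear combination of products of at most $O_{p,d,m}(1)$ of the digit polynomials, each of degree $<O_{p,d,n}(1)$. The remaining ingredient is the discrete Leibniz rule for additive polynomials: from $\Delta_h^+(fg)=(T_hf)(\Delta_h^+g)+(\Delta_h^+f)g$, a routine induction shows that if $f,g$ are additive polynomials of degrees $<d_1,<d_2$ then $fg$ has degree $<d_1+d_2-1$, so that a product of $M$ polynomials of degree $<D$ has degree $<M(D-1)+1$, while sums preserve the maximum degree. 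Composing these bounds through the three layers (the digits of the $\phi_i$'s, the polynomial $G_k$, and the outer $\sum_k p^k$) yields the desired total degree bound $<O_{p,d,m,n}(1)$. The main obstacle is purely bookkeeping of degrees through this three-level composition; all the analytic content is carried by the preceding proposition, and the only conceptual input beyond it is the polynomial representation of functions on $\mathbb{F}_p^N$ together with the additive Leibniz rule.
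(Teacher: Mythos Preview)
Your proposal is correct and follows essentially the same approach as the paper: the paper does not spell out its own argument but states that the proof is identical to that of \cite[Corollary D.7]{Berg& tao & ziegler}, and your outline---pass to the additive setting, use the preceding proposition to control the base-$p$ digits $b_j(\phi_i)$, represent $\tilde F$ as an integer-coefficient polynomial in those digits via the standard polynomial representation of functions on $\mathbb{F}_p^N$, and bound the degree of the resulting expression with the additive Leibniz rule---is exactly that argument.
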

	
\address{Einstein Institute of Mathematics\\
	The Hebrew University of Jerusalem\\
	Edmond J. Safra Campus, Jerusalem, 91904, Israel \\ Or.Shalom@mail.huji.ac.il}
\end{document}